\theoremstyle{definition}
\newtheorem{definition}{Definition}[section]
\theoremstyle{plain}
\newtheorem{thm}{Theorem}
\newtheorem{prop}{Proposition}[section]
\newtheorem{lemma}{Lemma}[section]
\newtheorem{corollary}{Corollary}[section]
\theoremstyle{remark}
\newtheorem{remark}{Remark}
\newcommand{\Acal}{\mathcal{A}}
\newcommand{\A}{\textbf{A}}
\newcommand{\Ccal}{\mathcal{C}}
\newcommand{\C}{\mathbb{C}}
\newcommand{\D}{\mathcal{D}}
\newcommand{\Ecal}{\mathcal{E}}
\newcommand{\Fcal}{\mathcal{F}}
\newcommand{\Hil}{\mathcal{H}}
\newcommand{\Hsc}{\textsc{H}}
\newcommand{\Hg}{\textsc{\textbf{H}}}
\newcommand{\Ical}{\mathcal{I}}
\newcommand{\Kcal}{\mathcal{K}}
\newcommand{\Lie}{\mathcal{L}}
\newcommand{\Ncal}{\mathcal{N}}
\newcommand{\N}{\mathbb{N}}
\newcommand{\Ocal}{\mathcal{O}}
\newcommand{\R}{\mathbb{R}}
\newcommand{\Sn}{\mathbb{S}}
\newcommand{\Ucal}{\mathcal{U}}
\newcommand{\Vcal}{\mathcal{V}}
\newcommand{\Wcal}{\mathcal{W}}
\newcommand{\Z}{\mathbb{Z}}
\newcommand{\dt}{\frac{d}{dt}}
\newcommand{\w}[1]{\langle {#1} \rangle}
\newcommand{\ts}{\textsuperscript}
\DeclareMathOperator{\Card}{Card}
\DeclareMathOperator{\Ker}{Ker}
\DeclareMathOperator{\Op}{\textbf{Op}}
\DeclareMathOperator{\Opw}{Op}
\DeclareMathOperator{\Int}{Int}
\DeclareMathOperator{\ind}{ind}
\DeclareMathOperator{\Id}{Id}
\DeclareMathOperator{\re}{Re}
\DeclareMathOperator{\Res}{Res}
\DeclareMathOperator{\Ran}{Ran}
\DeclareMathOperator{\supp}{supp}
\def\rest#1#2{\mathchoice
              {\setbox1\hbox{${\displaystyle #1}_{\scriptstyle #2}$}
              \restrictionaux{#1}{#2}}
              {\setbox1\hbox{${\textstyle #1}_{\scriptstyle #2}$}
              \restrictionaux{#1}{#2}}
              {\setbox1\hbox{${\scriptstyle #1}_{\scriptscriptstyle #2}$}
              \restrictionaux{#1}{#2}}
              {\setbox1\hbox{${\scriptscriptstyle #1}_{\scriptscriptstyle #2}$}
              \restrictionaux{#1}{#2}}}
\def\restrictionaux#1#2{{#1\,\smash{\vrule height .8\ht1 depth .85\dp1}}_{\,#2}}
\newcommand{\executeiffilenewer}[3]{%
 \ifnum\pdfstrcmp{\pdffilemoddate{#1}}%
 {\pdffilemoddate{#2}}>0%
 {\immediate\write18{#3}}\fi%
}
\newcommand{%
 \executeiffilenewer{.svg}{.pdf}%
 {inkscape -z -D --file=.svg %
 --export-pdf=.pdf --export-latex}%
 \input{.pdf_tex}%
}[1]{%
 \executeiffilenewer{#1.svg}{#1.pdf}%
 {inkscape -z -D --file=#1.svg %
 --export-pdf=#1.pdf --export-latex}%
 \input{#1.pdf_tex}%
}
\title{A Morse complex for Axiom A flows}
\author{Antoine Meddane}
\address{Laboratoire de mathématiques Jean Leray (UMR CNRS 6629), Université de Nantes, 2rue de la Houssinière, BP92208, 44322 Nantes Cédex 3, France}
\email{antoine.meddane@univ-nantes.fr}
\begin{document}

\begin{abstract}
 On a smooth compact Riemannian manifold without boundary, we construct a finite dimensional cohomological complex of currents that are invariant by an Axiom A flow verifying Smale's transversality assumptions. The cohomology of that complex is isomorphic to the De Rham cohomology via certain spectral projectors. This construction is achieved by defining anisotropic Sobolev spaces adapted to the global dynamics of Axiom A flows. In the particular case of Morse-Smale gradient flows, this complex coincides with the classical Morse complex. 
\end{abstract}

\maketitle
\begin{small}
\setcounter{tocdepth}{1}
\tableofcontents
\end{small}

\section{Introduction.}

Axiom A flows are a class of dynamical systems introduced by S. Smale \cite{smale} to describe chaotic dynamical systems. This type of flows arises in numerous physical problems and it contains two very interesting examples: Morse gradient flows and Anosov flows. On the one hand, the first one is well-known for its link with topology: notably through the Morse inequalities \cite{morse}, stated by Morse in $1920$, which relate the Betti numbers of the manifold and the number of critical points of a Morse function. Given a Riemannian metric on the manifold, Smale \cite{smale-morse} later gave another proof using dynamical arguments and ideas going back to Thom \cite{thom}. On the other hand, Anosov flows were defined first by Anosov in \cite{anosov} to describe the properties satisfied by geodesic flows on negatively curved manifolds and their links with the topology of the manifold are more sublte. 

From a purely dynamical point of view, Axiom A flows are interesting because, once they satisfy a generic condition called the \textit{strong transversality assumption}, they form an open subset of the set of vector fields. Precisely, for any vector field on a compact manifold without boundary, we have the equivalence: 
$$ \mbox{Axiom A + strong tranversality assumption } \Longleftrightarrow \: \mbox{$\Ccal^1$-structurally stable}.  $$
This equivalence is often refered to the $\Ccal^1$-structural stability conjecture which was solved by Robinson \cite{robinson} for the first implication and by Wen \cite{wen} and Hayashi \cite{hayashi} for the converse statement. The proof of the $\Ccal^1$-structural stability conjecture for diffeomorphisms was previously obtained by Robbin \cite{robbin} for the first implication and by Ma\~{n}e \cite{mane} for the converse statement. A great review on structural stability conjectures can be found in the book of Wen \cite{wen}.

In another direction, the concept of currents\footnote{In coordinates, differential forms with value in the set of distributions. We refer to the book of Schwartz \cite{schwartz} and the lecture notes of Laundenbach \cite{laudenbach-book} for a comprehensive introduction.} turns out to be very useful in the study of gradient flows. More precisely, Laudenbach \cite{laudenbach} and Harvey-Lawson \cite{harvey-lawson} gave a new interpretation of Morse homology in terms of currents by proving the following statement. Let us consider a smooth compact Riemannian manifold $(M,g)$ of dimension $n$ and a smooth Morse function $f$. If $x$ denotes a critical point of index $1 \leq k \leq n$, then the stable manifold $W^s (x)$ is an embedded submanifold of dimension $k$ and we have (in the sense of currents)
\begin{equation}\label{eq: intro derivative of De Rham currents}
 \partial [W^s (x)] = [\partial W^s (x)] = \sum_{\ind y = \ind x - 1 \atop y \mbox{ \Tiny{critical point}}} n(x,y)[W^s (y)]
\end{equation}
for some $n(x,y) \in \Z$ often called the \textit{instanton numbers}. For every $0 \leq \ell \leq n$, the space $\D'^{,n-\ell} (M)$ of current of degree $n-\ell$ is defined as the topological dual of the space of differential $\ell$-forms $\Omega^{\ell} (M)$, i.e. the space of smooth sections $\Gamma (M; \Lambda^{\ell} T^* M)$. An equivalent formulation for equation (\ref{eq: intro derivative of De Rham currents}) is given as follows
$$\forall \omega \in \Omega^{k-1} (M), \quad \int_{W^s(x)} d\omega = \sum_{\ind y = \ind x - 1 \atop y \mbox{ \Tiny{critical point}}} n(x,y) \int_{W^s (y)} \omega .$$
This relation is often presented in the following algebraic form. Consider the differential on the complex of critical points defined by
$$\partial x = \sum_{\ind y = \ind x - 1 \atop y \mbox{ \Tiny{critical point}}} n(x,y) . y \quad \mbox{for every }x \in C_k (f) = \{ \mbox{critical points of index }k \},$$
with the same numbers $n(x,y)$ as before. The cohomological complex $(C(f),\partial)$ is referred as the Morse complex and it is in fact quasi-isomorphic to the de Rham complex, in the sense that the cohomology groups are the same. A remarkable feature of (\ref{eq: intro derivative of De Rham currents}) is that it gives a representation of this algebraic complex in terms of currents that are invariant by the gradient flow. Morse inequalities have been generalized to more general dynamical systems as one can witness in the book of Franks \cite{franks_homology}. Yet, to the best of our knowledge, the previous algebraic procedure does not extend to Axiom A flows and there is no analogue of its analytical version as it was constructed by Laudenbach. In this direction, we can mention the article of Ruelle and Sullivan \cite{ruelle-sullivan} in which they constructed similar closed invariant currents for Axiom A diffeomorphisms. Nevertheless, their construction was only local (near a basic set) and was not enough to recover the whole De Rham cohomology. More recently, Dang and Rivière showed how to use the theory of Ruelle resonances \cite{baladi_2000} to define a natural cohomological complex of currents associated with Morse-Smale and Anosov flows which are two particular examples of Axiom A flows. In this article, we show how to extend this construction and we associate a natural cohomological complex to every Axiom A flow verifying Smale's transversality assumptions:
\vspace{0,3cm}

\fbox{
\begin{minipage}{0.9\linewidth}
\begin{thm}\label{thm intro}
 Let $V$ be an Axiom A vector field which satifies the Smale transversality assumption (\ref{Transversality assumption}). For every $0\leq k\leq n$, there exists a Hilbert space $\Omega^k (M;\C) \subset \Hil_k \subset \D^{\prime,k} (M;\C)$ (with continuous injection) and a positive integer $m_k (0)$ such that 
 $$ \cdots \overset{d}{\longrightarrow} \Ker \left( \rest{(\Lie_V^{(k)})^{m_k (0)}}{\Hil_k} \right) \overset{d}{\longrightarrow} \Ker \left( \rest{(\Lie_V^{(k+1)})^{m_{k+1} (0)}}{\Hil_{k+1}} \right) \overset{d}{\longrightarrow} \cdots ,$$
 defines a finite dimensional cohomological complex quasi-isomorphic to the De Rham complex.
\end{thm}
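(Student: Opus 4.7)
The plan follows the strategy developed by Dang--Rivi\`ere in the Anosov and Morse--Smale cases, and extends it to general Axiom A flows with strong transversality. I proceed in three stages, with the construction of the anisotropic space being the critical step.

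First, I would build the anisotropic Hilbert space $\Hil_k$ through a microlocal weight adapted to the global dynamics. Smale's spectral decomposition partitions the non-wandering set of $V$ into finitely many basic sets $\Lambda_1,\ldots,\Lambda_N$, each carrying a hyperbolic splitting of the tangent bundle. The strong transversality assumption endows the basic sets with a Smale partial order for which unstable manifolds of higher sets meet stable manifolds of lower ones transversally. Using this order together with a Lyapunov function on $M$, one builds an escape function $G_k$ on $T^*M$ that strictly decreases along the symplectic lift of $V$ outside a small conic neighborhood of the conormals to the unstable bundles above each $\Lambda_i$. Quantizing yields an elliptic weight $A_k = \Op(e^{G_k})$, and I set $\Hil_k := A_k^{-1} L^2(M; \Lambda^k T^*M)$; ellipticity produces the continuous inclusions $\Omega^k(M;\C) \subset \Hil_k \subset \D^{\prime,k}(M;\C)$. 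On $\Hil_k$, $-\Lie_V^{(k)}$ becomes a relatively compact perturbation of a maximally dissipative operator, so its spectrum in a half-plane $\{\re z > -c\}$ is discrete with finite algebraic multiplicities.

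Second, I extract the cohomological complex from this spectral picture. There is a smallest integer $m_k(0) \geq 1$ such that the spectral projector
$$\pi_k^{(0)} = \frac{1}{2\pi i} \oint_{|z|=\epsilon}(z - \Lie_V^{(k)})^{-1}\, dz$$
has range equal to $\Ker((\Lie_V^{(k)})^{m_k(0)}|_{\Hil_k})$, and this range is finite-dimensional. Cartan's magic formula $\Lie_V = d \iota_V + \iota_V d$ yields $[d, \Lie_V^{(k)}] = 0$, so $d$ preserves each generalized kernel and the complex is well-defined. The same commutation gives $[d, \pi_k^{(0)}] = 0$, so the projectors assemble into a chain map from the De Rham complex to the Ruelle complex.

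Third, I would prove this chain map is a quasi-isomorphism. Expanding the resolvent near the origin as
$$(z - \Lie_V^{(k)})^{-1} = \sum_{j=0}^{m_k(0)-1} z^{-j-1} (\Lie_V^{(k)})^j \pi_k^{(0)} + R_k(z),$$
with $R_k$ holomorphic at $0$, and setting $h_k := R_k(0)\,\iota_V$, a direct computation using Cartan's formula together with $\Lie_V^{(k)} R_k(0) = \Id - \pi_k^{(0)}$ on $\Ran(\Id - \pi_k^{(0)})$ yields $d h_k + h_{k+1} d = \Id - \pi_k^{(0)}$ on $\Hil_k$. Thus $\pi_\bullet^{(0)}$ is a homotopy equivalence from $(\Hil_\bullet, d)$ to the generalized-kernel complex. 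Since $\Omega^\bullet(M;\C) \hookrightarrow \Hil_\bullet \hookrightarrow \D^{\prime,\bullet}(M;\C)$ and the outer inclusion induces the De Rham isomorphism, the middle one does too, and composition with $\pi_\bullet^{(0)}$ completes the quasi-isomorphism with the De Rham complex.

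The principal obstacle is the first step. Unlike the uniformly hyperbolic Anosov case or the isolated-hyperbolic-fixed-point Morse--Smale case, the basic sets here may carry mixed stable/unstable dimensions and accumulate onto one another through their invariant manifolds. Patching the local escape functions around each $\Lambda_i$ into a coherent global one requires careful control of the transit time between neighborhoods of distinct basic sets, in order to rule out any non-wandering behavior of the Hamiltonian lift outside the unstable conormals. This is precisely where the strong transversality assumption becomes indispensable, and where the analysis genuinely departs from the previously treated cases.
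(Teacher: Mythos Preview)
Your overall strategy matches the paper's: build a global escape function (this is indeed the hard part, and your description of the difficulty is accurate), define anisotropic Sobolev spaces, obtain discrete spectrum, and use the Cartan homotopy $\Id-\pi_0^{(k)}=dR^{(k)}+R^{(k+1)}d$ with $R^{(k)}=\iota_V\circ(\Lie_V^{(k)})^{-1}(\Id-\pi_0^{(k)})$. The paper derives exactly this identity (see equation (\ref{eq: preuve thm - calcul representant lisse classe homologie}) and the operator $R^{(k)}$ in (\ref{eq: proof isomorphism theorem - homotopic operator})).

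There is, however, a genuine gap in your third step. First, $(\Hil_\bullet,d)$ is \emph{not} a cochain complex: the exterior derivative is order~$1$, so $d:\Hil_k^m\to\Hil_{k+1}^{m-1}$ rather than $\Hil_{k+1}^m$, and your homotopy identity only makes sense in a larger space of currents. Second, and more seriously, the inference ``$\Omega^\bullet\hookrightarrow\Hil_\bullet\hookrightarrow\D'^{,\bullet}$ with the outer arrow a quasi-isomorphism, hence the middle one is too'' is false in general: a composite being a quasi-isomorphism does not force either factor to be one. The paper does \emph{not} argue this way. Instead it works directly with $\pi_0^{(k)}:\Omega^k(M;\C)\to C_V^k(0)$ and invokes a De~Rham regularization theorem (Theorem~\ref{thm: De Rham}): (1) every closed $u\in\Hil_k^m$ differs from a smooth closed form by an element of $d(\Hil_{k-1}^{m+1})$, and (2) a smooth form that is exact in $\D'^{,\bullet}$ is already exact in $\Omega^\bullet$. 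Part~(2) combined with your homotopy gives injectivity on cohomology; part~(1) is needed for surjectivity, since an arbitrary resonant state $v\in C_V^k(0)$ is not smooth and one must first produce a smooth closed $\omega$ in its class before applying $\pi_0^{(k)}$. Your proposal omits this regularization step, and without it the argument does not close.
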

\end{minipage}
}
\vspace{0,3cm}

Equivalently, this Theorem shows the existence of a finite dimensional complex representing the De Rham cohomology and generated by dynamical currents that are invariant by the Axiom A flow. The Hilbert spaces $\Hil_k$ of the statement are anisotropic Sobolev spaces adapted to the spectral analysis of transfer operators,
$$ \Lie^t (u) =  u \circ \varphi^{-t} \qquad \forall u \in \Ccal^{\infty} (M), $$
as it was initiated by Ruelle \cite{ruelle, ruelle_book}.
Recall that this operator extends to the spaces of differential forms by setting
$$ \Lie_{(k)}^t (u):= (\varphi^{-t})^* (u), \qquad \forall u \in \Omega^k (M). $$
In order to prove theorem \ref{thm intro}, we can introduce the \textit{resolvent operator}  
$$ ( \Lie_V^{(k)} + z )^{-1} := \int_0^{+\infty} e^{-tz} (\varphi^{-t})^* dt : \Omega^k (M;\C) \longrightarrow \D'^{,k} (M;\C), $$
which is well-defined for $\re (z) \geq C_0 \gg 1$ and which defines a holomorphic function in that same region. Then, we deduce theorem \ref{thm intro} from the fact that the resolvent operator $( \Lie_V^{(k)} + z )^{-1}$ admit a meromorphic extension to a halfplane $\re (z) > -C_k$, with $C_k \gg 1$ arbitrary large so that $0$ belongs to that half plane. Precisely, we prove

\vspace{0,3cm}

\fbox{
\begin{minipage}{0.9\linewidth}
\begin{thm}\label{thm 2 intro}
 Let $V$ be an Axiom A vector field which satifies the Smale transversality assumptions (\ref{Transversality assumption}). For every $0\leq k \leq n$, the resolvent operator 
 $$z \longmapsto( \Lie_V^{(k)} + z )^{-1} : \Omega^k (M;\C) \longrightarrow \D'^{, k}(M;\C)$$
 extends meromorphically from $\text{Re}(z)\geq C_0$ to the whole complex plane $\C$.
\end{thm}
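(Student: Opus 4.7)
\medskip

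\textbf{Strategy.} The plan is to follow the microlocal/anisotropic Sobolev approach pioneered by Faure--Sjöstrand and Dyatlov--Zworski for Anosov flows and adapted by Dang--Rivière for Morse--Smale flows, but to globalize the escape function construction so that it handles all basic sets of the Smale spectral decomposition simultaneously. Concretely, I would construct an order function $G_k$ on $T^*M$ and the corresponding anisotropic Hilbert space $\Hil_k$ in such a way that the operator $\Lie_V^{(k)} + z$ becomes Fredholm (in fact, a bounded perturbation of an invertible operator) on a half-plane $\re(z)>-C_k$ for any prescribed $C_k>0$, and then conclude meromorphic continuation by the analytic Fredholm theorem applied to
\[
(\Lie_V^{(k)}+z)^{-1} = R_0(z)\bigl(\Id + K(z)\bigr)^{-1},
\]
where $R_0(z)$ is the Laplace transform of the transfer operator on $\Hil_k$ (well-defined and holomorphic for $\re(z)$ large) and $K(z)$ is a holomorphic family of compact operators on $\Hil_k$.

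\medskip

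\textbf{Construction of the escape function.} This is the heart of the argument. I would start from Smale's spectral decomposition $\Omega(V)=\bigsqcup_{i=1}^N\Lambda_i$ into basic hyperbolic sets, together with the partial order $\Lambda_i \preceq \Lambda_j$ given by the existence of orbits from $\Lambda_j$ to $\Lambda_i$. Over small isolating neighbourhoods $U_i$ of each $\Lambda_i$, the hyperbolic splitting $T_{\Lambda_i}M = E^s_i \oplus \R V \oplus E^u_i$ extends to a continuous (and, after smoothing, $\Ccal^\infty$) invariant-cone family, which dually determines stable/unstable conic subsets $\Gamma^s_i,\Gamma^u_i \subset T^*M$ away from the zero section. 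I would then build $G_k$ as a sum
\[
G_k(x,\xi) = \sum_{i=1}^N \chi_i(x)\, g_{k,i}(x,\xi) + g_\infty(x,\xi),
\]
where $\chi_i$ is a smooth cutoff supported in a slightly enlarged neighbourhood of $W^u(\Lambda_i)$, each $g_{k,i}$ is a locally hyperbolic escape function of the Faure--Sjöstrand type (very negative on $\Gamma^s_i$, very positive on $\Gamma^u_i$, with weights depending on $k$ to match the degree of the form), and $g_\infty$ is a standard Kohn--Nirenberg weight ensuring ellipticity in $|\xi|$ outside all the basic sets. The crucial verification is the inequality
\[
H_p G_k(x,\xi) \;\leq\; -C \w{\xi}^{m_0} \quad \text{on the escape region},
\]
where $p(x,\xi)=\xi(V(x))$ is the principal symbol of $-i\Lie_V$ and $H_p$ the Hamiltonian vector field. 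Smale's strong transversality assumption enters precisely here: it guarantees that orbits in $T^*M$ leaving the cone $\Gamma^u_j$ of a higher basic set $\Lambda_j$ can only reach the cone $\Gamma^s_i$ of a lower basic set, so the cutoffs $\chi_i$ can be ordered to make the weights monotone along trajectories.

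\medskip

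\textbf{Fredholm theory and conclusion.} With $\Hil_k$ defined via an elliptic quantization $\Op(e^{G_k})$ of the weight (so $\Hil_k = \Op(e^{G_k})^{-1}L^2(M;\Lambda^kT^*M)$), the classical microlocal argument of Faure--Sjöstrand, adapted to the degree $k$, shows that $\Lie_V^{(k)}$ conjugated by $\Op(e^{G_k})$ is a bounded perturbation of an operator that is accretive modulo a smoothing/compact operator, so $\Lie_V^{(k)}+z$ is invertible on $\Hil_k$ for $\re(z)$ large and is Fredholm of index $0$ on the strip $\re(z)>-C_k$. Analytic Fredholm theory then gives meromorphic continuation of $(\Lie_V^{(k)}+z)^{-1}$ as a map $\Hil_k \to \Hil_k$, and since $\Omega^k(M;\C)\hookrightarrow \Hil_k \hookrightarrow \D'^{,k}(M;\C)$ continuously, one obtains the desired meromorphic extension $\Omega^k(M;\C)\to \D'^{,k}(M;\C)$. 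Independence of the meromorphic extension from the choices in $G_k$ follows by a standard density argument on the half-plane of holomorphy and uniqueness of meromorphic continuation.

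\medskip

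\textbf{Main obstacle.} The genuinely new difficulty, compared to the Anosov and Morse--Smale cases, is the construction of a globally consistent escape function $G_k$ in the presence of several basic sets of mixed index. In the Anosov case $\Omega(V)=M$ is one basic set and the splitting is global and smooth; in the Morse--Smale case the basic sets are hyperbolic fixed points or closed orbits, which are zero- or one-dimensional, so cone constructions are local and easy to patch. For general Axiom A the basic sets may have complicated local structure (solenoids, suspended subshifts) and the heteroclinic orbits between them force one to interpolate between hyperbolic weight functions of \emph{different} indices while keeping the Hamiltonian derivative $H_p G_k$ negative. Controlling this interpolation, especially near the boundary of $W^u(\Lambda_i)\cap W^s(\Lambda_j)$ where transversality is only transverse intersection rather than transverse cone inclusion, will be where most of the technical work lies.
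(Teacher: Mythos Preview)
Your high-level strategy (anisotropic Sobolev spaces via an escape function, then analytic Fredholm theory) matches the paper exactly, and you correctly identify the main obstacle. However, the escape function you propose---a sum $\sum_i \chi_i(x)\,g_{k,i}(x,\xi)$ of local Faure--Sj\"ostrand weights with cutoffs supported near $W^u(\Lambda_i)$---is \emph{not} what the paper does, and as written there is a genuine gap: when you differentiate along the Hamiltonian flow you pick up $H_p(\chi_i)\,g_{k,i}$, which has no sign control on the transition regions between basic sets. Your "monotone ordering" remark does not fix this, because the cutoffs $\chi_i$ are supported on sets that can overlap in complicated ways (the closure of $W^u(\Lambda_j)$ contains all $W^u(\Lambda_i)$ with $\Lambda_i\preceq\Lambda_j$), and the local weights $g_{k,i}$ have different unstable/stable dimensions.

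The paper avoids this patching problem entirely. Instead of gluing local escape functions, it extends the dual stable/unstable distributions $E^*_{s/so}$, $E^*_{u/uo}$ \emph{globally} to all of $M$ via the partition $M=\bigsqcup_i W^s(K_i)=\bigsqcup_i W^u(K_i)$, defines the subsets $\Sigma_{u/uo},\Sigma_{s/so}\subset S^*M$ as their projections, and then proves (this is the main technical theorem, Proposition~\ref{prop compactness}) that these sets are \emph{compact} and form attractor--repeller pairs for the lifted flow $\widetilde\Phi^t$ on $S^*M$. Compactness is nontrivial precisely because the dimension of, say, $E^*_u(x)$ jumps as $x$ crosses between different $W^s(K_j)$; the proof uses a cone-stability lemma on unrevisited neighborhoods (replacing the $\Ccal^1$-linearizing charts used by Dang--Rivi\`ere for Morse--Smale flows) together with the transversality assumption to show that limit points of $\Sigma_{u/uo}$ cannot escape into $\Sigma_{so/s}$. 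Once the attractor--repeller structure on $S^*M$ is established, a single global energy function $E_\pm\in\Ccal^\infty(S^*M)$ comes out of the general Conley--Faure--Sj\"ostrand machinery, and the escape function is $G_m=m(x,\xi)\log\langle f\rangle$ with $m$ a linear combination of $E_\pm$ and an energy function on the base $M$. No local-to-global gluing is ever performed on the escape function itself.
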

\end{minipage}
}
\vspace{0,3cm}

We call \textit{resonances} the poles of the resolvent operators $( \Lie_V^{(k)} + z )^{-1}$ seen as a meromorphic function on all $\C$. In order to prove that $(\Lie_V^{(k)} + z)^{-1}$ admits a meromorphic extension, one needs to find good Hilbert spaces $\Hil_k$ on which the Lie derivative operators $\Lie_V^{(k)}$ have discrete spectrum of resonances. More precisely, using \textit{analytic Fredholm theory} \cite[app. C]{DZ-book}, we will extend the resolvent operator on the half planes $\text{Re}(z)\geq -C_k$ (for arbitrarly large $C_k$) as a meromorphic family of Fredholm operators. In particular, we will find that the residue of $(\Lie_V^{(k)} + z)^{-1}$ at $z = 0$ is a finite rank projector whose explicit expression is given by
$$ \pi_0^{(k)} := \Res_{z = 0} ( - \Lie_V^{(k)}) = \frac{-1}{2 i \pi} \int_{\gamma_0} (\Lie_V^{(k)} + z)^{-1} dz : \Omega^k (M;\C) \longrightarrow \D'^{,k}(M;\C).$$
Here, $\gamma_0$ denotes a positively oriented closed curve which surrounds the resonance $0$ and no other resonances. The spaces $\Ker \left( \rest{(\Lie_V^{(k)})^{m_k (0)}}{\Hil_k} \right)$ which appear in Theorem \ref{thm intro} are thus defined as the range of the finite rank projectors $\pi_0^{(k)}$. The quasi-isomorphism with the De Rham complex is then given by the maps $\pi_0^{(k)}$. Therefore, Theorem \ref{thm intro} will be deduced from Theorem \ref{thm 2 intro} which relies on the construction of the Hilbert space $\Hil_k$ presented earlier and which is the main technical issue of this article.

\subsection{Earlier results on meromorphic continuations for Axiom A flows}

These functional constructions were originally made by Ruelle using Markov partitions in view of studying the mixing properties of these dynamical systems and the analytical properties of the zeta functions associated with their periodic orbits. Similar results have been obtained by Bowen, Fried, Rugh, Dolgopyat and others \cite{bowen2}, \cite{fried_meromorphic}, \cite{rugh}, \cite{dolgopyat} also relying on symbolic dynamic.

Building on some earlier work \cite{BKL} with Blank and Keller for Anosov diffeomorphisms, Liverani introduced in \cite{liverani} Banach spaces of distributions with anisotropic Hölder regularity adapted to contact Anosov flows. Among other things, these spaces allowed him to prove the meromorphic continuation of the resolvent in some half plane slightly beyond $\re(z)=0$. This approach was further developped in subsequent works with Butterley \cite{butterley-liverani} and Giuletti-Pollicott \cite{GLP} to show the meromorphic continuation of $(\Lie_V^{(k)}+z)^{-1}$ to the whole complex plane for every $0\leq k\leq n$ and for any smooth Anosov flow. We also refer to \cite{gouezel-liverani} for earlier results with Gouëzel in that spirit for Axiom A diffeomorphims. We can also mention \cite{baladi-tsujii_2007} for a different approach (still for diffeomorphisms) by Baladi and Tsujii using anisotropic Sobolev spaces and methods from Fourier/microlocal analysis.
 
 From another direction, the general theory of semiclassical resonances \cite{DZ-book}, \cite{helffer-sjostrand} was used to derive alternative approaches to construct Hilbert spaces adapted to the dynamics. First, for smooth Anosov diffeomorphisms, Faure, Roy and Sjöstrand recovered in \cite{FRS} the existence of a discrete spectrum for the tranfer operator. Then for general Anosov flows, Faure and Sjöstrand constructed in  \cite{FS} Hilbert spaces, referred as \textit{anisotropic Sobolev spaces}, on which the Lie derivative $\Lie_V^{(0)}$ has discrete spectrum on a large part of the complex plane. Their analysis used the machinery of microlocal analysis as a toolbox and it reduced in some sense the problem to a dynamical question, i.e. constructing an escape (or Lyapunov) function adapted to the dynamics on the cotangent space. In \cite{FS}, the meromorphic extension of the resolvent was obtained for $k = 0$ and this result has been extended for every $0 \leq k \leq n$ in \cite{dyatlov-zworski} by Dyatlov and Zworski in view of applications to the Ruelle zeta function. More informations on the spectrum (e.g. band structure) have also been obtained by Faure and Tsujii \cite{tsujii_2010,tsujii_2012, faure-tsujii2013,faure-tsujii2017,faure-tsujii2019,faure-tsujii2021} in the context of Anosov flows and contact Anosov flows using these kinds of methods.
 
 Subsequently, the meromorphic extension of the resolvent has been extended to Axiom A flows by Dyatlov-Guillarmou \cite{dyatlov-guillarmou_2016, dyatlov-guillarmou_2018} under the assumption that the resolvent acts on differential forms supported near a fixed basic set. Although this analysis was enough to prove the meromorphic continuation of the Ruelle zeta function for Axiom A flows, it does not seem to be sufficient to deduce Theorem \ref{thm 2 intro} (and thus Theorem \ref{thm intro}) which does not require any support restriction. In that direction, Dang and Rivière \cite{DR0,DRI,DRII,DRIII} proved the meromorphic continuation for globally supported test forms in the case of Morse-Smale gradient flows, a generic subset of Morse gradient flows which satisfies a so called \textit{transversality assumption}. In this series of articles, Dang and Rivière gave a complete description of Pollicott-Ruelle resonances, giving a band structure for the spectrum, computed the dimensions of eigenspaces by expliciting the eigenvectors in terms of De Rham's currents and gave a new proof of Morse-Smale inequalities. The link with the topology was made possible through their \textit{global} construction of Sobolev spaces adapted to the dynamics of Morse-Smale flows. In this article, we gathered the two approaches of Dyatlov-Guillarmou and Dang-Rivière so that we obtain the meromorphic continuation of the resolvent acting on globally supported forms for general Axiom A flows. Finally, we emphasize that, besides its applications to topology, Theorem \ref{thm 2 intro} also answers a question raised by Baladi in \cite[4.32, p. 144]{baladi}. 
 
 \subsection{Back to topology}
 
 Recently, the developments of these analytical tools lead to many progress on the link between the topology of the manifold and the spectrum of the Lie derivative, at least for the examples where the functional setup was globally defined, namely Morse-Smale and Anosov flows. We recall here some of these progress. 
 \begin{itemize}
  \item \textbf{Contact Anosov flows in dimension $3$.} In that geometric framework, Dyatlov and Zworski \cite{dyatlov-zworski_2017} computed the dimension of $\Ker (\Lie_V^{(k)})^{m_k(0)}$ for every $0\leq k\leq 3$ and expressed it in terms of the Betti numbers of the manifold. They used this to generalize earlier results of Fried \cite{fried_lefschetz} on the order of vanishing of the Ruelle zeta function. In particular, their computation holds true for any geodesic flow acting on the unitary cotangent bundle $S^*\Sigma =: M$ of a compact negatively curved surface $\Sigma$. Burns-Weil-Shen \cite{borns-weil-shen} extended \cite{dyatlov-zworski_2017} to the nonorientable case and Hadfield \cite{hadfield} showed a similar result for compact negatively curved surfaces with boundaries.
  \item \textbf{Anosov flows in high dimension.} Küster-Weich \cite{kuster-weich} computed the dimension of $\Ker (\Lie_V^{(1)})^{m_1(0)}$ in terms of the first Betti number for hyperbolic manifolds of dimension $\neq 3$.
  \item \textbf{Perturbation of Anosov flows.} Ceki\'{c}-Paternain \cite{cekic-paternain} gave the first examples of Anosov flows in dimension $3$ which preserves a volume form where the vanishing order of the Ruelle zeta function jumps under perturbation of the flow. Again this was achieved by computing explicitely the dimension of the spaces appearing in the cohomological complex of Theorem \ref{thm 2 intro}. In dimension $5$, Ceki\'{c}-Dyatlov-Küster-Paternain \cite{CDKP} found a similar result for geodesic flows on nearly hyperbolic $3$-manifold (the unitary cotangent bundle is $5$-dimensional).
  \item \textbf{Fried conjecture \cite{fried_lefschetz,fried_meromorphic}.} Dang-Guillarmou-Rivière-Shen \cite{DGRS} established, in the case of Anosov flows, a criterium in terms of the spaces appearing in Theorem \ref{thm intro} to ensure that the value at $0$ of the twisted Ruelle zeta function is locally constant. It allowed them to prove Fried conjecture on the Reidemeister torsion for nearly hyperbolic $3$-manifolds. This was further pursued by Chaubet and Dang \cite{chaubet-dang} who used the cohomological complex of Theorem \ref{thm intro} to define a dynamical torsion for contact Anosov flows in any dimension.
  \item \textbf{Morse-Smale flows.} Dang-Rivière \cite{DRIII} proved Theorem \ref{thm intro} in the case of Morse-Smale and Anosov flows. In the specific case of Morse-Smale gradient flows \cite{DR_witten}, they also considered the Lie derivative operator as a limit of the Witten Laplacian and they obtained the Ruelle spectrum as a limit of the Witten spectrum. It allowed them to recover the Witten-Helffer-Sjöstrand instanton formula and to prove the Fukaya conjecture on Witten deformation of the wedge product.
  \end{itemize}

 \subsection{Outline of the proof} We use the microlocal approach to Pollicott-Ruelle resonances of the Lie derivative operator $\Lie_V$ as it was developped by Faure and Sjöstrand. Recall that the proof of Theorem \ref{thm 2 intro} relies on the construction of Hilbert spaces adapted to the dynamics. Following \cite{FS}, defining such spaces can be reduced through some microlocal procedure to the construction of an escape function. More precisely, one has to exhibit a family of functions that are decreasing along the Hamiltonian flow of $H(x,\xi)= \xi(V(x))$ on the cotangent bundle $T^*M$ of $M$. The existence of such decreasing functions, called energy or Lyapunov functions, is already known for the flow on $M$ as soon as $V$ is an Axiom A flow verifying Smale's transversality assumption. We can cite for example the articles of Conley \cite{conley}, Wilson \cite{wilson} for flows and Pugh-Shub \cite{pugh-shub} for Axiom A diffeomorphisms verifying Smale's tranversality assumptions. One of the main novelty of this article is to do the same for the induced Hamiltonian flow on $T^*M$. It was already done by Faure-Sjöstrand \cite{FS} for Anosov flows, by Dyatlov-Guillarmou \cite{dyatlov-guillarmou_2016} near a basic set and by Dang-Rivière in \cite{DR0,DRI} for Morse-Smale flows. To construct a decreasing function along this Hamiltonian flow, Dang and Rivière highlighted in the case of gradient flows \cite{DR0} that one needs to prove the compactness of the conormal distribution 
 $$ \bigcup_{x \in M} \left\{ \xi \in S_x^* M \:: \: \xi (T_x W^u (x_{-})) = 0, \mbox{ for } x_- \mbox{ the critical point s.t } x \in W^u (x_{-}) \right\},$$
 where $W^u (x_-)$ denotes the unstable manifold of the critical point $x_-$. Nevertheless, to do so, they made a restriction on the class of Morse-Smale flows which is the existence of $\Ccal^1$\textit{-linearization charts} near critical points. Such a restriction is not avalaible for more general Axiom A flows and we need to proceed differently. In particular, we note that our proof allows to remove this linearization assumption in the specific case of Morse-Smale flows. To prove a similar result for Axiom A flows, we proceed in three steps.
 \begin{itemize}
  \item We define a transversality assumption for Axiom A flows which generalizes the one used for Morse-Smale gradient flows.
  \item Then, we generalize the compactness result for conormal distributions without using $\Ccal^1$-linearizing charts. This step will require a similar analysis than the local analysis near basic sets performed by Dyatlov-Guillarmou \cite{dyatlov-guillarmou_2016}.
  \item We deduce the existence of a \textbf{global} escape functions for Axiom A flows which satisfies the transversality assumption by adapting the construction of Faure-Sjöstrand \cite{FS}.
 \end{itemize}

Concerning the proof of Theorem \ref{thm intro}, we recall that that there is a strong analogy with the Hodge-De Rham Laplace operator\footnote{The derivatives $d^*$ denotes the formal adjoint of $d$ in $L^2 (M; \Lambda^k T^* M)$.} $\Delta = d \circ d^* + d^* \circ d = (d + d^*)^2$ acting on differential forms $\Omega^* (M)$ if we remark that
$$ \Lie_V = d\circ \iota_V + \iota_V \circ d = (d + \iota_V )^2 .$$
Note also that both operators $\Delta$ and $\Lie_V$ commutes with the exterior derivative $d$. These analogies are at the heart of the proof of Theorem \ref{thm intro}.

\subsection{Organization of the article}

\begin{itemize}
 \item In section \ref{section Dynamical preliminaries}, we recall the definition of an Axiom A flow and introduce the dynamical tools we will need. Furthermore, we  present in this part a few key notions for our analysis which turn out to be related: Smale's order relation on basic sets, transversality assumption, filtrations (with open sets) and unrevisited neighborhoods. We also explain how to bypass the $\Ccal^1$-linearizing charts used in Dang-Rivière's articles.
 \item In section \ref{section Escape function}, we present a possible construction of an escape function and we state a generalization of the compactness result for conormal distributions which takes into account the neutral direction given by the flow direction. The result stated in this part were in fact the most challenging ones to prove. 
 \item In section \ref{section Anisotropic Sobolev spaces}, we define anisotropic Sobolev spaces, in which the Lie derivative operators $\Lie_V^{(k)}$ have discrete spectrum (see Theorem \ref{thm: discrete sprectrum} from which Theorem \ref{thm 2 intro} derives).
 \item In section \ref{section topology}, we state and prove the main topological result of this article, namely Theorem \ref{thm intro}. 
 \item In section \ref{section: Dynamical proofs}, \ref{section : Compactness result and energy functions for the Hamiltonian flow} and \ref{section : Proof of escape function} we give the proof of the dynamical results such as the construction of energy functions for Axiom A flows, the proof of the compactness of conormal distributions and the construction of the global escape functions.
\end{itemize}

\subsection{Acknowlegments} The author would like to warmly thank Gabriel Rivière for many explanations about his work with Nguyen Viet Dang and for his careful reading and remarks which contributed a lot to improve this paper. This work was partly supported by the Institut Universitaire de France and by the Agence Nationale de la Recherche through the PRC project ADYCT (ANR-20-CE40-0017).

\section{Dynamical preliminaries.}\label{section Dynamical preliminaries}

In all this paper, we denote by $(M^n,g)$ a smooth compact Riemannian manifold without boundary of dimension $n\geq 1$ and associated to some smooth Riemannian metric $g$. We also denote by $d_g$ the geodesic distance associated to the metric $g$ and by $|.|_g = \sqrt{g(. \: ,.)}$ the norm induced on the fibers of the tangent bundle $TM$ or of the cotangent bundle $T^*M$. To a smooth vector field $V \in \Gamma (TM)$, we can associate a flow $(\varphi^t )_{t \in \R}$ which solves the Cauchy problem: 
\begin{equation}\label{eq: EDO flow}
\forall x \in M, \:  \forall t \in \R, \quad
\left\{
    \begin{array}{ll}
        \dt \varphi^t (x) = V (\varphi^t (x)) \\
        \varphi^0 (x) = x
    \end{array}
\right. .
\end{equation}
The system (\ref{eq: EDO flow}) is highly non-linear in general which makes difficult to predict the large-time behavior of trajectories, especially in the case of hyperbolic dynamics.

\begin{definition}\cite[p.796]{smale}
 A point $x \in M$ is said to be \textbf{nonwandering} if for every neighborhood $\Ucal$ of $x$ and every $T > 0$ there exists $t \in \R$ such that $|t| \geq T$ and $\varphi^t (\Ucal) \cap \Ucal \neq \emptyset$. The nonwandering points form a closed invariant subset of $M$, called the \textbf{nonwandering set}, and we will denote it by $\Omega:= \Omega (\varphi^t)$.
\end{definition}

We refer to appendix \ref{Appendix: hyperbolic set} or to the books \cite{palis-demelo}, \cite{katok-hasselblatt} for a definition of hyperbolic set. 

\begin{definition}[{Axiom A flow, \cite[p.803]{smale}, \cite{pugh-shub}}]
A flow $\varphi^t: M \rightarrow M$ is said to be \textbf{Axiom A} if its nonwandering set $\Omega$ is hyperbolic and can be written as $\Omega = \Fcal \sqcup \Kcal$ where
\begin{enumerate}[label = (\roman*)]
 \item $\Kcal = \overline{Per (\varphi^t)}$ is the closure of all periodic orbits. 
 \item $\Fcal$ is the set of fixed point for the flow $\varphi^t$, which is assumed to be \textbf{finite}.
\end{enumerate}
\end{definition}

It is known from the works of Smale and Bowen that an Axiom A flow has a nonwandering set which splits into a finite number of hyperbolic invariant compact sets, which are either reduced to a fixed point or invariant hyperbolic sets called basic sets: 

\begin{prop}[{Spectral decomposition , \cite[\S II.5]{smale}, \cite{bowen2}}]
 If $\varphi^t$ is an Axiom A flow, then its nonwandering set $\Omega$ decomposes into a finite union of \textbf{basic} sets $K_i$: 
 $$ \Omega = K_1 \sqcup K_2 \sqcup \cdots \sqcup K_N$$
 where $K$ basic means:
 \begin{itemize}
  \item $K$ is compact and hyperbolic.
  \item $K$ is locally maximal: there exists some open set $O\subseteq M$ such that
 $$ K = \bigcap_{t \in \R} \varphi^t (O) .$$
  \item $K$ est topologically transitive, i.e. there exists a point $x \in K$ such that $\overline{(\varphi^t(x))}_{t \in \R_+} = K$.
  \item $K$ is the closure of its periodic orbits which can potentially be reduced to a fixed point, i.e. have period $0$.
 \end{itemize}
\end{prop}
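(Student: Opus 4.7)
The plan is to construct the basic sets as closures of equivalence classes of periodic orbits under a natural relation built from intersections of stable and unstable manifolds. Since $\Omega$ is hyperbolic, the Stable Manifold Theorem supplies continuous families of local stable and unstable manifolds $W^s_\epsilon(x), W^u_\epsilon(x)$ of uniform size for $x \in \Omega$. There exists $\epsilon_0 > 0$ such that whenever $x, y \in \Omega$ satisfy $d_g(x,y) < \epsilon_0$, the local weak-stable manifold of $x$ meets $W^u_\epsilon(y)$ in a unique point $[x,y]$, and the Anosov Closing Lemma, combined with the density of periodic points in $\overline{Per(\varphi^t)}$, ensures that this bracket lies again in $\Omega$.

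On the set of periodic orbits I would then introduce the relation $\Orb(p) \sim \Orb(q)$ defined by asking that both $W^u(\Orb(p)) \cap W^s(\Orb(q))$ and $W^s(\Orb(p)) \cap W^u(\Orb(q))$ are non-empty. Reflexivity and symmetry are immediate, while transitivity is the delicate point and would be obtained via the inclination lemma adapted to flows: long forward iterates of a disk meeting $W^s(\Orb(q))$ transversally accumulate in the $\Ccal^1$ topology on $W^u(\Orb(q))$, so a non-empty intersection $W^u(\Orb(p)) \cap W^s(\Orb(q))$ together with $W^u(\Orb(q)) \cap W^s(\Orb(r)) \neq \emptyset$ forces a non-empty intersection $W^u(\Orb(p)) \cap W^s(\Orb(r))$.

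For each equivalence class I would set $K_\alpha$ to be its closure inside $\overline{Per(\varphi^t)}$. Each $K_\alpha$ is by construction closed, $\varphi^t$-invariant, and equal to the closure of its periodic orbits; it inherits hyperbolicity from $\Omega$. Topological transitivity inside $K_\alpha$ follows from a shadowing argument: one chains orbits going from any periodic point to any other through the heteroclinic connections $W^u(p) \cap W^s(q)$ into an $\epsilon$-pseudo-orbit, which the Shadowing Lemma then realises by a genuine orbit staying close to $K_\alpha$. For local maximality, I would pick an open tubular neighbourhood $O_\alpha$ of $K_\alpha$ disjoint from the other classes and from $\Fcal$, and observe that any full orbit staying in $O_\alpha$ has all its accumulation points in $\Omega \cap \overline{O_\alpha} = K_\alpha$, so that it itself belongs to $K_\alpha$ by invariance.

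Finiteness of the number of classes is forced by the uniform size of the local product structure: two distinct classes must lie at Hausdorff distance at least $\epsilon_0$, for otherwise the bracket construction would place a common point in both, collapsing them; compactness of $M$ then allows only finitely many mutually separated closed subsets. The finitely many fixed points in $\Fcal$ are themselves singleton invariant hyperbolic sets and, being assumed isolated, trivially satisfy the four properties of a basic set. The main obstacle in this programme is to set up the inclination lemma and the Shadowing Lemma in the flow setting with enough care to track the neutral direction along $V$; the hypothesis that $\Fcal$ is finite and separated from $\overline{Per(\varphi^t)}$ is precisely what makes this neutral direction manageable away from the fixed points.
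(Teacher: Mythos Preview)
The paper does not supply a proof of this proposition: it is stated as a classical result with references to Smale \cite[\S II.5]{smale} and Bowen \cite{bowen2}, and the text moves directly on to the definitions of attractors and repellers without further argument. Your sketch follows essentially the classical route found in those references---defining an equivalence relation on periodic orbits through mutual heteroclinic connections, invoking the $\lambda$-lemma (inclination lemma) for transitivity of the relation, taking closures of classes, and obtaining finiteness from the uniform size of the local product structure---so there is no discrepancy to report.

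One small caution on your local-maximality step: showing that a full orbit staying in $O_\alpha$ has its $\omega$- and $\alpha$-limit sets in $K_\alpha$ is not quite enough to conclude the orbit itself lies in $K_\alpha$; you need the Shadowing Lemma once more (or the local product structure) to rule out orbits that wander in $O_\alpha$ without belonging to $K_\alpha$. This is handled in the references and is not a gap in the overall strategy, but the sentence as written elides it.
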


From now on, $\varphi^t$ will denote an Axiom A flow on $(M,g)$. We call \textit{attractor} for $\varphi^t$ a basic set $K$ which satifies
$$K = \bigcap_{t \in \R_+}  \varphi^t (O), $$
for some open set $O \supset K$. Similarly, we call \textit{repeller} for $\varphi^t$ a basic set $K$ which satifies
$$K = \bigcap_{t \in \R_-}  \varphi^t (O), $$
for some open set $O \supset K$. 

\begin{remark}\label{rk: fixed point isolated}
 The fixed points are isolated subsets of $\Omega$ thanks to the \textit{expansiveness} property of the flow on basic sets. See \cite[lemma 1 p.181]{bowen-walters} or \cite{bowen} for more details.
\end{remark}

\subsection{Stable and unstable manifolds} We begin by recalling some well-known facts concerning uniformly hyperbolic dynamics which can be found in \cite{katok-hasselblatt} or \cite{dyatlov}. Fix a basic set $K$. For all $\varepsilon > 0$ and all $z \in K$, the \textit{stable manifold}, \textit{local stable manifold} and \textit{local weak stable manifold} at the point $z$ are defined by 
\begin{align*}
 W^s (z) &:= \{ x \in M:  \: d_g(\varphi^t(x), \varphi^t(z)) \underset{t \to+\infty}{\longrightarrow} 0 \}\\
 W_{\varepsilon}^s (z) &:= \{ x \in  W^s (z): \: d_g(\varphi^t(x), \varphi^t(z)) < \varepsilon, \: \:  \forall t \in \R_+ \}\\
 W_{\varepsilon}^{so} (z) &:= \{ x \in  M: \: d_g(\varphi^t(x), \varphi^t(z)) < \varepsilon, \: \:  \forall t \in \R_+ \}.
\end{align*}

By replacing $s$ by $u$ and $\varphi^t$ by $\varphi^{-t}$ in the previous equalities, we could have defined similarly the $\emptyset$/local/local weak unstable manifolds. From this remark, let us only deal with stable manifolds by keeping in mind that everything can be adapted for unstable manifolds. Thanks to the Hadamard-Perron theorem, also called stable manifold theorem, there exists $\varepsilon_0 \ll 1$ such that, for all $z \in K$, the sets $W_{\varepsilon_0}^{s/so} (z)$ are smooth submanifolds of $M$ of dimension $d_{s/so}$, which is constant on each basic set. Precise statements and proof of this result can be found in \cite[thm 6.2.8, p. 242]{katok-hasselblatt} for the case of diffeomorphisms and in \cite[thm 5, p. 34]{dyatlov} for the case of flows. In general, stable manifolds are not embedded submanifolds but only immersed submanifolds, except in the case of Morse flows. Moreover, the stable manifold is related to the local stable manifold thanks to the following formula \cite[p. 24]{dyatlov}: 
$$
 W^s (z)  = \bigcup_{n \geq 0} \varphi^{-n} \left( W_{\varepsilon_0}^s (\varphi^{n} (z)) \right) , \quad W^{so} (z)  = \bigcup_{n \geq 0} \varphi^{-n} \left( W_{\varepsilon_0}^{so} (\varphi^{n} (z)) \right), \quad (n \in \N),
$$
which does not depend on $\varepsilon_0$ given by the stable manifold theorem. If $K$ denotes a basic set, then we define its \textit{stable manifold}\footnote{which is not a submanifold anymore, except in particular cases where for example $K$ is reduced to a fixed point or a single closed orbit.} by 
$$ W^{s} (K):= \{ x \in M: \: d(\varphi^{ t} (x), K) \underset{t \to +\infty}{\longrightarrow} 0 \}.$$
Thanks to the shadowing lemma \cite[thm 18.1.6 p. 569]{katok-hasselblatt} and to the local maximality of basic sets, this last set decomposes into the stable manifolds of elements of $K$, namely
  $$
W^s (K) = \bigcup_{z \in K} W^s (z).  
 $$
 A short proof can be found in \cite[prop 3.10, p. 53]{bowen3} or  \cite[thm 6.26, p. 131]{wen} in the case of diffeomorphisms. The scheme of the proof remains the same in the case of flows. For every $\varepsilon >0$, we can define its local version by setting
\begin{equation}\label{eq: local decomposition of the stable manifold of a basic set}
 W_{\varepsilon}^s (K) = \bigcup_{x \in K} W_{\varepsilon}^s (x).
\end{equation}

 Now, let us present a lemma which was originally given by Smale.

\begin{lemma}[{Partition by stable manifolds, \cite[lem 2.3, p.753]{smale} }]\label{lemma: spectral decomposition of the manifold} We have the following decomposition of $M$ in stable manifolds: 
	 \begin{equation} \label{partition of the manifold in stable manifolds}
	 M = \bigsqcup_{i = 1}^N W^s (K_i ).
	  \end{equation}
\end{lemma}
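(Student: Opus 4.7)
The strategy is to associate to every point $x \in M$ its $\omega$-limit set
$\omega(x) := \bigcap_{T\geq 0} \overline{\{\varphi^t(x) : t \geq T\}}$
and to show that it is contained in exactly one basic set $K_i$. Once this is done, the equivalence $\omega(x) \subset K_i \Longleftrightarrow x \in W^s(K_i)$ yields simultaneously the covering and the disjointness in (\ref{partition of the manifold in stable manifolds}).

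First I would check the standard properties of $\omega(x)$: it is nonempty by compactness of $M$, closed, and $\varphi^t$-invariant. Then I would establish that $\omega(x) \subset \Omega$: any $y \in \omega(x)$ is a limit $\varphi^{t_n}(x) \to y$ for some sequence $t_n \to +\infty$, so for any neighborhood $\Ucal$ of $y$ and any $T > 0$, we can select indices with $t_m - t_n \geq T$ and $\varphi^{t_n}(x), \varphi^{t_m}(x) \in \Ucal$, giving $\varphi^{t_m - t_n}(\Ucal) \cap \Ucal \neq \emptyset$ and therefore $y \in \Omega$.

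The key analytic ingredient, which I expect to be the main obstacle, is the connectedness of $\omega(x)$. Assume for contradiction that $\omega(x) = A \sqcup B$ with $A, B$ nonempty and disjoint in the induced topology. By normality of $M$, pick open neighborhoods $U_A \supset A$ and $U_B \supset B$ with $\overline{U_A} \cap \overline{U_B} = \emptyset$. Since $A$ and $B$ both belong to $\omega(x)$, the forward orbit $\{\varphi^t(x) : t \geq 0\}$ visits $U_A$ and $U_B$ at arbitrarily large times. By continuity of $t \mapsto \varphi^t(x)$, in between such visits the orbit must enter the closed set $M \setminus (U_A \cup U_B)$, producing a sequence $s_n \to +\infty$ with $\varphi^{s_n}(x)$ lying there. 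Compactness of $M$ then yields a cluster value $z \in \omega(x) \setminus (A \cup B)$, a contradiction. This is the only step which genuinely relies on the continuous-time structure of the flow.

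To conclude, I would appeal to the spectral decomposition $\Omega = K_1 \sqcup \cdots \sqcup K_N$: being finitely many pairwise disjoint closed subsets of $\Omega$, each $K_i$ is both open and closed in $\Omega$. The nonempty connected set $\omega(x) \subset \Omega$ therefore lies in a unique $K_i$. It remains to identify this with the condition $x \in W^s(K_i)$, i.e. $d_g(\varphi^t(x), K_i) \to 0$. One direction is immediate since $K_i$ is closed; for the converse, if $d_g(\varphi^t(x), K_i)$ did not tend to $0$, extracting a convergent subsequence of $\varphi^{t_n}(x)$ along some $t_n \to +\infty$ with $d_g(\varphi^{t_n}(x), K_i) \geq \varepsilon$ would produce a cluster point $y \in \omega(x) \subset K_i$ with $d_g(y, K_i) \geq \varepsilon$, a contradiction. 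Hence each $x$ belongs to $W^s(K_i)$ for exactly one $i$, which proves both the covering and the fact that the union is disjoint (since no two distinct $K_i, K_j$ can simultaneously contain the nonempty set $\omega(x)$).
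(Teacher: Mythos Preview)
Your proof is correct and follows the standard $\omega$-limit set argument for this classical lemma. Note that the paper does not actually prove this statement: it is quoted directly from Smale \cite[lem.~2.3, p.~753]{smale} without argument, so there is no ``paper's own proof'' to compare against. Your approach---showing $\omega(x)$ is nonempty, connected, invariant, contained in $\Omega$, hence in a single $K_i$, and then identifying $\omega(x)\subset K_i$ with $x\in W^s(K_i)$---is exactly the route one finds in Smale's original paper and in standard references on hyperbolic dynamics.
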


This lemma dictates the behavior of the trajectories outside the nonwandering set. Precisely, if we take an element $x \in M$, then there exists a unique couple $(i,j) \in [\![1,N]\!]^2$ such that $x \in W^u (K_i) \cap W^s (K_j)$ and the decomposition (\ref{eq: local decomposition of the stable manifold of a basic set}) provides elements $x_- \in K_i$ and $x_+ \in K_j$ such that $x \in W^u (x_-) \cap W^s (x_+)$. The point $x_+$ is unique modulo the equivalence relation on $K_j$ given by:
$$z_1 \sim_j z_2 \Longleftrightarrow z_1, z_2 \in K_j \mbox{ and } z_1 \in W^s (z_2) .$$
A similar remark holds for $x_-$. 
 
 \subsection{Lifting the dynamic on the cotangent}  Since we will use the analysis through escape functions developped in \cite[p. 329]{FS}, we begin by recalling how to lift the flow as an Hamiltonian flow acting on the unitary cotangent bundle $S^* M = \{(x,\xi) \in T^* M, \: |\xi|_g = 1 \}$. The cotangent bundle $T^* M$ can be endowed with a symplectic structure by considering the symplectic form $\omega = d\alpha$, where $d$ is the exterior derivative and $\alpha$ denotes the usual Liouville form on $M$. In a local trivialization chart $q_1, \cdots, q_n, p_1, \cdots, p_n$ of $TM$, the Liouville one form simply writes $\alpha = \sum_{i = 1}^n p_i d q_i$ and the symplectic form becomes $\omega = \sum_{i = 1}^n d p_i \wedge d q_i$. Let us introduce the following maps:
 \begin{equation}\label{eq : definition pi et kappa}
 \begin{split}
  \pi: T^* M \rightarrow M  &, \quad  (x,\xi) \mapsto x, \\
 \kappa: T^* M \setminus 0_M \rightarrow S^* M &, \quad  (x,\xi) \mapsto \left( x, \frac{\xi}{|\xi|_g}\right).
 \end{split}
 \end{equation}
 
 Also, we consider the Hamiltonian $H (x,\xi) = \xi(V(x)) \in \Ccal^{\infty} (T^* M; \R)$ and we denote by $X_H$ the corresponding Hamiltonian vector field, given by the formula $-dH = \iota_{X_H} \omega$. We also denote by $\Phi^t$ the corresponding flow on $T^*M$ which has an explicit formulation:
\begin{align*}
 \Phi^t(x,\xi) = (\varphi^t(x), \left( D\varphi^{t} (x)^{-1} \right)^{\top}(\xi)), \quad \forall (x,\xi) \in T^* M.
\end{align*}
Note that the Hamiltonian flow $\Phi^t$ extends the flow $\varphi^t$ to the cotangent in the sense that $\pi \circ \Phi^t = \varphi^t \circ \pi $ and that it is linear on each fiber. Moreover, the Hamiltonian flow sends $T^* M \setminus 0_M$ into $T^* M \setminus 0_M$ because the matrix $D\varphi^{t} (x)$ is invertible. Therefore, it induces a flow $\widetilde{\Phi}^t$ on the unitary cotangent bundle:
$$\widetilde{\Phi}^t (x,\xi) = ( \kappa \circ \Phi^t) (x,\xi) = \left(\varphi^t(x), \frac{(D\varphi^{t} (x)^{-1} )^{\top}(\xi)}{|(D\varphi^{t} (x)^{-1} )^{\top}(\xi)|_g} \right) , \quad  \forall (x,\xi) \in S^* M.$$
This flow on $S^* M$ is generated by a smooth vector field that will be denoted by $\widetilde{X}_H \in \Ccal^{\infty} (S^* M; T S^* M)$. To summarize, we have the following commutative diagrams:
\begin{center}
\begin{minipage}{5cm}
\begin{tikzpicture}
  \matrix (m) [matrix of math nodes,row sep=3em,column sep=4em,minimum width=2em]
  {
     T^* M & T^* M \\
     M & M \\};
  \path[-stealth]
    (m-1-1) edge node [left] {$ \pi $} (m-2-1)
    (m-1-1) edge node [below] {$ \Phi^t $} (m-1-2)
    (m-2-1) edge node [below] {$\varphi^t$} (m-2-2)
    (m-1-2) edge node [right] {$ \pi $} (m-2-2);
\end{tikzpicture}
\end{minipage} 
\begin{minipage}{5cm}
\begin{tikzpicture}
  \matrix (m) [matrix of math nodes,row sep=3em,column sep=4em,minimum width=2em]
  {
     T^* M \setminus 0_M & T^* M \setminus 0_M \\
     S^* M & S^* M \\};
  \path[-stealth]
    (m-1-1) edge node [left] {$ \kappa $} (m-2-1)
    (m-1-1) edge node [below] {$ \Phi^t $} (m-1-2)
    (m-2-1) edge node [below] {$\widetilde{\Phi}^t$} (m-2-2)
    (m-1-2) edge node [right] {$ \kappa $} (m-2-2);
\end{tikzpicture}
\end{minipage}
\end{center}

Since our analysis will take place in $T^* M$, we also define the dual distributions associated with the neutral $E_o$, stable $E_s$ and unstable $E_u$ distributions\footnote{On a basic set $K$, we also use the notations $E_{so}^*$ for $E_s^* + E_o^*$ and $E_{uo}^*$ for $E_u^* + E_o^*$.} which appear in the definition of hyperbolicity (see appendix \ref{Appendix: hyperbolic set}) at any point $z \in \Omega$ by 
\begin{align*}
  (E_o^*(z))(E_u(z) &\oplus E_s(z)) = 0, \\
  (E_u^*(z))(E_u(z) \oplus E_o(z)) = 0, \quad & \quad (E_s^*(z))(E_s(z) \oplus E_o(z)) = 0.
 \end{align*}
This gives us a hyperbolic splitting of the cotangent bundle:
\begin{align*}
 T_z^* M = E_s^* (z) \oplus E_u^* (z) \oplus E_o^*(z).
\end{align*}

 \subsection{Extension of the invariant distributions outside the nonwandering set}
 
 Thanks to the partition's lemma \ref{partition of the manifold in stable manifolds} by stable manifolds, we can extend the previous definitions outside the nonwandering set.
 
\begin{definition}\label{def distributions duales stables et instables sur Omega}
 For every $x \in W^s (x_+) \cap W^u (x_{-})$ with $x_- \in K_i$ and $x_+ \in K_j$, we define the following: 
	\begin{align*}
		E_s^*(x) (T_x W^{so} (x_+)) = 0,  &\qquad E_{so}^* (x)(T_x W^{s} (x_+)) = 0,\\
		E_u^*(x) (T_x W^{uo} (x_-)) = 0,  &\qquad E_{uo}^* (x)(T_x W^{u} (x_-)) = 0 \: .
	\end{align*}
	Moreover, the definition does not depend on the choice of $x_+$ and $x_-$ in $W^s (x_+) \cap K_j$ and $W^u (x_-) \cap K_i$ respectively.
\end{definition}

\begin{remark}\begin{itemize}[align=left, leftmargin=*, noitemsep]
      \item If $x_+$ is a fixed point, then $W^{so} (x_+) = W^{s} (x_+)$ and consequently $E_s^*(x) = E_{so}^*(x) = E_{so}^*(x) \cap \{\xi (V(x)) = 0 \}$.
      \item The distributions $E_s^*, E_u^*, E_{so}^*$ and $E_{uo}^*$ are defined on the whole manifold and are $\Phi^t$-invariant.
      \item Recall that fixed points are isolated (remark \ref{rk: fixed point isolated}). So if $x_+$ is not a fixed point then we get that $\dim E_{so}(x_+) = \dim E_{s}(x_+) + 1$ and $\dim W^{so} (x_+) = \dim W^{s} (x_+) + 1$. Therefore, we obtain $E_{s}^* (x) = E_{so}^* (x) \cap \{\xi (V(x)) = 0 \}$ and $\dim E_{so}^* (x) = \dim E_{s}^* (x) + 1$.
      \end{itemize} 
\end{remark}

  \subsection{Transversality assumption}

 \begin{figure}[!t]
\begin{subfigure}[b]{0.45\textwidth}
  \centering
  \definecolor{uququq}{rgb}{0.25,0.25,0.25}
\begin{tikzpicture}[line cap=round,line join=round,>=triangle 45,x=1.5cm,y=1.5cm]
\clip(-2.5,-2) rectangle (2.6,1.9);
\draw (-2,0)-- (2,0);
\draw (0,1.5)-- (0,-1.5);
\draw [->] (0,1.5) -- (-1,1.5);
\draw [->] (0,1.5) -- (1,1.5);
\draw [->] (0,-1.5) -- (1,-1.5);
\draw [->] (0,0) -- (-1,0);
\draw [->] (0,0) -- (1,0);
\draw [->] (0,-1.5) -- (0,-0.75);
\draw [->] (0,1.5) -- (0,0.75);
\draw [->] (0,-1.5) -- (-1,-1.5);
\draw (-2,1.5)-- (2,1.5);
\draw (2,1.5)-- (2,-1.5);
\draw (2,-1.5)-- (-2,-1.5);
\draw (-2,-1.5)-- (-2,1.5);
\draw (-0.02,1.84) node[anchor=north west] {$K_1$};
\draw (-0.1,-1.49) node[anchor=north west] {$K_1$};
\draw (2.12,1.28) node[anchor=north west] {\large $\mathbb{T}^2$};
\draw (-2.08,1.84) node[anchor=north west] {$K_4$};
\draw (1.95,1.8) node[anchor=north west] {$K_4$};
\draw (1.93,-1.49) node[anchor=north west] {$K_4$};
\draw (-2.15,-1.49) node[anchor=north west] {$K_4$};
\draw (-0.07,0.33) node[anchor=north west] {$K_2$};
\draw (-2.46,0.24) node[anchor=north west] {$K_3$};
\draw (1.96,0.24) node[anchor=north west] {$K_3$};
\draw [->] (-2,0) -- (-2,0.79);
\draw [->] (-2,0) -- (-2,-0.82);
\draw [->] (2,0) -- (2,0.79);
\draw [->] (2,0) -- (2,-0.83);
\draw [shift={(-1,1.14)}] plot[domain=3.18:6.21,variable=\t]({1*0.92*cos(\t r)+0*0.92*sin(\t r)},{0*0.92*cos(\t r)+1*0.92*sin(\t r)});
\draw [shift={(-0.99,1.45)}] plot[domain=3.65:5.76,variable=\t]({1*0.81*cos(\t r)+0*0.81*sin(\t r)},{0*0.81*cos(\t r)+1*0.81*sin(\t r)});
\draw [shift={(-0.97,2.76)}] plot[domain=4.32:5.06,variable=\t]({1*1.67*cos(\t r)+0*1.67*sin(\t r)},{0*1.67*cos(\t r)+1*1.67*sin(\t r)});
\draw [->] (-0.95,1.09) -- (-1.06,1.09);
\draw [->] (-0.99,0.64) -- (-1.1,0.64);
\draw [->] (-0.97,0.22) -- (-1.08,0.23);
\begin{scriptsize}
\fill [color=uququq] (-2,1.5) circle (1.5pt);
\fill [color=uququq] (2,1.5) circle (1.5pt);
\fill [color=uququq] (2,-1.5) circle (1.5pt);
\fill [color=uququq] (-2,-1.5) circle (1.5pt);
\fill [color=uququq] (-2,0) circle (1.5pt);
\fill [color=uququq] (2,0) circle (1.5pt);
\fill [color=uququq] (0,1.5) circle (1.5pt);
\fill [color=uququq] (0,0) circle (1.5pt);
\fill [color=uququq] (0,-1.5) circle (1.5pt);
\end{scriptsize}
\end{tikzpicture}
\caption{Axiom A flow which does not satisfy the transversality assumption: existence of a saddle-connection.}
 \label{fig: Axiom A flow on the torus - without transversality assumption}
\end{subfigure} \hfill
\begin{subfigure}[b]{0.45\textwidth}
 \centering
 \definecolor{uququq}{rgb}{0.25,0.25,0.25}
\begin{tikzpicture}[line cap=round,line join=round,>=triangle 45,x=1.5cm,y=1.5cm]
\clip(-2.4,-2) rectangle (2.7,1.8);
\draw (-2,0)-- (2,0);
\draw (0,1.5)-- (0,-1.5);
\draw [->] (0,1.5) -- (-1,1.5);
\draw [->] (0,1.5) -- (1,1.5);
\draw [->] (2,1.5) -- (2,0.75);
\draw [->] (2,-1.5) -- (2,-0.75);
\draw [->] (0,-1.5) -- (1,-1.5);
\draw [->] (-2,1.5) -- (-2,0.75);
\draw [->] (-2,-1.5) -- (-2,-0.75);
\draw [->] (0,0) -- (-1,0);
\draw [->] (0,0) -- (1,0);
\draw [->] (0,-1.5) -- (0,-0.75);
\draw [->] (0,1.5) -- (0,0.75);
\draw [->] (0,-1.5) -- (-1,-1.5);
\draw (-2,1.5)-- (2,1.5);
\draw (2,1.5)-- (2,-1.5);
\draw (2,-1.5)-- (-2,-1.5);
\draw (-2,-1.5)-- (-2,1.5);
\draw [shift={(-0.26,-0.32)}] plot[domain=1.66:2.79,variable=\t]({1*1.67*cos(\t r)+0*1.67*sin(\t r)},{0*1.67*cos(\t r)+1*1.67*sin(\t r)});
\draw [shift={(-1.87,2.03)}] plot[domain=4.83:5.8,variable=\t]({1*1.87*cos(\t r)+0*1.87*sin(\t r)},{0*1.87*cos(\t r)+1*1.87*sin(\t r)});
\draw [shift={(1.98,2.14)}] plot[domain=3.6:4.55,variable=\t]({1*2.03*cos(\t r)+0*2.03*sin(\t r)},{0*2.03*cos(\t r)+1*2.03*sin(\t r)});
\draw [shift={(-0.06,-0.76)}] plot[domain=0.53:1.37,variable=\t]({1*2.15*cos(\t r)+0*2.15*sin(\t r)},{0*2.15*cos(\t r)+1*2.15*sin(\t r)});
\draw [shift={(-1.8,-1.85)}] plot[domain=0.38:1.51,variable=\t]({1*1.72*cos(\t r)+0*1.72*sin(\t r)},{0*1.72*cos(\t r)+1*1.72*sin(\t r)});
\draw [shift={(-0.14,0.5)}] plot[domain=3.59:4.6,variable=\t]({1*1.91*cos(\t r)+0*1.91*sin(\t r)},{0*1.91*cos(\t r)+1*1.91*sin(\t r)});
\draw [shift={(1.78,-1.93)}] plot[domain=1.68:2.72,variable=\t]({1*1.78*cos(\t r)+0*1.78*sin(\t r)},{0*1.78*cos(\t r)+1*1.78*sin(\t r)});
\draw [shift={(0.28,0.24)}] plot[domain=4.77:5.87,variable=\t]({1*1.61*cos(\t r)+0*1.61*sin(\t r)},{0*1.61*cos(\t r)+1*1.61*sin(\t r)});
\draw [->] (-1.26,1.01) -- (-1.38,0.91);
\draw [->] (-0.76,0.53) -- (-0.83,0.48);
\draw [->] (0.66,0.59) -- (0.73,0.53);
\draw [->] (1.14,1.02) -- (1.24,0.95);
\draw [->] (0.69,-0.52) -- (0.8,-0.44);
\draw [->] (1.22,-1.06) -- (1.29,-1.01);
\draw [->] (-1.21,-1.09) -- (-1.29,-1.03);
\draw [->] (-0.72,-0.51) -- (-0.81,-0.44);
\draw (-0.02,1.84) node[anchor=north west] {$K_1$};
\draw (-0.1,-1.49) node[anchor=north west] {$K_1$};
\draw (2.12,1.28) node[anchor=north west] {\large $\mathbb{T}^2$};
\draw (-2.08,1.84) node[anchor=north west] {$K_2$};
\draw (1.95,1.8) node[anchor=north west] {$K_2$};
\draw (1.93,-1.49) node[anchor=north west] {$K_2$};
\draw (-2.15,-1.49) node[anchor=north west] {$K_2$};
\draw (-0.07,0.33) node[anchor=north west] {$K_3$};
\draw (-2.46,0.24) node[anchor=north west] {$K_4$};
\draw (1.96,0.24) node[anchor=north west] {$K_4$};
\begin{scriptsize}
\fill [color=uququq] (-2,1.5) circle (1.5pt);
\fill [color=uququq] (2,1.5) circle (1.5pt);
\fill [color=uququq] (2,-1.5) circle (1.5pt);
\fill [color=uququq] (-2,-1.5) circle (1.5pt);
\fill [color=uququq] (-2,0) circle (1.5pt);
\fill [color=uququq] (2,0) circle (1.5pt);
\fill [color=uququq] (0,1.5) circle (1.5pt);
\fill [color=uququq] (0,0) circle (1.5pt);
\fill [color=uququq] (0,-1.5) circle (1.5pt);
\end{scriptsize}
\end{tikzpicture}
\caption{Axiom A flow which satisfies the transversality assumption.\\} \label{fig: Axiom A flow on the torus - with transversality assumption}
\end{subfigure}
\caption{Some Axiom A flows on the 2-torus.}
\end{figure}
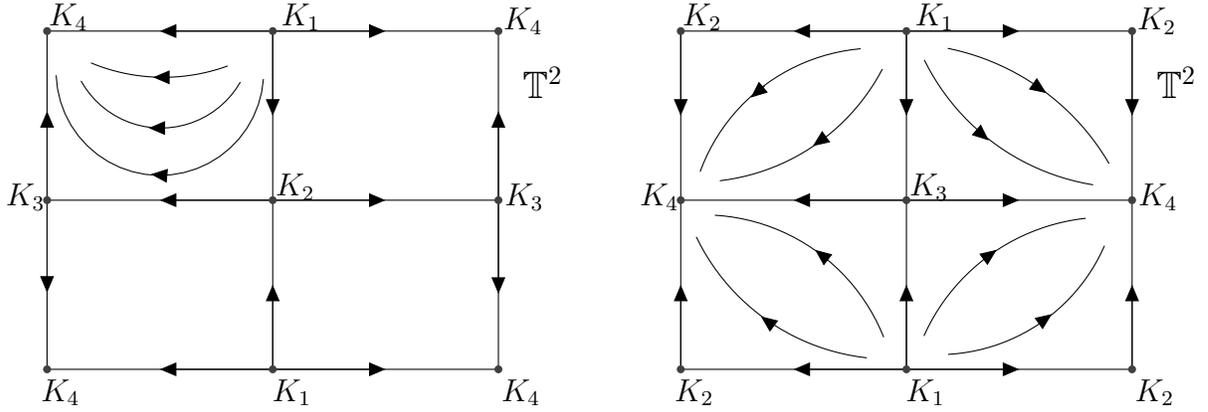
   
  For any $(x_-,x_+) \in K_i \times K_j$ and every $x \in  W^u (x_-) \cap W^s (x_+)$, we make the following \textit{strong tranversality assumption}:
   \begin{equation}\label{Transversality assumption - first}
	\boxed{ T_x W^{uo} (x_-) + T_x W^{so} (x_+)  = T_x M. }
  \end{equation}
  which is equivalent to
  \begin{equation}\label{Transversality assumption}
	\boxed{ T_x W^{u} (x_-) + T_x W^{so} (x_+)  = T_x W^{uo} (x_-) + T_x W^{s} (x_+) = T_x M. }
  \end{equation}
  since both spaces $T_x W^{uo} (x_-) $ and $T_x W^{so} (x_+)$ contain the flow direction $\R. V(x)$. Moreover, these transversality assumptions do not depend on the choice of $x_- \in K_i$ and $x_+ \in K_j$ such that $x \in W^u (x_-) \cap W^s (x_+)$.
  \begin{remark}If $x_-$ or $x_+$ is a fixed point, which is in particular the case for Morse-Smale gradient flows \cite{DR0} where both are fixed points, then the transversality assumption reads 
  $$T_x W^{u} (x_-) + T_x W^{s} (x_+)  = T_x M .$$
  \end{remark}
  
  From (\ref{Transversality assumption}) and directly from the definitions, we can deduce the following \textit{disjointness} properties:
  \begin{align*}
   E_s^* \cap E_{uo}^* = \emptyset \mbox{ and } E_{so}^* \cap E_u^* = \emptyset .
  \end{align*}
  
  \textbf{From now on}, the vector field $V \in \Gamma (TM)$ will be considered to be Axiom A and to satisfy the transversality assumption (\ref{Transversality assumption}). Let us extend the \textit{neutral} distribution outside the nonwandering set by fixing, for every $x \in  W^u (x_-) \cap W^s (x_+)$,
  $$
	E_o^* (x):= E_{uo}^* (x_-) \cap E_{so}^* (x_+) = \left\{ \xi \in T^* M , \: \xi \left(  T_x W^{u} (x_-) + T_x W^{s} (x_+)\right) = 0 \right\}.
  $$

 \subsection{Order relation}

 When Smale \cite{smale} defined Axiom A flows, he exhibited a partial order relation between basics sets of an Axiom A flow which satifies the strong tranversality assumption (\ref{Transversality assumption - first}).  Precisely, for two basic sets $K_i$ and $K_j$, he defined the relation $\leq$, illustrated in figures \ref{fig: order relation between two fixed points} and \ref{fig: order relation between two basic sets}, by
 \begin{equation}\label{order relation for basic sets}
    \begin{split}
	K_i \leq K_j &\Longleftrightarrow W^u (K_i) \cap W^s (K_j) \neq \emptyset\\
                &\Longleftrightarrow \exists x \in M, \: \exists (x_-, x_+) \in K_i \times K_j, \: \: x \in W^u (x_-) \cap W^s (x_+).
    \end{split}
 \end{equation}

\begin{figure}
   \begin{minipage}[c]{.46\linewidth}
\def\svgscale{0.45}
 \executeiffilenewer{deux_ens_basiques.svg}{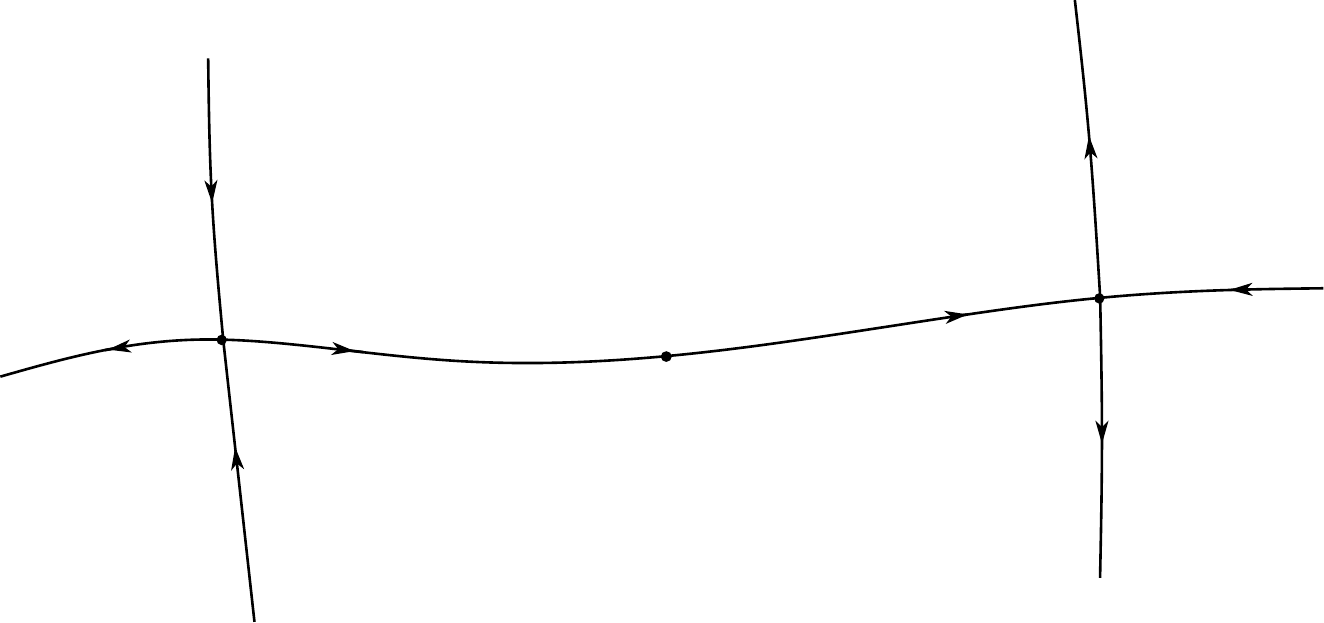}%
 {inkscape -z -D --file=deux_ens_basiques.svg %
 --export-pdf=deux_ens_basiques.pdf --export-latex}%
\begingroup%
  \makeatletter%
  \providecommand\color[2][]{%
    \errmessage{(Inkscape) Color is used for the text in Inkscape, but the package 'color.sty' is not loaded}%
    \renewcommand\color[2][]{}%
  }%
  \providecommand\transparent[1]{%
    \errmessage{(Inkscape) Transparency is used (non-zero) for the text in Inkscape, but the package 'transparent.sty' is not loaded}%
    \renewcommand\transparent[1]{}%
  }%
  \providecommand\rotatebox[2]{#2}%
  \newcommand*\fsize{\dimexpr\f@size pt\relax}%
  \newcommand*\lineheight[1]{\fontsize{\fsize}{#1\fsize}\selectfont}%
  \ifx\svgwidth\undefined%
    \setlength{\unitlength}{381.1395552bp}%
    \ifx\svgscale\undefined%
      \relax%
    \else%
      \setlength{\unitlength}{\unitlength * \real{\svgscale}}%
    \fi%
  \else%
    \setlength{\unitlength}{\svgwidth}%
  \fi%
  \global\let\svgwidth\undefined%
  \global\let\svgscale\undefined%
  \makeatother%
  \begin{picture}(1,0.47001723)%
    \lineheight{1}%
    \setlength\tabcolsep{0pt}%
    \put(0,0){\includegraphics[width=\unitlength,page=1]{deux_ens_basiques.pdf}}%
    \put(0.49549923,0.24457953){\makebox(0,0)[lt]{\lineheight{1.25}\smash{\begin{tabular}[t]{l}$x$\end{tabular}}}}%
    \put(0.1803473,0.2422438){\makebox(0,0)[lt]{\lineheight{1.25}\smash{\begin{tabular}[t]{l}$K_i$\end{tabular}}}}%
    \put(0.84249391,0.27532338){\makebox(0,0)[lt]{\lineheight{1.25}\smash{\begin{tabular}[t]{l}$K_j$\end{tabular}}}}%
  \end{picture}%
\endgroup%

  \caption{Order relation between two hyperbolic fixed points.}
  \label{fig: order relation between two fixed points} 
  \end{minipage}
   \begin{minipage}[c]{.46\linewidth}
\def\svgscale{0.5}
 \executeiffilenewer{two_basic_sets.svg}{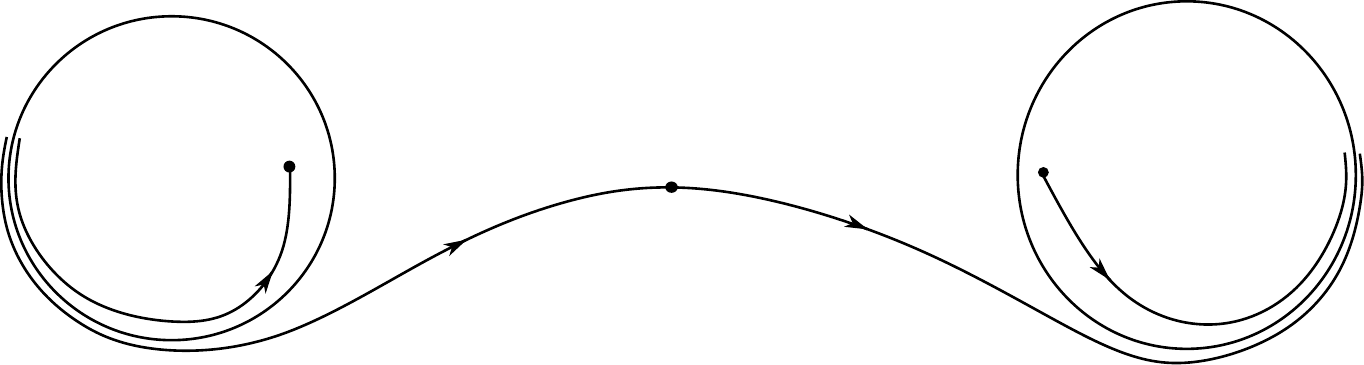}%
 {inkscape -z -D --file=two_basic_sets.svg %
 --export-pdf=two_basic_sets.pdf --export-latex}%
\begingroup%
  \makeatletter%
  \providecommand\color[2][]{%
    \errmessage{(Inkscape) Color is used for the text in Inkscape, but the package 'color.sty' is not loaded}%
    \renewcommand\color[2][]{}%
  }%
  \providecommand\transparent[1]{%
    \errmessage{(Inkscape) Transparency is used (non-zero) for the text in Inkscape, but the package 'transparent.sty' is not loaded}%
    \renewcommand\transparent[1]{}%
  }%
  \providecommand\rotatebox[2]{#2}%
  \newcommand*\fsize{\dimexpr\f@size pt\relax}%
  \newcommand*\lineheight[1]{\fontsize{\fsize}{#1\fsize}\selectfont}%
  \ifx\svgwidth\undefined%
    \setlength{\unitlength}{392.72001011bp}%
    \ifx\svgscale\undefined%
      \relax%
    \else%
      \setlength{\unitlength}{\unitlength * \real{\svgscale}}%
    \fi%
  \else%
    \setlength{\unitlength}{\svgwidth}%
  \fi%
  \global\let\svgwidth\undefined%
  \global\let\svgscale\undefined%
  \makeatother%
  \begin{picture}(1,0.26719698)%
    \lineheight{1}%
    \setlength\tabcolsep{0pt}%
    \put(0,0){\includegraphics[width=\unitlength,page=1]{two_basic_sets.pdf}}%
    \put(0.49406859,0.14984527){\makebox(0,0)[lt]{\lineheight{1.25}\smash{\begin{tabular}[t]{l}$x$\end{tabular}}}}%
    \put(0.20739237,0.16157944){\makebox(0,0)[lt]{\lineheight{1.25}\smash{\begin{tabular}[t]{l}$x_-$\end{tabular}}}}%
    \put(0.76512248,0.15415055){\makebox(0,0)[lt]{\lineheight{1.25}\smash{\begin{tabular}[t]{l}$x_+$\end{tabular}}}}%
    \put(0.86465905,0.14065571){\makebox(0,0)[lt]{\lineheight{1.25}\smash{\begin{tabular}[t]{l}$K_j$\end{tabular}}}}%
    \put(0.08853707,0.14585718){\makebox(0,0)[lt]{\lineheight{1.25}\smash{\begin{tabular}[t]{l}$K_i$\end{tabular}}}}%
  \end{picture}%
\endgroup%

  \caption{Order relation between two basic sets.}
  \label{fig: order relation between two basic sets} 
   \end{minipage}
\end{figure}
 
\begin{thm}[{Smale, \cite[prop. 8.5, p. 784]{smale}}]\label{thm Smale}
 If $\varphi^t$ satisfies the transversality assumption (\ref{Transversality assumption}), then the relation $\leq$ defines a partial order relation. Moreover, for every basic set $K$, we have
 $$ W^u (K) \cap W^s (K) = K .$$
\end{thm}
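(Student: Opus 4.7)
The plan is to establish reflexivity, transitivity and antisymmetry of $\leq$, and to prove the identity $W^u(K)\cap W^s(K)=K$ along the way. Reflexivity is immediate since $K\subseteq W^u(K)\cap W^s(K)$. For the identity $W^u(K)\cap W^s(K)=K$ I would use the local maximality of the basic set $K$: by standard hyperbolic theory any such $K$ admits an isolating block $B$ (so that $\bigcap_{t\in\R}\varphi^t(B)=K$ and orbits exiting $B$ do not re-enter). If $x\in W^u(K)\cap W^s(K)$, the orbit of $x$ converges to $K$ as $|t|\to\infty$ and hence lies in $B$ for $|t|$ large. The isolating block property forces $\varphi^t(x)\in B$ for all $t$, so $x\in\bigcap_{t\in\R}\varphi^t(B)=K$.

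For transitivity, given $K_i\leq K_j\leq K_k$ with witnesses $x\in W^u(x_-)\cap W^s(x_+)$ and $y\in W^u(y_-)\cap W^s(y_+)$ ($x_-\in K_i$, $x_+,y_-\in K_j$, $y_+\in K_k$), the main tool is the $\lambda$-lemma (inclination lemma) for uniformly hyperbolic invariant sets with a neutral direction. Using topological transitivity of $K_j$, I would choose an orbit in $K_j$ passing arbitrarily close to both $x_+$ and $y_-$. The manifolds $\varphi^t(W^u_{\mathrm{loc}}(x_-))$ then accumulate in the $\Ccal^1$ topology on unstable leaves attached to points of $K_j$, and in particular come $\Ccal^1$-close to $W^u(y_-)$ near $y$. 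The transversality assumption (\ref{Transversality assumption}) at $y$ (namely $T_y W^{u}(y_-) + T_y W^{so}(y_+)=T_yM$) is an open condition and therefore persists under this small perturbation, producing a transverse intersection of $\varphi^t(W^u(x_-))$ with $W^s(y_+)$, i.e.\ a point of $W^u(K_i)\cap W^s(K_k)$.

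For antisymmetry, suppose $K_i\leq K_j$ and $K_j\leq K_i$, with witnesses $x$ and $y$ as above. The same $\lambda$-lemma plus transversality construction produces a point $z\in W^u(K_i)\cap W^s(K_i)$ whose orbit passes arbitrarily close to $K_j$ (because the intersection point is obtained by following $W^u(x_-)$ forward until it meets a stable leaf of $K_i$ in a neighborhood of $y$, and $\varphi^{-t}(y)$ is close to $y_-\in K_j$). By the identity proved above, $z\in K_i$, so the entire orbit of $z$ lies in the closed invariant set $K_i$; since the basic sets in the spectral decomposition are pairwise disjoint, this forces $K_i=K_j$.

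The main obstacle is the correct application of the $\lambda$-lemma in the presence of the neutral flow direction: since the transversality assumption involves the \emph{weak} stable/unstable manifolds $W^{so}$ and $W^{uo}$, the $\Ccal^1$-accumulation provided by the $\lambda$-lemma must be set up to interact with the strong leaves $W^s(y_+)$ and $W^u(y_-)$ in order to produce genuine transverse intersection points in $W^u(K_i)\cap W^s(K_k)$, not merely in their weak versions. Controlling this interaction uniformly along orbits of $K_j$ (which need not be periodic) is the technical heart of the argument and is what makes the Smale transversality assumption the right hypothesis.
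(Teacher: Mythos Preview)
The paper does not supply its own proof of this theorem: immediately after the statement it writes ``We refer to \cite[\S 3-4 p.~152]{pugh-shub} and \cite[app.~B, p.~50]{DRI} for more details about the proof.'' So there is nothing to compare against in the paper itself; your plan should be measured against the classical argument in those references, and it is indeed the standard one. Reflexivity is trivial, transitivity via the $\lambda$-lemma together with the strong transversality hypothesis, the identity $W^u(K)\cap W^s(K)=K$ from local maximality plus an isolating (unrevisited) neighborhood, and antisymmetry by combining the two---this is exactly the Smale/Pugh--Shub route.

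Two small remarks. First, your isolating-block argument for $W^u(K)\cap W^s(K)=K$ tacitly uses the existence of a neighborhood with the ``no re-entry'' property; in the paper's language these are the \emph{unrevisited neighborhoods} of Proposition~\ref{prop: Robbin existence of unrevisited neighborhoods}, whose existence already relies on the transversality assumption. This is consistent with the theorem's hypotheses but worth making explicit. Second, in the antisymmetry step your contradiction is that the orbit of $z\in K_i$ comes arbitrarily close to $K_j$; to conclude you should fix in advance a positive distance separating the disjoint compact sets $K_i$ and $K_j$ (assuming $i\neq j$) and then run the $\lambda$-lemma construction to force the orbit of $z$ inside that separation---otherwise ``arbitrarily close'' alone does not contradict $z\in K_i$. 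With those clarifications your outline is correct, and you have also put your finger on the genuine technical issue: handling the neutral direction so that the $\lambda$-lemma produces intersections of \emph{strong} (not merely weak) stable and unstable manifolds.
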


We refer to \cite[\S 3-4 p. 152]{pugh-shub} and \cite[app. B, p. 50]{DRI} for more details about the proof. As we can see in figure \ref{fig: Axiom A flow on the torus - without transversality assumption}, relation (\ref{order relation for basic sets}) may not be an order relation if the transversality assumption does not hold.

\begin{prop}[{Smale, \cite[p. 783]{smale}}]An equivalent definition is given by
	$$
		K_i \leq K_j \Longleftrightarrow W^s (K_i) \subseteq \overline{W^s (K_j) } \Longleftrightarrow W^u (K_j) \subseteq \overline{W^u (K_i) } .
	$$
	We also have
	$$
		\overline{W^u (K_i) } = \bigcup_{j, K_j \geq K_i} W^u (K_j) \: \mbox{ and } \: \overline{W^s (K_j) } = \bigcup_{i, K_i \leq K_i} W^s (K_i).
	$$
\end{prop}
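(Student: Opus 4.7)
The proposition asserts both the pairwise equivalence of three conditions and two union identities. My plan is to first establish the equivalences and then deduce the unions as a corollary. By the time-reversal symmetry $\varphi^t \mapsto \varphi^{-t}$, which exchanges $W^s$ with $W^u$ and reverses the partial order $\leq$, the equivalence $K_i \leq K_j \Longleftrightarrow W^u(K_j) \subseteq \overline{W^u(K_i)}$ follows from $K_i \leq K_j \Longleftrightarrow W^s(K_i) \subseteq \overline{W^s(K_j)}$ applied to the reversed flow; therefore it suffices to establish the latter, and I describe only this case.

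For the forward implication, fix a heteroclinic $x \in W^u(x_-) \cap W^s(x_+)$ with $x_- \in K_i$, $x_+ \in K_j$, and an arbitrary $y \in W^s(y_-)$ with $y_- \in K_i$. My approach is to construct a sequence in $W^s(K_j)$ converging to $y$ in three steps. First, flow $y$ forward by a large time $T$ so that $\varphi^T(y)$ lies in a small neighborhood of the orbit of $y_-$ in $K_i$, where the local hyperbolic product structure of $K_i$ is available. Second, use topological transitivity of $K_i$ to translate the heteroclinic orbit of $x$ (which lies in $W^s(K_j)$) so that it passes within $\varepsilon$ of $\varphi^T(y)$, with the displacement lying in the unstable direction of $K_i$ by the transversality assumption (\ref{Transversality assumption}). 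Third, pull the resulting approximant $w \in W^s(K_j)$ back by $\varphi^{-T}$, exploiting the contraction of the unstable direction under $\varphi^{-T}$ to conclude that $\varphi^{-T}(w) \in W^s(K_j)$ is arbitrarily close to $y$.

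For the reverse implication, assume $W^s(K_i) \subseteq \overline{W^s(K_j)}$ and pick $z \in K_i$; there exist $z_n \to z$ with $z_n \in W^s(K_j)$. By the time-reversed version of Lemma \ref{lemma: spectral decomposition of the manifold}, each $z_n$ lies in $W^u(K_{l_n})$ for a unique basic set $K_{l_n}$, so $K_{l_n} \leq K_j$. The key step, which I expect to be the main obstacle, is to show that for $n$ large one can arrange $K_{l_n} = K_i$, giving $z_n \in W^u(K_i) \cap W^s(K_j) \neq \emptyset$ and hence $K_i \leq K_j$. This calls for a first-exit-time analysis of the backward orbit of $z_n$ from a small neighborhood $U$ satisfying $K_i = \bigcap_{t \in \R} \varphi^t(U)$: using local maximality together with the local product structure near $K_i$, either $z_n$ never exits in backward time (so $z_n \in W^u(K_i)$ directly), or the exit points $\varphi^{-T_n}(z_n)$ subsequentially cluster on a point of $\overline{W^u(K_i)}$ which, combined with Theorem \ref{thm Smale}'s identity $W^u(K) \cap W^s(K) = K$ applied to auxiliary basic sets, leads to a contradiction for $n$ large.

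Finally, for the union identities, the inclusion $\bigcup_{K_j \geq K_i} W^u(K_j) \subseteq \overline{W^u(K_i)}$ is immediate from the equivalence with the unstable condition. For the reverse inclusion, I take $y \in \overline{W^u(K_i)}$; the time-reversed partition lemma provides a unique $j_0$ with $y \in W^u(K_{j_0})$, and the same exit-time argument applied to a sequence $y_n \to y$ with $y_n \in W^u(K_i)$ yields $K_{j_0} \geq K_i$. The stable analogue then follows by time reversal.
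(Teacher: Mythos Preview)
The paper does not supply its own proof of this proposition; it is stated with a citation to Smale and used as a black box. So there is nothing to compare against, and I will assess your argument on its own merits.

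Your forward implication (\(K_i\le K_j\Rightarrow W^s(K_i)\subset\overline{W^s(K_j)}\)) is the right idea: it is essentially the \(\lambda\)-lemma/inclination argument, and the three-step outline is sound, though the invocation of the strong transversality assumption is unnecessary---this direction holds for any Axiom A flow, since one only needs hyperbolicity near \(K_i\) and the invariance of \(W^s(K_j)\).

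Your reverse implication, however, has a genuine gap. Aiming to prove that \(K_{l_n}=K_i\) for large \(n\) is both too strong and not what is needed; there is no reason the approximants \(z_n\in W^s(K_j)\) must lie on \(W^u(K_i)\). Your proposed backward-exit-time argument also points the wrong way: the limit of backward exit points lands in \(W^s(K_i)\), not \(W^u(K_i)\), so you do not produce the desired heteroclinic, and appealing to ``\(\overline{W^u(K_i)}\)'' at this stage is circular because that closure is exactly what the union identity describes.

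The standard route---and the one that makes the logic non-circular---is to reverse the order of your last two steps. First establish the forward implication. Then prove the union identity \(\overline{W^s(K_j)}=\bigcup_{K_l\le K_j}W^s(K_l)\) directly: given \(y\in\overline{W^s(K_j)}\) with \(y\in W^s(K_{i_0})\), flow \(y\) and the approximants \(y_n\in W^s(K_j)\) forward into an isolating neighborhood \(U\) of \(K_{i_0}\), take \emph{forward} first-exit times \(T_n\to\infty\), and extract a limit \(w\) of \(\varphi^{T_n}(y_n)\). This \(w\) lies in \(W^u(K_{i_0})\setminus K_{i_0}\) (since its backward orbit stays in \(\overline U\) and \(\varphi^1(w)\notin\mathrm{int}\,U\)) and in \(\overline{W^s(K_j)}\); writing \(w\in W^s(K_{i_1})\) gives \(K_{i_0}<K_{i_1}\). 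Iterating and invoking the no-cycle property (Theorem~\ref{thm Smale}) forces termination at \(K_j\), yielding \(K_{i_0}\le K_j\). Once the union identity is in hand, the reverse implication of the equivalence is immediate from the disjointness of the \(W^s(K_l)\).
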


\subsubsection{Graph structure}\label{subsubsection graph structure}

From the partial ordering on basic sets, one can define a graph structure as follows. The vertices $V$ of the graph $G$ are given by elements of $[\![1,N]\!]$ while the edges $E$ form a subset of $[\![1,N]\!] \times [\![1,N]\!]$ which satifies
$$i \neq j \mbox{ and } K_i \leq K_j \Longleftrightarrow (i,j) \in E.$$
To ensure that the graph does not contain too many edges, we add the following irreducibility assumption: for every integer $m \geq 3$ and for every $i_1,\cdots,i_m \in [\![1,N]\!]$,
$$
	(i_1,i_2), \cdots, (i_{m-1},i_m) \in E \Longrightarrow (i_1,i_m) \notin E.
$$
As a direct application of Theorem \ref{thm Smale} the oriented graph $G$ has no cycle.

\subsubsection{Total order relation}\label{subsubsection total order relation}

In what follows, we will frequently use mathematical induction. To do so, we define a total order relation from Smale's order relation $\leq$ on the basic sets as an order relation on $[\![1,N]\!]$  compatible with the partial order relation $\leq$ in the sense that
\begin{equation}\label{eq: total order relation, indices compatible}
	K_i \leq K_j \Longrightarrow i \leq j.
\end{equation}
\textbf{From now on}, we fix a total order relation. Note that $\cup_{j\geq i }W^u (K_j)$ is compact for every $1 \leq i \leq N$ but that $\overline{W^u (K_i)}$ and $\cup_{j\geq i }W^u (K_j)$ are not equal in general, see for example figure \ref{fig: graph structures on the 2 torus}.

\begin{figure}[t]
  \begin{minipage}[c]{.46\linewidth}
\centering
\def\svgscale{0.6}
 \executeiffilenewer{graphe_structure_tore.svg}{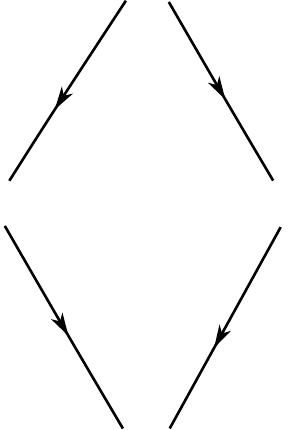}%
 {inkscape -z -D --file=graphe_structure_tore.svg %
 --export-pdf=graphe_structure_tore.pdf --export-latex}%
\begingroup%
  \makeatletter%
  \providecommand\color[2][]{%
    \errmessage{(Inkscape) Color is used for the text in Inkscape, but the package 'color.sty' is not loaded}%
    \renewcommand\color[2][]{}%
  }%
  \providecommand\transparent[1]{%
    \errmessage{(Inkscape) Transparency is used (non-zero) for the text in Inkscape, but the package 'transparent.sty' is not loaded}%
    \renewcommand\transparent[1]{}%
  }%
  \providecommand\rotatebox[2]{#2}%
  \newcommand*\fsize{\dimexpr\f@size pt\relax}%
  \newcommand*\lineheight[1]{\fontsize{\fsize}{#1\fsize}\selectfont}%
  \ifx\svgwidth\undefined%
    \setlength{\unitlength}{85.8115381bp}%
    \ifx\svgscale\undefined%
      \relax%
    \else%
      \setlength{\unitlength}{\unitlength * \real{\svgscale}}%
    \fi%
  \else%
    \setlength{\unitlength}{\svgwidth}%
  \fi%
  \global\let\svgwidth\undefined%
  \global\let\svgscale\undefined%
  \makeatother%
  \begin{picture}(1,1.44030322)%
    \lineheight{1}%
    \setlength\tabcolsep{0pt}%
    \put(0,0){\includegraphics[width=\unitlength,page=1]{graphe_structure_tore.pdf}}%
    \put(0.41246107,1.47885041){\makebox(0,0)[lt]{\lineheight{1.25}\smash{\begin{tabular}[t]{l}$K_1$\end{tabular}}}}%
    \put(-0.13611512,0.70184378){\makebox(0,0)[lt]{\lineheight{1.25}\smash{\begin{tabular}[t]{l}$K_2$\end{tabular}}}}%
    \put(0.91008139,0.7145733){\makebox(0,0)[lt]{\lineheight{1.25}\smash{\begin{tabular}[t]{l}$K_3$\end{tabular}}}}%
    \put(0.38260597,-0.12391249){\makebox(0,0)[lt]{\lineheight{1.25}\smash{\begin{tabular}[t]{l}$K_4$\end{tabular}}}}%
  \end{picture}%
\endgroup%

  \end{minipage}
  \begin{minipage}[c]{.46\linewidth}
   \centering
\def\svgscale{0.6}
 \executeiffilenewer{graphe_structure_tore2.svg}{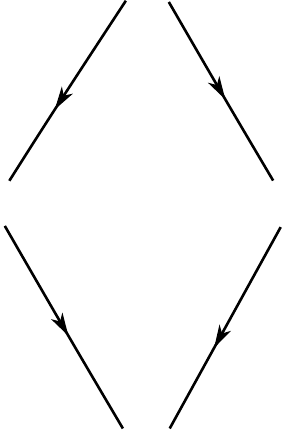}%
 {inkscape -z -D --file=graphe_structure_tore2.svg %
 --export-pdf=graphe_structure_tore2.pdf --export-latex}%
\begingroup%
  \makeatletter%
  \providecommand\color[2][]{%
    \errmessage{(Inkscape) Color is used for the text in Inkscape, but the package 'color.sty' is not loaded}%
    \renewcommand\color[2][]{}%
  }%
  \providecommand\transparent[1]{%
    \errmessage{(Inkscape) Transparency is used (non-zero) for the text in Inkscape, but the package 'transparent.sty' is not loaded}%
    \renewcommand\transparent[1]{}%
  }%
  \providecommand\rotatebox[2]{#2}%
  \newcommand*\fsize{\dimexpr\f@size pt\relax}%
  \newcommand*\lineheight[1]{\fontsize{\fsize}{#1\fsize}\selectfont}%
  \ifx\svgwidth\undefined%
    \setlength{\unitlength}{85.8115381bp}%
    \ifx\svgscale\undefined%
      \relax%
    \else%
      \setlength{\unitlength}{\unitlength * \real{\svgscale}}%
    \fi%
  \else%
    \setlength{\unitlength}{\svgwidth}%
  \fi%
  \global\let\svgwidth\undefined%
  \global\let\svgscale\undefined%
  \makeatother%
  \begin{picture}(1,1.44030322)%
    \lineheight{1}%
    \setlength\tabcolsep{0pt}%
    \put(0,0){\includegraphics[width=\unitlength,page=1]{graphe_structure_tore2.pdf}}%
    \put(0.41246107,1.47885041){\makebox(0,0)[lt]{\lineheight{1.25}\smash{\begin{tabular}[t]{l}$K_1$\end{tabular}}}}%
    \put(-0.13611512,0.70184378){\makebox(0,0)[lt]{\lineheight{1.25}\smash{\begin{tabular}[t]{l}$K_3$\end{tabular}}}}%
    \put(0.91008139,0.7145733){\makebox(0,0)[lt]{\lineheight{1.25}\smash{\begin{tabular}[t]{l}$K_4$\end{tabular}}}}%
    \put(0.38260597,-0.12391249){\makebox(0,0)[lt]{\lineheight{1.25}\smash{\begin{tabular}[t]{l}$K_2$\end{tabular}}}}%
  \end{picture}%
\endgroup%

  \end{minipage}
  \caption{Both graph correspond to the Axiom A flow on the 2-torus of fig. \ref{fig: Axiom A flow on the torus - with transversality assumption}. The left one has its indices compatible with the graph structure and not the right one.}
  \label{fig: graph structures on the 2 torus}
\end{figure}

 \subsection{Filtrations and unrevisited neighborhoods}

 An important concept in all our analysis is the concept of filtration. Eventhough the term of filtration usually refers to an increasing sequence of subcomplexes of a simplicial complex, we give here an open version deeply related to Morse homology where the subcomplexes are open sets, i.e. submanifolds of dimension $n = \dim M$ with boundary. To see the analogy, let us consider $f: M \rightarrow \R$ a Morse function which has $N$ critical points $x_1, \cdots, x_N$ that all satisfy $f(x_i) = i$ to simplify. Then, a filtration is given by the family of open sets $(f^{-1} (]-\infty,i +\frac{1}{2}[))_{0 \leq i \leq N}$. For a general Axiom A flow, we give a definition of a filtration which appeared in the works of Smale \cite{smale} and Robbin \cite[lem. 7.9, p. 471]{robbin}.

   \begin{definition}[Filtration] Let $\varphi^t$ be an Axiom A flow which satisfies the transversality assumption. Let us consider a total order relation on the basic sets in the sense of (\ref{eq: total order relation, indices compatible}). A sequence of open sets $(\Ocal_i^-)_{0 \leq i \leq N}$ is said to be a \textbf{filtration} for $\varphi^{-1}$ if the following conditions hold: 
\begin{enumerate}[label=(\roman*)]
 \item The sequence is increasing: $$\emptyset = \Ocal_0^- \subseteq \Ocal_{1}^- \subseteq \cdots \subseteq \Ocal_N^- = M$$
 \item For every $1 \leq i \leq N$, the open sets are $\varphi^{-1}$-stables: $\varphi^{-1} \left( \Ocal_{i}^- \right) \subseteq \Ocal_{i}^- $.
 \item For every $1 \leq i \leq N$, we have $K_i \subseteq \Ocal_i^- \setminus \overline{\Ocal_{i-1}^-}$. 
\end{enumerate}
\end{definition}

 \begin{figure}[t]
\centering
\def\svgscale{0.6}
 \executeiffilenewer{filtration.svg}{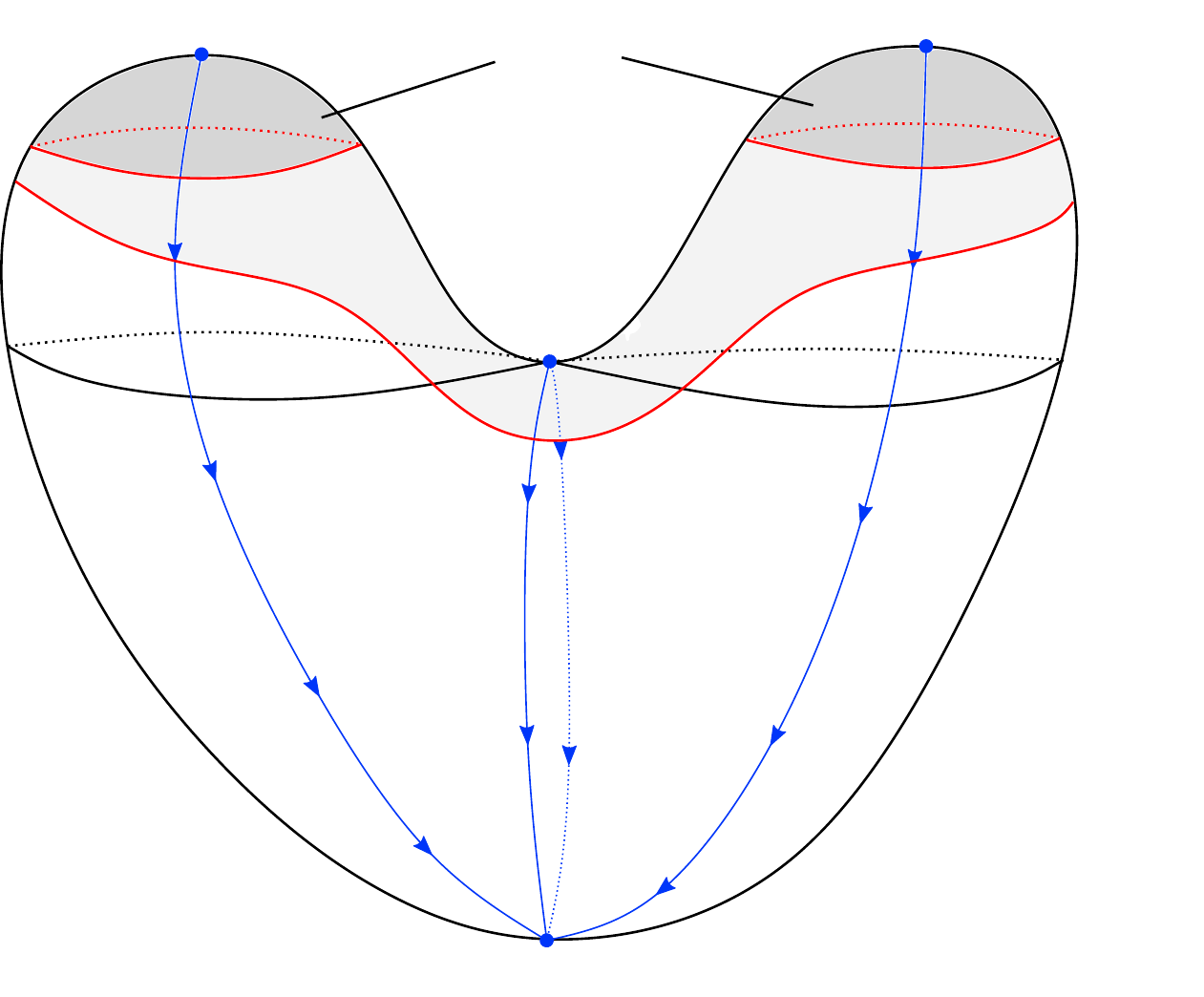}%
 {inkscape -z -D --file=filtration.svg %
 --export-pdf=filtration.pdf --export-latex}%
\begingroup%
  \makeatletter%
  \providecommand\color[2][]{%
    \errmessage{(Inkscape) Color is used for the text in Inkscape, but the package 'color.sty' is not loaded}%
    \renewcommand\color[2][]{}%
  }%
  \providecommand\transparent[1]{%
    \errmessage{(Inkscape) Transparency is used (non-zero) for the text in Inkscape, but the package 'transparent.sty' is not loaded}%
    \renewcommand\transparent[1]{}%
  }%
  \providecommand\rotatebox[2]{#2}%
  \newcommand*\fsize{\dimexpr\f@size pt\relax}%
  \newcommand*\lineheight[1]{\fontsize{\fsize}{#1\fsize}\selectfont}%
  \ifx\svgwidth\undefined%
    \setlength{\unitlength}{362.95523576bp}%
    \ifx\svgscale\undefined%
      \relax%
    \else%
      \setlength{\unitlength}{\unitlength * \real{\svgscale}}%
    \fi%
  \else%
    \setlength{\unitlength}{\svgwidth}%
  \fi%
  \global\let\svgwidth\undefined%
  \global\let\svgscale\undefined%
  \makeatother%
  \begin{picture}(1,0.82774237)%
    \lineheight{1}%
    \setlength\tabcolsep{0pt}%
    \put(0,0){\includegraphics[width=\unitlength,page=1]{filtration.pdf}}%
    \put(0.44995019,0.78196019){\makebox(0,0)[lt]{\lineheight{1.25}\smash{\begin{tabular}[t]{l}$\mathcal{O}_2^-$\end{tabular}}}}%
    \put(0.14260747,0.792147){\makebox(0,0)[lt]{\lineheight{1.25}\smash{\begin{tabular}[t]{l}$K_1$\end{tabular}}}}%
    \put(0.07166308,0.73133286){\makebox(0,0)[lt]{\lineheight{1.25}\smash{\begin{tabular}[t]{l}$\mathcal{O}_1^-$\end{tabular}}}}%
    \put(0.44007365,0.54324634){\makebox(0,0)[lt]{\lineheight{1.25}\smash{\begin{tabular}[t]{l}$K_3$\end{tabular}}}}%
    \put(0.75540003,0.79762737){\makebox(0,0)[lt]{\lineheight{1.25}\smash{\begin{tabular}[t]{l}$K_2$\end{tabular}}}}%
    \put(0.44045577,0.00518044){\makebox(0,0)[lt]{\lineheight{1.25}\smash{\begin{tabular}[t]{l}$K_4$\end{tabular}}}}%
    \put(0.610518,0.64518063){\makebox(0,0)[lt]{\lineheight{1.25}\smash{\begin{tabular}[t]{l}$\mathcal{O}_3^-$\end{tabular}}}}%
  \end{picture}%
\endgroup%

  \caption{Example of a filtration on the sphere $\Sn^2$.}
  \label{fig: filtration on the sphere} 
\end{figure}

\begin{remark}\label{rk : filtration}
\begin{itemize}[align=left, leftmargin=*, noitemsep]
 \item Any filtration for $\varphi^{-1}$ induces a filtration for $\varphi^{1}$ taking the interior of the complementary of each open set, i.e. setting $\Ocal_{i}^{+}:= \Int \left( \left( \Ocal_{N-i}^{-} \right)^c \right)$  for every $i$. Indeed, from $\varphi^{-1} \left( \Ocal_{N-i}^- \right) \subseteq \Ocal_{N-i}^-$, we deduce 
	$$\varphi^1 (\Ocal_{i}^+) = \varphi^1 \left( \Int \left( \left( \Ocal_{N-i}^- \right)^c \right) \right) \subseteq \varphi^1 \left( \left( \Ocal_{N-i}^- \right)^c \right)  \subseteq \left( \Ocal_{N-i}^- \right)^c. $$
	So, $\varphi^1 (\Ocal_{i}^+)$ is an open subset of $\left( \Ocal_{N-i}^- \right)^c$, and by definition of the interior we get $\varphi^1 (\Ocal_{i}^+) \subseteq \Int \left( \left( \Ocal_{N-i}^- \right)^c \right) = \Ocal_{i}^+$.
	Moreover, we still have $\emptyset = \Ocal_0^+ \subseteq \Ocal_{1}^+ \subseteq \cdots \subseteq \Ocal_N^+ = M$ and $K_{i} \subseteq \Ocal_{N-i+1}^+ \setminus \overline{\Ocal_{N-i}^+}$. 
 \item $\Ocal_i^- $ is a neighborhood of $\sqcup_{j\leq i} W^s (K_i)$. Indeed, $\Ocal_i^- $ contains every basic set $K_j$ for $j \leq i$ and if 
 $x \in W^s (K_j)$ then there exists $k \in \N$ such that $\varphi^k (x) \in \Ocal_i^-$. Thus, we deduce $x \in \varphi^{-k} (\Ocal_i^-) \subset \Ocal_i^-$.
 \item For every $1 \leq i \leq N$, let us define the set
 $$ \Vcal_i:= \Ocal_i^- \cap \Ocal_{N-i+1}^+. $$
 One can check that $\Vcal_i$ is a neighborhood of $K_i$ which satisfies $\overline{\Vcal_i}\cap \Omega = K_i$ and the following property: for all $m \in \N$ and for all $x \in \Vcal_i$, if we have $x \in \Vcal_i$ and $\varphi^m (x) \in \Vcal_i$ then we must have $\varphi^k (x) \in \Vcal_i$ for all $0 \leq k \leq m$. In the example presented in figure \ref{fig: filtration on the sphere}, the basic set $K_3$ belongs to $\Ocal_3^- \cap \Ocal_2^+$.
\end{itemize}
 \end{remark} 
 
This last remark brings us to the next definition.
 
\begin{definition}[{Unrevisited set, \cite[p.463]{robbin} and \cite{shub}}]Let $X$ be a smooth manifold. A set $W \subseteq X$ is called \textbf{unrevisited} for a diffeomorphism $f: X \rightarrow X$ if for any integer $m \in \N$,
 $$
  x, f^m (x) \in W \Longrightarrow \: \forall k \in \{0,\cdots,m \}, f^k (x) \in W .
 $$
\end{definition}

We say that a set is unrevisited for the flow $\varphi^t$ if it is unrevisited for the time-$1$ map $\varphi^1$. The existence of unrevisited neighborhoods goes back to the work of Robbin in 1971:
 
\begin{prop}[{Robbin - Hirsh, Palis, Pugh and Shub, \cite{robbin}, \cite[\S 6 and \S 7]{HPPS}}]\label{prop: Robbin existence of unrevisited neighborhoods}Let $K$ be a basic set and assume that $\varphi^t$ satisfies the transversality assumption. Then there exists arbitrarily small unrevisited neighborhoods $\Vcal$ of $K$.
\end{prop}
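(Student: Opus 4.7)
The plan is to produce the required $\Vcal$ as a plate in a well-chosen filtration. The last bullet of Remark~\ref{rk : filtration} already tells us that for any filtration $(\Ocal_i^-)_{0 \leq i \leq N}$ of $\varphi^{-1}$, the set $\Vcal_i := \Ocal_i^- \cap \Ocal_{N-i+1}^+$ is an unrevisited neighborhood of $K_i$. It therefore suffices to show that, for the given basic set $K = K_i$ and any prescribed open neighborhood $U$ of $K_i$, one can exhibit a filtration of $\varphi^{-1}$ whose plate $\Vcal_i$ is contained in $U$.

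To construct such filtrations I would start from a smooth Lyapunov function. Since $V$ is Axiom A and satisfies the transversality assumption, the Conley--Wilson--Smale theory \cite{conley, wilson, smale-morse} provides a smooth $L : M \to \R$ which is strictly decreasing along every orbit outside $\Omega$ and equal to a constant $c_i$ on each basic set $K_i$, with $c_1 > c_2 > \cdots > c_N$ compatible with the total order fixed in Section~\ref{subsubsection total order relation}. Setting $a_0 := +\infty$, $a_N := -\infty$ and $a_i := \tfrac{1}{2}(c_i + c_{i+1})$ for $1 \leq i < N$ (perturbed if necessary to be regular values of $L$), the family $\Ocal_i^- := L^{-1}\bigl((a_i, +\infty)\bigr)$ defines a filtration for $\varphi^{-1}$: the inclusions $\Ocal_{i-1}^- \subset \Ocal_i^-$ are obvious, the $\varphi^{-1}$-stability follows from $L \circ \varphi^{-1} \geq L$, and $K_i \subset L^{-1}((a_i, a_{i-1})) = \Ocal_i^- \setminus \overline{\Ocal_{i-1}^-}$. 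The associated plate around $K_i$ is then $\Vcal_i = L^{-1}\bigl((a_i, a_{i-1})\bigr)$.

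The main obstacle is to shrink this plate into any preassigned $U$. For a generic Lyapunov function one finds transit points $x \notin \Omega$ with $L(x) = c_i$ lying arbitrarily far from $K_i$, so $\Vcal_i$ need not be contained in any prescribed neighborhood of $K_i$. To bypass this I would follow Robbin \cite{robbin} and Hirsh--Palis--Pugh--Shub \cite{HPPS}: using the local product structure of $K$ together with the transversality assumption (which forbids an orbit from crossing the level $c_i$ outside $K_i$ and then returning to it), one locally modifies $L$ near $K_i$ by composing with a carefully chosen monotone cutoff, so that outside $U$ the range of $L$ avoids an open window around $c_i$. The level sets of the modified $L$ at heights close to $c_i$ are then genuinely concentrated near $K_i$, which forces $\Vcal_i \subset U$ and completes the proof. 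The technical heart of the argument, and where transversality is essential, is precisely this refinement of $L$: without transversality one cannot rule out recurrent behaviour of $L$ near $c_i$ that prevents shrinking the plate, and the local product structure alone, available from hyperbolicity, is not enough to perform the cutoff monotonically.
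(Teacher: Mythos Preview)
The paper itself does not prove this proposition: it is quoted from Robbin \cite{robbin} and Hirsh--Palis--Pugh--Shub \cite{HPPS}, with Conley's theory \cite{conley} mentioned as an alternative. So there is no ``paper's own proof'' to compare against; the question is only whether your argument stands on its own.

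Your first two steps are fine and indeed recover the observation of Remark~\ref{rk : filtration}: a filtration built from the sublevels of a global Lyapunov function yields unrevisited plates $\Vcal_i = L^{-1}((a_i,a_{i-1}))$. (One should note that in this paper the existence of filtrations, Lemma~\ref{lemma: equivalence filtration and unrevisited}, is derived \emph{from} Proposition~\ref{prop: Robbin existence of unrevisited neighborhoods}, so you are right to obtain your filtration from an external source such as Conley--Wilson rather than from the paper's own Proposition~\ref{prop energy function for Axiom A flows}, which would be circular.)

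The genuine gap is in the ``arbitrarily small'' part, and the fix you sketch does not work as stated. Suppose $K_j >_{\mathrm{Smale}} K_i$ and $K_k <_{\mathrm{Smale}} K_i$, so necessarily $c_j > c_i > c_k$, and suppose there is a heteroclinic orbit from $K_j$ to $K_k$ that never comes near $K_i$ (there is no reason it should; take for instance a ``diamond'' in the Smale graph). Since $L$ is strictly decreasing along this orbit and runs from $c_j$ to $c_k$, the intermediate value theorem forces it to cross the level $c_i$ at some point far from $K_i$, and that point lies in $\Vcal_i$. A \emph{local} modification of $L$ near $K_i$ cannot remove this distant crossing, and no reparametrisation of $L$ can avoid it either, because the constraints $c_j>c_i>c_k$ are forced by the partial order. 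Your parenthetical that transversality ``forbids an orbit from crossing the level $c_i$ outside $K_i$ and then returning to it'' is beside the point: strict monotonicity of $L$ already rules out a second crossing, with or without transversality; the problem is the first crossing.

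What does work along Conley's lines is more local: $K$ is an isolated invariant set by local maximality, hence admits arbitrarily small isolating blocks, and an isolating block is unrevisited in the continuous-time sense (hence for the time-$1$ map). This is presumably what the paper has in mind when it mentions Conley, and it bypasses the global Lyapunov function entirely. Alternatively, the route actually taken in \cite{robbin,HPPS} uses the local product structure near $K$ directly; your final paragraph gestures at this but effectively just re-cites the same references the paper cites, without supplying the argument.
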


In Robbin's article, the proof was given for Axiom A diffeomorphisms and it follows from a construction of particular neighborhoods of hyperbolic sets given by Hirsh, Palis, Pugh and Shub \cite{HPPS} who explained how to generalize it for flows. This proposition can also be deduced from the work of Conley \cite{conley} up to a small perturbation of the vector field. Precisely, up to a small perturbation of the vector field and for every basic set $K$ there exists an open neighborhood $\Vcal \supset K$ such that for all $T \geq 0$:
$$ x, \varphi^T (x) \in \Vcal \Longrightarrow \forall t \in [0,T], \: \: \varphi^t (x) \in \Vcal.$$
In particular, $\Vcal$ is unrevisited. Even if this last definition seems to be more natural for flows, we chose the diffeo-like definition which will be more convenient for the analysis of the lifted Hamiltonian dynamic on the phase space as we will see later on. From now on, we assume that $\varphi^t$ satisfies the transversality assumption (\ref{Transversality assumption}). Unrevisited neighborhoods will be a very important tool of our analysis. They will be a purely dynamical alternative to the $\Ccal^1$ linearizing charts near critical points which were used in \cite{DR0} for Morse-Smale gradient flows, as we can witness in figure \ref{fig: unrevisited neighborhoods near a basic set}. But first, let us mention some of their properties. All the results mentionned below about unrevisited neighborhoods were not precisely stated in the litterature. They should be attributed to Robbin, Hirsh, Palis, Pugh, Shub, Conley and Easton to the best of our knowledge.

\subsubsection{Properties of unrevisited neighborhoods}
If $\Vcal$ is an unrevisited neighborhood of $K$, then

\begin{figure}[!t]
\centering
\def\svgscale{0.7}
 \executeiffilenewer{unrevisited_neighborhoods.svg}{unrevisited_neighborhoods.pdf}%
 {inkscape -z -D --file=unrevisited_neighborhoods.svg %
 --export-pdf=unrevisited_neighborhoods.pdf --export-latex}%
 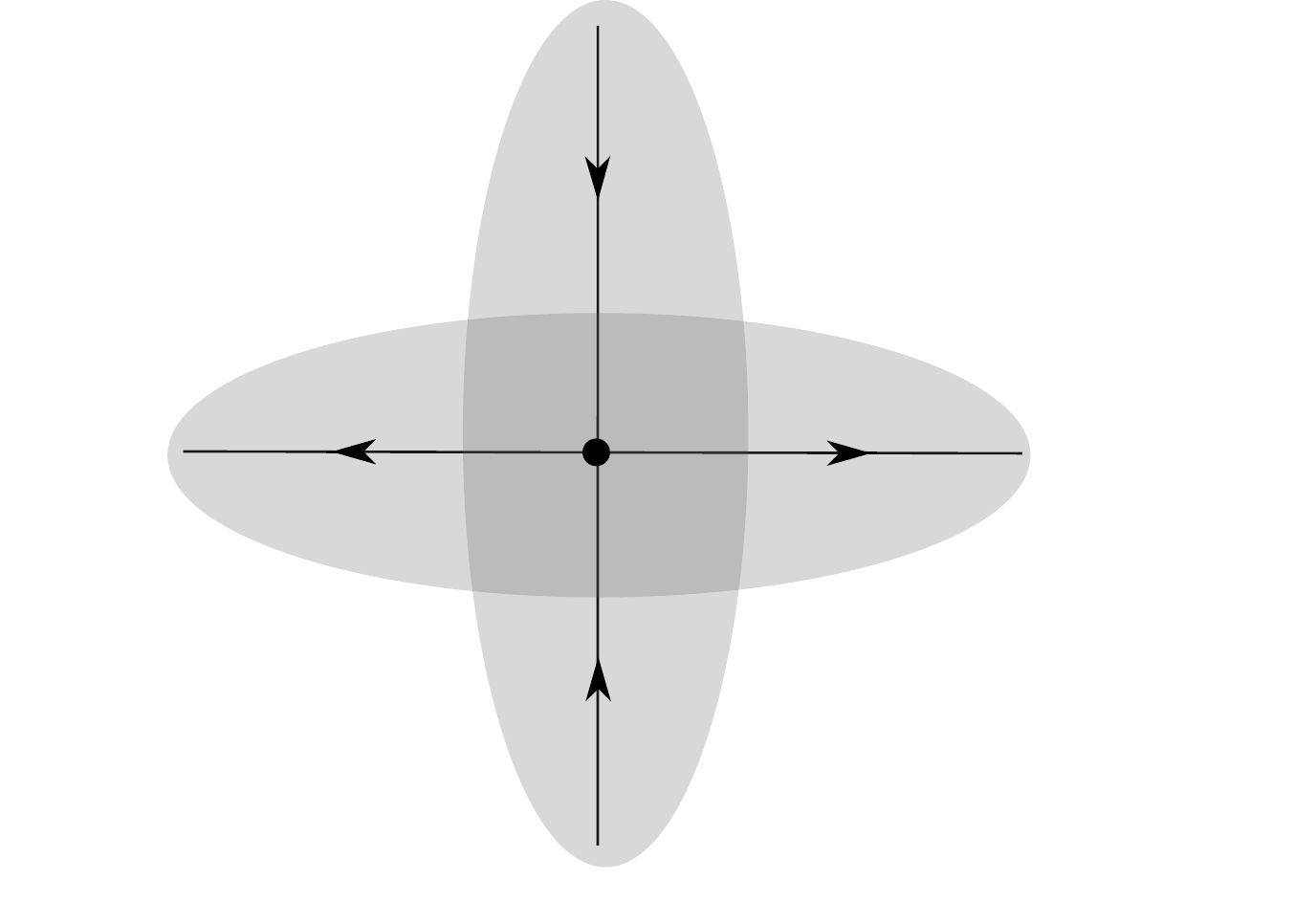%

  \caption{Illustration of some unrevisited neighborhoods near a basic set.}
  \label{fig: unrevisited neighborhoods near a basic set} 
\end{figure}

\begin{enumerate}[label={\bfseries (P\arabic*)}]
 \item The intersection of two unrevisited sets is also an unrevisited.
 \item we have a uniform approximation of the stable and unstable manifolds in the sense of the following lemma.
\end{enumerate}

 \begin{lemma}[Uniform convergence of unrevisited neighborhoods]\label{lemme: cv nonrevisite vers variete stable et instable}
    Let $\Vcal$ be an unrevisited neighborhood of a basic set $K$ which satisfies $\overline{\Vcal} \cap \Omega = K$. The sequence of unrevisited neighborhoods $\Vcal \cap \varphi^m (\Vcal)$  is decreasing with respect to $m \in \N$ and converges uniformly to $W^u (K) \cap \overline{\Vcal}$ as $m$ tends to $+ \infty$, in the sense that
    $$
	  \sup_{y \in \Vcal \cap \varphi^{m} (\Vcal)} d_g \left( y , W^{u} (K) \cap \overline{\Vcal} \right) \underset{m \to + \infty}{\longrightarrow} 0.
    $$
    A similar relation holds for the stable manifolds if we replace $\varphi^{m}$ by $\varphi^{-m}$.
\end{lemma}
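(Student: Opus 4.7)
My strategy has two parts: first establish monotonicity directly from the unrevisited property, then prove the uniform convergence by a compactness/contradiction argument that produces a limit point whose backward $\varphi$-orbit is trapped in $\overline{\Vcal}$, and use $\overline{\Vcal}\cap\Omega=K$ to pull that orbit back into the unstable manifold of $K$.

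For monotonicity, I would pick $y\in\Vcal\cap\varphi^{m+1}(\Vcal)$ and write $y=\varphi^{m+1}(x)$ with $x\in\Vcal$. Since $x$ and $\varphi^{m+1}(x)=y$ both lie in $\Vcal$, applying the unrevisited property to the time-$1$ map gives $\varphi^{k}(x)\in\Vcal$ for every $0\le k\le m+1$; in particular $\varphi(x)\in\Vcal$, so $y=\varphi^{m}(\varphi(x))\in\varphi^{m}(\Vcal)$. Thus $\Vcal\cap\varphi^{m+1}(\Vcal)\subseteq\Vcal\cap\varphi^{m}(\Vcal)$, and the sets remain unrevisited by property (P1) together with the easy check that each $\varphi^{m}(\Vcal)$ is unrevisited.

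For the uniform convergence I would argue by contradiction. Suppose there exist $\varepsilon>0$, a sequence $m_k\to+\infty$, and points $y_{k}\in\Vcal\cap\varphi^{m_k}(\Vcal)$ with $d_g(y_{k},W^u(K)\cap\overline{\Vcal})\geq\varepsilon$. By compactness of $\overline{\Vcal}$, up to extraction $y_k\to y_\infty\in\overline{\Vcal}$. Writing $y_k=\varphi^{m_k}(x_k)$ with $x_k\in\Vcal$ and using the unrevisited property gives $\varphi^{-j}(y_k)\in\Vcal$ for every $0\le j\le m_k$. For any fixed integer $n\ge 0$, this holds as soon as $m_k\ge n$; by continuity of $\varphi^{-n}$, passing to the limit yields $\varphi^{-n}(y_\infty)\in\overline{\Vcal}$ for all $n\in\N$.

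The main obstacle, and the heart of the argument, is that the trapping property we obtained only holds at integer times, whereas $W^u(K)$ is defined through the continuous flow. I would bridge this as follows: let $z\in\alpha_\varphi(y_\infty)$ be any $\alpha$-limit point of $y_\infty$ under the flow, so that $\varphi^{-t_n}(y_\infty)\to z$ for some $t_n\to+\infty$. Decomposing $t_n=\lfloor t_n\rfloor+s_n$ with $s_n\in[0,1)$ and extracting, we get $\varphi^{-\lfloor t_n\rfloor}(y_\infty)\to z'\in\overline{\Vcal}$ and $s_n\to s\in[0,1]$, whence $z=\varphi^{-s}(z')$. A standard argument shows $z\in\Omega$ (pick two large indices of the $t_n$ differing by more than any prescribed $T$ to verify the nonwandering condition), so $\varphi^{s}(z)=z'\in\overline{\Vcal}\cap\Omega=K$, and by invariance of $K$ we conclude $z\in K$. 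Therefore $\alpha_\varphi(y_\infty)\subseteq K$, which is equivalent to $d_g(\varphi^{-t}(y_\infty),K)\to 0$ as $t\to+\infty$, i.e. $y_\infty\in W^u(K)\cap\overline{\Vcal}$. This contradicts $d_g(y_k,W^u(K)\cap\overline{\Vcal})\geq\varepsilon>0$ for all $k$, and the stable case is obtained by reversing time.
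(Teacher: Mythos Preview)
Your proof is correct and reaches the same conclusion as the paper, but the endgame differs. After establishing that $\varphi^{-n}(y_\infty)\in\overline{\Vcal}$ for all $n\in\N$, the paper invokes the spectral decomposition (Lemma~\ref{lemma: spectral decomposition of the manifold} applied to unstable manifolds) to write $y_\infty\in W^u(K')$ for some basic set $K'$, and then derives a contradiction from $K'\neq K$ by pushing the backward orbit into a small $W_\varepsilon^u(K')$ disjoint from $\overline{\Vcal}$. You instead analyse the $\alpha$-limit set of $y_\infty$ directly: you show that any $\alpha$-limit point is nonwandering and, after correcting for the integer-time trapping via the floor/remainder decomposition, lies in $\overline{\Vcal}\cap\Omega=K$. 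Your route is more self-contained in that it does not appeal to the partition $M=\bigsqcup_i W^u(K_i)$, and it makes the integer-versus-continuous-time issue explicit; the paper's route is shorter once that partition lemma is available and plugs more naturally into the Axiom~A machinery used elsewhere in the article.
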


\begin{proof}
   Let us prove the relation for the unstable manifold. The other one can be proved similarly if we replace $\varphi^t$ by $\varphi^{-t}$. By contradiction, let us assume we can find $\varepsilon > 0$ and a sequence of points 
  $$y_m \in \Vcal \cap \varphi^m (\Vcal) \: \mbox{ such that } \: d_g (y_m, W^u (K) \cap \Vcal) > \varepsilon$$
  for every $m \in \N$. By compactness of $M$, one can extract a subsequence $(y_{m_k})_k$ of $(y_m)_m$ which converges to a point $y_{\infty}$ as $k$ tends to $+\infty$. We have the implication
  $$ \forall k \in \N, \: d_g (y_{m_k}, W^u (K) \cap \Vcal) > \varepsilon   \Longrightarrow d_g (y_{\infty}, W^u (K) \cap \Vcal) \geq \varepsilon  .$$ Moreover, the sequence of unrevisited neighborhoods $\Vcal \cap \varphi^m (\Vcal)$ is decreasing with respect to $m \in \N$. So taking the closure, the sequence of compacts set $\overline{\Vcal} \cap \varphi^m (\overline{\Vcal})$ is also decreasing. The limit point $y_{\infty}$ belongs to $\overline{\Vcal} \cap \varphi^m (\overline{\Vcal})$ for every $m \in \N$ and it follows that
  $$
        y_{\infty} \in \bigcap_{m \in \N} \varphi^m (\overline{\Vcal}). 
  $$
  It remains to prove that $y_{\infty}$ lies in the unstable manifold of $K$. By contradiction, let us assume that $y_{\infty} \notin W^u (K)$. Thanks to Lemma \ref{partition of the manifold in stable manifolds} which decomposes $M$ into the unstable manifolds of the basic sets, there exists a basic set $K'$ distinct from $K$ such that 
  $$ y_{\infty} \in W^u (K').$$
  Now, let us fix $\varepsilon > 0$ sufficiently small so that $W_{\varepsilon}^u (K') \cap \overline{\Vcal} = \emptyset$. By definition, there exists $m \in \N$ such that $\varphi^{-m} (y_{\infty}) \in W_{\varepsilon}^u (K') \subset M \setminus \overline{\Vcal}$. Since $M \setminus \overline{\Vcal}$ is an open set, the point $\varphi^{-m} (y_p)$ does not belong to $\overline{\Vcal}$ for $p$ large enough. In particular, if we choose $p$ sufficiently large so that $p \geq m$ then we obtain that $\varphi^{-m} (y_p) \in \varphi^{-m} (\Vcal \cap \varphi^p (\Vcal)) \subset \Vcal$ and $\varphi^{-m} (y_p) \notin \overline{\Vcal}$. This gives the expected contradiction.
  \end{proof}

A direct consequence of this lemma is that 
\begin{equation}\label{eq: unrevisited and unstable manifold}
    \bigcap_{m \in \N} \varphi^m (\Vcal) = W^u (K) \cap \Vcal.
\end{equation}
Of course the previous lemma can be adapted for the stable manifold and it gives similarly
$$ \bigcap_{m \in \N} \varphi^{-m} (\Vcal) = W^s (K) \cap \Vcal.$$
Another way to look at these relations is to think about the \textit{exit time} of $\Vcal$. For every $x \in \Vcal$, we have
$$ \Card \{ m \in \N, \: \varphi^{m} (x) \in \Vcal \} = +\infty \Longleftrightarrow x \in W^u (K) \cap \Vcal $$
and similarly
$$ \Card \{ m \in \N, \: \varphi^{-m} (x) \in \Vcal \} = +\infty \Longleftrightarrow x \in W^s (K) \cap \Vcal .$$

\begin{enumerate}[resume*]
 \item Moreover if $\overline{\Vcal} \cap \Omega = K$ then \textbf{(P1) + (P2)} implies that the sequence $(\varphi^m (\Vcal) \cap \varphi^{-m} (\Vcal))_{m \in \N}$ converges uniformly to $$\left( W^s (K) \cap \Vcal \right) \cap \left( W^u (K) \cap \Vcal \right) = (W^s (K) \cap W^u (K) )\cap \Vcal = K $$ in the sense of Lemma \ref{lemme: cv nonrevisite vers variete stable et instable}.
\end{enumerate}

\begin{remark}
\begin{itemize}[align=left, leftmargin=*, noitemsep]
 \item Note that we used the transversality assumption twice for \textbf{(P3)}: one time for the existence of unrevisited neighborhoods and a second time for relation $W^s (K) \cap W^u (K) = K$. Also, \textbf{(P3)} is related with the local maximality of basic sets. Indeed, from \textbf{(P3)} we deduce
 $$ \bigcap_{m \in \N} \varphi^m (\Vcal) \cap \varphi^{-m} (\Vcal) = \bigcap_{m \in \Z} \varphi^m (\Vcal) = K,$$
 and thus we recover the local maximality of $K$:
$$ K \subseteq \bigcap_{t \in \R} \varphi^t (\Vcal) \subseteq \bigcap_{m \in \Z} \varphi^m (\Vcal) = K. $$ 
 \item We will see in Section \ref{section: Dynamical proofs} (Lemma \ref{lemma: equivalence filtration and unrevisited}) that the existence of unrevisited neighborhoods implies the existence of a filtration. 
\end{itemize}
\end{remark}

Let us present another result which describes the closure of the stable and unstable manifold of a basic set on an unrevisited neighborhood.

\begin{lemma}\label{lemma: clos. of stable man on unrevisited neigh.}
   Let $K$ be a basic set and $\Vcal$ be some unrevisited neighborhood of $K$ such that $\overline{\Vcal}\cap \Omega = K$. The following equalities hold:
   $$ \overline{W^s (K) \cap \Vcal} = W^s (K) \cap \overline{\Vcal} \quad \mbox{and} \quad \overline{W^u (K) \cap \Vcal} = W^u (K) \cap \overline{\Vcal} .$$
  \end{lemma}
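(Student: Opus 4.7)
The plan is to prove both equalities by double inclusion, and by symmetry (exchanging $\varphi^t \leftrightarrow \varphi^{-t}$) it is enough to treat the stable case.

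\textbf{Easy inclusion.} For $\overline{W^s(K) \cap \Vcal} \subseteq W^s(K) \cap \overline{\Vcal}$, I rely on the stable analogue of (\ref{eq: unrevisited and unstable manifold}), namely $W^s(K) \cap \Vcal = \bigcap_{m \in \N} \varphi^{-m}(\Vcal)$. Picking $x_n \in W^s(K) \cap \Vcal$ converging to $x$, each iterate $\varphi^m(x_n)$ sits in $\Vcal$, so by continuity $\varphi^m(x) \in \overline{\Vcal}$ for every $m \geq 0$. The forward orbit of $x$ therefore lives in the compact set $\overline{\Vcal}$, so its $\omega$-limit set is nonempty and, being contained in the nonwandering set $\Omega$, lies in $\overline{\Vcal} \cap \Omega = K$. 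This forces $x \in W^s(K)$, and $x \in \overline{\Vcal}$ is immediate.

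\textbf{Hard inclusion.} For $W^s(K) \cap \overline{\Vcal} \subseteq \overline{W^s(K) \cap \Vcal}$, take $x$ in the left-hand side. The case $x \in \Vcal$ is trivial, so assume $x \in \partial \Vcal$. Since $\varphi^t(x) \to K \subseteq \Vcal$ and $\Vcal$ is open, I first fix $\varepsilon > 0$ small enough that $W^s_\varepsilon(K) \subseteq \Vcal$, and then $T_0$ large and $z_0 \in K$ with $\varphi^{T_0}(x) \in W^s_\varepsilon(z_0)$. The local stable manifold $W^s_\varepsilon(z_0) \subseteq W^s(K) \cap \Vcal$ is a smooth submanifold through $\varphi^{T_0}(x)$, so I approximate $\varphi^{T_0}(x)$ by a sequence $w_n \in W^s_\varepsilon(z_0)$; then $y_n := \varphi^{-T_0}(w_n) \to x$ and $y_n \in W^s(K)$ by flow invariance.

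\textbf{Main obstacle.} The hard part will be to ensure that $y_n \in \Vcal$ for $n$ large: once this is established, $y_n \in W^s(K) \cap \Vcal$ and $y_n \to x$ close the argument. What is automatic is $\varphi^m(y_n) \in \Vcal$ for all $m \geq T_0$, since the forward orbit of $w_n$ stays in $W^s_\varepsilon(K) \subseteq \Vcal$; but because $x \in \partial \Vcal$, nothing a priori forces the pulled-back points $y_n$ into $\Vcal$. I plan to overcome this by exploiting the strong transversality assumption (\ref{Transversality assumption}) together with the fact that the unrevisited neighborhoods furnished by Proposition \ref{prop: Robbin existence of unrevisited neighborhoods} (and by the Conley construction recalled just below it) can be chosen with boundary transverse to the flow direction. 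Transversality of $\partial \Vcal$ to the flow, combined with the transversality of $W^u$ and $W^{so}$ at $x$, makes the submanifold $\varphi^{-T_0}(W^s_\varepsilon(z_0)) \subseteq W^s(z_0)$ cross $\partial \Vcal$ transversally at $x$; consequently the subset of $W^s_\varepsilon(z_0)$ whose $\varphi^{-T_0}$-image lies in $\Vcal$ is open and accumulates at $\varphi^{T_0}(x)$. Selecting $w_n$ from that open subset produces the required sequence $y_n \in W^s(K) \cap \Vcal$ converging to $x$. This transversality control near $\partial \Vcal$ is the technical core of the argument; alternatively one can pinch the decreasing open sets $\Vcal \cap \varphi^{-m}(\Vcal)$ onto $W^s(K) \cap \overline{\Vcal}$ via the uniform-convergence Lemma \ref{lemme: cv nonrevisite vers variete stable et instable} and a diagonal extraction, but either route must confront the same boundary issue.
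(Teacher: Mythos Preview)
Your argument for what you call the ``easy inclusion'' $\overline{W^s(K)\cap\Vcal}\subseteq W^s(K)\cap\overline{\Vcal}$ is correct and is essentially the paper's proof: the paper argues by contradiction (if the limit point $x_\infty$ were not in $W^u(K)$ then $x_\infty\in W^u(K')$ for some other basic set $K'$, and pulling back by $\varphi^{-m}$ contradicts the unrevisited property), while you go directly via the $\omega$-limit set, but the content is the same. This is in fact the \emph{only} inclusion the paper actually proves; it declares the reverse inclusion ``the only nontrivial thing to show is\dots'' and says nothing more about the other direction.

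Your treatment of the reverse inclusion $W^s(K)\cap\overline{\Vcal}\subseteq\overline{W^s(K)\cap\Vcal}$ has a genuine gap: you invoke transversality of $\partial\Vcal$ to the flow (and to the stable lamination), appealing to the specific Robbin/Conley constructions of Proposition~\ref{prop: Robbin existence of unrevisited neighborhoods}. But the lemma is stated for an \emph{arbitrary} unrevisited neighborhood satisfying $\overline{\Vcal}\cap\Omega=K$; no regularity of $\partial\Vcal$ is assumed, so this hypothesis is unwarranted and your boundary-crossing argument does not go through as written. That said, you are over-engineering: if you trace every application of this lemma in the paper --- Lemma~\ref{lemma: closure of local stable manifold on a basic set}, step~(3) in the proof of Lemma~\ref{lemma: equivalence filtration and unrevisited}, and the compactness argument of \S\ref{subsection : proof compactness result} --- only the inclusion $\overline{W^{s/u}(K)\cap\Vcal}\subseteq W^{s/u}(K)\cap\overline{\Vcal}$ is ever used. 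So the direction you struggle with is both the one the paper waves away and the one that is never actually needed.
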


 \begin{proof}
   Let us prove the equality for the unstable manifold, the other one is proven similarly if we replace $\varphi^{t}$ by $\varphi^{-t}$. The only nontrivial thing to show is the inclusion $\overline{W^u (K) \cap \Vcal} \subseteq W^u (K) \cap \overline{\Vcal}$. By contradiction, assume there exists a sequence $(x_p)_{p}$ of points in $W^u (K) \cap \Vcal$ which converges to an element $x_{\infty} \notin W^u (K) \cap \overline{\Vcal}$. Since $x_p \in \Vcal$ for all $p \in \N$, we must have $x_{\infty} \in \overline{\Vcal}$ and thus $x_{\infty} \notin W^u (K)$. Moreover, thanks to the decomposition of $M$ into the unstable manifold of the basic sets given by Lemma \ref{partition of the manifold in stable manifolds}, there exists a basic set $K'$ such that 
   $$ x_{\infty} \in W^u (K'). $$
   Now, let us choose $\varepsilon$ sufficiently small so that the following equality hold:
   $$ W_{\varepsilon}^u (K') \cap \overline{\Vcal} = \emptyset $$
   By definition of $W^u (K')$, there exists an integer $m \in \N$ such that $\varphi^{-m} (x_{\infty}) \in W_{\varepsilon}^u (K')$. Since $\varphi^{-m}
 (x_{p})$ converges to $\varphi^{-m} (x_{\infty}) \in M \setminus \overline{\Vcal}$ as $p \to + \infty$ and since $M \setminus \overline{\Vcal}$ is open, there exists $p_0 \in \N$ such that for every $p \geq p_0$,
 $$ \varphi^{-m} (x_p) \notin \Vcal. $$
 This gives the expected contradiction as $\Vcal$ is unrevisited and $x_p \in W^u (K) \cap \Vcal$.
 \end{proof} 
  
Eventhough we have $\overline{W^s (K)} = \cup_{j, K_j \leq K} W^s (K_j)$, we can deduce from the previous lemma that $\overline{W_{\varepsilon}^s (K)} \subset W^s (K)$ for $\varepsilon$ sufficiently small, see figure \ref{fig: closure of local stable manifold}.
     
\begin{lemma}\label{lemma: closure of local stable manifold on a basic set}
 For $\varepsilon \ll 1$, we have $\overline{W_{\varepsilon}^s (K)} \subseteq W_{2\varepsilon}^s (K)$.
\end{lemma}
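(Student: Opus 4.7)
The strategy is to extract a limit point using compactness of $K$, to identify it within a local weak stable manifold by passing the defining distance inequality to the limit, and then to use the local product decomposition on the hyperbolic basic set to place $y_\infty$ on a strong stable leaf of some point of $K$.

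Take $y_\infty \in \overline{W_\varepsilon^s(K)}$ and write it as a limit of a sequence $y_n \to y_\infty$ with $y_n \in W_\varepsilon^s(x_n)$ for some $x_n \in K$. Up to extracting a subsequence, the compactness of $K$ gives $x_n \to x_\infty \in K$. The defining condition $d_g(\varphi^t(y_n), \varphi^t(x_n)) < \varepsilon$ for every $t \geq 0$ passes to the limit thanks to the continuity of $\varphi^t$ and yields
\begin{equation*}
d_g(\varphi^t(y_\infty), \varphi^t(x_\infty)) \leq \varepsilon \qquad \forall t \geq 0,
\end{equation*}
so that $y_\infty$ lies in the closed local weak stable manifold $\overline{W_\varepsilon^{so}(x_\infty)}$. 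If $K$ is reduced to a hyperbolic fixed point the weak and strong stable manifolds coincide at $x_\infty$ and $y_\infty \in W^s(x_\infty) \subseteq W^s_{2\varepsilon}(K)$ directly; otherwise, by Remark~\ref{rk: fixed point isolated}, $K$ contains no fixed point and $\inf_K |V|_g > 0$.

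The stable manifold theorem applied uniformly over the compact basic set $K$ then provides, for $\varepsilon$ small enough, a local product decomposition of the form
\begin{equation*}
\overline{W_\varepsilon^{so}(x_\infty)} \subseteq \bigcup_{|\tau| \leq c\varepsilon} W^s_{c\varepsilon}\bigl(\varphi^\tau(x_\infty)\bigr),
\end{equation*}
where $c > 0$ depends only on the uniform hyperbolic splitting of $K$ (the angles between $E_o$, $E_s$, $E_u$ and the lower bound on $|V|_g$). Hence there exists $\tau_0$ with $|\tau_0| \leq c\varepsilon$ such that $y_\infty \in W^s(z)$ with $z := \varphi^{\tau_0}(x_\infty) \in K$ by invariance of $K$. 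A triangle inequality combined with the uniform bound $d_g(\varphi^t(x_\infty), \varphi^{t+\tau_0}(x_\infty)) \leq |\tau_0|\sup_M |V|_g$ controls $d_g(\varphi^t(y_\infty), \varphi^t(z))$ by a constant multiple of $\varepsilon$; taking $\varepsilon$ sufficiently small so that the chart distortions are absorbed and the prefactor is at most $2$ gives $y_\infty \in W^s_{2\varepsilon}(z) \subseteq W^s_{2\varepsilon}(K)$.

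The main obstacle is the product decomposition step: one must identify $y_\infty$ as belonging to the strong stable manifold (not merely the weak one) of some point of $K$, and track the geometric constants carefully enough to close up at the factor $2\varepsilon$. This relies on the uniform continuity of the strong stable foliation over the hyperbolic basic set together with its transversality to the flow direction, which are available precisely because $K$ is compact, hyperbolic, and free of fixed points.
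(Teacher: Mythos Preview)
Your approach is correct and takes a route genuinely different from the paper's. The paper's proof is a two-line appeal to its Lemma~\ref{lemma: clos. of stable man on unrevisited neigh.}: one picks an unrevisited neighborhood $\Vcal$ of $K$ with $\overline{\Vcal}\cap\Omega = K$ and $B_g(K,2\varepsilon)\subset\Vcal$, and that lemma gives $\overline{W^s(K)\cap\Vcal}=W^s(K)\cap\overline{\Vcal}$, which places any limit point back in $W^s(K)$. In other words, the paper leverages the unrevisited-neighborhood machinery that is the central dynamical tool of the article to obtain the key fact that $y_\infty$ lies on a \emph{strong} stable leaf of some point of $K$. You obtain the same conclusion through the classical local product structure of the weak stable lamination over a fixed-point-free hyperbolic set, which is more self-contained and bypasses unrevisited neighborhoods entirely. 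Each route has its merits: the paper's fits the surrounding narrative, yours is more portable.

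One quibble: your closing claim, that ``taking $\varepsilon$ sufficiently small \ldots\ the prefactor is at most $2$'', is not justified as written. Your triangle inequality gives
\[
d_g\bigl(\varphi^t(y_\infty),\varphi^t(z)\bigr)\le \varepsilon + |\tau_0|\sup_M|V|_g,\qquad |\tau_0|\lesssim \varepsilon/\inf_K|V|_g,
\]
so the prefactor is of order $1+\sup_M|V|_g/\inf_K|V|_g$, which depends only on the geometry of $K$ and does \emph{not} shrink as $\varepsilon\to 0$; it may well exceed $2$. What your argument cleanly establishes is $\overline{W_\varepsilon^s(K)}\subseteq W_{C\varepsilon}^s(K)$ for a fixed geometric constant $C$, and that is all that is used downstream (in Lemma~\ref{convergence uniforme vers K} and in the extension of the distributions). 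The paper's terse proof is no more explicit about recovering the specific constant $2$; one can do so after passing to an adapted metric as in \S\ref{subsubsection adapted metric}, but not in an arbitrary~$g$.
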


\begin{proof}Let us consider an unrevisited neighborhood $\Vcal$ of $K$ such that $\overline{\Vcal} \cap \Omega = K$ and assume that $\varepsilon$ is chosen sufficiently small so that $W_{2\varepsilon}^s (K)$ is well defined and  
 $$ B_g (K,2 \varepsilon ) \subset \Vcal, $$
 where $B_g (K,2 \varepsilon )$ denotes the set of point at (geodesic) distance at most $2 \varepsilon$ of $K$. Therefore, we can deduce the result from Lemma \ref{lemma: clos. of stable man on unrevisited neigh.}.
\end{proof}

\begin{figure}[!t]
\centering
\def\svgscale{0.7}
 \executeiffilenewer{closure_local_stable_man.svg}{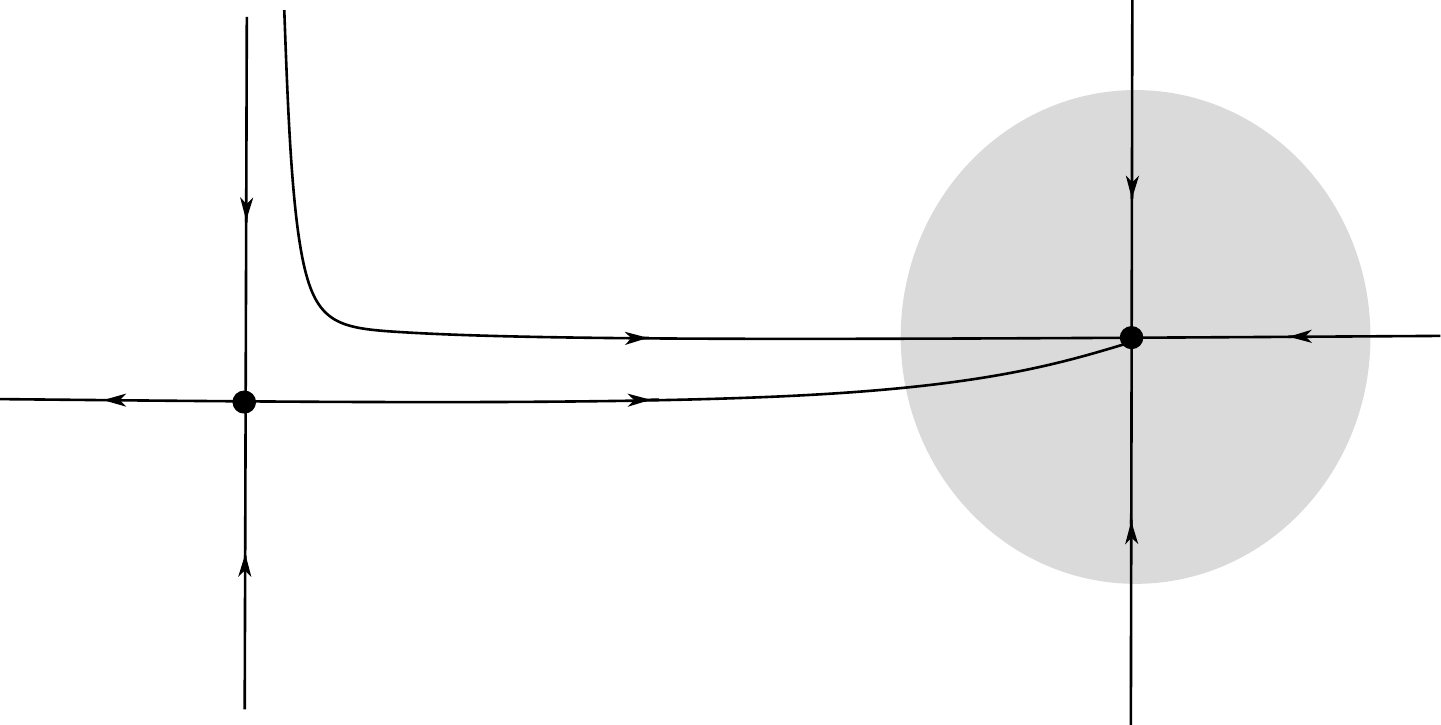}%
 {inkscape -z -D --file=closure_local_stable_man.svg %
 --export-pdf=closure_local_stable_man.pdf --export-latex}%
\begingroup%
  \makeatletter%
  \providecommand\color[2][]{%
    \errmessage{(Inkscape) Color is used for the text in Inkscape, but the package 'color.sty' is not loaded}%
    \renewcommand\color[2][]{}%
  }%
  \providecommand\transparent[1]{%
    \errmessage{(Inkscape) Transparency is used (non-zero) for the text in Inkscape, but the package 'transparent.sty' is not loaded}%
    \renewcommand\transparent[1]{}%
  }%
  \providecommand\rotatebox[2]{#2}%
  \newcommand*\fsize{\dimexpr\f@size pt\relax}%
  \newcommand*\lineheight[1]{\fontsize{\fsize}{#1\fsize}\selectfont}%
  \ifx\svgwidth\undefined%
    \setlength{\unitlength}{414.83168234bp}%
    \ifx\svgscale\undefined%
      \relax%
    \else%
      \setlength{\unitlength}{\unitlength * \real{\svgscale}}%
    \fi%
  \else%
    \setlength{\unitlength}{\svgwidth}%
  \fi%
  \global\let\svgwidth\undefined%
  \global\let\svgscale\undefined%
  \makeatother%
  \begin{picture}(1,0.50319354)%
    \lineheight{1}%
    \setlength\tabcolsep{0pt}%
    \put(0,0){\includegraphics[width=\unitlength,page=1]{closure_local_stable_man.pdf}}%
    \put(0.12371183,0.26355969){\makebox(0,0)[lt]{\lineheight{1.25}\smash{\begin{tabular}[t]{l}$K_i$\end{tabular}}}}%
    \put(0.80241624,0.30597322){\makebox(0,0)[lt]{\lineheight{1.25}\smash{\begin{tabular}[t]{l}$K_j$\end{tabular}}}}%
    \put(0.64955725,0.3266381){\makebox(0,0)[lt]{\lineheight{1.25}\smash{\begin{tabular}[t]{l}$W_{\varepsilon}^s(K_j)$\end{tabular}}}}%
  \end{picture}%
\endgroup%

  \caption{Illustration of Lemma \ref{lemma: closure of local stable manifold on a basic set}.}
  \label{fig: closure of local stable manifold} 
\end{figure}

Now, let us present another property of the local stable manifold of a basic set which also uses unrevisited neighborhoods. 

  \begin{lemma}[Uniform convergence]\label{convergence uniforme vers K}Let $K$ be a basic set. For every $0 < \varepsilon_2 < \varepsilon_1 \ll 1$, there exists a constant $T:= T(\varepsilon_1, \varepsilon_2) > 0$ such that 
 $$
  \varphi^T \left(  W_{\varepsilon_1}^s (K) \right) \subseteq W_{\varepsilon_2}^s (K), \quad  \varphi^{-T} \left(  W_{\varepsilon_1}^u (K) \right) \subseteq W_{\varepsilon_2}^u (K)
 $$
 \end{lemma}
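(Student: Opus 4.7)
The plan is to invoke the uniform exponential contraction on local stable manifolds provided by the Hadamard–Perron theorem applied to the hyperbolic basic set $K$. Since $K$ is uniformly hyperbolic with splitting $TM|_K = E_s \oplus E_o \oplus E_u$, there exist constants $C > 0$, $\lambda > 0$ and $\varepsilon_0 > 0$, depending only on $K$, such that for every $z \in K$ and every $x \in W_{\varepsilon_0}^s(z)$,
$$d_g(\varphi^t(x), \varphi^t(z)) \leq C e^{-\lambda t}\, d_g(x,z) \qquad \forall t \geq 0,$$
and the analogous statement for unstable manifolds with $\varphi^{-t}$. The key point is that $C$, $\lambda$ and $\varepsilon_0$ are \emph{uniform} in $z \in K$, which is exactly what uniform hyperbolicity buys us.

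Assuming $\varepsilon_1 \leq \varepsilon_0$, I would choose $T = T(\varepsilon_1,\varepsilon_2)$ large enough so that $C e^{-\lambda T} \varepsilon_1 < \varepsilon_2$. Given $x \in W_{\varepsilon_1}^s(K)$, pick $z \in K$ with $x \in W_{\varepsilon_1}^s(z)$, so that $d_g(x,z) < \varepsilon_1$. Then for every $s \geq 0$,
$$d_g\bigl(\varphi^{s}(\varphi^T(x)),\, \varphi^{s}(\varphi^T(z))\bigr) = d_g\bigl(\varphi^{T+s}(x),\varphi^{T+s}(z)\bigr) \leq C e^{-\lambda(T+s)} \varepsilon_1 \leq C e^{-\lambda T}\varepsilon_1 < \varepsilon_2.$$
Since $\varphi^T(z) \in K$, this exactly says that $\varphi^T(x) \in W_{\varepsilon_2}^s(\varphi^T(z)) \subseteq W_{\varepsilon_2}^s(K)$, proving the first inclusion. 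The second inclusion, for unstable manifolds, follows by replacing $\varphi^t$ with $\varphi^{-t}$ and applying the analogous statement.

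I do not expect a real technical obstacle here: the content of the lemma is essentially a uniform version of the definition of $W_\varepsilon^s(K)$. The only delicate point is the \emph{uniformity} of the contraction constants over $z \in K$, which one might worry is not immediate. If one wanted to avoid quoting the quantitative Hadamard–Perron theorem, an alternative (soft) route would be by contradiction: if no such $T$ works, extract sequences $x_n \in W_{\varepsilon_1}^s(z_n)$, $t_n \to +\infty$ and $s_n \geq 0$ with $d_g(\varphi^{t_n+s_n}(x_n),\varphi^{t_n+s_n}(z_n)) \geq \varepsilon_2$; setting $y_n = \varphi^{t_n+s_n}(x_n)$ and $w_n = \varphi^{t_n+s_n}(z_n) \in K$, pass to a limit $(y_\infty,w_\infty)$ with $d_g(y_\infty,w_\infty) \geq \varepsilon_2$ but $d_g(\varphi^\tau(y_\infty),\varphi^\tau(w_\infty)) \leq \varepsilon_1$ for all $\tau \in \R$; conclude from local maximality of $K$ that $y_\infty \in K$, and then from expansiveness of the flow on the basic set that $y_\infty$ must lie on the orbit of $w_\infty$ within distance $O(\varepsilon_1)$, contradicting $d_g(y_\infty,w_\infty) \geq \varepsilon_2$ once $\varepsilon_1$ is small enough. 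The direct quantitative argument is cleaner, so I would favor it.
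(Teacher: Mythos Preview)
Your primary argument is correct. The uniform exponential contraction along local stable manifolds with constants independent of $z\in K$ is a standard consequence of the Hadamard--Perron theorem on a compact hyperbolic set, and once you have it the inclusion follows exactly as you wrote.

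This is, however, not the route the paper takes. The paper gives a soft compactness argument by contradiction, resting on the earlier Lemma~\ref{lemma: closure of local stable manifold on a basic set} (which says $\overline{W_{\varepsilon}^s(K)}\subseteq W_{2\varepsilon}^s(K)$) together with the definition of $W_{\varepsilon}^s(K)$ as a union of local stable leaves. Roughly: if no $T$ worked, one would find $y_n:=\varphi^n(x_n)\in W_{\varepsilon_1}^s(K)\setminus W_{\varepsilon_2}^s(K)$, extract a limit $y_\infty\in W_{2\varepsilon_1}^s(K)\subset W^s(K)$ using the closure lemma, and derive a contradiction from the fact that $y_\infty$ lies on an actual stable leaf of $K$. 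Your quantitative approach is more direct and gives an explicit $T$, but requires citing the full strength of the stable manifold theorem; the paper's approach stays internal to the lemmas already developed in the section and avoids quoting the exponential rate. Your alternative ``soft'' route via local maximality and expansiveness is also valid but heavier than necessary: the paper's closure lemma already packages the relevant compactness, so one does not need to invoke expansiveness separately.
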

 
 \begin{proof}
  This can be proved by contradiction using the Lemma \ref{lemma: closure of local stable manifold on a basic set} and the definition \ref{eq: local decomposition of the stable manifold of a basic set} of the local stable manifold of $K$.
 \end{proof}

   \section{Escape functions for Axiom A flows}\label{section Escape function}
   
   Following the strategy of Faure and Sjöstrand \cite{FS}, we will construct some function, called an escape function, which will allow us to define anisotropic Sobolev spaces on which the Lie derivative operator $-\Lie_V$ has nice spectral properties. This function is related to the construction of energy functions (also called Lyapunov functions) whose existence will be stated in paragraph \ref{subsection: energy function for the hamiltonian flow} and proved in Sections \ref{section: Dynamical proofs} and \ref{section : Compactness result and energy functions for the Hamiltonian flow}.
   
   According to the strategy of \cite{FS}, we need to construct an energy function on the unitary cotangent bundle $S^*M$ which is increasing along the (projected) Hamiltonian flow $\widetilde{\Phi}^t$. We choose here to split its construction into the one of two energy functions that are slightly easier to build independently: one on the base manifold $M$ and one on the fibers of $S^* M$. 
   
   \subsection{Energy functions on $M$}
   
    It is known from the work of Conley \cite{conley} that any continuous flow on a compact manifold behaves like a gradient flow outside an invariant set called the \textit{chain recurrent set} (see for instance \cite{palis-demelo} for a definition). For example, it is the case for gradient flows of Morse functions where the chain recurrent set equals the set of hyperbolic fixed points and an energy function is given by the Morse function itself. Axiom A flows are type of flows where the chain recurrent set equals the nonwandering set. The following proposition was originally proved by Conley \cite{conley} and Wilson \cite{wilson} (in a slightly better version). Yet, for the sake of completness and since its proof will be very instructive for the construction of energy fonctions on $S^* M$, we will provide another proof using filtrations and unrevisited neighborhoods in Section \ref{section: Dynamical proofs} for the next proposition and in Section \ref{section : Compactness result and energy functions for the Hamiltonian flow} for the analysis on $S^*M$.
   
    \begin{prop}[Energy function for Axiom A flows]\label{prop energy function for Axiom A flows}
	Let $\varphi^t$ be an Axiom A flow which satisfies the transversality assumption (\ref{Transversality assumption - first}). For every $\varepsilon > 0$ and for every family of pairwise distinct real numbers $(\lambda_i)_{1\leq i \leq N}$ compatible with the graph structure in the sense that 
	$$ K_i \leq K_j \Longrightarrow \lambda_i \leq \lambda_j,$$
	there exists an energy function $E \in \Ccal^{\infty}(M)$, $\varepsilon$-neighborhoods $ \Ncal_i$ of $K_i$ and a constant $\eta > 0$ such that:
 $$ \Lie_V E \geq 0 \mbox{ on } M, \: \mbox{and } \Lie_V E >  \eta \mbox{ on } M \setminus \left( \cup_{i = 1}^N \Ncal_i \right).$$
Moreover, for all $1 \leq i \leq N$, the map $E$ is close to $\lambda_i$ on each $\Ncal_i$ in the sense that 
$$
	E = \lambda_i \mbox{ on } K_i \mbox{ and } \sup_{x \in \Ncal_i} |E(x) - \lambda_i | < \varepsilon ( \lambda_N - \lambda_1).
$$
\end{prop}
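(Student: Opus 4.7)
The strategy is to construct $E$ by flow-averaging a step function adapted to the filtration for $\varphi^{-1}$, exploiting the fact that each $M \setminus \Ocal_i^-$ is forward-invariant under $\varphi^t$.

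First, I would reorder the basic sets so that $\lambda_1 < \lambda_2 < \cdots < \lambda_N$; this is possible because the hypothesis $K_a \leq K_b \Rightarrow \lambda_a \leq \lambda_b$ ensures that the permutation sorting the $\lambda_i$'s in increasing order is again a total order compatible with Smale's. Using Proposition \ref{prop: Robbin existence of unrevisited neighborhoods} and the equivalence between unrevisited neighborhoods and filtrations announced for Section \ref{section: Dynamical proofs}, I would produce a filtration $(\Ocal_i^-)_{0 \leq i \leq N}$ for $\varphi^{-1}$ associated to this labeling. By Lemma \ref{lemme: cv nonrevisite vers variete stable et instable} applied to the unrevisited neighborhoods $\Vcal_i := \Ocal_i^- \cap \Ocal_{N-i+1}^+$, and replacing $\Vcal_i$ by $\Vcal_i \cap \varphi^{m}(\Vcal_i)\cap \varphi^{-m}(\Vcal_i)$ for $m$ sufficiently large, one may assume $\Vcal_i \subset B_g(K_i, \varepsilon)$ and $\Vcal_i \subset \Ocal_i^- \setminus \overline{\Ocal_{i-1}^-}$.

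Second, define the step function $g\colon M \to \R$ by $g(x) := \lambda_i$ for $x \in \Ocal_i^- \setminus \Ocal_{i-1}^-$. Since $\varphi^{-1}(\Ocal_i^-) \subseteq \Ocal_i^-$, an orbit that leaves $\Ocal_i^-$ never returns; combined with the increasing character of $(\lambda_i)$, this shows that $t \mapsto g(\varphi^t(x))$ is non-decreasing on $\R$ for every $x \in M$. Choose a non-negative $\Ccal^\infty$ density $\rho$ on $\R$ with compact support in $(0, T)$ and $\int \rho = 1$, and set
$$E(x) := \int_0^T g(\varphi^t(x))\, \rho(t)\, dt.$$
Differentiating along the flow and integrating by parts yields $\Lie_V E(x) = -\int_0^T g(\varphi^t(x))\,\rho'(t)\, dt$, which a Riemann--Stieltjes integration by parts (justified by the monotonicity of $g \circ \varphi^\cdot$) rewrites as $\int_0^T \rho\,d\bigl[g(\varphi^\cdot(x))\bigr] \geq 0$; a single jump of this step function from $\lambda_i$ to $\lambda_j$ at time $t^\ast \in (0, T)$ contributes exactly $(\lambda_j - \lambda_i)\rho(t^\ast) > 0$.

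Taking $\Ncal_j := \varphi^{m}(\Vcal_j) \cap \varphi^{-m}(\Vcal_j)$ with $m \geq T$ forces, by unrevisitedness of $\Vcal_j$, that $\varphi^{[0,T]}(x) \subset \Vcal_j \subset \Ocal_j^- \setminus \overline{\Ocal_{j-1}^-}$ for all $x \in \Ncal_j$; hence $g \circ \varphi^\cdot \equiv \lambda_j$ on $[0, T]$ and $E \equiv \lambda_j$ on $\Ncal_j$, which gives at once $E(K_j) = \lambda_j$ and the required closeness. For the strict lower bound, a point $x \in M \setminus \bigcup_j \Ncal_j$ cannot have its forward orbit trapped in a single layer $\Ocal_i^- \setminus \Ocal_{i-1}^-$ for arbitrarily long: the accumulation of $\varphi^t(x)$ on $\Omega = \sqcup_j K_j$ would force it into some $\Vcal_j$ and, by unrevisitedness, keep it there, eventually putting $x \in \Ncal_j$. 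Picking $T$ large enough ensures that $\varphi^{[0,T]}(x)$ must cross at least one $\dd \Ocal_i^-$, and a compactness argument on $M \setminus \bigcup_j \Ncal_j$ delivers the uniform bound $\eta > 0$.

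The main obstacle is smoothness: $g$ is only a step function, so the integral above defines a continuous but not $\Ccal^\infty$ function $E$. The remedy is to replace $g$ by a smooth approximation $\tilde g$ which still satisfies that $\tilde g \circ \varphi^\cdot$ is non-decreasing, built by interpolating across each boundary $\dd \Ocal_i^-$ in flow-box coordinates. Such flow-boxes exist in a collar of $\dd \Ocal_i^-$ because no basic set meets $\dd \Ocal_i^-$: indeed $K_l \subseteq \Ocal_i^-$ for $l \leq i$ and $K_l \cap \overline{\Ocal_i^-} = \emptyset$ for $l > i$ (the latter from $K_l \cap \overline{\Ocal_{l-1}^-} = \emptyset$ combined with $\overline{\Ocal_i^-} \subseteq \overline{\Ocal_{l-1}^-}$). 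This flow-adapted mollification is the technical heart of the argument, which is why the complete proof is deferred to Section \ref{section: Dynamical proofs} and why an analogous construction is required for the induced Hamiltonian dynamics on $S^* M$ in Section \ref{section : Compactness result and energy functions for the Hamiltonian flow}.
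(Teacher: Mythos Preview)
Your approach differs from the paper's. You flow-average a single step function $g$ adapted to the filtration; the paper instead splits the problem into $N-1$ attractor--repeller pairs $\bigl(\cup_{k\geq j} W^u(K_k),\,\cup_{i<j} W^s(K_i)\bigr)$, applies the Faure--Sj\"ostrand Lemma~\ref{lemma: energy function for attractor-repeller model} to each one (symmetric $[-T,T]$ averaging of a \emph{smooth} cutoff $\phi_j$) to produce $E_j\in\Ccal^\infty(M;[0,1])$ with $\Lie_V E_j\geq 0$ everywhere and $\Lie_V E_j>\eta_0$ outside $\Wcal_j^-\cup\Wcal_j^+$, and then sets $E=\lambda_1+\sum_{j\geq 2}(\lambda_j-\lambda_{j-1})E_j$. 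Since your step function is exactly $g=\lambda_1+\sum_{j\geq 2}(\lambda_j-\lambda_{j-1})\mathbf{1}_{M\setminus\Ocal_{j-1}^-}$, the two constructions are close cousins, differing mainly in whether one smooths before or after summing and in the symmetry of the averaging window.

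There is, however, a genuine gap in your strict lower bound. Your claim that every $x\notin\cup_j\Ncal_j$ has $\varphi^{[0,T]}(x)$ crossing some $\partial\Ocal_i^-$ fails: take $x\in\partial\Ocal_i^-\subset M\setminus\Ocal_i^-$. By forward invariance of $M\setminus\Ocal_i^-$ the map $t\mapsto g(\varphi^t(x))$ is constant for $t\geq 0$, so your formula gives $\Lie_V E(x)=0$; yet such an $x$ lies in some $W^u(K_a)\cap W^s(K_b)$ with $a\leq i<b$, and then $\varphi^{-m}(x)\to K_a$ excludes $x$ from $\Ncal_b$ while $\varphi^{m}(x)\to K_b$ excludes $x$ from $\Ncal_a$, for every large $m$. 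The asymmetry of the window $[0,T]$ is the culprit: the only crossing sits at $t^\ast\leq 0$, where your weight $\rho$ vanishes. The paper sidesteps this via the symmetric $[-T,T]$ average built into Lemma~\ref{lemma: energy function for attractor-repeller model}, which yields $\Lie_V E_j(x)=\frac{1}{2T}\bigl[\phi(\varphi^T(x))-\phi(\varphi^{-T}(x))\bigr]$ and controls both time directions at once; your proposed flow-box mollification of $g$ is then precisely the replacement of each indicator by a smooth $\phi_j$, so once the window is symmetrized your scheme collapses to the paper's.
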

  
  \subsection{Energy functions for the Hamiltonian flow}\label{subsection: energy function for the hamiltonian flow}
  
Let us define the following $\widetilde{\Phi}^t$-invariant subset of $S^*M$:
 
\begin{align*}
 \begin{split}
  \Sigma_{uo} &:= \bigcup_{x \in M} \kappa \left(E_{so}^* (x)\right), \qquad \Sigma_{s}:= \bigcup_{x \in M}  \kappa \left(E_u^* (x)\right),\\
  \Sigma_{u} &:= \bigcup_{x \in M} \kappa \left(E_s^* (x)\right), \qquad \Sigma_{so}:= \bigcup_{x \in M}  \kappa \left(E_{uo}^* (x)\right),
 \end{split}
 \end{align*}
 where $\kappa$ denotes the projection on the unitary cotangent bundle defined in (\ref{eq : definition pi et kappa}).
They will be our basic ingredients to construct energy functions on $S^*M$. Indeed, following the ideas of Faure-Sjöstrand \cite{FS} and Dang-Rivière \cite{DR0, DRI}, we will see that $(\Sigma_{uo},\Sigma_{s})$ and $(\Sigma_{u},\Sigma_{so})$ are both a couple of repelling and attracting compact invariant sets for the Hamiltonian flow $\widetilde{\Phi}^t$. It will be enough to construct an energy function on the fiber. First, let us recall that our \textit{transversality assumption} implies that 
$$\Sigma_{uo} \cap \Sigma_{s} = \emptyset = \Sigma_{u} \cap \Sigma_{so}.$$
The following lemma proved in \S \ref{subsection : proof Lemma attracting and repelling sets for the Hamiltonian flow} tells us that they are indeed attracting and repelling sets for the Hamiltonian flow:
 
 \begin{lemma}\label{lemma attractor and repeller on the phase space}
 For every $(x,\xi) \in S^* M \setminus \left( \Sigma_{s} \cup \Sigma_{uo} \right)$, we have
 $$ d_{S^* M} \left( \widetilde{\Phi}^{t} (x,\xi), \Sigma_{s} \right) \underset{t \to +\infty}{\longrightarrow } 0 \mbox{ and } 
 d_{S^* M} \left( \widetilde{\Phi}^{-t} (x,\xi), \Sigma_{uo} \right) \underset{t \to +\infty}{\longrightarrow } 0.$$
 Similarly, for every $(x,\xi) \in S^* M \setminus \left( \Sigma_{so} \cup \Sigma_{u} \right)$, we have
 $$ d_{S^* M} \left( \widetilde{\Phi}^{t} (x,\xi), \Sigma_{so} \right) \underset{t \to +\infty}{\longrightarrow } 0 \mbox{ and } 
 d_{S^* M} \left( \widetilde{\Phi}^{-t} (x,\xi), \Sigma_{u} \right) \underset{t \to +\infty}{\longrightarrow } 0 .$$
\end{lemma}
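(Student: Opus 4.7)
The statement splits into two symmetric halves---forward convergence to $\Sigma_s$ and backward convergence to $\Sigma_{uo}$---and I will focus on the forward one, the other being identical after reversing time and swapping the roles of stable and unstable objects.

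Fix $(x,\xi)\in S^*M\setminus(\Sigma_s\cup\Sigma_{uo})$. The first step is to localize near a single basic set. By the partition Lemma~\ref{lemma: spectral decomposition of the manifold}, there is a unique basic set $K_j$ with $x\in W^s(K_j)$. Choose an unrevisited neighborhood $\Vcal$ of $K_j$ so small that $\overline{\Vcal}\cap\Omega=K_j$. Lemma~\ref{lemme: cv nonrevisite vers variete stable et instable} then provides a time $T_0>0$ such that $\varphi^t x\in\Vcal$ for all $t\geq T_0$ and $d_g(\varphi^t x,K_j)\to 0$, so the whole subsequent analysis can be carried out in $\Vcal$.

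The core dynamical input comes from uniform hyperbolicity of $\Phi^t$ above $K_j$: the decomposition $T^*M|_{K_j}=E_u^*\oplus E_s^*\oplus E_o^*$ is continuous, $\Phi^t$-invariant, and expanding/contracting/bounded on its three pieces. The standard cone construction of hyperbolic theory provides, after possibly shrinking $\Vcal$, a thin open conic neighborhood $\mathcal{C}_u^*\subset T^*\Vcal\setminus 0$ of $E_u^*|_{K_j}$ that is forward-invariant under $\Phi^t$ along orbits staying in $\Vcal$ and on which $|\Phi^t\eta|\geq C^{-1}e^{\lambda t}|\eta|$. Setting $\mathcal{C}_{so}^*$ to be the fiberwise closed complement of $\mathcal{C}_u^*$, one gets backward invariance $\Phi^{-t}(\mathcal{C}_{so}^*|_{\varphi^t y})\subseteq\mathcal{C}_{so}^*|_y$ for orbits $\{\varphi^s y\}_{0\leq s\leq t}\subseteq\Vcal$, together with the fundamental cone-shrinking identity
$$\bigcap_{t\geq 0}\Phi^{-t}\bigl(\mathcal{C}_{so}^*|_{\varphi^t y}\bigr)=E_{so}^*|_y.$$
This last identity uses both the exponential expansion on $E_u^*$ (so that anything with nonzero $E_u^*$ component eventually enters $\mathcal{C}_u^*$) and the compatibility of the partition-based definition of $E_{so}^*$ outside $\Omega$ with its hyperbolic counterpart on $K_j$.

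The conclusion is then obtained by a dichotomy on $\tilde\xi_t:=\widetilde{\Phi}^t(\kappa(\xi))$ for $t\geq T_0$. If $\tilde\xi_{t_1}\in\mathcal{C}_u^*$ for some $t_1\geq T_0$, forward invariance and exponential expansion imply that $\tilde\xi_t\in\mathcal{C}_u^*$ for $t\geq t_1$ and that $d_{S^*M}(\tilde\xi_t,\kappa(E_u^*(\varphi^t x)))$ decays exponentially; since $\kappa(E_u^*(\varphi^t x))\subseteq\Sigma_s$ by definition, this yields the wanted convergence. Otherwise $\tilde\xi_t\in\mathcal{C}_{so}^*|_{\varphi^t x}$ for every $t\geq T_0$; backward invariance then gives $\tilde\xi_{T_0}=\Phi^{-(t-T_0)}\tilde\xi_t\in\Phi^{-(t-T_0)}(\mathcal{C}_{so}^*|_{\varphi^t x})$ for every $t\geq T_0$, so by the cone-shrinking identity $\tilde\xi_{T_0}\in\kappa(E_{so}^*(\varphi^{T_0}x))\subseteq\Sigma_{uo}$. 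The $\widetilde{\Phi}^t$-invariance of $\Sigma_{uo}$ then forces $(x,\xi)\in\Sigma_{uo}$, contradicting our hypothesis and completing the forward part. The backward half follows by applying the same argument to the reversed flow, exchanging $\Sigma_s\leftrightarrow\Sigma_{uo}$ and $\Sigma_u\leftrightarrow\Sigma_{so}$.

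The main technical obstacle is the extension of the hyperbolic cone field from $K_j$ to the unrevisited neighborhood $\Vcal$: continuous dependence and uniform hyperbolicity on $K_j$ carry the cones to a neighborhood, but the presence of the neutral direction $E_o^*$, which enjoys no exponential behavior under $\Phi^t$, requires the cone around $E_{so}^*$ to be chosen with care. One must also verify that the partition-based definition of $E_{so}^*$ on $\Vcal\setminus K_j$ (using $W^s(x_+)$ with $x_+\in K_j$) agrees with the hyperbolic extension used in the cone construction, which is a consequence of the continuous dependence of the stable foliation of $K_j$ on the base point together with the transversality assumption.
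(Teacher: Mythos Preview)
Your approach is essentially the same as the paper's: localize to the basic set $K_j$ with $x\in W^s(K_j)$, then use cone estimates from the hyperbolic splitting to force $\widetilde{\Phi}^t(x,\xi)$ toward $\kappa(E_u^*)\subseteq\Sigma_s$. The paper argues directly rather than by dichotomy---since $(x,\xi)\notin\Sigma_{uo}$ and the extended bundle $\widetilde{E}_{so}^*$ coincides with the partition-based $E_{so}^*$ on $W^s(K_j)\cap\Vcal$, one has $\widetilde{\Phi}^T(x,\xi)\notin\Ccal_{so}^\delta$ for some $\delta>0$, and Corollary~\ref{corollary big lemma} then yields $\widetilde{\Phi}^{T+2m}(x,\xi)\in\Ccal_{u}^{\delta'}$ for every $\delta'>0$ once $m$ is large.

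The one point that deserves emphasis is that what you call the ``standard cone construction'' is exactly the content of Lemma~\ref{stabilite voisinage conique tilde pour Phi1} and Corollary~\ref{corollary big lemma}, and it is not quite standard: the extended distributions $\widetilde{E}_u^*,\widetilde{E}_s^*,\widetilde{E}_o^*$ are \emph{not} $\Phi^t$-invariant on $\Vcal\setminus K_j$, so forward invariance of $\Ccal_u^\delta$ only holds on $\Vcal\cap\varphi^m(\Vcal)$ for $m$ large (near $W^u(K_j)$), not on all of $\Vcal$. Your claim of forward invariance ``along orbits staying in $\Vcal$'' is therefore not literally true at the entry of the orbit into $\Vcal$; the paper handles this by the two-step mechanism of Corollary~\ref{corollary big lemma} (first flow forward long enough to reach $\varphi^{m_\delta}(\Vcal)\cap\Vcal$, then iterate the cone inclusion). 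You correctly flag this as ``the main technical obstacle'' in your last paragraph, but to make the argument complete you should either supply that analysis or invoke Corollary~\ref{corollary big lemma} directly.
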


Moreover, contrary to Anosov flows for which it is rather immediate, we need to make sure that these sets are compact sets. The next proposition is similar to the compactness result of Dang and Rivière \cite[lem. 3.7, p.15]{DR0}. The proof is given in \S \ref{subsection : proof compactness result}.
 
 \begin{prop}[Compactness]\label{prop compactness}Let us assume that $\varphi^t$ satisfies the tranversality assumption (\ref{Transversality assumption}). Then, the subsets
 $\Sigma_{u}$, $\Sigma_{uo}$, $\Sigma_{so}$ and $\Sigma_{s}$ of $S^*M$ which are $\widetilde{\Phi}^t$-invariant compact sets.
 \end{prop}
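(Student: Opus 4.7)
The plan is as follows. Since $S^*M$ is compact and each of the four sets is visibly $\widetilde{\Phi}^t$-invariant (from the $\Phi^t$-invariance of $E_s^*,E_u^*,E_{so}^*,E_{uo}^*$ together with the equivariance of $\kappa$), it suffices to prove closedness. By the time-reversal symmetry $\varphi^t\leftrightarrow\varphi^{-t}$ the cases of $\Sigma_u$ and $\Sigma_s$ are interchangeable, and $\Sigma_{so},\Sigma_{uo}$ differ from $\Sigma_u,\Sigma_s$ only by an additional continuous $\Phi^t$-invariant line (the flow-dual covector direction), so one may concentrate on proving that $\Sigma_u=\bigcup_{x\in M}\kappa(E_s^*(x))$ is closed.

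Take a convergent sequence $(x_n,\xi_n)\in\Sigma_u$ with limit $(x_\infty,\xi_\infty)\in S^*M$; the goal is $\xi_\infty\in E_s^*(x_\infty)$. Using the finite stratification of $M$ by $W^u(K_i)\cap W^s(K_j)$, one extracts so that each $x_n$ lies in a fixed stratum $W^u(K_i)\cap W^s(K_j)$ and $x_\infty$ in a fixed stratum $W^u(K_{i'})\cap W^s(K_{j'})$. Smale's formula $\overline{W^s(K_j)}=\bigcup_{K_\ell\le K_j}W^s(K_\ell)$ forces $K_{j'}\le K_j$; the case $K_{j'}=K_j$ follows from the continuity of $x\mapsto T_x W^{so}(K_j)$ inside a single stratum, so the genuinely nontrivial case is the strict inequality $K_{j'}<K_j$, where $\dim E_s^*(x_\infty)$ may strictly exceed $\dim E_s^*(x_n)$. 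By definition of $E_s^*$, closedness then follows from the Grassmannian semicontinuity
\[
T_{x_\infty}W^{so}(K_{j'})\;\subseteq\;\liminf_{n\to\infty}T_{x_n}W^{so}(K_j),
\]
since this allows one to pass to the limit in the annihilation relation $\xi_n\cdot T_{x_n}W^{so}(K_j)=0$.

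The tool I would use to establish this semicontinuity is a $\lambda$-lemma adapted to basic sets. Fix an unrevisited neighborhood $\mathcal{V}_{j'}$ of $K_{j'}$ furnished by Proposition~\ref{prop: Robbin existence of unrevisited neighborhoods}. Since $x_\infty\in W^s(K_{j'})$ while $x_n$ is not, for $n$ large the orbit $\varphi^t(x_n)$ shadows the orbit of $x_\infty$ and therefore spends a time $t_n\to+\infty$ inside $\mathcal{V}_{j'}$ before escaping toward $K_j$; choose $t_n$ so that $\varphi^{t_n}(x_n)$ sits at a fixed small distance $\delta$ from $K_{j'}$. Writing
\[
T_{x_n}W^{so}(K_j)=D\varphi^{-t_n}\!\big(\varphi^{t_n}(x_n)\big)\,T_{\varphi^{t_n}(x_n)}W^{so}(K_j),
\]
the strong transversality assumption ensures $T_{\varphi^{t_n}(x_n)}W^{so}(K_j)$ is transverse to the unstable direction of $K_{j'}$; the uniform hyperbolicity of $K_{j'}$ then forces the backward differential $D\varphi^{-t_n}$ to stretch these subspaces along the weak-stable direction over $K_{j'}$, producing in the Grassmannian limit a subspace that contains the weak-stable plane at $x_\infty$, hence $T_{x_\infty}W^{so}(K_{j'})$.

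The main obstacle is to make this $\lambda$-lemma argument uniform in $n$ when $K_{j'}$ is a general (non-smoothly-linearizable) basic set rather than a single hyperbolic fixed point, which is precisely why the author's strategy of replacing the $\mathcal{C}^1$-linearizing charts of \cite{DR0} by the unrevisited-neighborhood toolbox of Section~\ref{section Dynamical preliminaries}, combined with the local-near-a-basic-set analysis of Dyatlov--Guillarmou \cite{dyatlov-guillarmou_2016}, is essential. The transversality assumption enters twice in this argument: once through the disjointness $\Sigma_u\cap\Sigma_{so}=\emptyset$ (and $\Sigma_s\cap\Sigma_{uo}=\emptyset$) that rules out pathological limits, and once to give the transversality to $E^u(K_{j'})$ that the $\lambda$-lemma requires. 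This is exactly the technical content postponed to Section~\ref{section : Compactness result and energy functions for the Hamiltonian flow}.
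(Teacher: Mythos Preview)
Your overall setup is correct and matches the paper's: reduce to closedness, use time-reversal symmetry, take a sequence $(x_n,\xi_n)\in\Sigma_u$ with $x_n\in W^s(K_j)$ after extraction, and note that the limit satisfies $x_\infty\in W^s(K_{j_0})$ for some $K_{j_0}\le K_j$. Your reformulation in terms of Grassmannian semicontinuity of $x\mapsto T_xW^{so}$ is the geometric dual of what the paper does in the cotangent bundle with cone fields, so the two viewpoints are morally equivalent.

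However, your single $\lambda$-lemma step has a genuine gap. You claim that strong transversality ensures that $T_{\varphi^{t_n}(x_n)}W^{so}(K_j)$ is transverse to the unstable direction of $K_{j'}=K_{j_0}$. But the transversality assumption only tells you that at $y=\varphi^{t_n}(x_n)\in W^u(K_i)\cap W^s(K_j)$ one has $T_yW^u(K_i)+T_yW^{so}(K_j)=T_yM$, i.e.\ transversality to the unstable manifold of $K_i$, not to the (extended) unstable distribution $\widetilde{E}_u$ of $K_{j_0}$. There is no a priori reason why $T_yW^{so}(K_j)$ should be uniformly transverse to $\widetilde{E}_u(K_{j_0})$ along the sequence; in fact, knowing this amounts to knowing that $E_s^*$ stays uniformly away from $\widetilde{E}_{uo}^*$ near $K_{j_0}$, which is essentially the compactness you are trying to prove. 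Your argument is circular at this point.

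The paper circumvents this by arguing by contradiction and, crucially, by an \emph{inductive chain} through all intermediate basic sets $K_{j_0}<K_{j_1}<\cdots<K_{j_\ell}=K_j$ that the orbit of $x_n$ visits on its way from a neighborhood of $K_{j_0}$ to $K_j$. At each stage one extracts a new limit point $(x_\infty^{(k)},\xi_\infty^{(k)})$ with $x_\infty^{(k)}\in W^s(K_{j_k})\cap W^u(K_{j_{k-1}})$, and the key implication (proved via the cotangent cone-stability Corollary~\ref{corollary big lemma}) is
\[
(x_\infty^{(k)},\xi_\infty^{(k)})\notin\Sigma_u \;\Longrightarrow\; (x_\infty^{(k+1)},\xi_\infty^{(k+1)})\in\Sigma_{so}.
\]
Since $\Sigma_u\cap\Sigma_{so}=\emptyset$ by transversality, one can iterate. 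At the final step $\ell$ the limit point lies in $W^s(K_j)$ where $E_s^*$ is continuous by construction, forcing $(x_\infty^{(\ell)},\xi_\infty^{(\ell)})\in\Sigma_u\cap\Sigma_{so}=\emptyset$, the desired contradiction. Your single-step argument misses both this chain structure and the contradiction framing; the cone-field machinery (Lemma~\ref{stabilite voisinage conique tilde pour Phi1} and its Corollary) is precisely what replaces your unjustified transversality claim near each intermediate basic set.
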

 
 To construct energy functions, we also need the existence of arbitrarily small stable neighborhoods. The proof of next lemma is given in \S \ref{subsection : Lemma invariant neighborhoods for the Hamiltonian flow}.
 
\begin{lemma}[Invariant neighborhoods]\label{lemma invariant neighborhoods}
 For every $\varepsilon > 0$, there exist $\varepsilon$-neighborhoods $\Ucal^{s/so}$ (resp. $\Ucal^{u/uo}$) of $\Sigma_{s/so}$ (resp. $\Sigma_{u/uo}$) which are $\widetilde{\Phi}^{1}$-stable (resp. $\widetilde{\Phi}^{-1}$-stable).
 \end{lemma}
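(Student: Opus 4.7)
The plan is to produce the $\widetilde{\Phi}^1$-stable neighborhood $\Ucal^s$ of $\Sigma_s$ as the forward orbit of a sufficiently small open neighborhood of $\Sigma_s$; the analogous neighborhood $\Ucal^{uo}$ of $\Sigma_{uo}$ is obtained by reversing time, and the pair $(\Ucal^u, \Ucal^{so})$ is handled by exactly the same arguments applied to the second attractor/repeller pair $(\Sigma_{so}, \Sigma_u)$ coming from Lemma \ref{lemma attractor and repeller on the phase space}. By Proposition \ref{prop compactness} the sets $\Sigma_s$ and $\Sigma_{uo}$ are compact, and the transversality assumption gives $\Sigma_s \cap \Sigma_{uo} = \emptyset$, so after shrinking $\varepsilon$ if necessary I may assume $\overline{B_\varepsilon(\Sigma_s)} \cap \Sigma_{uo} = \emptyset$.

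The heart of the argument is to upgrade the pointwise attraction from Lemma \ref{lemma attractor and repeller on the phase space} to the uniform statement that there exists $\delta \in (0, \varepsilon)$ with $\widetilde{\Phi}^t(B_\delta(\Sigma_s)) \subset B_\varepsilon(\Sigma_s)$ for every $t \geq 0$. The key preliminary remark is that $\Sigma_s$ is \emph{isolated}: for any open neighborhood $N$ of $\Sigma_s$ with $\overline{N} \cap \Sigma_{uo} = \emptyset$, any $p \in \bigcap_{t \in \R} \widetilde{\Phi}^t(\overline{N})$ satisfies $p \notin \Sigma_{uo}$, so if $p$ also lay outside $\Sigma_s$ then Lemma \ref{lemma attractor and repeller on the phase space} would force $\widetilde{\Phi}^{-t}(p) \to \Sigma_{uo}$, contradicting that the whole backward orbit of $p$ remains in the compact set $\overline{N}$ disjoint from $\Sigma_{uo}$. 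Hence $\bigcap_{t \in \R} \widetilde{\Phi}^t(\overline{N}) = \Sigma_s$. Uniform attraction then follows by contradiction: otherwise one extracts sequences $p_n \to p_\infty \in \Sigma_s$ and first-exit times $s_n > 0$ from $B_\varepsilon(\Sigma_s)$, with $\widetilde{\Phi}^{s_n}(p_n) \to q_\infty$ at distance exactly $\varepsilon$ from $\Sigma_s$ (so $q_\infty \notin \Sigma_s \cup \Sigma_{uo}$). A bounded subsequence of $(s_n)$ yields $q_\infty = \widetilde{\Phi}^{s_\infty}(p_\infty) \in \Sigma_s$ by invariance, an immediate contradiction. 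The remaining case $s_n \to +\infty$ is ruled out by a compactness extraction on the orbit segments $t \mapsto \widetilde{\Phi}^t(p_n)$ on $[0, s_n]$, regarded as paths contained in $\overline{B_\varepsilon(\Sigma_s)}$: passing to a subsequential limit one obtains a complete bilateral $\widetilde{\Phi}^t$-orbit through $q_\infty$ entirely trapped in $\overline{B_\varepsilon(\Sigma_s)} \subset S^*M \setminus \Sigma_{uo}$, which by the isolation statement above must lie in $\Sigma_s$, contradicting $q_\infty \notin \Sigma_s$.

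Once uniform attraction is in hand, the set
$$\Ucal^s := \bigcup_{t \geq 0} \widetilde{\Phi}^t\bigl(B_\delta(\Sigma_s)\bigr)$$
is open (as a union of homeomorphic images of an open set), contains $\Sigma_s$ (since $\Sigma_s \subset B_\delta(\Sigma_s)$ and $\Sigma_s$ is $\widetilde{\Phi}^t$-invariant), lies inside the $\varepsilon$-neighborhood of $\Sigma_s$ by the uniform attraction claim, and satisfies $\widetilde{\Phi}^1(\Ucal^s) = \bigcup_{t \geq 1} \widetilde{\Phi}^t(B_\delta(\Sigma_s)) \subset \Ucal^s$, which is exactly the required $\widetilde{\Phi}^1$-stability. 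The main obstacle throughout is the case $s_n \to +\infty$ of the uniform attraction step: the lack of uniform-in-$n$ continuity of $\widetilde{\Phi}^{-s_n}$ forces one to argue by an Arzelà--Ascoli-type extraction of whole orbit segments and to exploit the isolation of $\Sigma_s$, itself a consequence of the disjointness $\Sigma_s \cap \Sigma_{uo} = \emptyset$ built in by the transversality assumption together with the backward half of Lemma \ref{lemma attractor and repeller on the phase space}.
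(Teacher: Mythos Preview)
Your argument is essentially correct and follows a genuinely different route from the paper's proof. One small wrinkle: in the case $s_n \to +\infty$ you claim a \emph{bilateral} orbit through $q_\infty$ trapped in $\overline{B_\varepsilon(\Sigma_s)}$, but only the \emph{backward} half is actually trapped (for fixed $T>0$ and large $n$ one has $\widetilde{\Phi}^{-T}(q_\infty)=\lim_n \widetilde{\Phi}^{s_n-T}(p_n)\in\overline{B_\varepsilon(\Sigma_s)}$; nothing controls $\widetilde{\Phi}^{+T}(q_\infty)$ since the orbit of $p_n$ exits at time $s_n$). Fortunately this is all you need: your own isolation argument invokes only the backward convergence $\widetilde{\Phi}^{-t}(q_\infty)\to\Sigma_{uo}$ from Lemma~\ref{lemma attractor and repeller on the phase space}, and the backward trapping contradicts it directly. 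The Arzel\`a--Ascoli extraction is unnecessary; the pointwise limit just described suffices.

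The comparison with the paper is instructive. The paper builds $\Ucal^{s/so}$ by an explicit induction over the basic sets, lifting the filtration $(\Ocal_j^+)$ on $M$ to a filtration on $S^*M$: near each $K_j$ it uses the concrete conical neighborhoods $\Ccal_{u/uo}^\delta$ over an unrevisited neighborhood and the quantitative contraction of Corollary~\ref{corollary big lemma} to glue these local pieces onto the previously constructed $\Ucal_{j-1}^{s/so}$. Your proof is the abstract Conley-theory version: once compactness (Proposition~\ref{prop compactness}), disjointness (transversality), and the attractor--repeller dichotomy (Lemma~\ref{lemma attractor and repeller on the phase space}) are in hand, the existence of arbitrarily small forward-invariant neighborhoods of the attractor is a purely topological consequence, with no further use of the hyperbolic cone structure. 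Your route is shorter and more general; the paper's route is more constructive and produces neighborhoods with an explicit conical description over each basic set, which aligns with how the escape function is later assembled from local cone data in Section~\ref{section : Proof of escape function}.
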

 
 As a consequence of these three results, we obtain energy functions on the fiber of $S^* M$. The proof of the following proposition is given in \S \ref{subsection : Proof energy functions for the Hamiltonian flow}.
 
\begin{prop}[Energy functions for the Hamiltonian flow]\label{prop energy function on the unitary cotangent bundle}
	Let $\varphi^t$ be an Axiom A flow which satisfies the tranversality assumption (\ref{Transversality assumption}). For every $\varepsilon > 0$, there exist energy functions $E_{\pm} \in \Ccal^{\infty}(S^* M; [0,1])$, $\varepsilon$-neighborhoods $\Wcal^{s/so}$ of $\Sigma_{s/so}$, $\Wcal^{uo/u}$ of $\Sigma_{uo/u}$ and a constant $\eta > 0$ such that:
	\begin{align*}
	 \begin{split}
		 \Lie_{X_H} E_{+} &\geq 0 \mbox{ on } S^* M \: \mbox{and } \Lie_{X_H} E_{+} > \eta \mbox{ on } S^* M \setminus  (\Wcal^{uo} \cup \Wcal^{s}) , \\ 
		  \Lie_{X_H} E_{-} &\geq 0 \mbox{ on } S^* M \: \mbox{and } \Lie_{X_H} E_{-} > \eta \mbox{ on } S^* M \setminus (\Wcal^{u} \cup \Wcal^{so}) .
	 \end{split}
	\end{align*}  
 Moreover, the map $E_{\pm}$ are constant on each $\Sigma_{*}$ and we have the estimate
 \begin{align*}
	\sup_{(x,\xi) \in \Wcal^{u/uo}} \left| E_{\pm}(x,\xi) - 0 \right| \leq \varepsilon \quad \mbox{and} \quad \sup_{(x,\xi) \in \Wcal^{s/so}} \left| E_{\pm}(x,\xi) - 1 \right| \leq \varepsilon.
 \end{align*}
\end{prop}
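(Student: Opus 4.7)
The plan is a Conley-type Lyapunov function construction on the unitary cotangent bundle $S^*M$, applied to the attractor-repeller pair $(\Sigma_s,\Sigma_{uo})$ furnished by Lemma \ref{lemma attractor and repeller on the phase space}. The energy function $E_-$ is built word-for-word the same way using the dual pair $(\Sigma_{so},\Sigma_u)$, so I focus on $E_+$ and mention $E_-$ only at the end.

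The first step is to set up nested invariant neighborhoods. Using Lemma \ref{lemma invariant neighborhoods}, pick a $\widetilde\Phi^{-1}$-stable open neighborhood $\Wcal_1^{uo}$ of $\Sigma_{uo}$ and a $\widetilde\Phi^{1}$-stable open neighborhood $\Wcal_1^{s}$ of $\Sigma_s$, both of diameter at most $\varepsilon/2$, with disjoint closures (possible since $\Sigma_{uo}\cap\Sigma_s=\emptyset$ by transversality and both sets are compact by Proposition \ref{prop compactness}). Applying Lemma \ref{lemma attractor and repeller on the phase space} on the compact set $S^*M\setminus\Wcal_1^{uo}$ (which misses $\Sigma_{uo}$, so each of its points is forward-attracted to $\Sigma_s$), combined with the forward-invariance of $\Wcal_1^{s}$, yields by a standard continuity-plus-compactness argument a uniform time $T_0>0$ with
\[
\widetilde\Phi^{T_0}\bigl(S^*M\setminus\Wcal_1^{uo}\bigr)\subset\Wcal_1^{s},\qquad \widetilde\Phi^{-T_0}\bigl(S^*M\setminus\Wcal_1^{s}\bigr)\subset\Wcal_1^{uo}.
\]
Finally, using uniform continuity of $\widetilde\Phi^{\cdot}$ on $[-T_0,T_0]\times\Sigma_{uo}$ (resp.\ on $[-T_0,T_0]\times\Sigma_s$), shrink to smaller open neighborhoods $\Wcal^{uo}\subset\Wcal_1^{uo}$ and $\Wcal^{s}\subset\Wcal_1^{s}$ with $\widetilde\Phi^{[-T_0,T_0]}(\Wcal^{uo})\subset\Wcal_1^{uo}$ and $\widetilde\Phi^{[-T_0,T_0]}(\Wcal^{s})\subset\Wcal_1^{s}$. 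Now pick, via Urysohn, a cutoff $\chi\in C^\infty(S^*M;[0,1])$ equal to $0$ on $\overline{\Wcal_1^{uo}}$ and to $1$ on $\overline{\Wcal_1^{s}}$, and define
\[
E_+(x,\xi):=\frac{1}{2T_0}\int_{-T_0}^{T_0}\chi\bigl(\widetilde\Phi^t(x,\xi)\bigr)\,dt\in C^\infty\!\bigl(S^*M;[0,1]\bigr).
\]

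Differentiating under the integral,
\[
\Lie_{X_H}E_+(x,\xi)=\frac{1}{2T_0}\Bigl[\chi\bigl(\widetilde\Phi^{T_0}(x,\xi)\bigr)-\chi\bigl(\widetilde\Phi^{-T_0}(x,\xi)\bigr)\Bigr].
\]
A three-case analysis---splitting on whether $(x,\xi)$ lies in $\Wcal_1^{uo}$, in $\Wcal_1^{s}$, or in the transition region $S^*M\setminus(\Wcal_1^{uo}\cup\Wcal_1^{s})$, and using the respective invariance of $\Wcal_1^{uo/s}$ together with the uniform-time property---shows this expression is $\geq 0$ on all of $S^*M$ and equal to $\eta:=1/(2T_0)>0$ on the transition region. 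The shrinkage step forces $E_+\equiv 0$ on $\Wcal^{uo}$ and $E_+\equiv 1$ on $\Wcal^{s}$, since the orbit $(\widetilde\Phi^t(x,\xi))_{t\in[-T_0,T_0]}$ of such a point is trapped in $\{\chi=0\}$ or $\{\chi=1\}$ respectively; in particular $E_+$ is constant on each of $\Sigma_{uo},\Sigma_s$. The one remaining wrinkle is to upgrade strict positivity of $\Lie_{X_H}E_+$ from the transition region to the slightly larger set $S^*M\setminus(\Wcal^{uo}\cup\Wcal^{s})$ of the statement; this is handled by a small additive perturbation of the form $\delta\int_{-T_0}^{T_0}\psi\circ\widetilde\Phi^t\,dt$, where $\psi\geq 0$ is a smooth bump supported in the annular regions $\Wcal_1^{uo}\setminus\Wcal^{uo}$ and $\Wcal_1^{s}\setminus\Wcal^{s}$, chosen so that the averaged contribution is strictly positive on the annular points while the change in boundary values on $\Wcal^{uo}\cup\Wcal^{s}$ stays within the $\varepsilon$-tolerance.

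The main technical obstacle is not the averaging or the sign analysis, which are essentially formal once the setup is in place, but rather the extraction of the uniform transition time $T_0$. It is at precisely this step that one genuinely uses the three preceding ingredients in concert: Compactness of the $\Sigma_*$ (Proposition \ref{prop compactness}) to pass from a pointwise convergence to a finite cover, the Attracting/Repelling Lemma \ref{lemma attractor and repeller on the phase space} to guarantee the pointwise convergence in the first place, and Lemma \ref{lemma invariant neighborhoods} on invariant neighborhoods to stop the flow from escaping the target region once it has entered. The construction of $E_-$ is entirely analogous upon replacing the attractor-repeller pair $(\Sigma_s,\Sigma_{uo})$ by $(\Sigma_{so},\Sigma_u)$ and the corresponding invariant neighborhoods.
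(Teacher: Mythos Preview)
Your approach is essentially identical to the paper's: verify that $(\Sigma_s,\Sigma_{uo})$ and $(\Sigma_{so},\Sigma_u)$ are attractor--repeller pairs on $S^*M$ by combining Proposition~\ref{prop compactness}, Lemma~\ref{lemma attractor and repeller on the phase space}, and Lemma~\ref{lemma invariant neighborhoods}, and then build Lyapunov functions by time-averaging a cutoff. The paper isolates this last step as the general Lemma~\ref{lemma: energy function for attractor-repeller model} (the Faure--Sj\"ostrand construction) and simply invokes it; you unpack the same construction directly.

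There is, however, a genuine gap in your final ``additive perturbation'' step. The Lie derivative of the term $\delta\int_{-T_0}^{T_0}\psi\circ\widetilde\Phi^t\,dt$ is the boundary difference $\tfrac{\delta}{2T_0}\bigl[\psi(\widetilde\Phi^{T_0})-\psi(\widetilde\Phi^{-T_0})\bigr]$, which has no reason to be nonnegative everywhere; adding it can therefore destroy the global inequality $\Lie_{X_H}E_+\ge 0$ that you just established. The issue you are trying to patch---strict positivity on the annuli $\Wcal_1^{uo}\setminus\Wcal^{uo}$ and $\Wcal_1^{s}\setminus\Wcal^{s}$---is real, but the fix is to change the definition of the inner neighborhoods, not to perturb $E_+$. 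Instead of your shrinkage step, set
\[
\Wcal^{uo}:=S^*M\setminus\widetilde\Phi^{-T_0}(\Wcal_1^{s}),\qquad \Wcal^{s}:=S^*M\setminus\widetilde\Phi^{T_0}(\Wcal_1^{uo}),
\]
which is exactly what the paper's Lemma~\ref{lemma: energy function for attractor-repeller model} does. Then any $(x,\xi)\notin\Wcal^{uo}\cup\Wcal^{s}$ satisfies $\widetilde\Phi^{T_0}(x,\xi)\in\Wcal_1^{s}$ and $\widetilde\Phi^{-T_0}(x,\xi)\in\Wcal_1^{uo}$, so $\Lie_{X_H}E_+(x,\xi)=\tfrac{1}{2T_0}$ exactly, with no perturbation needed. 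The $\varepsilon$-smallness of these neighborhoods and the value estimates $E_+<\varepsilon$ on $\Wcal^{uo}$, $E_+>1-\varepsilon$ on $\Wcal^{s}$ then follow by taking $T_0$ large compared to the uniformly bounded transition time across $S^*M\setminus(\Wcal_1^{uo}\cup\Wcal_1^{s})$.
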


  \subsection{The escape functions}
  
Next, we give a global escape function which extends the construction of Dyatlov and Guillarmou \cite{dyatlov-guillarmou_2016} to the whole manifold and coincide with the one of Dang and Rivière \cite{DR0} for Morse-Smale gradient flows and the one of Faure-Sj\"ostrand \cite{FS} for Anosov flows. The proof of next proposition is given in Section \ref{section : Proof of escape function}.
 
\begin{prop}[Escape function]\label{prop escape function}
 Let $u,s,n_0 \in \R$ such that $u < 0 \leq n_0 < s$. There exists a smooth function $m(x,\xi) \in C^{\infty} (T^* M)$ called an \textbf{order function} and an \textbf{escape function} $G_m \in \Ccal^{\infty} (T^*M)$ defined by:
 $$
  G_{m} (x, \xi) = m(x, \xi) \log \sqrt{1 + f(x,\xi)^2}
 $$
where $f \in C^{\infty}(T^* M)$ is positive everywhere and homogeneous of degree $1$ in $\xi$ as soon as $|\xi| \geq 1$ and where $m$ is defined by $m(x,\xi) = \chi (|\xi|^2) E(x,\frac{\xi}{|\xi|})$ with $\chi$ being a smooth cut-off function such that $\chi = 0$ on $]-\infty, 1/2]$, $\chi = 1$ on $[1,+\infty[$, $\chi \geq 0$ everywhere as in figure \ref{figure chi} and $E$ being a linear combination of previous energy functions: 
$$E(x,\xi):= -E(x) + s + (u - n_0) E_+ (x,\xi) + (n_0 - s) E_- (x,\xi).$$
Moreover, we have the following estimates:
\begin{enumerate}
 \item There exist conical neighborhoods $\widetilde{\Ncal}^{s/o/u}$ of  $\bigcup_{x \in M} E_{s/o/u}^* (x) \setminus 0_M$ such that $f = |\xi(V)|$ on $\widetilde{\Ncal}^o$ and for $|\xi| \geq 1$,
 $$ \frac{1}{2} n_0 \leq m \leq  n_0  \mbox{ on } \widetilde{\Ncal}^{o} ,$$
 $$
 m \geq \frac{s}{4} \mbox{ on } \widetilde{\Ncal}^{s},  \quad \mbox{and} \quad m \leq \frac{u}{2} \mbox{ on } \widetilde{\Ncal}^{u} .
 $$
 Also, the open sets can be chosen arbitrarily close to the invariant distributions $E_s^*$, $E_o^*$ and $E_u^*$ as in Proposition \ref{prop energy function on the unitary cotangent bundle}.
 \item The map $G_m$ is strictly decreasing along the flow $\Phi^t$ except at points $(x,\xi)$ where $|\xi|$ is small or where $x$ is in a small neighborhood of the nonwandering set and $\xi$ is in a conical neighborhood of $E_o^*$: for all $1 \leq i \leq N$, there exist an open neighborhood $\Ncal_i$ of $K_i$ and a radius $R > 0$ such that for all $(x,\xi)\in \cup_{x\in \cup N_i} T_x^*M\setminus \widetilde{\Ncal}^o$ such that $ |\xi| \geq R$ \textbf{and} for all $(x,\xi) \in \cup_{x\notin \cup_i \Ncal_i} T_x^*M$ such that $ |\xi| \geq R$
 $$
   X(G_{m}) (x,\xi) <  -c \min (s,|u|)  =: - C_m  < 0
 $$
 with $c > 0$ which is independent of the constants $u, n_0, s$ and of the size of the conical neighborhoods. 
 \item More generally, for every $(x,\xi) \in T^* M$ such that $|\xi| \geq R$, we have
 $$
   X(G_{m}) (x, \xi) \leq 0.
 $$
\end{enumerate}
\end{prop}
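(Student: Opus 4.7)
The plan, following Faure-Sjöstrand \cite{FS} and Dang-Rivière \cite{DR0, DRI}, is to assemble the three energy functions provided by Propositions \ref{prop energy function for Axiom A flows} and \ref{prop energy function on the unitary cotangent bundle} into a single order function whose logarithmic weight is strictly decreased by the Hamiltonian flow outside a controlled region. I would first apply Proposition \ref{prop energy function for Axiom A flows} with constants $\lambda_i$ pairwise distinct, compatible with the graph order, and of magnitude much smaller than $\min(s,|u|,n_0)$, to produce $E\in\Ccal^\infty(M)$ with $\Lie_V E\geq 0$ and strictly positive outside small neighborhoods $\Ncal_i$ of the basic sets. Then I would apply Proposition \ref{prop energy function on the unitary cotangent bundle} with a small common parameter $\varepsilon$, yielding $E_\pm\in\Ccal^\infty(S^*M;[0,1])$ together with neighborhoods $\Wcal^{s/so/u/uo}$ of $\Sigma_{s/so/u/uo}$. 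The conical sets $\widetilde{\Ncal}^{s/o/u}\subset T^*M\setminus 0_M$ are defined as $\kappa^{-1}$ of small neighborhoods of $\Sigma_u$, $\Sigma_o:=\Sigma_{so}\cap\Sigma_{uo}$, and $\Sigma_s$ respectively (restricted to $\{|\xi|\geq 1\}$); these are pairwise disjoint by Proposition \ref{prop compactness} together with the transversality assumption.

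For $f$ I would use a homogeneous partition of unity: pick $\psi\in\Ccal^\infty(T^*M\setminus 0_M;[0,1])$, homogeneous of degree $0$ in $\xi$, supported in a slight enlargement of $\widetilde{\Ncal}^o$ and equal to $1$ on $\widetilde{\Ncal}^o$, and set
\[
 f(x,\xi) := \psi(x,\xi)\sqrt{\xi(V(x))^2+\delta|\xi|_g^2}+(1-\psi(x,\xi))|\xi|_g \qquad(|\xi|\geq 1),
\]
with $\delta>0$ a tiny smoothing parameter, extended smoothly and positively for $|\xi|<1$. Then $f$ is smooth, positive, homogeneous of degree $1$ for $|\xi|\geq 1$, and coincides with $|\xi(V(x))|$ on $\widetilde{\Ncal}^o$ up to the $\delta$-smoothing. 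With $E(x,\xi)$, $m$ and $G_m$ as in the statement, item (1) is then immediate: evaluating the defining formula on the three cones (using $E_\pm\in[0,\varepsilon]$ on $\Sigma_u$, $E_\pm\in[1-\varepsilon,1]$ on $\Sigma_s$, $E_+\in[0,\varepsilon]$ and $E_-\in[1-\varepsilon,1]$ on $\Sigma_o$, and $|E(x)|\ll\min(s,|u|,n_0)$) gives $E(x,\xi)\approx s-E(x)\geq s/4$ on $\Sigma_u$, $E(x,\xi)\approx u-E(x)\leq u/2$ on $\Sigma_s$, and $E(x,\xi)\approx n_0-E(x)\in[n_0/2,n_0]$ on $\Sigma_o$.

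The heart of the proof is the derivative computation
\[
 X_H(G_m)=X_H(m)\log\sqrt{1+f^2}+m\cdot\frac{f\,X_H(f)}{1+f^2}.
\]
For $|\xi|\geq 1$, $\chi(|\xi|^2)$ is constant; using $\pi\circ\Phi^t=\varphi^t\circ\pi$ and $\kappa\circ\Phi^t=\widetilde{\Phi}^t\circ\kappa$, one obtains $X_H(m)(x,\xi)=\widetilde{X}_H(E)(x,\xi/|\xi|)$ with
\[
 \widetilde{X}_H(E)=-\Lie_V E+(u-n_0)\widetilde{X}_H(E_+)+(n_0-s)\widetilde{X}_H(E_-).
\]
Since $u-n_0<0$ and $n_0-s<0$, each summand is $\leq 0$ and at least one is strictly negative as long as $x\notin\cup_i\Ncal_i$ or $\xi/|\xi|\notin\Wcal^{uo}\cap\Wcal^{so}$; hence $X_H(m)\leq-\eta<0$ outside the bad region. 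For the second term, the crucial observation is that $H(x,\xi)=\xi(V(x))$ is $\Phi^t$-invariant, so $X_H(\xi(V))=0$ and therefore on $\widetilde{\Ncal}^o$ one has $X_H(f^2)=\mathcal{O}(\delta)$; choosing $\delta$ small after everything else is fixed renders the second term negligible compared to the strictly negative first term. Off a slight enlargement of $\widetilde{\Ncal}^o$, $f=|\xi|_g$ and $X_H(\log|\xi|_g)$ is a bounded function, strictly negative on $\widetilde{\Ncal}^s$ (stable contraction) and strictly positive on $\widetilde{\Ncal}^u$ (unstable expansion); combined with the sign of $m$ ($\geq s/4$ on $\widetilde{\Ncal}^s$, $\leq u/2$ on $\widetilde{\Ncal}^u$), the product is $\leq 0$ with size $\gtrsim\min(s,|u|)$ on $\widetilde{\Ncal}^{s/u}$. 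The first term carries the logarithmic growth in $|\xi|$ while the second is bounded, so for $R$ large one obtains $X_H(G_m)\leq -c\min(s,|u|)$ outside the bad region, giving item (2); item (3) follows from the same sign analysis without strictness.

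The main technical difficulty will be the transition zone $\supp\nabla\psi$ wedged between $\widetilde{\Ncal}^o$ and the complement of its slight enlargement, where $f$ interpolates between $|\xi(V)|$ and $|\xi|_g$ and neither piece of the decomposition carries an unambiguous sign at the outset. I would handle this by choosing $\widetilde{\Ncal}^o$ strictly larger than the set where both $\widetilde{X}_H(E_\pm)$ vanish (which is possible by the invariance and disjointness statements in Proposition \ref{prop energy function on the unitary cotangent bundle}), so that the first term remains strictly negative throughout $\supp\nabla\psi$; then $\delta$ and $R$ are tuned last to ensure the second term cannot overwhelm it.
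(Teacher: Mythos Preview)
Your overall architecture matches the paper's: both build $E(x,\xi)$ as the same linear combination of the base energy $E(x)$ and the fiber energies $E_\pm$, both define the cones $\widetilde{\Ncal}^{s/o/u}$ as preimages under $\kappa$ of neighborhoods of $\Sigma_{u}$, $\Sigma_{so}\cap\Sigma_{uo}$, $\Sigma_{s}$, and both split $X_H(G_m)$ into the ``$X_H(m)\log\langle f\rangle$'' piece and the ``$m\cdot X_H(\log\langle f\rangle)$'' piece and argue region by region. Point~(1) and the treatment of the wandering region and of the neutral cone (via $X_H(\xi(V))=0$) are essentially the same as in the paper.

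There is, however, a genuine gap in your construction of $f$ on the stable/unstable cones. You set $f=|\xi|_g$ off the neutral cone and assert that $X_H(\log|\xi|_g)$ is strictly negative on $\widetilde{\Ncal}^s$ and strictly positive on $\widetilde{\Ncal}^u$. This is exactly where the argument must do real work: at points $(x,\xi)$ with $x\in K_i$ and $\xi\in E_{s}^*(x)$ (or $E_u^*(x)$), the first term $X_H(m)\log\langle f\rangle$ \emph{vanishes identically}, since $\Lie_V E=0$ on $K_i$ and $\widetilde{X}_H E_\pm=0$ on $\Sigma_{u}$ (resp.\ $\Sigma_s$) by Proposition~\ref{prop energy function on the unitary cotangent bundle}. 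So the entire burden falls on the second term. But for a \emph{smooth} metric $g$, hyperbolicity only gives $|\Phi^t(x,\xi)|\leq Ce^{-\lambda t}|\xi|$ with a constant $C>1$; the infinitesimal quantity $X_H(\log|\xi|_g)$ can have the wrong sign on $E_s^*$ over $K_i$. An adapted metric would fix the sign, but such a metric is only H\"older (the bundles $E_{s/u}$ are only H\"older), so $|\xi|_g$ would no longer be smooth and $G_m$ would leave the symbol class you need.

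The paper resolves this by \emph{not} taking $f=|\xi|_g$ on those cones. Instead it uses, on $\kappa^{-1}(\Ncal^s)$ over $K_i$, the time-averaged symbol $\tilde f(x,\xi)=\int_0^T|\Phi^t(x,\xi)|\,dt$ (and $\int_0^T|\Phi^{-t}(x,\xi)|\,dt$ on the opposite cone). For any smooth metric one has $X_H(\tilde f)=|\Phi^T(x,\xi)|-|\xi|\geq(\tilde Ce^{\tilde\lambda T}-1)|\xi|$, which is $\geq 2|\xi|$ once $T$ is large; this yields $X_H(\log\langle f\rangle)\geq\gamma>0$ on the cone over $K_i$, and then by continuity on a neighborhood (Corollary~\ref{Corollary compactness proposition}). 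Away from $\cup_i\Ncal_i$, $f$ is allowed to be arbitrary and positive because the $-\Lie_V E$ term makes $X_H(m)$ strictly negative there, and the $\log\langle f\rangle$ factor dominates for $|\xi|\geq R$. If you replace your $|\xi|_g$ on the stable/unstable cones by this averaged construction, the rest of your outline goes through essentially as in the paper.
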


  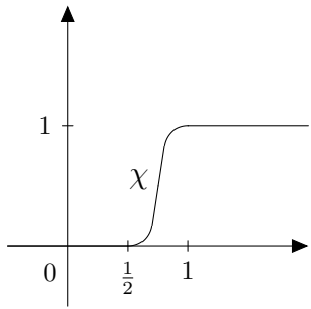
\begin{figure}[!t]
  \centering
  \begin{tikzpicture}[line cap=round,line join=round,>=triangle 45,x=1.6cm,y=1.6cm]
\draw[->,color=black] (-0.5,0) -- (2,0);
\draw[->,color=black] (0,-0.5) -- (0,2);
\draw[shift={(1,0)},color=black] (0pt,2pt) -- (0pt,-2pt) node[below] {\footnotesize $1$};
\draw[shift={(0.5,0)},color=black] (0pt,2pt) -- (0pt,-2pt) node[below] {\footnotesize $\frac{1}{2}$};
\draw[shift={(0,1)},color=black] (2pt,0pt) -- (-2pt,0pt) node[left] {\footnotesize $1$};
\draw[color=black] (0pt,-10pt) node[left] {\footnotesize $0$};
\clip(-0.5,-0.5) rectangle (2,2);
\draw [domain=-0.5:0.5] plot(\x,{(-0-0*\x)/-0.5});
\draw [domain=1.0:2.0] plot(\x,{(--0.5-0*\x)/0.5});
\draw [shift={(0.5,0.2)}] plot[domain=4.69:5.32,variable=\t]({1*0.2*cos(\t r)+0*0.2*sin(\t r)},{0*0.2*cos(\t r)+1*0.2*sin(\t r)});
\draw [shift={(0.49,0.2)}] plot[domain=5.38:6.11,variable=\t]({1*0.21*cos(\t r)+0*0.21*sin(\t r)},{0*0.21*cos(\t r)+1*0.21*sin(\t r)});
\draw [shift={(1.01,0.8)}] plot[domain=2.23:2.97,variable=\t]({1*0.21*cos(\t r)+0*0.21*sin(\t r)},{0*0.21*cos(\t r)+1*0.21*sin(\t r)});
\draw [shift={(1,0.8)}] plot[domain=1.55:2.18,variable=\t]({1*0.2*cos(\t r)+0*0.2*sin(\t r)},{0*0.2*cos(\t r)+1*0.2*sin(\t r)});
\draw (0.7,0.17)-- (0.75,0.5);
\draw (0.8,0.83)-- (0.75,0.5);
\draw (0.42,0.73) node[anchor=north west] {$\chi $};
\end{tikzpicture}
 \captionof{figure}{cut-off function $\chi$ \label{figure chi}}
 \end{figure}

 \section{Anisotropic Sobolev spaces}\label{section Anisotropic Sobolev spaces}
 
 The purpose of this section is to construct some Hilbert spaces in which the operator $-\Lie_V^{(k)}$ acting on sections of $\Lambda^k T^*M \otimes \C$ has good spectral properties. For $k = 0$ and for Anosov flows, Faure and Sjöstrand defined an anisotropic Sobolev space which can be roughly written as $\exp (G_m (x,-iD))^{-1} L^2 (M ;  \C)$ using the escape function $G_m$ of Proposition \ref{prop escape function}. In this part, we recall the definition of these Hilbert spaces starting from our escape function and we extend it for $0 \leq k \leq n$ as in \cite{dyatlov-zworski}. More precisely, we construct a pseudodifferential operator acting on sections of $ \Lambda^k T^*M \otimes \C$ using the map $e^{G_m}$. First, let us define 
 $$
  A_m (x,\xi) = \exp (G_m (x,\xi)) \in \Ccal^{\infty} (T^* M),
 $$
 with $G_m$ being the escape function obtained in Proposition \ref{prop escape function}.
 We consider for every $0 \leq k \leq n$ the vector bundle $\Ecal_k:= \Lambda^k T^*M \otimes \C$ of complex-valued differential $k$-forms\footnote{Note that a Hilbert structure on $L^2 (M; \Ecal_k) $ can be defined by setting
 $$
  \w{\alpha, \beta}_{L^2}^{(k)}:= \int_M \w{\overline{\alpha(x)}, \beta(x)}_{g^{\otimes k}} dvol_g (x),
 $$
 where $\w{.,.}_{g^{\otimes k}}$ denotes the scalar product induced by the Riemannian metric $g^{\otimes k}$ on the bundle $\Lambda^k (T^* M) \rightarrow M$ and $g^{\otimes k}$ denotes the metric on $\Lambda^k (T^* M)$ taking $k$-times the tensor product of $g$.}.

 Now, we briefly recall how to define pseudodifferential operators on vector bundles - see for instance \cite[app. C.1, p.29]{dyatlov-zworski} and \cite[\S 9.2 p.40]{DRI} for more details. Let us consider a finite cover $(U_i)_{i \in I}$ of $M$ by contractible open sets and local trivializations $\chi_i: \rest{\Ecal_k}{U_i} \rightarrow \widetilde{U}_i \times \R^2 \times \R^{\frac{n!}{k! (n-k)!}}$ where $\widetilde{U}_i$ is an open subset of $\R^n$. We define pseudo-differential operators on $\Ecal_k$ using the Weyl quantization in these local trivialization charts. In other words, we define the operator $\Op_i (A_m): \Omega^k (U_i ; \C) \rightarrow \Omega^k (M ; \C ) $ by the formula 
 $$
  \chi_i \circ \Op_i (A_m) \circ \chi_i^{-1} (f dx^{i_1} \wedge \cdots \wedge dx^{i_k}) = \Opw^w (A_m) (f) dx^{i_1} \wedge \cdots \wedge dx^{i_k},
 $$
where $\Opw^w$ denotes the usual \textbf{Weyl quantization} on $\R^n$ - see \cite[chap. 4, p.56]{zworski} for a definition. Now, if we consider a partition of unity $\zeta_i \in \Ccal_c^{\infty} (U_i;[0,1])$ associated to $(U_i)$, i.e. $\supp \zeta_i \subset U_i$ and $\sum_i \zeta_i =  1$ on $M$, and functions $\psi_i \in \Ccal_c^{\infty} (U_i)$ such that $\psi_i = 1$ on $\supp \zeta_i$, then we define the pseudodifferential operator $\A_m^{(k)}: \Omega^k (M ; \C) \rightarrow \Omega^k (M ; \C ) $ by setting:
$$
 \A_m^{(k)} = \frac{1}{2} \sum_{i \in I} \psi_i \Op_i (A_m) \zeta_i  + \frac{1}{2} \left( \sum_{i \in I} \psi_i \Op_i (A_m) \zeta_i \right)^* ,
$$
where $(\sum_{i \in I} \psi_i \Op_i (A_m) \zeta_i )^*$ denotes the formal adjoint $\sum_{i \in I} \psi_i \Op_i (A_m) \zeta_i $ given by the Hilbert structure $\w{.,.}_{L^2}^{(k)}$ of $L^2 (M ; \Ecal_k)$. Note that the symbols $A_m$ belong to a class of symbols with variable order whose properties are discussed in the appendix of \cite{FRS}.

The operator $\A_m^{(k)}$ has for principal symbol $A_m (x,\xi) \Id_{\Ecal_k (x)}$, where $\Id_{\Ecal_k (x)}$ denotes the identity map on each fiber $\Ecal_k (x) = \C_x \otimes \Lambda^k (T_x^* M)$, and it is \textbf{elliptic} in the sense of \cite[def. 8, p. 40]{FRS}. Therefore, there exists a smoothing operator $\hat{r}: \D'^{,k} (M;\C) \rightarrow \Omega^k (M; \C)$ such that $\widetilde{\A}_m^{(k)}:= \A_m^{(k)} + \hat{r}: \Omega^k (M; \C) \rightarrow \Omega^k (M; \C)$ is formally self-adjoint, elliptic and invertible - see \cite[lem. 12, p. 42]{FRS}. Thus, we choose $(\widetilde{\A}_m^{(k)})^{-1}$ to be a representant of the inverse of $\A_m^{(k)}$ modulo smoothing operators by setting
$$
 (\widetilde{\A}_k^{(k)})^{-1}:= (\A_m^{(k)} + \hat{r})^{-1} .
$$

Following \cite{FRS}, \cite{FS}, we define the anisotropic Sobolev space
$$
 \boxed{\Hil_k^{m}:= (\widetilde{\A}_m^{(k)})^{-1} L^2 (M; \Ecal_k)}
$$

The space $\Hil_k^m$ can be endowed with the following Hermitian structure which makes it isometric to the space $L^2 (M; \C \otimes \Lambda^k T^* M)$: for every $u,v \in \Hil_k^m$,
$$
 \w{u,v}_{\Hil_k^m}:= \w{\widetilde{\A}_m^{(k)} u, \widetilde{\A}_m^{(k)} v}_{L^2}^{(k)} .
$$
Moreover, the space $\Hil_k^m$ is isomorphic\footnote{The idea is that any current $u \in \D'^{,k}(M)$ writes in coordinates as a $k$-form with coefficients in $\D'(M)$, i.e. $u = \sum u_{i_1,\cdots, i_k} dx^{i_1} \wedge \cdots \wedge dx^{i_k}$ where $u_{i_1,\cdots, i_k} \in \D' (M)$. A partition of unity argument gives the result.} to the space $\Hil_0^m \otimes \Omega^k (M; \C)$ and the following inclusion
$$
 \Omega^k (M; \C) \subset \Hil_k^m \subset \D'^{,k} (M ;\C).
$$
are continuous, where $\D'^{,k} (M; \C)$ is endowed with the weak topology - see \cite{schwartz}.

\subsection{Spectral properties}

Adapting the proof of \cite[Theorem 1.4]{FS} to the case of vector bundles and using the properties\footnote{These are the only properties used in the proof of \cite{FS}.} of the escape function stated in Proposition (which is the exact analogue of Lemma $1.2$ in \cite{FS}), one can establish the existence of a discrete spectrum on these anisotropic Sobolev spaces:

\begin{center}
\fbox{
\begin{minipage}{0.9\linewidth}
\begin{thm}[Discrete spectrum]\label{thm: discrete sprectrum} Let $\varphi^t$ be an Axiom A flow verifying the transversality assumption (\ref{Transversality assumption - first}). Let $G_m$ be an escape function. For every $0 \leq k \leq n$, the operator $-\Lie_V^{(k)}$ defines a maximal closed unbounded operator on $\Hil_k^m$ with domain 
$$
 \D (-\Lie_V^{(k)}) = \{u \in \Hil_k^m: \: -\Lie_V^{(k)} u \in \Hil_k^m \} \:  \mbox{ and }  \: \Lie_V^{(k)}: \D (-\Lie_V^{(k)}) \subset \Hil_k^m \rightarrow \Hil_k^m .  
$$
Moreover, there exists a constant $C_0 \in \R$ (which depends on the choice of the escape function $G_m$) such that 
$$ -\Lie_V^{(k)} \mbox{ has empty spectrum on } \re (z) \geq C_0 $$
and there exists constants $C_m >0$ (given by Proposition \ref{prop escape function}) and $C_1 > 0$ (which only depends on the vector field $V$ and the metric $g$) such that
$$ -\Lie_V^{(k)} \mbox{ has discrete spectrum on } \re (z) \geq -C_m + C_1 ,$$
where $C_m>0$ is the constant given by Proposition \ref{prop escape function}.
\end{thm}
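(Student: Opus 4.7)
The plan is to follow the strategy of Faure--Sjöstrand \cite{FS}, with the bundle-valued modifications of Dyatlov--Zworski \cite{dyatlov-zworski}. All the global dynamical information enters only through the escape function $G_m$ via the properties stated in Proposition \ref{prop escape function}, so the microlocal arguments transfer verbatim once that input is available.

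First I would conjugate by the elliptic pseudodifferential operator $\widetilde{\A}_m^{(k)}$. Setting
$$ P_m := \widetilde{\A}_m^{(k)} \circ (-\Lie_V^{(k)}) \circ (\widetilde{\A}_m^{(k)})^{-1}, $$
this reduces the problem to the spectral analysis of $P_m$ on $L^2(M;\Ecal_k)$, to which $\widetilde{\A}_m^{(k)}$ is by construction an isometry from $\Hil_k^m$. The closedness of $-\Lie_V^{(k)}$ on $\Hil_k^m$ and the identification of its domain then follow from the corresponding statements for the first-order pseudodifferential operator $P_m$, combined with the ellipticity and invertibility of $\widetilde{\A}_m^{(k)}$.

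Next I would compute the principal part of $P_m$ via the composition rules of the pseudodifferential calculus on vector bundles. A direct Egorov-type calculation, as in \cite[\S 3]{FS} with the bundle extension of \cite[App.~C]{dyatlov-zworski}, yields for $|\xi|\gg 1$
$$ \sigma(P_m)(x,\xi) = \bigl(-i H(x,\xi) + X_H(G_m)(x,\xi)\bigr)\cdot \Id_{\Ecal_k(x)} + B_k(x,\xi) + \mbox{l.o.t.}, $$
where $H(x,\xi)=\xi(V(x))$, $X_H$ is its Hamiltonian vector field, and $B_k$ is the bundle endomorphism coming from the action of $-\Lie_V^{(k)}$ on the fibers of $\Lambda^k T^*M$. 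Crucially, $B_k$ is $0$-homogeneous in $\xi$ and bounded, uniformly in $(x,\xi)$, by a constant $C_1 > 0$ that depends only on $V$ and $g$, and not on the parameters $u,n_0,s$ entering the order function. Combining this with property (3) of Proposition \ref{prop escape function} ($X_H(G_m)\leq 0$ for $|\xi|\geq R$) and property (2) (the strict bound $X_H(G_m)\leq -C_m$ outside a fixed compact region of $T^*M$), the sharp Gårding inequality applied to the matrix-valued symbol $-\re\sigma(P_m)-(C_m-C_1-\varepsilon)\Id_{\Ecal_k}$ gives
$$ \re\w{P_m u,u}_{L^2}^{(k)} \leq (-C_m + C_1 + \varepsilon)\|u\|_{L^2}^2 + \w{Ku,u}_{L^2}^{(k)}, $$
with $K$ compact on $L^2(M;\Ecal_k)$. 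Hence $P_m + z$ is Fredholm of index zero on $\re(z) > -C_m + C_1$, and analytic Fredholm theory \cite[App.~C]{DZ-book} delivers the discrete spectrum claim.

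The empty spectrum statement on $\re(z) \geq C_0$ is more elementary: it follows from the direct Neumann-series estimate on the resolvent integral
$$ (\Lie_V^{(k)}+z)^{-1} = \int_0^{+\infty} e^{-tz}(\varphi^{-t})^*\,dt, $$
whose convergence on $\Hil_k^m$ for $\re(z)\geq C_0\gg 1$ follows from a polynomial bound on $\|(\varphi^{-t})^*\|_{\Hil_k^m\to\Hil_k^m}$ obtained by interpolating between standard Sobolev estimates and the decay produced by the monotonicity of $G_m$ along $\Phi^t$. The main obstacle is the bookkeeping of the bundle endomorphism $B_k$ through the symbolic calculus to ensure that its uniform bound $C_1$ depends only on the background geometry; this is what makes it possible to enlarge $|u|$ and $s$ (hence $C_m$) so as to push the half-plane of Fredholmness arbitrarily far to the left, in particular past $z=0$, which is what is needed to deduce Theorem \ref{thm 2 intro} and, through the residue calculus sketched in the introduction, Theorem \ref{thm intro}.
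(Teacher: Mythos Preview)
Your approach is exactly the paper's: the paper does not give an independent proof but states that one adapts \cite[Theorem 1.4]{FS} to vector bundles as in \cite{dyatlov-zworski}, using only the properties of $G_m$ listed in Proposition \ref{prop escape function}. Your conjugation by $\widetilde{\A}_m^{(k)}$, symbol computation, G{\aa}rding-type estimate, and analytic Fredholm theory all match that scheme.

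There is, however, one concrete inaccuracy in your sketch. You assert that property (2) of Proposition \ref{prop escape function} gives $X_H(G_m)\leq -C_m$ ``outside a fixed compact region of $T^*M$''. That is not what the proposition says: the exceptional set is $\{|\xi|<R\}$ \emph{together with} the conical neighborhood $\widetilde{\Ncal}^o$ of $E_o^*$ over $\cup_i\Ncal_i$, and the latter is unbounded in $\xi$. Your G{\aa}rding argument as written would therefore only produce a compact remainder if you could absorb $\widetilde{\Ncal}^o$ into the compact part, which you cannot. The fix, which is precisely the mechanism used in \cite{FS}, is that on $\widetilde{\Ncal}^o$ (near a non-fixed basic set) property (1) gives $f=|\xi(V)|$, so the principal symbol $-iH(x,\xi)=-i\xi(V(x))$ is \emph{elliptic} there; one microlocalizes and uses this ellipticity, rather than the decay of $X_H(G_m)$, to obtain the Fredholm estimate on that cone. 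Near fixed-point basic sets $E_o^*=\{0\}$ and $\widetilde{\Ncal}^o$ is empty, so no separate argument is needed. A minor second point: the semigroup bound on $(\varphi^{-t})^*$ in $\Hil_k^m$ is exponential ($\leq e^{C_0 t}$), not polynomial; this is what the paper records in the remark following the theorem via Hille--Yosida.
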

\end{minipage}
}
\end{center}

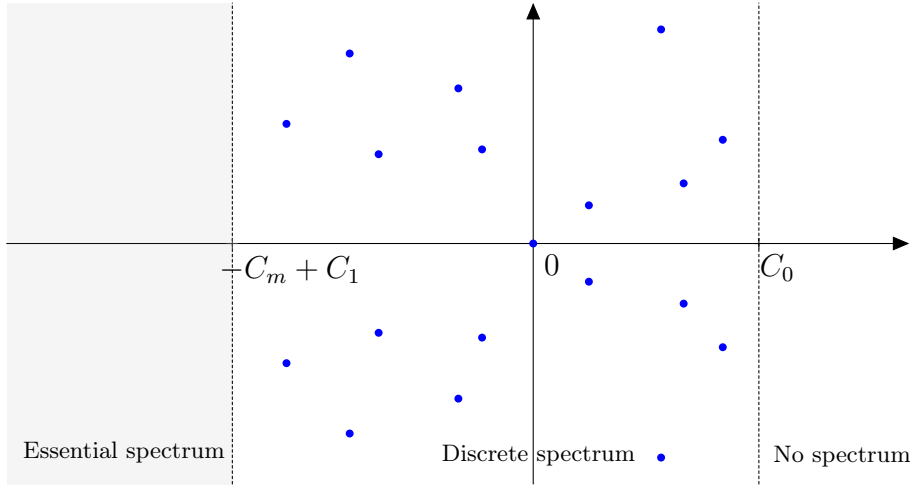
\begin{figure}[!t]
\begin{center}
\definecolor{uququq}{rgb}{0.25,0.25,0.25}
\definecolor{zzzzzz}{rgb}{0.6,0.6,0.6}
\definecolor{qqqqff}{rgb}{0,0,1}
\definecolor{ttttff}{rgb}{0.2,0.2,1}
\begin{tikzpicture}[line cap=round,line join=round,>=triangle 45,x=0.5cm,y=0.4cm]
\draw[->,color=black] (-14,0) -- (10,0);
\draw[->,color=black] (0,-8) -- (0,8);
\clip(-14,-8) rectangle (10,8);
\fill[line width=0pt,dash pattern=on 2pt off 2pt,color=zzzzzz,fill=zzzzzz,fill opacity=0.1] (-14,-8) -- (-14,8) -- (-8,8) -- (-8,-8) -- cycle;
\draw [dash pattern=on 1pt off 1pt] (6,-8) -- (6,8);
\draw [dash pattern=on 1pt off 1pt] (-8,-8) -- (-8,8);
\draw (-8.63,-0.08) node[anchor=north west] {$-C_m + C_1$};
\draw (5.74,0) node[anchor=north west] {$C_0$};
\draw (-13.85,-6.22) node[anchor=north west] {$\scriptsize{\mbox{Essential spectrum}}$};
\draw (-2.72,-6.33) node[anchor=north west] {$\scriptsize{\mbox{Discrete spectrum}}$};
\draw (6.1,-6.35) node[anchor=north west] {$\scriptsize{\mbox{No spectrum}}$};
\draw (0,0) node[anchor=north west] {$0$};
\begin{scriptsize}
\draw [color=black] (6,0)-- ++(-2.0pt,0 pt) -- ++(4.0pt,0 pt) ++(-2.0pt,-2.0pt) -- ++(0 pt,4.0pt);
\fill [color=qqqqff] (0,0) circle (1.5pt);
\fill [color=qqqqff] (1.48,-1.27) circle (1.5pt);
\fill [color=qqqqff] (4,-2) circle (1.5pt);
\fill [color=qqqqff] (5.04,-3.45) circle (1.5pt);
\fill [color=qqqqff] (3.4,-7.12) circle (1.5pt);
\fill [color=qqqqff] (-1.36,-3.13) circle (1.5pt);
\fill [color=qqqqff] (-1.99,-5.16) circle (1.5pt);
\fill [color=qqqqff] (-4.11,-2.97) circle (1.5pt);
\fill [color=qqqqff] (-6.56,-3.98) circle (1.5pt);
\fill [color=qqqqff] (-4.88,-6.32) circle (1.5pt);
\fill [color=qqqqff] (-6.56,3.98) circle (1.5pt);
\fill [color=qqqqff] (-4.11,2.97) circle (1.5pt);
\fill [color=qqqqff] (-1.36,3.13) circle (1.5pt);
\fill [color=qqqqff] (-4.88,6.32) circle (1.5pt);
\fill [color=qqqqff] (-1.99,5.16) circle (1.5pt);
\fill [color=qqqqff] (1.48,1.27) circle (1.5pt);
\fill [color=qqqqff] (4,2) circle (1.5pt);
\fill [color=qqqqff] (5.04,3.45) circle (1.5pt);
\fill [color=qqqqff] (3.4,7.12) circle (1.5pt);
\draw [color=uququq] (-8,0)-- ++(-2.0pt,0 pt) -- ++(4.0pt,0 pt) ++(-2.0pt,-2.0pt) -- ++(0 pt,4.0pt);
\end{scriptsize}
\end{tikzpicture}
\captionof{figure}{Illustration of Pollicott-Ruelle resonances of $-\Lie_V^{(k)}$ on the anisotropic Sobolev space $\Hil_k^m$. The fact that $0$ is a resonance or not depends on $k$.}
\end{center}
\end{figure}

The eigenvalues of $-\Lie_V^{(k)}$ on the anisotropic Sobolev space are called the \textit{Pollicott-Ruelle resonances} of $-\Lie_V^{(k)}$. There are many (equivalent) definitions of resonances. In particular, they can be viewed as the poles of the resolvent operator $(-\Lie_V^{(k)} - z)^{-1}: \Omega^k (M; \C) \rightarrow \D'^{,k} (M; \C)$. Let us make a few remarks about them:

\begin{remark}
 \begin{itemize}[align=left, leftmargin=*, noitemsep]
  \item The discrete spectrum of $-\Lie_V^{(k)}$ is intrinsic in the sense that it does not depend on the escape function and the essential spectrum can be chosen as far as we want to the origin by taking $m$ such that $C_m \gg 1$. We refer to \cite[Thm. 1.5, p. 134]{FS} for a proof in the case of Anosov vector fields.
  \item The set of resonances is symmetric along the real axis since the vector field $V$ is real. 
  \item When $k = 0$, the resonances are included in the set $\{ \re (z) \leq 0 \}$ and the point $z = 0$ is a resonance since the constants are solutions of $\Lie_V u = 0$. This fact is not true in general for $k > 0$ and the optimal constant $h \in \R$ such that there is no spectrum in the set $\re (z) > h$ is related to the \textit{topological entropy} of  the basic sets.
  \item From the previous remark, we can see that the resonance $0$ is somehow related to Morse inequalities:
  $$ \boxed{\dim (\Ker(\Lie_V)) \geq b_0 = \dim \Hsc_0  (M),}$$
  where $b_0(M)$ is the number of connected components of $M$ and where $\Ker \Lie_V$ denotes the kernel of the Lie derivative viewed as an unbounded operator on $\Hil_0^m$.
  \item From the first point, we can deduce that the space $\Ker ((\Lie_V^{(k)})^{\ell})$ does not depend on the espace function $m$ for any $\ell \in \N$ (provided $m$ is chosen such that $-C_m+C_1<0$).
 \end{itemize}
\end{remark}

Before going deeper into topological considerations, let us recall some useful properties of the operators $-\Lie_V^{(k)}$. 

\begin{remark}
 \begin{itemize}[align=left, leftmargin=*, noitemsep]
    \item When proving the discrete spectrum theorem for Anosov vector fields, precisely in \cite[lem. 3.3, p. 343]{FS}, Faure and Sjöstrand obtained a bound on the resolvent operator which remains true in our context. For every $z$ such that $\re (z) > C_0$, we have
    $$
     \| (\Lie_V^{(k)} + z)^{-1} \|_{\Hil_k^m \rightarrow \Hil_k^m} \leq \frac{1}{\re (z) - C_0}.
    $$
    An application of Hille-Yosida theorem \cite[cor. 3.6, p. 76]{engel-nagel} gives that 
    $$ (\varphi^{-t})^*: \Hil_k^m \rightarrow \Hil_k^m, \quad \forall t \geq 0,$$
    generates a \textbf{strongly continuous semi-group} whose norm is bounded by $e^{C_0 t}$. Therefore, for every $z$ such that $\re (z) > C_0$ we can write the resolvent as follows:
    $$
     (\Lie_V^{(k)} + z)^{-1} = \int_0^{+\infty} e^{-zt } (\varphi^{-t})^* dt: \Hil_k^m \rightarrow \Hil_k^m,
    $$
    where the integral converges absolutely. Note that it is convenient to use the convention $(\varphi^{-t})^* = e^{-t \Lie_V^{(k)}}: \Hil_k^m \rightarrow \Hil_k^m$.
  \item If $z_0 $ is any resonance of $-\Lie_V^{(k)}$ such that $\re (z_0) > -C_m + C_1$, then we can define the Riesz projector
  $$
   \pi_{z_0}^{(k)}:= \frac{1}{2 i \pi} \int_{\gamma_{z_0}} (\Lie_V^{(k)} + z)^{-1} dz: \Hil_k^m \rightarrow \Hil_k^m ,
  $$
  where the integral is over a positively oriented closed curved $\gamma_{z_0}$ which surrounds the resonance $z_0$ and no other resonances. Moreover, it commutes with $\Lie_V^{(k)}$ and it has finite rank. Note that this definition still makes sense when $z_0$ is not a resonance and in that case, $\pi_{z_0}^{(k)}$ is identically $0$. Note also that $\pi_{z_0}^{(k)}$ commutes with the exterior derivatives $d$ because $\Lie_V^{(k)}$ commutes with $d$ thanks to the Cartan formula.
  \item The resolvent operator writes as a Laurent series near $z_0$:
  $$
      (\Lie_V^{(k)} + z)^{-1} = \sum_{l = 1}^{m_k (z_0)}  (-1)^{l-1} \frac{(\Lie_V^{(k)} + z)^{l-1} \pi_{z_0}^{(k)}}{(z - z_0)^{l}} + R_{z_0,k} (z)
  $$
  where $R_{z_0,k}$ is holomorphic near $z_0$.
 \end{itemize}
\end{remark}

\section{Construction of the Morse-De Rham complex} \label{section topology}

Let us define $\Res_k (V)$ as the set of resonances $z \in \C$ of the operator $-\Lie_V^{(k)}$, i.e. the set of points $z_0\in \C$ such that we can find an escape function $G_m$ with $\re (z_0) > -C_m+C_1$ and such that the algebraic multiplicity of $z_0$, denoted by $m_k (z_0)$, verifies $m_k (z_0) \neq 0$. We can then define $C_V^k(z_0)$ as the range of the projector $\pi_{z_0}^{(k)}$ defined on the space of $k$-forms $\Omega^k (M;\C)$. Equivalently, we have
$$
 C_V^k (z_0) = \Ker \left( (\Lie_V^{(k)} + z_0)^{m_k (z_0)} \right),
$$
and since $\pi_{z_0}^{(k)}$ has finite rank, the vectorial spaces $C_V^k (z_0)$ are \textbf{finite dimensional}. Recall that this space is independent of the choice of the escape function used to define Hilbert spaces.

Recall now that the sequence of De Rham complex is defined as
$$ 0 \overset{d}{\longrightarrow} \Omega^0 (M) \overset{d}{\longrightarrow} \Omega^1 (M) \overset{d}{\longrightarrow} \cdots \overset{d}{\longrightarrow} \Omega^n (M) \overset{d}{\longrightarrow}  0 $$

We will denote by $\Hg (C_V^* (0),d)$ the cohomology of the spectral complex associated with the eigenvalue $0$ and by $\Hg^k (M; \C)$ the complex $k$-th De Rham cohomology: 
$$ \Hg^k (C_V^* (0),d)  = \faktor{ \Ker (\rest{d}{C_V^k (0)}) }{\Ran (\rest{d}{C_V^{k-1} (0)}) } \mbox{ and } \Hg^k (M; \C)  = \faktor{\Ker (\rest{d}{\Omega^k (M;\C)}) }{ \Ran (\rest{d}{\Omega^{k-1} (M;\C)})}.$$

Theorem \ref{thm intro} of the introduction is a consequence of the next Theorem.
\vspace{0.2cm}

\fbox{
\begin{minipage}{0.9\linewidth}
\begin{thm}\label{thm isomorphism}
 Suppose that the vector field V generates an Axiom A flow which satisfies the transversality assumption (\ref{Transversality assumption - first}). For every $0 \leq k \leq n$, the map
 $$
  \pi_0^{(k)}: \Omega^k (M ;\C) \rightarrow C_V^k (0)
 $$
induces an isomorphism between $\Hg^k (M; \C)$ and $\Hg^k (C_V^* (0),d)$. 
\end{thm}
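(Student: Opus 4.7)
The plan is to exploit Cartan's magic formula $\Lie_V = d\iota_V + \iota_V d$ to construct an explicit chain homotopy between $\pi_0^{(*)}$ and the identity on smooth forms, and then to deduce the isomorphism of cohomologies from the de Rham theorem for currents. That $\pi_0^{(*)}$ is already a morphism of complexes was observed in the last remark before the statement: since $\Lie_V^{(k)}$ commutes with $d$ by Cartan's formula, so does the resolvent $(\Lie_V^{(k)} + z)^{-1}$, and therefore so does its Riesz projector $\pi_0^{(k)}$; in particular $d$ preserves $\Ran(\pi_0^{(k)}) = C_V^k(0)$, and $\pi_0^{(*)}$ sends $(\Omega^*(M;\C), d)$ to $(C_V^*(0),d)$. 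To obtain the homotopy, I would identify the coefficients of the Laurent expansion of $(\Lie_V^{(k)}+z)^{-1}$ at $z=0$ recalled at the end of Section \ref{section Anisotropic Sobolev spaces} and use the relation $(\Lie_V^{(k)})^{m_k(0)} \pi_0^{(k)} = 0$ to derive the identity
\begin{equation*}
 \Lie_V^{(k)} \circ R_{0,k}(0) = R_{0,k}(0) \circ \Lie_V^{(k)} = \Id - \pi_0^{(k)} \quad \text{on } \Omega^k(M;\C).
\end{equation*}
Setting $K^{(k)} := \iota_V \circ R_{0,k}(0) : \Omega^k(M;\C) \to \D'^{,k-1}(M;\C)$ and using that $R_{0,k}$ commutes with $d$, Cartan's formula rewrites this as the chain homotopy
\begin{equation*}
 d \circ K^{(k)} + K^{(k+1)} \circ d = \Id - \pi_0^{(k)}.
\end{equation*}

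Injectivity of $[\pi_0^{(*)}]$ is then immediate: if $\omega \in \Omega^k(M;\C)$ is closed and $\pi_0^{(k)}(\omega) = d\alpha$ for some $\alpha \in C_V^{k-1}(0)$, the homotopy yields $\omega = d(\alpha + K^{(k)}\omega)$ as currents, so $[\omega] = 0$ in $\Hg^k(\D'^{,*}(M;\C),d)$, which by the de Rham theorem for currents means $[\omega] = 0$ in $\Hg^k(M;\C)$. For surjectivity, given a closed $\beta \in C_V^k(0)$, I would apply de Rham for currents once more to find a closed $\omega \in \Omega^k(M;\C)$ and some $\eta \in \D'^{,k-1}(M;\C)$ with $\omega - \iota(\beta) = d\eta$, and then apply the spectral projector to this equation. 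Using that $\pi_0^{(k)} \circ \iota = \Id$ on $C_V^k(0)$, this formally gives $\pi_0^{(k)}(\omega) - \beta = d\pi_0^{(k-1)}(\eta)$ with $\pi_0^{(k-1)}(\eta) \in C_V^{k-1}(0)$, producing the desired smooth preimage in cohomology.

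The main obstacle is precisely this last step: $\pi_0^{(k-1)}$ was originally defined only on $\Omega^{k-1}(M;\C)$, so one must extend it to the possibly non-smooth current $\eta$. The natural route is to regard $\pi_0^{(*)}$ as its intrinsic Riesz projector on the anisotropic Sobolev space $\Hil_*^m$ afforded by Theorem \ref{thm: discrete sprectrum}, and to show that $\eta$ can be chosen in $\Hil_{k-1}^m$. This reduces to verifying that the inclusion of complexes $(\Omega^*(M;\C),d) \hookrightarrow (\Hil_*^m, d)$ is itself a quasi-isomorphism, which in turn requires the escape functions across degrees to be chosen compatibly so that the exterior derivative $d$ maps $\Hil_k^m$ continuously into $\Hil_{k+1}^m$ — a microlocal compatibility condition that the anisotropic Sobolev construction of Section \ref{section Anisotropic Sobolev spaces} is designed to handle. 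Once this regularity point is secured, the chain homotopy above closes the proof.
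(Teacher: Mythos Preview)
Your proposal is correct and follows essentially the same route as the paper: the chain homotopy $\Id - \pi_0^{(k)} = dK^{(k)} + K^{(k+1)}d$ built from Cartan's formula is exactly the paper's operator $R^{(k)} = \iota_V \circ (\Lie_V^{(k)})^{-1}(\Id - \pi_0^{(k)})$ (your $R_{0,k}(0)$ agrees with $(\Lie_V^{(k)})^{-1}(\Id - \pi_0^{(k)})$ on $\Ran(\Id - \pi_0^{(k)})$ and the difference is irrelevant), and the regularity obstacle you isolate for surjectivity is precisely what the paper handles via its anisotropic de Rham Theorem~\ref{thm: De Rham}. The only cosmetic difference is that the paper does not require $d:\Hil_k^m \to \Hil_{k+1}^m$ but instead allows a shift to $\Hil_{k-1}^{m+1}$ for the primitive, which is enough since $\pi_0^{(k-1)}$ is intrinsically defined on every $\Hil_{k-1}^{m'}$.
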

\end{minipage}}

\vspace{0.2cm}
Its proof is based on the following De Rham theorem:
\vspace{0.2cm}

\fbox{
\begin{minipage}{0.9\linewidth}
\begin{thm}[De Rham]\label{thm: De Rham} Let $u$ be an element in $\Hil_k^m$ satisfying $d u = 0$.
\begin{enumerate}
 \item There exists $\omega \in \Omega^k (M; \C)$ such that $u - \omega \in d\left( \Hil_{k-1}^{m+1} (M;\C) \right)$.
 \item If $u = d v$ with $u \in \Omega^k (M; \C)$ and $v \in \D'^{,k-1} (M; \C)$, then there exists $\omega \in \Omega^{k-1} (M; \C)$ such that 
 $$ u = d \omega .$$
\end{enumerate}
\end{thm}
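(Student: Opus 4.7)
My plan is to prove both parts by classical Hodge theory on $(M,g)$, promoted to the anisotropic setting via pseudodifferential mapping properties. Let $\Delta=dd^*+d^*d$ denote the Hodge--De Rham Laplacian acting on $k$-forms, where $d^*$ is the formal $L^2$-adjoint of $d$. Then $\Delta$ is an elliptic, formally self-adjoint, non-negative operator of order $2$ commuting with $d$ and $d^*$. Let $\pi_{\mathrm{harm}}^{(k)}$ denote the $L^2$-orthogonal projection onto the finite-dimensional space $\mathcal{H}^k_{\mathrm{harm}}$ of harmonic $k$-forms (which is contained in $\Omega^k(M;\C)$ by elliptic regularity), and let $G_k$ be the Green operator inverting $\Delta$ on $(\mathcal{H}^k_{\mathrm{harm}})^{\perp}$. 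Both $\pi_{\mathrm{harm}}^{(k)}$ and $G_k$ are classical pseudodifferential operators of orders $-\infty$ and $-2$, respectively, and one has the decomposition
\begin{equation*}
 \Id = \pi_{\mathrm{harm}}^{(k)} + d\,d^* G_k + d^* d\, G_k.
\end{equation*}

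For part (1), apply this identity to $u\in\Hil_k^m$ with $du=0$. Using the commutation $d\,G_k=G_{k+1}\,d$, I get $d^*d\,G_k u = d^* G_{k+1}\,du = 0$, so
\begin{equation*}
 u = \pi_{\mathrm{harm}}^{(k)}(u) + d\bigl(d^* G_k u\bigr).
\end{equation*}
Set $\omega := \pi_{\mathrm{harm}}^{(k)}(u)\in\Omega^k(M;\C)$. It remains to verify that $v:=d^* G_k u$ lies in $\Hil_{k-1}^{m+1}$. Since $d^* G_k$ is a classical pseudodifferential operator of order $-1$ acting from $k$-forms to $(k-1)$-forms, and since the anisotropic Sobolev spaces $\Hil_\ell^m$ are defined via the elliptic weight $\widetilde{\A}_m^{(\ell)}$ whose symbol class admits the standard pseudodifferential calculus (including composition with classical symbols) as recalled in \cite{FRS}, the operator $d^*G_k$ maps $\Hil_k^m$ continuously into $\Hil_{k-1}^{m+1}$; this is the place where the mapping property of order-$(-1)$ operators on variable-order spaces is used, and it is the main technical checkpoint of the argument.

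For part (2), assume $u\in\Omega^k(M;\C)$ and $u=dv$ for some $v\in\D'^{,k-1}(M;\C)$. Then $du=0$ (as currents, hence in any reasonable sense), so the same decomposition gives $u=\pi_{\mathrm{harm}}^{(k)}(u)+d(d^*G_k u)$. To see that the harmonic piece vanishes, pair with any $h\in\mathcal{H}^k_{\mathrm{harm}}$: by the definition of the exterior derivative on currents and since $d^*h=0$,
\begin{equation*}
 \langle u,h\rangle_{L^2} = \langle dv,h\rangle = \langle v, d^* h\rangle = 0.
\end{equation*}
Hence $\pi_{\mathrm{harm}}^{(k)}(u)=0$ and $u=d\omega$ with $\omega := d^*G_k u$. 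Because $u$ is smooth and $d^* G_k$ is pseudolocal of order $-1$, elliptic regularity gives $\omega\in\Omega^{k-1}(M;\C)$, completing the proof.

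The main obstacle is the clean mapping property $d^* G_k\colon \Hil_k^m\to\Hil_{k-1}^{m+1}$: one must check that composing a classical order-$(-1)$ pseudodifferential operator with the variable-order weight $\widetilde{\A}_m^{(k)}$ produces an operator whose conjugate is bounded on $L^2$, using the pseudodifferential calculus with variable-order symbols developed in \cite{FRS}. Everything else is the standard Hodge--theoretic identity, elliptic regularity, and the duality $\langle dv,h\rangle=\langle v,d^*h\rangle$.
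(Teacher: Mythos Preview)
Your proof is correct and is essentially the same as the paper's: both use the Hodge decomposition $\Id=\pi_0+d\,R+R\,d$ with $R=d^*\Delta^{-1}(\Id-\pi_0)=d^*G_k$, set $\omega=\pi_0(u)$, and invoke that $R$ is a classical pseudodifferential operator of order $-1$ to obtain the mapping $\Hil_k^m\to\Hil_{k-1}^{m+1}$. The only difference is that for part (2) the paper simply cites Schwartz, whereas you spell out the standard pairing argument $\langle dv,h\rangle=\langle v,d^*h\rangle=0$ against harmonic forms; this is fine, though strictly speaking the pairing should be interpreted via the Hodge star and the distributional definition of $d$ on currents.
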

\end{minipage}}

\begin{remark}
\begin{itemize}[align=left, leftmargin=*, noitemsep]
 \item We give a short proof of De Rham Theorem in appendix \ref{Appendix: De Rham} for the sake of completeness. We follow the lines of the proof presented in \cite[p. 16]{DRIII} and \cite[p. 355]{schwartz}.
\end{itemize}
\end{remark}

\label{section: proof of isomorphism theorem}

\begin{proof}[Proof of theorem \ref{thm isomorphism}]
Our goal is to prove that the following diagram is well defined, commutes and induces a map $\widetilde{\pi_0^{(k)}}$ which is an isomorphism from $\Hg^k (M; \C)$ to $\Hg^k (C_V^* (0),d)$:
\begin{center}
\begin{tikzpicture}
  \matrix (m) [matrix of math nodes,row sep=3em,column sep=4em,minimum width=2em]
  {
     \Ker (d) \cap \Omega^k (M; \C) & \Ker (d) \cap C_V^k (0) \\
     \Hg^k (M; \C) & \Hg^k (C_V^* (0),d) \\};
  \path[-stealth]
    (m-1-1) edge node [left] {$ p_1 $} (m-2-1)
    (m-1-1) edge node [above] {$ \pi_0^{(k)} $} (m-1-2)
    (m-2-1) edge node [above] {$\widetilde{\pi_0^{(k)}}$} (m-2-2)
    (m-1-2) edge node [left] {$ p_2 $} (m-2-2);
\end{tikzpicture}
\end{center}
Let us recall that the projector $\pi_0^{(k)}$ is defined on the anisotropic Sobolev space $\Hil_k^m$ which contains $\Omega^k (M; \C)$. Thus, it induces a map by restriction that we will still denoted by $\pi_0^{(k)}$. Also, since $d$ commutes with the projectors $\pi_0^{(*)}$, in the sense that $ d \circ \pi_0^{(k-1)} = \pi_0^{(k)} \circ d$, we get that $\pi_0^{(k)}$ sends the space $\Ker (d) \cap \Omega^k (M; \C)$ into the space $\Ker (d) \cap C_V^k (0)$. To prove the existence of the map $\widetilde{\pi_0^{(k)}}$ and in the same way its injectivity we will check the following equality:
\begin{equation}\label{proof thm isomorphism: equation kernel}
    \Ker (p_2 \circ \pi_0^{(k)} ) \cap \Ker (d) \cap \Omega^k (M; \C) = \Ran (d) \cap \Omega^k (M; \C).
\end{equation}
  Since we have $ d \circ \pi_0^{(k-1)} = \pi_0^{(k)} \circ d$, we can deduce the inclusion $\Ker (p_2 \circ \pi_0^{(k)} ) \supseteq \Ran (d) \cap \Omega^k (M; \C)$. Let us prove the other one. Consider $u \in \Omega^k (M; \C)$ such that $d u = 0$ and $p_2 \circ \pi_0^{(k)} (u) = 0$. By definition of $p_2$, there exists $v \in C_V^{k-1} (0)$ such that 
$$ \pi_0^{(k)} (u) = dv \quad \mbox{in } \: \D^{',k} (M; \C) .$$
In order to apply part $(2)$ of De Rham theorem (Thm \ref{thm: De Rham}), we will show that 
$$ \pi_0^{k} (u) \mbox{ and } u \mbox{ are in the same cohomology class (in } \D'^{,k}(M;\C) ) ,$$
and apply the theorem to $u$. But first, let us note that $\Lie_V^{(k)}$ is invertible in the Hilbert space 
$$ \Hil_k^m  \cap \Ker \pi_0^{(k)} = (\Id_{\Lambda^k T^* M \otimes \C} - \pi_0^{(k)}) \Hil_k^m $$
and for our $u \in \Omega^k (M; \C) \subset \Hil_k^m$ we have
\begin{equation}\label{eq: preuve thm - calcul representant lisse classe homologie}
 \begin{split}
   u &= \pi_0^{(k)} (u) + (\Id_{\Lambda^k T^* M \otimes \C} - \pi_0^{(k)}) (u) \\
 &= \pi_0^{(k)} (u) + \Lie_V^{(k)} \circ (\Lie_V^{(k)})^{-1}(\Id_{\Lambda^k T^* M \otimes \C} - \pi_0^{(k)}) (u) \\
 &= \pi_0^{(k)} (u) + (d \circ \iota_V + \iota_V \circ d) \circ (\Lie_V^{(k)})^{-1}(\Id_{\Lambda^k T^* M \otimes \C} - \pi_0^{(k)}) (u) \\
 &= \underbrace{\pi_0^{(k)} (u)}_{dv} + d \circ R^{(k)} (u) +  R^{(k)} \circ \underbrace {d u}_{0} \\
 &= d (v + R^{(k)} (u))  \in d \left( \D'^{,k-1}(M; \C) \right)
 \end{split}
\end{equation}
where
\begin{equation}\label{eq: proof isomorphism theorem - homotopic operator}
    R^{(k)} (u) = \iota_V \circ (\Lie_V^{(k)})^{-1}(\Id_{\Lambda^k T^* M \otimes \C} - \pi_0^{(k)}) (u).
\end{equation}
  We also used the Cartan formula $\Lie_V^{(k)} = d \circ \iota_V + \iota_V \circ d$ and the fact that $d$ commutes with $\Lie_V^{(k)}$ and therefore with $(\Lie_V^{(k)})^{-1}$ and $\pi_0^{(k)}$. Finally, we can apply the point $(2)$ of the De Rham theorem and we obtain the existence of $\omega \in \Omega^{k-1} (M; \C)$ such that 
$$ u = d \omega \in \Ran (d) \cap \Omega^{k-1} (M; \C).$$
So, equation (\ref{proof thm isomorphism: equation kernel}) holds and the universal property of the quotient gives us the existence of the linear map $\widetilde{\pi_0^{(k)}}$. In order to prove that it is an isomorphism, we need to show that it is onto.

\textbf{The map $\widetilde{\pi_0^{(k)}}$ is onto.} By definition of $C_V^k (0)$ the projector $\pi_0^{(k)}: \Hil_k^m \rightarrow C_V^k (0)$ is onto. Let us take $\widetilde{v} \in \Hg^k (C_V^* (0),d)$ and consider $v \in \Ker (d) \cap C_V^k (0)$ such that $\widetilde{v} = p_2 (v)$. Applying De Rham theorem to $v \in \Hil_k^m$, we obtain the existence of $\omega \in \Omega^k (M; \C) \cap \Ker (d)$ and $v' \in \Hil_{k-1}^{m +1}$ such that 
$$ v = \omega + dv' \mbox{ and } \widetilde{v} = p_2 (v) = p_2 (\omega) + \underbrace{p_2 (d v')}_{0} = p_2 (\omega).$$
Note that the expression $p_2 (d v')$ only makes sense for $dv' \in \Hil_k^m$.
Using a similar computation that in (\ref{eq: preuve thm - calcul representant lisse classe homologie}) on $\omega$, we deduce that
$$ v = \pi_0^{(k)} (\omega) + d( R^{(k)}(\omega) + v'). $$
Now, using the explicit expression of the operator $R^{(k)}$ given in (\ref{eq: proof isomorphism theorem - homotopic operator}) together with the fact that $(\Lie_V^{(k)})^{-1}(\Id_{\Lambda^k T^* M \otimes \C} - \pi_0^{(k)})$ is a pseudodifferential operator of order $-1$, we get that
$$ R^{(k)} (\omega) \in \Hil_{k-1}^{m +1}$$
and therefore 
$$ p_2 \circ \pi_0^{(k)} (\omega) = \widetilde{v}.$$
Finally, if we define $\widetilde{u} = p_1 (\omega) \in \Hg^k (M; \C)$ then we obtain
$$ \widetilde{\pi_0^{(k)}} (\widetilde{u} ) = \widetilde{v} $$
and thus the surjectivity of $\widetilde{\pi_0^{(k)}}$.
\end{proof}
  
 \section{Energy functions and application to Axiom A flows}\label{section: Dynamical proofs}

  \subsubsection{A useful lemma}\label{subsubsection: a useful lemma} In this part, we recall a general analysis introduced in \cite{FS} which will be applied to both flows $\varphi^t: M \rightarrow M$ and $\widetilde{\Phi}^t: S^* M \rightarrow S^* M$.
  
  Let us consider $v \in \Gamma (TX)$ some smooth vector field on a compact manifold $X$ and let us denote by $\exp (t.v)$ the flow generated by $v$. A couple of compact sets $(K_+,K_-)$ is said to be \textbf{attractor-repeller} for the flow $\exp (t.v)$ on $X$ if it satisfies the two following conditions:
  	\begin{enumerate}[label = \roman*)]
  	 \item $\forall x \in X \setminus (K_- \cup K_+)$,  $d( \exp (\pm t. v) (x), K_{\pm}) \underset{t \to \pm \infty}{\longrightarrow} 0$.
  	 \item There exists open neighborhoods $\Vcal_{\pm}$ of $K_{\pm}$ stable by $\exp (\pm 1. v)$ such that  $\overline{\Vcal_{-}} \cap \overline{\Vcal_{+}} = \emptyset$.
  	\end{enumerate}
  	
  	\begin{figure}[!t]
\centering
\def\svgscale{0.7}
 \executeiffilenewer{attractor-repeller.svg}{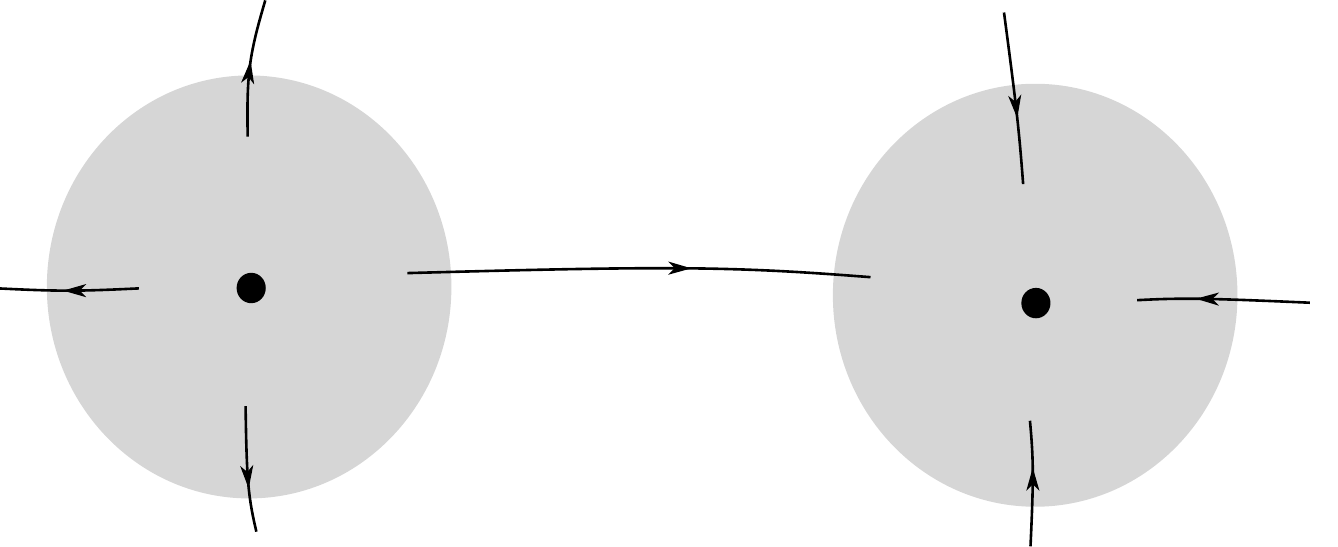}%
 {inkscape -z -D --file=attractor-repeller.svg %
 --export-pdf=attractor-repeller.pdf --export-latex}%
\begingroup%
  \makeatletter%
  \providecommand\color[2][]{%
    \errmessage{(Inkscape) Color is used for the text in Inkscape, but the package 'color.sty' is not loaded}%
    \renewcommand\color[2][]{}%
  }%
  \providecommand\transparent[1]{%
    \errmessage{(Inkscape) Transparency is used (non-zero) for the text in Inkscape, but the package 'transparent.sty' is not loaded}%
    \renewcommand\transparent[1]{}%
  }%
  \providecommand\rotatebox[2]{#2}%
  \newcommand*\fsize{\dimexpr\f@size pt\relax}%
  \newcommand*\lineheight[1]{\fontsize{\fsize}{#1\fsize}\selectfont}%
  \ifx\svgwidth\undefined%
    \setlength{\unitlength}{385.2276371bp}%
    \ifx\svgscale\undefined%
      \relax%
    \else%
      \setlength{\unitlength}{\unitlength * \real{\svgscale}}%
    \fi%
  \else%
    \setlength{\unitlength}{\svgwidth}%
  \fi%
  \global\let\svgwidth\undefined%
  \global\let\svgscale\undefined%
  \makeatother%
  \begin{picture}(1,0.40845389)%
    \lineheight{1}%
    \setlength\tabcolsep{0pt}%
    \put(0,0){\includegraphics[width=\unitlength,page=1]{attractor-repeller.pdf}}%
    \put(0.19249616,0.22371105){\makebox(0,0)[lt]{\lineheight{1.25}\smash{\begin{tabular}[t]{l}$K_-$\end{tabular}}}}%
    \put(0.77991422,0.20903732){\makebox(0,0)[lt]{\lineheight{1.25}\smash{\begin{tabular}[t]{l}$K_+$\end{tabular}}}}%
    \put(0.07065287,0.24345724){\makebox(0,0)[lt]{\lineheight{1.25}\smash{\begin{tabular}[t]{l}$\mathcal{V}_-$\end{tabular}}}}%
    \put(0.65425376,0.23798236){\makebox(0,0)[lt]{\lineheight{1.25}\smash{\begin{tabular}[t]{l}$\mathcal{V}_+$\end{tabular}}}}%
  \end{picture}%
\endgroup%

  \caption{Illustration of an attractor-repeller system.}
  \label{fig: attractor-repeller model} 
\end{figure}

 \begin{remark}\label{remark: uniform bound on time spent outside unrevisited neighborhoods}
  From conditions i) and ii) we can deduce that the time spent outside $\Vcal_- \cup \Vcal_+$ is uniformly bounded: if we define the map
  $$ \tau (x) = \Card \{ m \in \Z: \: \varphi^m (x) \notin \Vcal_- \cup \Vcal_+ \}, $$
  then we get the existence of an integer $T > 0$ such that $\tau (x) \leq T$ for every $x \in M$. Indeed, by contradiction, if the map was not bounded then we could find (by compactness) a point $x \in X \notin (\Vcal_- \cup \Vcal_+)$ which satisfies $\varphi^m (x) \in X \notin (\Vcal_- \cup \Vcal_+)$ for all $m \in \N$. It is in contradiction with condition i).
 \end{remark}

\begin{lemma}[{Faure-Sjöstrand, \cite{FS}}]\label{lemma: energy function for attractor-repeller model}
 Let $(K_-,K_+)$ be an attractor-repeller couple for the flow $\exp (t.v)$ on the compact manifold $X$ and fix $\varepsilon > 0$. There exist $\varepsilon$-neighborhoods $\Wcal_{\pm} \subset \Vcal_{\pm}$ of $K_{\pm}$, an energy function $m \in \Ccal^{\infty} (X; [0,1])$ and a constant $\eta > 0$, which depends on $\varepsilon$, such that $v (m) \geq 0$ everywhere and $v (m) > \eta$ outside $\Wcal_{-} \cup \Wcal_{+}$. Moreover, we have $m > 1 - \varepsilon$ on $\Wcal_{+}$ and $m = 1$ on $K_+$. Similarly, we have $m < \varepsilon$ on $\Wcal_{-}$ and $m = 0$ on $K_-$. 
\end{lemma}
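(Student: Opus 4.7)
The plan is to construct $m$ as the time-average along the flow of a smooth cut-off function vanishing near $K_-$ and equal to $1$ near $K_+$, a classical strategy going back to Conley and used by Faure-Sj\"ostrand.

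\textbf{Setup.} Using that $K_+$ is an attractor with forward-stable neighborhood $\Vcal_+$ (and dually for $K_-$), the decreasing families of open sets $\exp(T_0.v)(\Vcal_+)$ and $\exp(-T_0.v)(\Vcal_-)$ are forward, resp.\ backward, invariant and shrink respectively to $K_+$ and $K_-$ as $T_0\to+\infty$. Fixing two large parameters $T_0<T_1$, I set
$$ U_+ := \exp(T_0.v)(\Vcal_+),\qquad U_+^{(\chi)} := \exp(T_1.v)(\Vcal_+),\qquad U_- := \exp(-T_0.v)(\Vcal_-), $$
chosen so that $\overline{U_+^{(\chi)}}\subset U_+\subset B_{\varepsilon/3}(K_+)$ and $U_-\subset B_{\varepsilon/3}(K_-)$, shrinking $\Vcal_\pm$ beforehand so that they are stable under $\exp(s.v)$ for every $s\geq 0$, respectively $s\leq 0$, not just for $s=\pm1$. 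A smooth Urysohn function on $X$ then produces $\chi\in\Ccal^\infty(X;[0,1])$ with $\{\chi=0\}=\overline{U_-}$ and $\{\chi=1\}=\overline{U_+^{(\chi)}}$.

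\textbf{Uniform convergence.} Combining (i), the forward invariance of $\Vcal_+$, the backward invariance of $\Vcal_-$ (which forbids forward orbits from entering $\Vcal_-$ from outside), the uniform transit-time bound of Remark \ref{remark: uniform bound on time spent outside unrevisited neighborhoods}, and a Dini-type compactness argument on $X\setminus U_-$ and on $\overline{U_+}$, I obtain times $T^*,T^{**}>0$ such that $\exp(T.v)(x)\in U_+^{(\chi)}$ holds for every $T\geq T^*$ and $x\in X\setminus U_-$, and for every $T\geq T^{**}$ and $x\in\overline{U_+}$.

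\textbf{Averaging and verification.} Fixing $T\geq\max(T^*,T^{**})/\varepsilon$ and setting
$$ m(x):=\frac{1}{T}\int_0^T\chi(\exp(s.v)(x))\,ds, $$
I obtain $m\in\Ccal^\infty(X;[0,1])$ and $v(m)(x)=[\chi(\exp(T.v)(x))-\chi(x)]/T$. I take $\Wcal_+:=U_+$ and $\Wcal_-:=\exp(-T(1-\varepsilon).v)(U_-)$, both open and containing $K_\pm$; backward invariance of $U_-$ under the semi-flow gives $\Wcal_-\subset U_-\subset B_{\varepsilon/3}(K_-)$. Nonnegativity of $v(m)$ splits into two cases: if $x\in\overline{U_-}$ then $\chi(x)=0$, and otherwise the first uniform estimate gives $\chi(\exp(T.v)(x))=1\geq\chi(x)$. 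For the range of $m$: on $\Wcal_-$ the orbit stays in $U_-$ for $s\in[0,T(1-\varepsilon)]$, so the integrand vanishes on this interval and $m\leq\varepsilon$; on $\Wcal_+\subset\overline{U_+}$ the second uniform estimate gives $\chi(\exp(s.v)(x))=1$ for $s\geq T^{**}$, so $m\geq 1-T^{**}/T\geq 1-\varepsilon$; invariance of $K_\pm$ gives $m=0$ on $K_-$ and $m=1$ on $K_+$. Finally, for $x\notin\Wcal_-\cup\Wcal_+$ the positivity analysis yields $v(m)(x)=(1-\chi(x))/T$; since $\{\chi=1\}=\overline{U_+^{(\chi)}}\subset U_+=\Wcal_+$, the compact set $X\setminus\Wcal_+$ avoids $\{\chi=1\}$, so by continuity $\chi\leq 1-\alpha$ there for some $\alpha>0$, giving $v(m)>\alpha/T=:\eta$.

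\textbf{Main obstacle.} The main difficulty is to accommodate simultaneously the three requirements on $\Wcal_\pm$: being $\varepsilon$-small, forcing $m$ to be close to $0$ and $1$, and leaving a strict positive gap for $v(m)$ outside. This is handled by the interplay between the sizes of the auxiliary invariant neighborhoods $U_\pm, U_+^{(\chi)}$, the strict separation of $\{\chi=1\}$ from $X\setminus\Wcal_+$, and the choice of an averaging time $T$ that is large compared to the convergence times $T^*,T^{**}$.
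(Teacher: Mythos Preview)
Your proof is correct and follows essentially the same averaging strategy as the paper (and Faure--Sj\"ostrand): both build $m$ as a time-average of a smooth cut-off along the flow and use the uniform transit-time bound to control its derivative and its values near $K_\pm$. The only cosmetic differences are that you use a one-sided average $\frac{1}{T}\int_0^T$ whereas the paper uses the symmetric one $\frac{1}{2T}\int_{-T}^T$, and your $\Wcal_\pm$ are defined slightly differently; note that as written your bounds $m\leq\varepsilon$, $m\geq 1-\varepsilon$, $v(m)\geq\eta$ are non-strict, which is fixed by taking $T$ strictly larger than $\max(T^*,T^{**})/\varepsilon$ and halving $\eta$.
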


This lemma proves similarly to Lemma $2.1$ of Faure-Sjöstrand \cite[p. 336]{FS}. For the sake of completeness, let us recall the main lines of the proof:
\begin{itemize}
 \item First, we consider $T \geq 0$ large enough so that $\exp (T.v) \left( X \setminus \Vcal_{-} \right) \subseteq \Vcal_+$, \\
 $\exp (-T.v) \left( X \setminus \Vcal_{+} \right) \subseteq \Vcal_-$ and we define
  $$\Wcal_- = X \setminus \exp (-T.v)(\Vcal_+) \quad \mbox{and} \quad \Wcal_+ = X \setminus \exp (T.v)(\Vcal_-). $$
 \item Then, we fix a map $\phi \in \Ccal^{\infty} (X)$ which has value $1$ on $\Vcal_{-}$ and $0$ on $\Vcal_{+}$ and we define the \textit{energy function} $m \in \Ccal^{\infty} (X)$ by 
  $$ m(x):= \frac{1}{2T} \int_{-T}^T \phi (\exp (t.v) (x)) dt,$$
 \item Finally, by noticing that $\Ical (x):= \{ t \in \R: \: \exp (t.v). x \in X \setminus (\Vcal_- \cup \Vcal_+) \}$
 is uniformly bounded in the sense that 
 $$ \exists \tau > 0, \:  \forall x \in X, \: \: |\max (\Ical (x)) - \min (\Ical (x))| < \tau, $$ one can deduce the estimates on $m$ for $T$ sufficiently large (which depends on $\varepsilon$, $\tau$ and the size of $\Vcal_{\pm}$). We refer to \cite{FS} for complementary details. 
\end{itemize}

 \subsection{Energy functions for Axiom A flows} 
  
In this part, we explain how to construct an energy function for the flow $\varphi^t$ from the previous lemma. More precisely, we will prove Proposition \ref{prop energy function for Axiom A flows}. We will use strongly the order relation property, thus the transversality assumption. \textbf{From now on}, we consider a total ordering of the basic set in the sense of \S\ref{subsubsection total order relation}. The construction of an energy function presented here splits in two parts:
\begin{itemize}
 \item First, we prove that for every $1 < j \leq N$ the couple $(\cup_{k \geq j}W^u (K_k), \cup_{i < j}W^s (K_i))$ is an attractor-repeller couple. 
 \item Then, as a consequence of Lemma \ref{lemma: energy function for attractor-repeller model} we obtain a family of energy functions $E_i$ and a linear combination of them gives the global energy function for $\varphi^t$.
\end{itemize}

\begin{lemma}[Invariant neighborhoods on the base]\label{lemma: attractor and repeller for the flow on the basis}
 For every $1 < j \leq N$, $\cup_{i < j}W^s (K_i)$ and $\cup_{k \geq j}W^u (K_k)$ are disjoint invariant compact sets such that:
 \begin{equation}\label{eq : lemma invariant neigh. on the base - eq 1}
  \forall x \notin \bigcup_{i < j}W^s (K_i), \quad d (\varphi^t (x), \cup_{k \geq j}W^u (K_k)) \underset{t \to+\infty}{\longrightarrow} 0
 \end{equation}
 and
 \begin{equation}\label{eq : lemma invariant neigh. on the base - eq 2}
   \forall x \notin \bigcup_{k \geq j}W^u (K_k), \quad d (\varphi^{-t} (x),\cup_{i < j}W^s (K_i)) \underset{t \to+\infty}{\longrightarrow} 0 .
 \end{equation}
\end{lemma}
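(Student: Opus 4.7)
The plan is to deduce all four assertions of the lemma almost directly from three ingredients that are already in place: (a) the partition $M = \sqcup_i W^s(K_i) = \sqcup_i W^u(K_i)$ provided by Lemma \ref{lemma: spectral decomposition of the manifold}; (b) the closure formulas
\[
\overline{W^u(K_i)} = \bigcup_{K_j \geq K_i} W^u(K_j), \qquad \overline{W^s(K_j)} = \bigcup_{K_i \leq K_j} W^s(K_i),
\]
and (c) the compatibility $K_i \leq K_j \Rightarrow i \leq j$ of our fixed total order with Smale's partial order. Invariance is immediate, so there are really only three things to check: disjointness, compactness, and the two asymptotic convergence statements.

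\textbf{Disjointness.} Suppose there exists $x \in W^u(K_k) \cap W^s(K_i)$ with $k \geq j > i$. By the very definition of Smale's order relation (\ref{order relation for basic sets}), this forces $K_k \leq K_i$, hence $k \leq i$ by the compatibility of the total order, contradicting $k > i$. Thus $\bigl(\cup_{k \geq j} W^u(K_k)\bigr) \cap \bigl(\cup_{i < j} W^s(K_i)\bigr) = \emptyset$.

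\textbf{Compactness.} Both sets are closed in $M$, hence compact. Indeed, using that closure commutes with finite unions and invoking the closure formulas,
\[
\overline{\bigcup_{k \geq j} W^u(K_k)} = \bigcup_{k \geq j} \overline{W^u(K_k)} = \bigcup_{k \geq j} \bigcup_{K_m \geq K_k} W^u(K_m) \subseteq \bigcup_{m \geq j} W^u(K_m),
\]
where the last inclusion uses compatibility $K_m \geq K_k \Rightarrow m \geq k \geq j$. The reverse inclusion is trivial, so the set equals its closure. An identical computation with $\overline{W^s(K_i)} \subseteq \cup_{\ell \leq i} W^s(K_\ell)$ shows that $\cup_{i<j} W^s(K_i)$ is closed as well.

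\textbf{Asymptotic convergence and invariance.} Invariance under $\varphi^t$ is immediate since each $W^{s/u}(K_\ell)$ is invariant. For the first limit, take $x \notin \cup_{i < j} W^s(K_i)$. By the partition lemma there is a unique $\ell \geq j$ with $x \in W^s(K_\ell)$, hence $d_g(\varphi^t(x), K_\ell) \to 0$ as $t \to +\infty$. Since $K_\ell \subseteq W^u(K_\ell) \subseteq \cup_{k \geq j} W^u(K_k)$, we conclude
\[
d_g\bigl(\varphi^t(x),\, \cup_{k \geq j} W^u(K_k)\bigr) \leq d_g(\varphi^t(x), K_\ell) \xrightarrow[t \to +\infty]{} 0.
\]
The symmetric argument with $\varphi^{-t}$, replacing stable by unstable manifolds and using $x \in W^u(K_m)$ for some $m < j$, gives (\ref{eq : lemma invariant neigh. on the base - eq 2}). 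I do not expect any serious obstacle: the role of the transversality assumption has already been absorbed into Theorem \ref{thm Smale} (which guarantees $\leq$ is an order) and into the closure formulas of the preceding proposition, so the present lemma reduces to bookkeeping with the total order. The only mildly subtle point is being careful that compatibility is used in the correct direction (its contrapositive $k > i \Rightarrow K_k \not\leq K_i$) to extract disjointness.
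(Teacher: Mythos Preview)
Your proof is correct and follows essentially the same approach as the paper's own proof: disjointness and compactness are read off from the order relation and the closure formulas, and the convergence comes from the partition lemma by locating $x$ in the appropriate $W^s(K_\ell)$ (resp.\ $W^u(K_m)$) and letting the flow carry it to $K_\ell$ (resp.\ $K_m$). The paper's proof is terser---it dismisses disjointness and compactness as ``a direct consequence of the order relation's properties'' and sketches the convergence in two lines---but the content is the same; your version simply spells out the details more carefully.
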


\begin{proof}
 The fact that these sets are disjoint and compact is a direct consequence of the order relation's properties. Now, consider $x \notin \bigcup_{i < j}W^s (K_i)$. From the decomposition (\ref{partition of the manifold in stable manifolds}) of $M$ into unstable manifolds of the basic sets, there exists $1 \leq k \leq N$ such that $x \in W^u( K_k)$. Thanks to our choice of a total order relation in the sence of \S \ref{subsubsection total order relation}, we have necessarily $k \geq j$. It proves the convergence (\ref{eq : lemma invariant neigh. on the base - eq 1}). Up to replacing the flow $\varphi^t$ by $\varphi^{-t}$, we also get the convergence (\ref{eq : lemma invariant neigh. on the base - eq 2}).  
\end{proof}

Now the key point is to prove the second property of an attractor-repeller. This is given by the next lemma which is slightly more precise.

\begin{lemma}\label{lemma: equivalence filtration and unrevisited}
	For every $\varepsilon >0$, there exists a filtration $(\Ocal_{\ell}^-)_{1\leq \ell \leq N}$ on $M$ for $\varphi^{-1}$ which is within a distance $\varepsilon$ of the stable manifolds, i.e. $\forall  j \in [\![1,N]\!]$,
	 $$
		\sup_{y \in \Ocal_j^-} d_g \left( y, \bigcup_{i \leq j} W^s (K_i) \right) < \varepsilon .
	 $$
	 Similarly, there exists a filtration $(\Ocal_{\ell}^+)_{1\leq \ell \leq N}$ on $M$ for $\varphi^{1}$ which is within a distance $\varepsilon$ of the unstable manifolds, i.e. $\forall  j \in [\![1,N]\!]$,
	 $$
		\sup_{y \in \Ocal_{N-j+1}^+} d_g \left(y, \bigcup_{k \geq j} W^u (K_k)\right) < \varepsilon. 
	 $$
\end{lemma}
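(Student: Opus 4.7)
My plan is to construct the filtration $(\Ocal_\ell^-)$ by induction on $\ell$ from $0$ to $N$, using small unrevisited neighborhoods provided by Proposition \ref{prop: Robbin existence of unrevisited neighborhoods} as the basic building blocks. As a preliminary, I would record that the sets $A_i := \bigcup_{l \leq i} W^s(K_l)$ and $R_i := \bigcup_{k > i} W^u(K_k)$ are both closed (hence compact) in $M$: Smale's formula $\overline{W^s(K_l)} = \bigcup_{K_{l'} \leq K_l} W^s(K_{l'})$ together with compatibility of the total ordering with $\leq$ ensures that taking closures of these unions does not enlarge them. Since $K_i$ meets neither $R_i$ nor $A_{i-1}$, compactness gives $d(K_i, R_i) > 0$ and $d(K_i, A_{i-1}) > 0$.

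Starting from $\Ocal_0^- := \emptyset$ and assuming $\Ocal_{i-1}^-$ is open, $\varphi^{-1}$-stable and contained in $B_g(A_{i-1}, \varepsilon/2)$, I would apply Proposition \ref{prop: Robbin existence of unrevisited neighborhoods} to choose an unrevisited neighborhood $\Vcal_i$ of $K_i$ small enough that $\overline{\Vcal_i} \cap \Omega = K_i$, $\overline{\Vcal_i} \cap R_i = \emptyset$, $\overline{\Vcal_i} \cap \overline{\Ocal_{i-1}^-} = \emptyset$, and with diameter below a threshold to be specified. I would then set
\[ \Ocal_i^- := \Ocal_{i-1}^- \cup \bigcup_{m \geq 0} \varphi^{-m}(\Vcal_i). \]
Openness is immediate, $\varphi^{-1}$-stability follows from $\varphi^{-1}(\bigcup_{m \geq 0}\varphi^{-m}(\Vcal_i)) = \bigcup_{m \geq 1}\varphi^{-m}(\Vcal_i)$, and $K_i \subseteq \Vcal_i \subseteq \Ocal_i^-$ while staying disjoint from $\overline{\Ocal_{i-1}^-}$ by the choice of $\Vcal_i$. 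The final equality $\Ocal_N^- = M$ then follows from Lemma \ref{lemma: spectral decomposition of the manifold}: any $y \in M$ lies in some $W^s(K_l)$, so its forward orbit enters $\Vcal_l$, giving $y \in \bigcup_{m \geq 0}\varphi^{-m}(\Vcal_l) \subseteq \Ocal_N^-$.

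The main obstacle is verifying that $\bigcup_{m \geq 0} \varphi^{-m}(\Vcal_i) \subseteq B_g(A_i, \varepsilon/2)$. I would split in $m$. For large $m$, Lemma \ref{lemma: attractor and repeller for the flow on the basis} gives the pointwise convergence $d(\varphi^{-m}(z), A_i) \to 0$ for each $z \in \overline{\Vcal_i} \subseteq M \setminus R_i$, which I would promote to uniform convergence on the compact set $\overline{\Vcal_i}$ by noting that
\[ T(z) := \inf\bigl\{M \in \N : d(\varphi^{-m}(z), A_i) \leq \varepsilon/4 \text{ for all } m \geq M \bigr\} \]
is upper semicontinuous because $\{T \leq M\} = \bigcap_{m \geq M} \{z : d(\varphi^{-m}(z), A_i) \leq \varepsilon/4\}$ is closed; being finite on $\overline{\Vcal_i} \subseteq M \setminus R_i$, it is bounded there by some integer $T_0$. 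For the remaining finite range $0 \leq m < T_0$, invariance $\varphi^{-m}(K_i) = K_i$ and a uniform Lipschitz constant $L$ of $\varphi^{-1}$ give $d(\varphi^{-m}(z), A_i) \leq d(\varphi^{-m}(z), K_i) \leq L^{T_0}\,\mathrm{diam}(\Vcal_i)$, so requiring $\mathrm{diam}(\Vcal_i) < \varepsilon/(2L^{T_0})$ closes the estimate. Choosing $\varepsilon$ smaller than $\tfrac{1}{3} \min_i d(K_i, A_{i-1})$ at the outset guarantees $\overline{\Vcal_i} \cap \overline{\Ocal_{i-1}^-} = \emptyset$ is achievable. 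The dual filtration $(\Ocal_\ell^+)$ for $\varphi^1$ is obtained by running the mirror argument on the reversed ordering, replacing backward saturations $\bigcup_{m \geq 0} \varphi^{-m}(\Vcal_j)$ with forward saturations $\bigcup_{m \geq 0} \varphi^m(\Vcal_j)$ and exploiting the dual attractor-repeller structure from Lemma \ref{lemma: attractor and repeller for the flow on the basis}.
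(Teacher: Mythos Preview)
Your overall strategy—full backward saturation $\Ocal_i^- = \Ocal_{i-1}^- \cup \bigcup_{m\geq 0}\varphi^{-m}(\Vcal_i)$—is different from the paper's, which instead adjoins only a \emph{finite} union $\bigcup_{k=0}^{m_0-1}\varphi^{k}(\varphi^m(\Vcal)\cap\Vcal)$ built from an exit annulus. Your route can be made to work, but the argument you give for the crucial uniform bound has a genuine error.

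The problem is the semicontinuity step. You show that $\{T\leq M\}=\bigcap_{m\geq M}\{z:d(\varphi^{-m}(z),A_i)\leq\varepsilon/4\}$ is closed, and conclude that $T$ is upper semicontinuous. But closed \emph{sublevel} sets mean $T$ is \emph{lower} semicontinuous, and a lower semicontinuous function that is finite on a compact set need not be bounded above (e.g.\ $T(0)=0$, $T(x)=1/x$ on $(0,1]$). So the deduction ``$T$ finite and u.s.c.\ on $\overline{\Vcal_i}$, hence bounded by some $T_0$'' collapses, and with it the choice $\mathrm{diam}(\Vcal_i)<\varepsilon/(2L^{T_0})$ for the small-$m$ range.

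The underlying claim—that $\sup_{z\in\overline{\Vcal_i}}T(z)<\infty$—is true, but proving it is essentially the heart of the lemma. A direct proof requires tracking a hypothetical sequence $y_n=\varphi^{-m_n}(z_n)$ with $d(y_n,A_i)\geq\varepsilon/4$ through a chain of basic sets $K_{k_1}<K_{k_2}<\cdots$ with each $k_j>i$, using unrevisited neighborhoods and Lemma~\ref{lemme: cv nonrevisite vers variete stable et instable} at each step, to eventually force $z_\infty\in R_i$ and contradict $\overline{\Vcal_i}\cap R_i=\emptyset$. The paper sidesteps this by exploiting the induction hypothesis: once $\Ocal_{j}^+$ is built, it is $\varphi^1$-stable and \emph{open}, so the first hitting time $m_0(x)=\inf\{m:\varphi^m(x)\in\Ocal_j^+\}$ is genuinely upper semicontinuous on the compact set $W^u(K)\cap\overline{\Acal(0)}$ and the uniform bound follows by a one-line compactness argument. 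That is exactly the structural advantage your full-saturation approach gives up: you try to control distance to the raw invariant set $A_i$, which has no trapping property, instead of hitting an already-constructed open invariant neighborhood.
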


Note that the filtration $(\Ocal_{\ell}^+)_{1 \leq \ell \leq N}$ is not obtained by taking the complementary of $(\Ocal_{\ell}^-)_{1 \leq \ell \leq N}$ but by exchanging the sense of times.

\begin{proof}
Let us proceed by induction to construct a filtration on $M$ stable by $\varphi^{1}$. The following arguments adapt easily to construct a filtration $\varphi^1$-stable if we change $\varphi^{-1}$ by $\varphi^1$.

\textbf{Base case (construction of $\Ocal_1^+$).} Fix $\varepsilon > 0$. Since the indices of $(K_i)_{1 \leq i \leq N}$ have been chosen compatible with the relation $\leq$, the basic set $K_{N}$ must be an attractor. So if we consider some unrevisited neighborhood $\Vcal_{N}$ of $K_N$ such that $\overline{\Vcal_N} \cap \Omega = K_N$, then $\Vcal_N$ is $\varphi^1$-stable and we will simply define $$\Ocal_{1}^+:= \Vcal_N \cap \varphi^{n}(\Vcal_{N})$$
for a large enough value of $n$. Indeed, thanks to Lemma \ref{lemme: cv nonrevisite vers variete stable et instable} we can choose $n$ so that $\sup_{y \in \Vcal_N \cap \varphi^{n}(\Vcal_{N})} d_g (y, W^u (K_{N}) \cap \Vcal_N) < \varepsilon $. Since $K_N$ is an attractor, the equality $W^u (K_N) = K_N$ holds and it implies 
$$\sup_{y \in \Ocal_{1}^+} d_g (y,  K_N) < \varepsilon .$$

\textbf{Induction step.} Fix $\varepsilon > 0$ and assume that the open sets $\Ocal_1^+ \subseteq \Ocal_2^+ \subseteq \cdots \subseteq \Ocal_{j-1}^+ \subseteq \Ocal_j^+$ to be constructed for some $ 1 \leq j \leq N$ so that it defines a filtration for the family of basic sets $(K_{N-i})_{0 \leq i \leq j-1}$ which is $\varepsilon$-close to the unstable manifolds, in the sense that, for every $1 \leq i \leq j$,
$$
 \sup_{y \in \Ocal_i^+} d_g \left( y, \bigcup_{k \geq N- i +1} W^u (K_{k}) \right) < \varepsilon .
$$
	We want to construct a $\varphi^1$-stable neighborhood $\Ocal_{j+1}^+$ of $\bigcup_{k \geq N-j} W^u (K_{k})$. To lighten the proof, let us denote by $K$ the basic set $K_{N-j}$. If $K$ is an attractor, then we can proceed exactly as in the base case and take the union of this open set with $\mathcal{O}_{j}^+$. So let us assume that $K$ is not a attractor, i.e. $W^u (K) \neq K$, and take some small unrevisited neighborhood $\Vcal$ of $K$ in the sense that $\overline{\Vcal} \cap  \Omega = K$. Let us define the following ``annulus'' of $K$ for every $m \in \N$:
	$$\Acal (m):= (\varphi^{m}(\Vcal) \cap \Vcal) \setminus (\varphi^{m}(\Vcal) \cap \varphi^{-1} (\Vcal)). $$
	 We refer the reader to the figure \ref{fig: unrevisited neighborhoods near a basic set}. Thanks to Lemma \ref{lemme: cv nonrevisite vers variete stable et instable}, the decreasing sequence  $\varphi^{m} (\Vcal) \cap \Vcal$ of unrevisited neighborhoods converges uniformly to $W^u (K) \cap \overline{\Vcal}$ and therefore 
	 \begin{equation}\label{eq : lemma invariant neighborhoods on the base - convergence of annulus}
	  \Acal (m)  \mbox{ converges uniformly to } W^u (K) \cap \left( \overline{\Vcal} \setminus \varphi^{-1} (\Vcal) \right)
	 \end{equation}
    as $m$ tends to $+\infty$. 
	
	Before proceeding to the construction of $\Ocal_{j+1}^+$, we need to prove a few properties:  (1) show that for every $m \in \N$ we have $\Acal(m) \neq \emptyset$ and $W^u (K) \cap \Acal(m) = W^u (K) \cap \Acal(0)$, (2) find for every  $x$ in $W^u (K) \cap \Acal(0)$ an integer $m_0 > 0$ such that we have $\varphi^{m_0}(x) \in \Ocal_j^{+}$, (3) prove that $m_0$ can be chosen uniformly in $x \in W^u (K) \cap \Acal(0)$ and (4) prove that $\varphi^{m_0} (x)$ belongs to $\Ocal_j^+$ for all $x$ in $\Acal (m)$ with $m$ large enough.
	\begin{enumerate}[label=(\arabic*)]
	 \item Since the relations $W^u (K) \cap \varphi^{m} (\Vcal) \cap \Vcal = W^u (K) \cap \Vcal$ and $W^u (K) \cap \varphi^{m} (\Vcal) \cap \varphi^{-1} (\Vcal) = W^u (K) \cap \Vcal \cap  \varphi^{-1} (\Vcal)$ hold from (\ref{eq: unrevisited and unstable manifold}), we must have
	$$W^u (K)\cap \Acal (m) = W^u (K) \cap \Acal (0) = \{ x \in W^u (K) \cap \Vcal, \: \varphi^{1} (x) \notin \Vcal \}$$
	for all $m \in \N$. Let us prove that the last set is non-empty. If $x$ belongs to $W^u (K) \cap \Vcal \setminus K \neq \emptyset$ as $K$ is not an attractor, then one can find an integer $k_0 \geq 0$ such that $\varphi^{k_0}(x) \in W^u (K) \cap \Vcal$ and $\varphi^{k_0 + 1}(x) \notin \Vcal$ by fixing $k_0 = \sup \{ k \in \N, \: \varphi^{k} (x) \in \Vcal \}$ which is finite thanks to the definition of $x$.
	 \item A straightforward application of Lemma \ref{lemma: attractor and repeller for the flow on the basis} gives us the result:
	\begin{equation}\label{eq : lemma filtration on the base - annulus on unstable manifold}
            \forall x \in W^u (K) \cap \overline{\Acal (0)}, \: \exists m_0 \in \N, \mbox{ s.t } \varphi^{m_0}(x) \in \Ocal_j^+ .
	\end{equation}
	 \item Let us prove that there exists $m_0 \in \N$ such that for every $x \in W^u (K) \cap \Acal (0)$ we have $\varphi^{m_0} (x) \in \Ocal_j^+$. By contradiction, let us assume that for every integer $m \in \N^*$ there exists an element $x_m \in W^u (K) \cap \Acal (0)$ such that $\varphi^{m} (x_m) \notin \Ocal_j^+$. By compactness of $\overline{\Vcal}$, we can extract a subsequence $(x_{m_k})_{k \in \N}$ which converges to an element $x_{\infty} \in \overline{\Vcal}$. According to Lemma \ref{lemma: clos. of stable man on unrevisited neigh.}, we must have $x_{\infty} \in W^u (K) \cap \overline{\Vcal}$. Also, by definition of $\Acal (0)$, the elements $\varphi^1 (x_{m_k})$ belong to $M \setminus \Vcal$. So, letting $k$ tends to $+\infty$, we deduce that $\varphi^1 (x_{\infty}) \in M \setminus \Vcal $. Therefore, we have $x_{\infty} \in W^u (K) \cap \overline{\Acal (0)}$ and the step (2) implies that 
	 $$\varphi^{m_0} (x_{\infty}) \in \Ocal_j^+ .$$ However, by definition of our sequence $(x_{m_k})_k$ and due to the stability of $\Ocal_j^+$, we must have
	$  \varphi^{m_0} (x_{n_k}) \notin \Ocal_j^+$ for $k$ sufficiently large.
	Letting $k$ tends to $+\infty$ in the last relation gives a contradiction with the fact that $\varphi^{m_0} (x_{\infty}) \in \Ocal_j^+$. Therefore, the integer $m_0$ can be chosen uniformly with respect to $x$ in $W^u (K) \cap \Acal (0)$.
	 \item Since $\Acal (m)$ converges uniformly to $W^u (K) \cap \left( \overline{\Vcal} \setminus \varphi^{-1} (\Vcal) \right)$ as $m$ tends to $+\infty$ in the sense of Lemma \ref{lemme: cv nonrevisite vers variete stable et instable}, we deduce by continuity the following statement: there exists $m_1 \in \N$ such that 
	\begin{equation}\label{eq: lemme filtration cv uniform R(n)}
	\forall m \geq m_1, \: \forall x \in \Acal (m), \: \varphi^{ m_0} (x) \in \Ocal_j^+.
	\end{equation}
	\end{enumerate}
	Finally, for every $m \geq m_1$, we define
	$$
		\Ocal_{j+1}^+ (m):= \Ocal_j^+ \cup \bigcup_{k = 0}^{m_0 - 1} \varphi^{k} (\varphi^{m} (\Vcal) \cap \Vcal).
	$$
	It remains to prove that $\Ocal_{j+1}^+ (m)$ is $\varphi^1$-stable for every choice of $m \geq m_1$. Let us consider $y \in  \varphi^{k} (\varphi^{m} (\Vcal) \cap \Vcal)$ for some $0 \leq k \leq m_0 - 1$. By definition, there exists $x \in \varphi^{m} (\Vcal) \cap \Vcal$ such that $y = \varphi^k (x)$. We have two cases to deal with.
	\begin{itemize}
	 \item \textbf{1\ts{st} case:} $x \in \varphi^{m} (\Vcal) \cap \varphi^{-1} (\Vcal)$. Then, we have
	 $$\varphi^1 (x) \in  \varphi^{m+ 1} (\Vcal) \cap \Vcal \subseteq  \varphi^{m} (\Vcal) \cap \Vcal.$$
	 \item \textbf{2\ts{nd} case:} $x \in  (\varphi^{m} (\Vcal) \cap \Vcal ) \setminus (\varphi^{m} (\Vcal) \cap \varphi^{-1} (\Vcal))  = \Acal (m)$. Since $x \in \varphi^{m} (\Vcal) \cap \Vcal$ we get $\varphi^{k}(x) \in \varphi^{k}(\varphi^{m} (\Vcal) \cap \Vcal)$ for every $k \in [\![0, m_0 -1 ]\!]$ and the statement (\ref{eq: lemme filtration cv uniform R(n)}) gives us $\varphi^{m_0}(x) \in \Ocal_i^+$.
	 \end{itemize}
	 Finally, we get that for every $m \geq m_1$ and every $\ell \geq 0$, we have $\varphi^{\ell} (x) \in \Ocal_{j+1}^+ (m)$. In particular, we deduce that for every $\ell \geq 0$, we have $\varphi^{\ell} (y) \in \Ocal_{j+1}^+ (m)$. So, $\Ocal_{j+1}^+ (m)$ is $\varphi^1$-stable. Now, if we choose $m$ sufficiently large so that $ d_g (\cup_{k = 0}^{m_0 - 1} \varphi^{k} (\varphi^{m} (\Vcal) \cap \Vcal) ,  W^u (K)) < \varepsilon$, then it ensures that 
	 $$\sup_{y \in \Ocal_{j+1}^+} d_g \left( y, \bigcup_{k \geq N-j} W^u (K_k) \right) < \varepsilon.$$
	 It ends the induction and the proof.
\end{proof}

\begin{remark}
 Thanks to the total order relation (\ref{eq: total order relation, indices compatible}), the compact sets $\cup_{i \leq j} W^s (K_i)$ and $\cup_{k \geq j}W^u (K_k)$ intersect on $K_j$, i.e.
$$\bigcup_{i \leq j}W^s (K_i) \cap \bigcup_{k \geq j}W^u (K_k) = K_j. $$
Recall from Remark \ref{rk : filtration} that the set $\Vcal_{j}:= \Ocal_j^- \cap \Ocal_{N-j+1}^+$ is an unrevisited neighborhood at distance (at most) $\varepsilon$ of $K_j$.
\end{remark}

A direct application of these lemmas gives what we were looking for. 

\begin{prop}\label{prop: attractor reppeler couple on the base}
 For every $1 < j \leq N$, $(\cup_{k \geq j} W^u (K_k), \cup_{i < j}W^s (K_i))$ defines an attractor-repeller couple.
\end{prop}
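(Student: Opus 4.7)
The plan is to combine the two preceding lemmas directly. Lemma \ref{lemma: attractor and repeller for the flow on the basis} already supplies condition (i) of the attractor-repeller definition: the sets $K_+ := \bigcup_{k\geq j} W^u(K_k)$ and $K_- := \bigcup_{i<j} W^s(K_i)$ are disjoint invariant compact subsets of $M$, forward trajectories starting outside $K_-$ converge to $K_+$, and backward trajectories starting outside $K_+$ converge to $K_-$. So the only thing left to do is to produce the stable neighborhoods $\Vcal_\pm$ required by condition (ii).

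For this I would use Lemma \ref{lemma: equivalence filtration and unrevisited}. Since $K_+$ and $K_-$ are disjoint compact sets, the geodesic distance $\delta := d_g(K_+, K_-)$ is strictly positive. I pick some $0 < \varepsilon < \delta/4$ and apply Lemma \ref{lemma: equivalence filtration and unrevisited} to produce both a $\varphi^{-1}$-stable filtration $(\Ocal_\ell^-)_\ell$ that is $\varepsilon$-close to the stable manifolds and a $\varphi^{1}$-stable filtration $(\Ocal_\ell^+)_\ell$ that is $\varepsilon$-close to the unstable manifolds. I then set
\[
\Vcal_- := \Ocal_{j-1}^- \quad \text{and} \quad \Vcal_+ := \Ocal_{N-j+1}^+.
\]
By the remark following Lemma \ref{lemma: spectral decomposition of the manifold} (item (ii) of Remark \ref{rk : filtration}), these are open neighborhoods of $\bigcup_{i\leq j-1} W^s(K_i) = K_-$ and $\bigcup_{k\geq j} W^u(K_k) = K_+$ respectively, and they are invariant in the correct sense: $\varphi^{-1}(\Vcal_-)\subseteq \Vcal_-$ and $\varphi^{1}(\Vcal_+)\subseteq \Vcal_+$.

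To finish I have to verify $\overline{\Vcal_-} \cap \overline{\Vcal_+} = \emptyset$. By Lemma \ref{lemma: equivalence filtration and unrevisited}, every point of $\Vcal_-$ lies within distance $\varepsilon$ of $K_-$ and every point of $\Vcal_+$ lies within distance $\varepsilon$ of $K_+$. Taking closures preserves these inequalities (non-strict), so $\overline{\Vcal_-}$ is contained in the closed $\varepsilon$-neighborhood of $K_-$ and similarly for $\overline{\Vcal_+}$. Since $\varepsilon < \delta/4 < \delta/2$, these two closed neighborhoods are disjoint by the triangle inequality, hence so are $\overline{\Vcal_-}$ and $\overline{\Vcal_+}$.

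The potential obstacle is not really in the logical assembly, which is essentially immediate once one has the two lemmas; the real work has already been done inside Lemma \ref{lemma: equivalence filtration and unrevisited}, whose construction of the filtration via unrevisited neighborhoods and induction on the total ordering of basic sets is what makes the open neighborhoods both flow-stable \emph{and} uniformly close to the corresponding stable/unstable skeleton. Once that is available, the present proposition is a short corollary.
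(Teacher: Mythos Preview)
Your proof is correct and follows exactly the same approach as the paper: invoke Lemma~\ref{lemma: attractor and repeller for the flow on the basis} for condition~(i), then use the $\varepsilon$-close filtrations from Lemma~\ref{lemma: equivalence filtration and unrevisited} as the invariant neighborhoods $\Vcal_\pm$, choosing $\varepsilon$ small enough that the resulting tubular neighborhoods of $K_+$ and $K_-$ are disjoint. Your write-up is in fact more detailed than the paper's one-sentence version, and your closing remark that all the work sits inside Lemma~\ref{lemma: equivalence filtration and unrevisited} is exactly right.
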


\begin{proof}It is a direct application of Lemmas \ref{lemma: attractor and repeller for the flow on the basis} and \ref{lemma: equivalence filtration and unrevisited} once we have chosen $\varepsilon\ll 1$ small enough to ensure that
$$ B_g \left( \cup_{k \geq j} W^u (K_k), 2\varepsilon \right) \bigcap B_g \left( \cup_{i < j} W^s (K_i), 2\varepsilon \right) = \emptyset, \quad \forall 1 < j \leq N,$$
where $B_g(K, 2\varepsilon)$ denotes the geodesic ball at distance $2\varepsilon$ to the compact set $K$.
\end{proof}

Now, we are ready to construct an energy function for $\varphi^t$. 

\begin{proof}[Proof of proposition \ref{prop energy function for Axiom A flows}]
 Let $\varepsilon > 0$ as in the previous proof and fix a sequence of pairwise distinct real numbers $(\lambda_i)_{1 \leq i \leq N}$ compatible with the graph structure in the sense that $\lambda_i \leq \lambda_j \Longleftrightarrow K_i \leq K_j$. Up to a permutation of the indices of the basic sets, we can assume that $\lambda_1 < \lambda_2 < \cdots < \lambda_N$. Indeed, there exists a one to one map $\sigma: [\![1, N ]\!] \rightarrow [\![1, N ]\!]$ such that $\lambda_{\sigma^{-1} (1)} < \lambda_{\sigma^{-1} (2)} < \cdots < \lambda_{\sigma^{-1} (N)}$. The map $\sigma$ is given by
 $$\sigma (j) = \Card \left\{ i \in [\![1, N ]\!], \: \: \lambda_i \leq \lambda_j \right\}.$$
 If we rename $K_{\sigma^{-1} (i)}$ by $K_i$ and $\lambda_{\sigma^{-1}(i)}$ by $\lambda_i$, then we obtain a total order relation on the basic sets given by the usual order relation on $[\![1, N ]\!]$. 
 
  In order to find an energy function $E$ such that $E = \lambda_j$ on $K_j$, we will apply Lemma \ref{lemma: energy function for attractor-repeller model} for each attractor-repeller given in Proposition \ref{prop: attractor reppeler couple on the base}. Thanks to Lemma \ref{lemma: equivalence filtration and unrevisited}, there exists filtrations (which depend on $\varepsilon$) $\Ocal_0^- \subset \Ocal_1^- \subset \cdots \subset \Ocal_{N}^-$ and $\Ocal_N^+ \supset \cdots \supset \Ocal_{1}^+ \supset \Ocal_0^+$ which are respectively $\varphi^{-1}$-stable and $\varphi^{1}$-stable. Thanks to Lemma \ref{lemma: energy function for attractor-repeller model}, we obtain for every $1 < j \leq N$ a smooth energy function $E_{j} \in \Ccal^{\infty} (M; [0,1])$, $\varepsilon$-neighborhoods $\Wcal_j^- \subset \Ocal_j^-$ and $\Wcal_j^+ \subset \Ocal_j^+$ of $ \bigcup_{i < j} W^s (K_i)$ and $ \bigcup_{k \geq j} W^u (K_k)$ respectively, a constant $\eta_0 > 0$ (which only depends on $\varepsilon$) such that $\Lie_V (E_j) \geq 0$ on $M$ and
  \begin{itemize}
   \item $\Lie_V (E_j) > \eta_0$ on $M\setminus (\Wcal_j^- \cup \Wcal_j^+)$,
   \item $E_j < \varepsilon$ on $\Wcal_j^-$ and $E_j = 0$ on $\bigcup_{i < j} W^s (K_i)$. In particular, we have $E_j = 0$ on $\bigcup_{i < j} K_i$.
   \item $E_j > 1 - \varepsilon$ on $\Wcal_j^+$ and $E_j = 1$ on $\bigcup_{k \geq j} W^u (K_k)$. In particular, we have $E_j = 1$ $\bigcup_{k \geq j} K_k$.
  \end{itemize}
 We define a global energy function $E \in \Ccal^{\infty} (M)$ as a linear combinaison of previous energy functions:
 $$
	\boxed{E = \lambda_1  + \sum_{j = 2}^N (\lambda_{j} - \lambda_{j-1}) E_{j}.}
 $$
 Thanks to the analysis of Lemma \ref{lemma: equivalence filtration and unrevisited}, we deduce that 
 $$ \Lie_V (E)  = \sum_{j = 1}^N (\lambda_j - \lambda_{j-1}) \Lie_V (E_j) > \min_{1 < j \leq N}(\lambda_j - \lambda_{j-1}) \eta_0 =: \eta \:  \mbox{ on } \: \left( \bigcap_{j =2}^N (\Wcal_j^- \cup \Wcal_j^+) \right)^c .$$ 
 It remains to proof that $\bigcap_{j = 2}^N (\Wcal_j^- \cup \Wcal_j^+)$ is an $\varepsilon$-neighborhood of the nonwandering set and that $E$ is close to $\lambda_i$ near $K_i$ with equality on $K_i$. One has: 
 $$
	\bigcap_{j = 2}^N (\Wcal_j^- \cup \Wcal_j^+) = \bigcup_{\tau: [\![1,N]\!] \rightarrow \{\pm\}} \bigcap_{j = 2}^N \Wcal_j^{\tau (j)} = \bigcup_{j = 1}^N \Ncal_j:= \bigcup_{j = 1}^N \left( \Wcal_{2}^+ \cap \cdots \cap \Wcal_{j}^+ \cap \Wcal_{j+1}^- \cap \cdots \cap \Wcal_{N}^- \right) ,
 $$
 using the convention $\Ncal_1 = \Wcal_{2}^- \cap \cdots \cap \Wcal_{N}^-$.
 Note that the second equality is a direct consequence of the next fact which holds for $\varepsilon \ll 1$:
 $$
	\mbox{if } 2 \leq i < j \mbox{ and } (\tau (i), \tau (j)) = (-,+) \mbox{ then }  \bigcap_{j = 2}^N \Wcal_j^{\tau (j)} = \emptyset.
 $$
Moreover, we have on each $K_i$:
$$ E = \lambda_1 + \sum_{j = 2}^N (\lambda_j - \lambda_{j-1}) E_j = \lambda_1 + \sum_{j = 2}^N (\lambda_j - \lambda_{j-1}) \delta_{j \leq i} = \lambda_i .$$
It remains to prove that $E$ is close to $\lambda_i$ on the neighborhood $\Ncal_i$ of $K_i$. To that aim, let us note that for every $x \in M$ and for every $1 \leq i \leq N$, we have 
 $$
	|E(x) - \lambda_i | \leq  \sum_{j = 2}^N (\lambda_j - \lambda_{j-1}) |E_j (x) - \delta_{j \leq i} |.
 $$
 For any $1 < j \leq N$ and $x \in \Ncal_i$ we will bound $|E_j (x) - \delta_{j \leq i}|$ by a small quantity independent of $x$ in $\Ncal_i$. We have two cases to deal with: 
\begin{itemize}
 \item If $i < j$, then $\delta_{j \leq i} = 0$ and $|E_j (x) - 0| = E_j (x) < \varepsilon$ by definition of $E_j$.
 \item If $i \geq j$, then $\delta_{j \leq i} = 1$ and $|E_j (x) - 1| = 1 - E_j (x) < \varepsilon$ again by definition of $E_j$. 
\end{itemize}
Finally, we obtain the upper bound
$$
      \sup_{1 \leq i \leq N} \sup_{x \in \Ncal_i} |E (x) - \lambda_i| \leq \varepsilon \sum_{j = 2}^N (\lambda_j - \lambda_{j-1}) \leq  (\lambda_N - \lambda_1) \varepsilon.
$$
 \end{proof}
  
  Now, the idea will be to perform the same analysis for the Hamiltonian flow $\widetilde{\Phi}^t$ acting on $S^* M$. However, in that case, proving that one has an attractor-reppeler structure for ($\Sigma_{\cdots}$) reveals to be more challenging because we need to understand what happens to the fiber part of $\widetilde{\Phi}^t (x,\xi)$ when the orbit of $(x,\xi)$ comes close to a basic set. The next part is devoted to the local analysis of the Hamiltonian near basic sets in view of applications to the proofs of Proposition \ref{prop energy function for Axiom A flows}, Lemmas \ref{lemma: attractor and repeller for the flow on the basis} and \ref{lemma: equivalence filtration and unrevisited} and finally the existence of the energy function on $S^*M$.
  
  \section{Compactness result and energy functions for the Hamiltonian flow}\label{section : Compactness result and energy functions for the Hamiltonian flow}
  
  On a basic set $K$, one can define conical neighborhoods of the unstable distributions $E_u^*$ and $E_{uo}^*$ which are stable under the Hamiltonian flow $\Phi^t$ as soon as $t > 0$, under some assumptions on the conical neighborhood. In particular, they are $\Phi^1$ stable and this property should extend ``by continuity'' to a small neighborhood of $K$. In this part, we will make sense of the term ``by continuity''.  
  
  \subsection{Adapted metric on a basic set}\label{subsubsection adapted metric}

From the fixed Riemannian metric $g$ on $M$, we can define on any basic set $K$ the following new metric called \textbf{adapted metric} for the flow $\varphi^t$. More precisely, for every $x \in K$ and every $v = v_s + v_u + v_o \in E_s (x) \oplus E_u (x) \oplus E_o (x) = T_x M$, we set
\begin{equation}\label{eq : def extended metric near basic set}
	\begin{split}
	\hat{g} (v,v) &=  \hat{g}(v_s, v_s) + \hat{g}(v_u, v_u) + \hat{g}(v_o, v_o) \\
		&:= \int_0^{+\infty} e^{\lambda t /2} (\varphi^{t*} g) (v_s,v_s) dt + \int_0^{+\infty} e^{\lambda t /2} (\varphi^{-t*} g) (v_u,v_u) dt + g(v_o,v_o),
	\end{split}
\end{equation}
where $\lambda$ denotes the hyperbolic exponent on the basic set $K$, see Appendix \ref{Appendix: hyperbolic set}.
This new metric is well defined thanks to hyperbolicity and to the invariance properties of the vector bundles on $K$. It also depends continuously on the point $x \in K$, even if the vector bundles depend smoothly on the point $x$. Recall also that the distributions $E_s, E_u$ are only Hölder continuous in general. Precisely, $\hat{g}$ will be seen as a continuous section of the vector bundle of metrics (i.e. symmetric $(2,0)$-tensors) $\otimes_{sym}^2 T M \rightarrow M$ defined on the compact set $K$. Let us denote by $|.|_{\hat{g}}$ the norm induced by $\hat{g}$, i.e. $|v|_{\hat{g}}:= \sqrt{\hat{g}(v,v)}$. The metric is said to be \textit{adapted} due to the following hyperbolic estimates: for every $x \in K$,
\begin{equation}\label{hyperbolic estimates for the adapted metric}
    \begin{split}
	 |D\varphi^t (x) v_s |_{\hat{g}} &\leq e^{-\lambda t /2} |v_s |_{\hat{g}}, \qquad \forall t  \geq 0, \: \forall v_s \in E_s (x) \\
	 |D\varphi^{-t} (x) v_u |_{\hat{g}} &\leq e^{-\lambda t /2} |v_u |_{\hat{g}}, \qquad \forall t  \geq 0, \: \forall v_u \in E_u (x) \\
	 |D\varphi^t (x) v_o |_{\hat{g}} &= |v_o |_{\hat{g}}, \qquad \forall t \in \R, \:\forall v_o \in E_o (x).
    \end{split}
\end{equation}

\subsection{Extension of the invariant distributions near basic sets.}

In order to analyse the dynamics near basic sets, it will be convenient to extend the previous hyperbolic estimates in some neighborhood of $K$. To that aim, we need to extend the distributions $E_s, E_u$, $E_o$ and $|.|_{\hat{g}}$ near each basic set. This can be achieved thanks to the following lemma:

\begin{lemma}[{\textbf{Extension lemma}, \cite[lem. 4.4 p. 128]{HPPS}}]\label{Lemme geometrique prolongement des sections} Let $X$ be a smooth manifold and let $\pi: E \rightarrow X$ be some vector bundle over $X$. If $s: K \rightarrow E$ denotes a continuous section defined on a compact set $K \subseteq M$, then $s$ extends as a continuous section $\overline{s}: \Ncal \rightarrow E$ on a neighborhood $\Ncal$ of $K$.  
 \end{lemma}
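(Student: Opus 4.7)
The plan is to reduce the problem to the scalar Tietze extension theorem by working in local trivialisations of $E$, and then to reassemble the local extensions using a partition of unity together with the vector-space structure of the fibres.

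First I would cover the compact set $K$ by a finite family of open sets $U_1, \dots, U_N$ of $X$, chosen small enough that $E$ admits a trivialisation $\phi_i : E|_{U_i} \to U_i \times \R^r$ over each $U_i$ (this uses only that $K$ is compact). Since $K$ is compact and therefore closed in $X$, each intersection $K \cap U_i$ is closed in $U_i$. Composing $s$ with $\phi_i$ over $K \cap U_i$ gives a continuous map of the form $x \mapsto (x, s_i(x))$, hence $r$ real-valued continuous functions $s_i^1,\dots,s_i^r$ defined on the closed subset $K \cap U_i$ of the metrisable manifold $U_i$. Applying Tietze's extension theorem component by component, each $s_i^j$ admits a continuous extension $\widetilde{s}_i^{\,j} : U_i \to \R$, and setting $s_i^{\sharp}(x) := \phi_i^{-1}(x, \widetilde{s}_i(x))$ produces a continuous section of $E$ over the whole of $U_i$ that agrees with $s$ on $K \cap U_i$.

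Next I would glue these local extensions into a single section. Put $U := \bigcup_{i=1}^N U_i$, which is an open neighbourhood of $K$ in $X$, and let $(\chi_i)_{1\leq i\leq N}$ be a continuous partition of unity on $U$ subordinate to the cover $(U_i)$; such a partition exists because $X$, being a smooth manifold, is paracompact. Since each fibre $E_x$ carries a vector-space structure, the formula
$$ \overline{s}(x) := \sum_{i=1}^N \chi_i(x)\, s_i^{\sharp}(x), \qquad x \in U, $$
where the $i$-th summand is interpreted as $0 \in E_x$ on $\{\chi_i = 0\}$, defines a continuous section of $E$ on $U$. For $x \in K$, every index $i$ with $\chi_i(x) \neq 0$ satisfies $x \in U_i$, and hence $s_i^{\sharp}(x) = s(x)$; combined with $\sum_i \chi_i(x) = 1$ on $K$, this yields $\overline{s}(x) = s(x)$. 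One may then take $\Ncal := U$, or any smaller open neighbourhood of $K$ contained in $U$ if extra control near $K$ is desired.

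There is no serious obstacle in the argument: the one mildly subtle point is securing a partition of unity on $U$ adapted to $(U_i)$, but paracompactness of $X$ handles this and even allows the $\chi_i$ to be chosen smooth (though mere continuity suffices here). Conceptually, the proof simply records that extending a continuous section of a vector bundle from a closed subset reduces, via local trivialisations, to the extension of $\R^r$-valued continuous functions, which is precisely the content of Tietze's theorem; the linear structure of the fibres is what makes the partition-of-unity gluing globally consistent.
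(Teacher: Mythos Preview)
Your argument is correct and is the standard proof of this extension result. Note, however, that the paper does not actually give its own proof of this lemma: it is quoted directly from \cite[lem.~4.4, p.~128]{HPPS} and used as a black box. So there is nothing to compare against beyond observing that your Tietze-plus-partition-of-unity argument is precisely the classical justification one would expect, and it goes through without issue (the only mild points being that $K$ is closed in $X$ because manifolds are Hausdorff, and that open subsets of manifolds are paracompact, both of which you address).
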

 
 \subsubsection{Extension of the adapted metric near a basic set} Let us apply this Lemma with
 $$ X:= M \quad \mbox{and} \quad E = T^* M \otimes_{sym} T^* M.$$
 The Riemannian metric $\hat{g}$ can be seen as a continuous section $\hat{g}:  K \rightarrow E$. Since the basic set $K$ is a compact subset of $M$, the lemma applies and it allows to extend $\hat{g}$ continuously on an open neighborhood $\Ncal_0$ of $K$. Up to considering smaller $\Ncal_0$, we can assume that the extended metric remains Riemannian on $\Ncal_0$ as positivity and semidefiniteness are open conditions. Since we can do this extension near each basic and since the metric $g$ is Riemannian, a partition of unity argument allows to prove
 
 \begin{lemma}
  For any Axiom A flow on a compact manifold, there exists a continuous Riemannian metric (globally defined) which is adapted to the dynamics on each basic set, in the sense that (\ref{hyperbolic estimates for the adapted metric}) holds on each basic set.
 \end{lemma}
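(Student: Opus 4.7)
The plan is to glue together the locally-extended adapted metrics using a partition of unity, exactly as suggested in the text. Since the basic sets $K_1,\dots,K_N$ are pairwise disjoint compact subsets of the Hausdorff manifold $M$, I can choose pairwise disjoint open neighborhoods $U_i\supset K_i$. Shrinking if necessary, I may moreover assume each $U_i$ is contained in the neighborhood $\Ncal_0^{(i)}$ on which the extension lemma produces a continuous section $\widetilde{g}_i$ of $\otimes_{sym}^2T^*M$ extending $\hat{g}_i$, and on which $\widetilde{g}_i$ is still positive definite (positive definiteness is an open condition and $\hat{g}_i$ is Riemannian on the compact set $K_i$).

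Next, I would pick, for each $i\in\{1,\dots,N\}$, an intermediate open set $V_i$ with $K_i\subset V_i\Subset U_i$, and a continuous Urysohn function $\chi_i\in \Ccal^0(M;[0,1])$ with $\chi_i\equiv 1$ on $\overline{V_i}$ and $\supp\chi_i\subset U_i$. Because the $U_i$'s are pairwise disjoint, the supports of the $\chi_i$'s are pairwise disjoint and $\sum_{i=1}^N\chi_i\leq 1$ pointwise, so the function $\chi_0:=1-\sum_{i=1}^N\chi_i$ is nonnegative and continuous, and vanishes on $\cup_iV_i$. I would then define
\begin{equation*}
\hat{g}:=\chi_0\,g+\sum_{i=1}^N\chi_i\,\widetilde{g}_i,
\end{equation*}
which is a globally-defined continuous section of $\otimes_{sym}^2T^*M$.

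To check that $\hat{g}$ is the desired metric, observe first that at each point $x\in M$ the expression $\hat{g}(x)$ is a convex combination of the Riemannian inner products $g(x),\widetilde{g}_1(x),\dots,\widetilde{g}_N(x)$ (with at least one strictly positive coefficient), hence is itself a positive-definite symmetric bilinear form; so $\hat{g}$ is a continuous Riemannian metric on $M$. Second, for any $x\in K_i$ one has $\chi_i(x)=1$ while $\chi_j(x)=0$ for every $j\neq i$ and $\chi_0(x)=0$, so $\hat{g}(x)=\widetilde{g}_i(x)=\hat{g}_i(x)$. Consequently, along trajectories that remain in $K_i$ the metric $\hat{g}$ coincides with $\hat{g}_i$, and the hyperbolic estimates (\ref{hyperbolic estimates for the adapted metric}) for $\hat{g}_i$ on $K_i$ transfer verbatim to $\hat{g}$ on $K_i$.

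There is no real obstacle here: the only point requiring mild care is the disjointness of the neighborhoods, which forces $\sum_i\chi_i\leq 1$ and thus makes $\chi_0$ well defined and the convex-combination argument valid. All the analytic content has already been packaged into the extension lemma \ref{Lemme geometrique prolongement des sections} and into the invariance of the integrals defining $\hat{g}_i$ on each basic set.
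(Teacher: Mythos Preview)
Your proposal is correct and follows exactly the partition-of-unity argument the paper sketches: extend each adapted metric $\hat g_i$ continuously to a neighborhood of $K_i$, then glue with the background metric $g$ via cut-offs supported in pairwise disjoint neighborhoods. You have simply supplied the routine details that the paper leaves implicit.
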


 In what follows, we will always assume that $g$ is a continuous Riemannian metric adapted to the dynamics on each basic set and we will denote by $|.|$ its norm on the fibers of $TM$ and $T^* M$ to lighten notations. 
 \vspace{0.2cm}
 
 \subsubsection{Extension of distributions} We now apply Lemma \ref{Lemme geometrique prolongement des sections} in order to extend the distributions $E_{\cdots}^*$ defined on $K$ on a neighborhood of $K$. Note that we already explained that $E_s^*$, $E_u^*$, ... are well defined all over $M$ using the partition into unstable manifolds. The point of this new extension based on the bundles on $K$ (and not on the global dynamics) is that we expect that these new bundles have good hyperbolic properties in the sense of (\ref{hyperbolic estimates for the adapted metric}). We will also need to make sure that our local analysis is related to the invariant distributions $E_{u/uo}^*$ and $E_{s/so}^*$ defined all over $M$.
  
  Fix a basic set $K$ and recall that the dimension of the distributions $E_s, E_u, E_o$ are constant on $K$. We denote by $d_{s} = \dim E_s$ and $d_{u} = \dim E_u$ their dimension on $K$. According to Lemma \ref{lemma: closure of local stable manifold on a basic set}, the inclusions $\overline{W_{\varepsilon}^s (K)} \subseteq W_{2\varepsilon}^s (K)$ and $\overline{W_{\varepsilon}^u (K)}  \subseteq W_{2\varepsilon}^u (K)$ hold for $\varepsilon$ small enough. Now, if we apply Lemma \ref{Lemme geometrique prolongement des sections} for the continuous\footnote{The continuity follows from an adaptation of the proof of Lemma 4.2 of \cite{dyatlov} using the exponential estimates on $T_x W^s (z)$ and $T_x W^u (z)$ for every $z \in K$ which can be found in \cite[Lemma 2.10, p. 13]{dyatlov-guillarmou_2016}.} section 
  $$s: \overline{W_{\varepsilon}^{s} (K)} \rightarrow G_{s}(M)$$ with value in the Grassmann vector bundle of subspaces of dimension $d_{s}$, then we obtain an open set $\Ncal_s \supset \overline{W_{\varepsilon}^s (K)} $ and an extension $\widetilde{E}_s$ of the distribution $\bigcup_{x \in \overline{W_{\varepsilon}^s (K)}}E_{s} (x)$ on $\Ncal_s$. If we replace $s$ by $u$ in the previous construction then we obtain similarly a continuous extension $\widetilde{E}_u$ of $ \bigcup_{x \in \overline{W_{\varepsilon}^u (K)}} E_{u} (x)$ on a neighborhood $\Ncal_u$ of $\overline{W_{\varepsilon}^u (K)}$.  
  Next, we define $ \widetilde{E}_{so}^*$  (resp. $\widetilde{E}_{uo}^*$) by taking the dual orthogonal of $\widetilde{E}_s$ (resp. $\widetilde{E}_u$). The notation $\widetilde{E}_{so}^*$ can seem a little ambiguous at first, because we extend first and then take the dual orthogonal. Yet, everything is consistent here since $\widetilde{E}_{so}^*$ also extends continuously the distribution  $\bigcup_{x \in \overline{W_{\varepsilon}^s (K)}} E_{so}^* (x)$. A similar remark holds for $\widetilde{E}_{uo}^*$. Moreover, by setting $\widetilde{E}_s^*:=  \widetilde{E}_{so}^* \cap \{\xi \in T_x^*M, \:  \xi(V(x)) = 0\}$ we obtain a continuous extension of $\bigcup_{x \in \overline{W_{\varepsilon}^s (K)}}E_{s}^* (x)$. The different steps can be summarized in the next diagram (which of course also hold if we replace s by u):
 
 $$ \bigcup_{x \in \overline{W_{\varepsilon}^s (K)}} E_s (x) \dashrightarrow \widetilde{E}_s \dashrightarrow \widetilde{E}_{so}^* \dashrightarrow  \widetilde{E}_s^* .$$
 Moreover, the distributions $\widetilde{E}_{so/s}^* $ extend $E_{so/s}^*$ on a neighborhood of the local stable manifold of $K$:
 $$
  \boxed{\forall x \in W_{\varepsilon}^s (K), \quad \widetilde{E}_{so}^* (x) =  E_{so}^* (x) \mbox{ and } \widetilde{E}_s^* (x) = E_s^* (x).}
 $$
 Similarly, we have
$$
  \boxed{\forall x \in W_{\varepsilon}^u (K), \quad \widetilde{E}_{uo}^* (x) =  E_{uo}^* (x) \mbox{ and } \widetilde{E}_u^* (x) = E_u^* (x).}
 $$
These two last statement will be crucial in the proof of the compactness Proposition \ref{prop compactness}.

 Finally, in order to extend continuously the neutral direction, we define for all $x \in \Ncal =\Ncal_s \cap \Ncal_u$,
 $$ \widetilde{E}_o^* (x):= \{ \xi \in T_x^* M, \: \xi \left( \widetilde{E}_s(x) + \widetilde{E}_u(x)\right) = 0 \}.$$
 
  \begin{remark}\label{remark: upper triangular block matrices}
 It is important to note that for every $x \in W_{\varepsilon}^{s} (K) \cap \Ncal$, we have $\widetilde{E}_o^* (x) \subseteq \widetilde{E}_{so}^* (x)$. Similarly, we have for every $x \in W_{\varepsilon}^{u} (K) \cap \Ncal$, the inclusion $\widetilde{E}_o^* (x) \subseteq \widetilde{E}_{uo}^* (x)$.
 \end{remark}
 
  \begin{figure}[t]
\centering
\def\svgscale{0.5}
 \executeiffilenewer{neighborhoods_extension_distribution.svg}{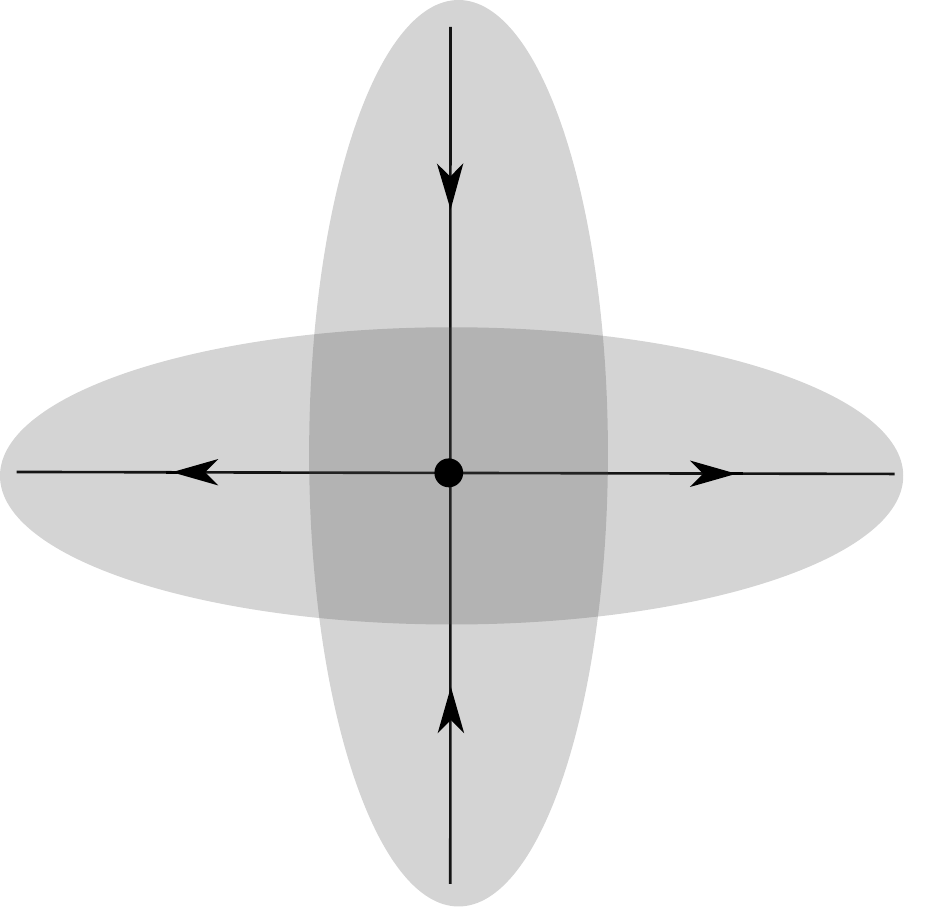}%
 {inkscape -z -D --file=neighborhoods_extension_distribution.svg %
 --export-pdf=neighborhoods_extension_distribution.pdf --export-latex}%
\begingroup%
  \makeatletter%
  \providecommand\color[2][]{%
    \errmessage{(Inkscape) Color is used for the text in Inkscape, but the package 'color.sty' is not loaded}%
    \renewcommand\color[2][]{}%
  }%
  \providecommand\transparent[1]{%
    \errmessage{(Inkscape) Transparency is used (non-zero) for the text in Inkscape, but the package 'transparent.sty' is not loaded}%
    \renewcommand\transparent[1]{}%
  }%
  \providecommand\rotatebox[2]{#2}%
  \newcommand*\fsize{\dimexpr\f@size pt\relax}%
  \newcommand*\lineheight[1]{\fontsize{\fsize}{#1\fsize}\selectfont}%
  \ifx\svgwidth\undefined%
    \setlength{\unitlength}{271.82766652bp}%
    \ifx\svgscale\undefined%
      \relax%
    \else%
      \setlength{\unitlength}{\unitlength * \real{\svgscale}}%
    \fi%
  \else%
    \setlength{\unitlength}{\svgwidth}%
  \fi%
  \global\let\svgwidth\undefined%
  \global\let\svgscale\undefined%
  \makeatother%
  \begin{picture}(1,0.96040629)%
    \lineheight{1}%
    \setlength\tabcolsep{0pt}%
    \put(0,0){\includegraphics[width=\unitlength,page=1]{neighborhoods_extension_distribution.pdf}}%
    \put(0.51070278,0.74800619){\makebox(0,0)[lt]{\lineheight{1.25}\smash{\begin{tabular}[t]{l}$\mathcal{N}_s$\end{tabular}}}}%
    \put(0.70716772,0.35933134){\makebox(0,0)[lt]{\lineheight{1.25}\smash{\begin{tabular}[t]{l}$\mathcal{N}_u$\end{tabular}}}}%
    \put(0.51389744,0.52696848){\makebox(0,0)[lt]{\lineheight{1.25}\smash{\begin{tabular}[t]{l}$\mathcal{N}'$\end{tabular}}}}%
    \put(0.39239639,0.38192418){\makebox(0,0)[lt]{\lineheight{1.25}\smash{\begin{tabular}[t]{l}$K$\end{tabular}}}}%
    \put(0.41715023,-0.02851194){\makebox(0,0)[lt]{\lineheight{1.25}\smash{\begin{tabular}[t]{l}$W_{\varepsilon}^s (K)$\end{tabular}}}}%
    \put(-0.25507689,0.44665648){\makebox(0,0)[lt]{\lineheight{1.25}\smash{\begin{tabular}[t]{l}$W_{\varepsilon}^u (K)$\end{tabular}}}}%
  \end{picture}%
\endgroup%

  \caption{Illustration of some neighborhoods used in the extension of distributions}
  \label{fig: neighborhoods extension of distributions} 
\end{figure}
 
 Up to considering smaller $\Ncal$, we can assume that 
    \begin{equation}\label{decomposition cotangent au voisinage d un compact basique hyperbolique}
   \widetilde{E}_{u}^*(x) \oplus \widetilde{E}_{s}^*(x) \oplus \widetilde{E}_{o}^*(x) = T_x^* M, \qquad \forall x \in \Ncal .
 \end{equation}
 Note that this decomposition of the cotangent space is \textbf{not invariant} by the flow $\Phi^t$ in general. Despite that, hyperbolic estimates as well as stability of good conical neighborhoods of these bundles should extend by continuity on a neighborhood of $K$.
 
 \subsubsection{Stability of conical neighborhoods near a basic set}
 
 We set for all $\delta > 0$ and all $x \in \Ncal$,
 \begin{equation}\label{definition voisinage conique tilde}
 \begin{split}
  \Ccal_{u}^{\delta} (x) &:= \left\{ \xi \in T_{x}^* M, \: \delta |\xi_u|^2 >  |\xi_s|^2 +  |\xi_o|^2 \right\}, \\ 
  \Ccal_{uo}^{\delta} (x) &:= \left\{ x \in T_{x}^* M, \: \delta (|\xi_u|^2 + |\xi_o|^2 ) > |\xi_s|^2 \right\},
  \end{split}
 \end{equation}
 where we used the decomposition (\ref{decomposition cotangent au voisinage d un compact basique hyperbolique}) on the fibers, i.e. $\xi = \xi_{u} + \xi_{s}+ \xi_{o} \in \widetilde{E}_{u}^*(x) \oplus \widetilde{E}_{s}^*(x) \oplus \widetilde{E}_{o}^*(x)$. If we replace $s$ by $u$ in  (\ref{definition voisinage conique tilde}), then we can define similarly the stable and weak-stable conical neighborhoods $\Ccal_{s}^{\delta}$ and $\Ccal_{so}^{\delta}$ on $\Ncal$. The main technical statement of this section is
 
\begin{lemma}\label{stabilite voisinage conique tilde pour Phi1} Let $K$ be a basic set and let $\Ncal$ be the open neighborhood appearing in (\ref{decomposition cotangent au voisinage d un compact basique hyperbolique}). There exists $\varepsilon_0 >0$ such that, for every unrevisited neighborhood $\Vcal \subset \Ncal \cap\varphi^{-1}(\Ncal)$ contained in an $\varepsilon_0$-neighborhood of $K$, the following hold:
\begin{enumerate}[label=(\roman*)]
 \item For every $0<\delta_0 \leq 1$, one can find $m_{\delta_0} \geq 0$ so that, for every $\delta_0 \leq \delta \leq 1$, for every $m \geq m_{\delta_0}$ and for every $x\in\mathcal{V}\cap\varphi^{m}(\Vcal)$,
  one has the inclusions 
  \begin{equation}\label{lemma stab vois conique tilde - Unstable inlusions}
	  \Phi^{1}\left( \Ccal_{u}^{\delta}(x)\right) \subseteq  \Ccal_{u}^{\delta'}(\varphi^{1}(x)), \qquad  
	\Phi^{1}\left( \Ccal_{uo}^{\delta}(x) \right) \subseteq \Ccal_{uo}^{\delta'}(\varphi^{1}(x))
 \end{equation}
 where $\delta' = e^{-\lambda/3} \delta$ and with $\lambda$ being the constant appearing in the definition (\ref{eq : def extended metric near basic set}). 
  \item Moreover, there exists $0 < \delta_1 \leq 1$ such that for every $\delta \leq \delta_1$, one can find $m_{\delta} \geq 0$ so that, for every $m \geq m_\delta$, for every $x \in \Vcal \cap\varphi^{m}(\Vcal)$ and for every $\xi \in \widetilde{C}^\delta_u(x)$, we have
  $$
 |\Phi^1 (x,\xi)|^2 \geq  e^{\lambda /3} |\xi|^2.
 $$
\end{enumerate}
\end{lemma}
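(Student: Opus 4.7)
My plan is to reduce the cone-stability assertions to the sharp hyperbolic estimates available on $K$ itself (encoded by the adapted metric (\ref{hyperbolic estimates for the adapted metric})), and then to transfer them to $\Vcal\cap\varphi^{m}(\Vcal)$ by continuity. The bridge between the two is the uniform approximation $\Vcal\cap\varphi^{m}(\Vcal)\to W^u(K)\cap\overline{\Vcal}$ provided by Lemma \ref{lemme: cv nonrevisite vers variete stable et instable}, together with the key fact, recorded just before (\ref{decomposition cotangent au voisinage d un compact basique hyperbolique}), that the extended distributions $\widetilde{E}^{*}_{u/s/o}$ coincide with the truly $\Phi^{t}$-invariant distributions $E^{*}_{u/s/o}$ everywhere on $W^u_{\varepsilon}(K)$.

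The first step will be to check the cone inclusions on $K$ itself. At a point $y\in K$, invariance of the splitting together with (\ref{hyperbolic estimates for the adapted metric}) gives, after dualization, that $\Phi^{1}$ expands every vector of $E_u^{*}(y)$ by at least $e^{\lambda/2}$, contracts every vector of $E_s^{*}(y)$ by at most $e^{-\lambda/2}$, and acts isometrically on $E_o^{*}(y)$. Since the splitting is preserved on $K$, this immediately yields the strict inclusions
\[
\Phi^{1}\!\left(\Ccal_u^{\delta}(y)\right)\subseteq \Ccal_u^{e^{-\lambda}\delta}(\varphi^{1}(y)),\qquad \Phi^{1}\!\left(\Ccal_{uo}^{\delta}(y)\right)\subseteq \Ccal_{uo}^{e^{-\lambda}\delta}(\varphi^{1}(y))
\]
for every $0<\delta\leq 1$, with the sharp rate $e^{-\lambda}$ leaving comfortable slack for the continuity argument that follows.

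Next I will transfer these inclusions by continuity. Because $\widetilde{E}^{*}_{u/s/o}$ are continuous on $\Ncal$ and the strict cone condition is open, for any $\delta_0>0$ one can find $\varepsilon_0>0$ so that the inclusions above persist with the weaker rate $e^{-\lambda/3}$ for every $\delta\in[\delta_0,1]$ whenever the extended splittings at $x$ and at $\varphi^{1}(x)$ are close to the invariant splittings at some reference point $y\in W^u(K)$ and its image $\varphi^{1}(y)$. The crucial point is that on $W^u_{\varepsilon}(K)$ the extensions $\widetilde{E}^{*}_{u/s/o}$ agree with the invariant distributions, so such reference points behave exactly like points of $K$ in the previous step. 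I then invoke Lemma \ref{lemme: cv nonrevisite vers variete stable et instable}: for $\Vcal\subset\Ncal\cap\varphi^{-1}(\Ncal)$ an unrevisited neighborhood contained in the $\varepsilon_0$-neighborhood of $K$, I can choose $m_{\delta_0}$ so that every $x\in\Vcal\cap\varphi^{m}(\Vcal)$ with $m\geq m_{\delta_0}$ lies within distance $\varepsilon_0$ of some $y\in W^u(K)\cap\overline{\Vcal}\subset W^u_{2\varepsilon_0}(K)$, while $\varphi^{1}(x)$ lies within a comparable distance of $\varphi^{1}(y)\in W^u(K)$, still inside $\Ncal$ by the assumption $\Vcal\subset\Ncal\cap\varphi^{-1}(\Ncal)$. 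Combining this with the continuity step yields (\ref{lemma stab vois conique tilde - Unstable inlusions}). For part (ii) the same scheme applies: for $\delta\leq \delta_1$ small enough, a vector $\xi\in\Ccal_u^{\delta}(x)$ satisfies $|\xi|^{2}\leq(1+\delta)|\xi_u|^{2}$, and after one iteration its $\xi_u$-component dominates with $|\Phi^{1}\xi_u|\geq (e^{\lambda/2}-o_{\delta}(1))|\xi_u|$ up to continuity errors, which upon choosing $m\geq m_{\delta_1}$ large enough gives $|\Phi^{1}\xi|^{2}\geq e^{\lambda/3}|\xi|^{2}$.

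The main obstacle will be the continuity step: the extensions $\widetilde{E}^{*}_{u/s/o}$ are only continuous (since $E_s$ and $E_u$ are generally only Hölder on a basic set) and they are not $\Phi^{t}$-invariant away from $W^u(K)$. Every comparison must therefore be routed through a reference point in $W^u(K)\cap\overline{\Vcal}$, where invariance is exact, and I will need to show that the gap between the sharp rate $e^{-\lambda}$ available on $K$ and the required rate $e^{-\lambda/3}$ is large enough to absorb all continuity errors uniformly in $\delta\in[\delta_0,1]$. This uniformity in $\delta$ is exactly what forces the threshold $m_{\delta_0}$ to depend on $\delta_0$, and tracking the errors carefully through the dual linear algebra is the delicate bookkeeping of the argument.
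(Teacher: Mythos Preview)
Your overall strategy --- verify cone contraction on a reference set, then transfer to $\Vcal\cap\varphi^{m}(\Vcal)$ by continuity via Lemma~\ref{lemme: cv nonrevisite vers variete stable et instable} --- is the paper's. But there is a genuine gap at the bridge step, and it forces the quantifier slip you make later.

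You assert that on $W_{\varepsilon}^u(K)$ the extensions $\widetilde{E}_{u}^*,\widetilde{E}_{s}^*,\widetilde{E}_{o}^*$ all coincide with $\Phi^t$-invariant distributions, so that reference points on $W^u(K)$ ``behave exactly like points of $K$''. This is false. The boxed identities preceding (\ref{decomposition cotangent au voisinage d un compact basique hyperbolique}) give only $\widetilde{E}_{u}^*=E_{u}^*$ and $\widetilde{E}_{uo}^*=E_{uo}^*$ on $W_{\varepsilon}^u(K)$; the analogous identity for $\widetilde{E}_{s}^*$ holds on $W_{\varepsilon}^s(K)$, not on $W_{\varepsilon}^u(K)$. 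Hence at $y\in W^u(K)\setminus K$ the map $\Phi^1$ is not block-diagonal in the splitting $\widetilde{E}_{u}^*\oplus\widetilde{E}_{o}^*\oplus\widetilde{E}_{s}^*$ but only block-\emph{triangular}: the flag $\widetilde{E}_{u}^*\subset\widetilde{E}_{uo}^*\subset T_y^*M$ is preserved (Remark~\ref{remark: upper triangular block matrices}), while the off-diagonal blocks sending $\widetilde{E}_{o}^*,\widetilde{E}_{s}^*$ into $\widetilde{E}_{u}^*$ remain nonzero. Establishing (\ref{lemma stab vois conique tilde - Unstable inlusions}) on $W^u(K)\cap\overline{\Vcal}$ therefore requires an explicit Cauchy--Schwarz computation: one uses proximity to $K$ to bound the surviving off-diagonal blocks by some small $\varepsilon$ (this fixes $\varepsilon_0$), and one uses the \emph{exact} vanishing on $W^u(K)$ of the ``bad'' blocks --- those sending $\widetilde{E}_{u}^*$ into $\widetilde{E}_{s}^*\oplus\widetilde{E}_{o}^*$ --- to prevent a term of order $\varepsilon|\xi_u|$ from appearing in $|\xi_s^1|+|\xi_o^1|$, which for $\xi\in\Ccal_{u}^{\delta}$ with $\delta$ small would otherwise swamp the estimate. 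That triangular computation is the substantive work you omit, and it is what makes the contraction rate on $W^u(K)\cap\overline{\Vcal}$ independent of $\delta$.

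This is also why your quantifier order (``for any $\delta_0>0$ one can find $\varepsilon_0>0$'') is inverted relative to the statement. Running continuity directly from $K$ cannot repair it: the neighborhood of $K$ on which cone contraction with rate $e^{-\lambda/3}$ persists for all $\delta\in[\delta_0,1]$ shrinks as $\delta_0\to 0$, whereas $\Vcal\cap\varphi^{m}(\Vcal)$ converges as $m\to\infty$ to the fixed set $W^u(K)\cap\overline{\Vcal}$, which sits at distance up to $\varepsilon_0$ from $K$. Routing through $W^u(K)$ is exactly what decouples $\varepsilon_0$ from $\delta_0$: the triangular estimate gives contraction on all of $W^u(K)\cap\overline{\Vcal}$ uniformly in $\delta\in(0,1]$, and only the last continuity step --- from $W^u(K)$ to $\Vcal\cap\varphi^m(\Vcal)$, via the ratio function on the compact annulus $\overline{\Ccal_{u}^{1}}\setminus\Ccal_{u}^{\delta_0}$ --- costs the $\delta_0$-dependent threshold $m_{\delta_0}$.
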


   \begin{proof}
\textbf{Hyperbolic inequalities on the unstable manifold of $K$.} 
      Let us denote by $\pi_{\alpha}$ the \textbf{continuous} projector on $\widetilde{E}_{\alpha}^*$ for each $\alpha \in \{s,o,u\}$ and let us define 
   $$
	  \forall \alpha, \beta \in \{s,o,u\}, \quad A_{\alpha \beta}:= \pi_{\alpha} \circ \Phi^1 \circ \pi_{\beta} \mbox{ and } B_{\alpha \beta}:= \pi_{\alpha} \circ \Phi^{-1} \circ \pi_{\beta}.
   $$
   We will also use the notation $\xi_{\alpha}^1$ for $\pi_{\alpha} \circ \Phi^1 (\xi)$. For every $x \in \Ncal \cap \varphi^{-1}(\Ncal) $, we have $\xi = \xi_u + \xi_s + \xi_o \in \widetilde{E}_u^* (x) \oplus \widetilde{E}_s^* (x) \oplus \widetilde{E}_o^* (x) = T_x^* M$ and $\Phi_x^1 (\xi) = \xi_u^1 + \xi_s^1 + \xi_o^1 \in \widetilde{E}_u^* (\varphi^1 (x)) \oplus \widetilde{E}_s^* (\varphi^1 (x)) \oplus \widetilde{E}_o^* (\varphi^1 (x))$. The dynamics of $\Phi^1$ and $\Phi^{-1}$ can be encode within the two following matrices of linear morphisms:
 $$ \begin{pmatrix}\xi_u^1 \\ \xi_o^1 \\ \xi_s^1 \end{pmatrix} 
 = \begin{pmatrix}
A_{uu} & A_{ou} & A_{su} \\
A_{uo} & A_{oo} & A_{so} \\
A_{us} & A_{os} & A_{ss}
\end{pmatrix}
\begin{pmatrix}\xi_u \\ \xi_o \\ \xi_s \end{pmatrix}
\: \: \mbox{and} \: \: \begin{pmatrix}\xi_u \\ \xi_o \\ \xi_s \end{pmatrix} 
 = \begin{pmatrix}
B_{uu} & B_{ou} & B_{su} \\
B_{uo} & B_{oo} & B_{so} \\
B_{us} & B_{os} & B_{ss}
\end{pmatrix}
\begin{pmatrix}\xi_u^1 \\ \xi_o^1 \\ \xi_s^1 \end{pmatrix}.$$
To be more preciese, we should have written $A_{\alpha \beta} (x)$ and $B_{\alpha \beta} (\varphi^1(x))$ to indicate that $\xi \in T_x^* M$. In the particular case where $x \in W^u (K) \cap \Ncal \cap \varphi^{-1}(\Ncal)$, both matrices are upper triangular block matrices thanks to our definition of $\widetilde{E}_u^*$, $\widetilde{E}_o^*$ and $\widetilde{E}_s^*$, i.e.
$$
A_{uo} = A_{us} = A_{os} = B_{uo} = B_{us} = B_{os} = 0 \mbox{ on }W^u (K) \cap \Ncal \cap \varphi^{-1}(\Ncal). 
$$
 However, on the whole open set $\Ncal \cap \varphi^{-1}(\Ncal)$ this is not the case anymore\footnote{It follows from the fact that in the general case (where $\widetilde{E}_s^*$ and $\widetilde{E}_u$ are both non trivial) any of the extended distribution $\widetilde{E}_s^*$, $\widetilde{E}_u^*$ or $\widetilde{E}_o^*$ is invariant by $\Phi^t$ on $\Ncal$, except on the stable and unstable manifold $W^{s/u} (K) \cap \Ncal$.}. Now, we extend the hyperbolic estimates (\ref{hyperbolic estimates for the adapted metric}) on the unstable manifold of $K$. For every $\varepsilon \geq 0$, we define the set $\Ncal(\varepsilon)$ of points $y \in \Ncal \cap \varphi^{-1}(\Ncal)$ on which we have for all $\xi \in T_{y}^* M$,
$$  |A_{uu} (\xi_u)|  \geq  (e^{\lambda/2 }-\varepsilon ) |\xi_u|, \quad |B_{ss} (\xi_s^1)|  \: \geq  (e^{\lambda/2 } - \varepsilon) |\xi_s^1|, $$
  \begin{equation}\label{eq perturbe: lemme stabilite vois conique tilde - estimées matrices}
  |A_{oo} (\xi_o)| \geq  (1 - \varepsilon) |\xi_o|, \quad |B_{oo} (\xi_o^1)| \geq  (1 - \varepsilon ) |\xi_o^1|
 \end{equation}
 $$\begin{matrix}
	\| A_{ou} \| \leq \varepsilon , \: & \| A_{su} \| \leq \varepsilon , \: & \| A_{so} \| \leq \varepsilon , \: \\
	\| B_{ou} \| \leq \varepsilon, \: & \| B_{su} \| \leq  \varepsilon , \: & \| B_{so} \| \leq \varepsilon, \\
	A_{uo} = A_{us} = &A_{os} = B_{uo} = &B_{us} = B_{os} = 0.
 \end{matrix}$$
 where $\| . \|$ denotes the operator norm defined by:
 $$ \forall \alpha, \beta \in \{u,o,s\}, \quad \| A_{\alpha \beta} \|:=  \sup_{\xi_{\alpha} \in \widetilde{E}_{\alpha}^* (y) \setminus \{0 \}} \frac{|A_{\alpha \beta} (\xi_{\alpha})|}{ |\xi_{\alpha}|} .  $$
 Note that the map $A_{\alpha \beta}$ and the norm $\|.\|$ depend on the point $y$, but we will not preciese the point $y$ if everything is clear. For every $\varepsilon > 0$ and for every unrevisited neighborhood $\Vcal$ sufficiently close to $K$, we have $\Vcal \cap W^u (K) \subset \Ncal (\varepsilon)$. Let first check that the inclusions (\ref{lemma stab vois conique tilde - Unstable inlusions}) hold on the unstable manifold of $K$ for some $\delta' \leq \delta e^{-\lambda/2}$ as soon as $\varepsilon$ is sufficiently small. The result on the whole neighborhood will then follow by continuity.
 \begin{itemize}
 \item \textbf{First inclusion, on unstable cones $\Ccal_u^{\delta}$.} Let us fix $0 < \delta \leq 1$, $x \in W^u (K) \cap  \Ncal \cap \varphi^{-1} (\Ncal)$ and $\xi \in \Ccal_u^{\delta} (x)$. Our goal is to find a parameter $\delta' \leq \delta e^{-\lambda/2} $ such that $\xi^1:= \Phi_x^1 (\xi) \in \widetilde {\Ccal}_u^{\delta'} (\varphi^1 (x))$. The first step consists in computing $|\xi_u^1|^2$ in order to find a lower bound which essentially depends on $|\xi_u|$. Precisely, we have
 \begin{align*}
	|\xi_u^1|^2 &= |A_{uu} \xi_u + A_{su} \xi_s  + A_{ou} \xi_o |^2 \\
		    &\geq  |A_{uu} \xi_u|^2 +  \underbrace{|A_{su} \xi_s + A_{ou} \xi_o |^2}_{\geq 0} + 2 \underbrace{\w{A_{uu} \xi_u, A_{su} \xi_s + A_{ou} \xi_o}}_{I},
 \end{align*}
 where the bracket and the norm are the one induced by the extended adapted metric on the fiber. By Cauchy-Schwarz inequality and thanks to (\ref{eq perturbe: lemme stabilite vois conique tilde - estimées matrices}),we get
 \begin{align*}
	|I| &\geq - |A_{uu} \xi_u||A_{su} \xi_s | - |A_{uu} \xi_u||A_{ou} \xi_o | \geq - \varepsilon |A_{uu} \xi_u|^2 - \frac{\varepsilon }{2}(|\xi_s|^2 + |\xi_o|^2) 
 \end{align*}
 and therefore, for $\varepsilon < \frac{1}{2}$ and thanks to our assumption $\delta \leq 1$, we obtain 
 \begin{equation}\label{eq: proof perturb vois conique tilde - lower bound unstable}
      \begin{split}
	  \delta|\xi_u^1|^2 &\geq \delta (1 -2 \varepsilon) |A_{uu} \xi_u|^2 - \delta \varepsilon (|\xi_s|^2 + |\xi_o|^2) \\
		    &\geq (1 -2 \varepsilon )(e^{\lambda/2} -\varepsilon )^2 \delta | \xi_u|^2 - \delta \varepsilon (|\xi_s|^2 + |\xi_o|^2) \\
		    &> (1 -2 \varepsilon) (e^{\lambda/2} -\varepsilon)^2  (| \xi_s|^2 + | \xi_o|^2) - \varepsilon  (| \xi_s|^2 + | \xi_o|^2) \\
		    &\qquad =: C_1 (\varepsilon) (|\xi_s|^2 + |\xi_o|^2).
      \end{split}
 \end{equation}
 The second inequality is obtained thanks to the estimate on $A_{uu}$ in (\ref{eq perturbe: lemme stabilite vois conique tilde - estimées matrices}) and the third one follows from our choice of $\xi \in \Ccal_u^{\delta} (x)$. Note that we implicitely show the following estimate which will imply the exponential estimate (as we will see later on):
 \begin{equation}\label{inequality intermediate exponential bound}
	|\xi_u^1|^2 \geq \left( (1 -2 \varepsilon)(e^{\lambda/2} -\varepsilon )^2 - \delta \varepsilon  \right) |\xi_u|^2 \geq C_1 (\varepsilon) |\xi_u|^2,
 \end{equation}
 since $\delta \leq 1$. Now, let us do a similar computation for the matrix $B$. This time, we aim to find a lower bound for $|\xi_s|^2 + |\xi_o|^2$:
  \begin{align*}
	|\xi_s|^2 + |\xi_o|^2 &= | B_{ss} \xi_s^1 |^2 +  |B_{so} \xi_s^1  + B_{oo} \xi_o^1 |^2 \\
		    &\geq |B_{ss} \xi_s^1|^2 + \underbrace{|B_{so} \xi_s^1|^2}_{\geq 0} + |B_{oo} \xi_o^1|^2 + 2 \underbrace{\w{ B_{so} \xi_s^1, B_{oo} \xi_o^1}}_{J}. 
 \end{align*}
 Again by Cauchy-Schwarz inequality on $J$ and using again estimates (\ref{eq perturbe: lemme stabilite vois conique tilde - estimées matrices}), we deduce
 \begin{align*}
	J \geq - |B_{so} \xi_s^1||B_{oo} \xi_o^1 | \geq - \frac{\varepsilon}{2} (|\xi_s^1|^2 + |B_{oo} \xi_o^1|^2  )
 \end{align*}
 Then, it gives 
 \begin{equation}\label{eq: proof stability of conical neigh. - inequality 2 unstable}
    \begin{split}
	|\xi_s|^2 + |\xi_o|^2 &\geq   |B_{ss} \xi_s^1|^2  + (1 - \varepsilon) |B_{oo} \xi_o^1 |^2 - \varepsilon|\xi_s^1|^2 \\
	&\geq  (e^{\lambda/2} -\varepsilon)^2 |\xi_s^1|^2  + (1 - \varepsilon)(1 - \varepsilon)^2 |\xi_o^1|^2  - \varepsilon |\xi_s^1|^2 \\
	&\geq \left[  (1 - \varepsilon)^2 - \varepsilon \right] (|\xi_s^1|^2 + |\xi_o^1|^2) =: C_2 (\varepsilon)  (|\xi_s^1|^2 + |\xi_o^1|^2).
    \end{split}
\end{equation}
 Putting together equations (\ref{eq: proof perturb vois conique tilde - lower bound unstable}) and (\ref{eq: proof stability of conical neigh. - inequality 2 unstable}), we deduce
 \begin{align*}
	\delta|\xi_u^1|^2 > C_1 (\varepsilon)  C_2 (\varepsilon) (|\xi_s^1|^2 + |\xi_o^1|^2)  
 \end{align*}
 Finally, we obtain $\xi^1 = \Phi_x^1 (\xi) \in \Ccal_u^{\delta'}(\varphi^1 (x))$ for 
 $$
	 \boxed{\delta'(\varepsilon) = \frac{\delta }{C_1 (\varepsilon) C_2 (\varepsilon)}, }
 $$
 where the polynomial functions $C_1$ and $C_2$ (with respect to the variable $\varepsilon$) satisfy $C_1 (\varepsilon) \leq e^{\lambda}$ for every $0 \leq \varepsilon < \frac{1}{2}$ and 
 $$ C_1 (\varepsilon) \underset{\varepsilon \to 0}{\longrightarrow} e^{\lambda} \mbox{ and } C_2 (\varepsilon) \underset{\varepsilon\to 0}{\longrightarrow} 1 .$$

 If we fix $\varepsilon$ sufficiently small so that $\varepsilon$ be sufficiently small so that 
 \begin{equation}\label{eq : proof big lemma - estimates C1 C2}
  C_1 (\varepsilon) C_2 (\varepsilon ) \geq e^{\lambda/2}, \quad C_1 (\varepsilon) \geq e^{\lambda/2},
 \end{equation}
then we get for every $0 \leq \delta \leq 1$ and for every $x \in \Ncal (\varepsilon)$ the inclusion
$$\Phi^{1}\left( \Ccal_{u}^{\delta}(x)\right) \subseteq  \Ccal_{u}^{\delta'}(\varphi^{1}(x))$$
for $\delta'(\varepsilon) \leq \delta e^{-\lambda/2}$, and we deduce from (\ref{inequality intermediate exponential bound}) the bound 
\begin{equation}\label{inequality intermediate exponential bound 2}
 |\xi_u^{1}|^2 \geq e^{\lambda/2} |\xi_u|^2
\end{equation}

 \item \textbf{Second inclusion, on weak unstable cones $\Ccal_{uo}^{\delta}$}.  Let us fix $x \in W^u (K) \cap \Ncal \cap \varphi^{-1}(\Ncal) $ and $\xi \in \Ccal_{uo}^{\delta} (x)$. By definition of the conical neighborhood, we now only have $\delta (|\xi_u|^2 + |\xi_o|^2) \geq |\xi_s|^2 $. The idea of the proof is exactly the same as for the conical neighborhood $\Ccal_{u}^{\delta}$. However, we won't have an exponential estimate because of the neutral direction $\widetilde{E}_o^*$. Let us check the computations for this case. First, we have
 \begin{align*}
	(|\xi_u^1|^2 + |\xi_o^1|^2) &= |A_{uu} \xi_u + A_{su} \xi_s  + A_{ou} \xi_o |^2 + |A_{so} \xi_s  + A_{oo} \xi_o |^2 \\
		    &\geq  |A_{uu} \xi_u|^2 +  \underbrace{|A_{su} \xi_s + A_{ou} \xi_o |^2}_{\geq 0} + 2 \underbrace{\w{A_{uu} \xi_u, A_{su} \xi_s + A_{ou} \xi_o}}_{I_1} \\
		    & \qquad +  \underbrace{| A_{so} \xi_s |^2}_{\geq 0} + | A_{oo} \xi_o |^2 + 2 \underbrace{\w{A_{so} \xi_s, A_{oo} \xi_o}}_{I_2},
 \end{align*}
 Using Cauchy-Schwarz inequality on each term $I_1$ and $I_2$, we deduce
 \begin{align*}
	|I_1| &\geq - \varepsilon |A_{uu} \xi_u|^2 - \frac{\varepsilon}{2}|\xi_o|^2 -\frac{\varepsilon}{2}|\xi_s|^2 \\ 
	|I_2| &\geq - \frac{\varepsilon }{2} |A_{oo} \xi_o|^2 -\frac{\varepsilon }{2}|\xi_s|^2 .
 \end{align*}
 With $\varepsilon < \frac{1}{2}$ and thanks to the estimates (\ref{eq perturbe: lemme stabilite vois conique tilde - estimées matrices}),
 \begin{align*}
  	  |\xi_u^1|^2 + |\xi_o^1|^2  &\geq  (1 - 2\varepsilon ) |A_{uu} \xi_u|^2 +   (1 - \varepsilon) |A_{oo} \xi_o|^2 - \varepsilon |\xi_o|^2 -  2\varepsilon |\xi_s|^2 \\
  	  &\geq (1 - 2\varepsilon) (e^{\lambda/2} - \varepsilon)|\xi_u|^2 +   (1 - \varepsilon)^2 |\xi_o|^2  - \varepsilon |\xi_o|^2 -  2\varepsilon|\xi_s|^2 \\
  	  &\geq \left[ (1 - 2\varepsilon)(1 - \varepsilon) - \varepsilon  \right] (|\xi_u|^2 + |\xi_o|^2) - 2\varepsilon  |\xi_s|^2 .
 \end{align*}
 Now, multiplying by $\delta \leq 1$ on both side of previous inequalities and using $\xi \in \Ccal_{uo}^{\delta} (x)$, we get 
\begin{equation}\label{eq: lemme vois conique tilde - C3 2nd inclusion}
       \begin{split}
	 \delta  (|\xi_u^1|^2 + |\xi_o^1|^2) &\geq  \left[(1 - 2\varepsilon)(1 - \varepsilon) - 3\varepsilon \right] |\xi_s|^2 =: C_3 (\varepsilon)  |\xi_s|^2 .
      \end{split}
\end{equation}
  It remains to find a lower bound for $|\xi_s|$. This case is much simpler since we have
  \begin{align}\label{eq: lemme vois conique tilde - 3rd step, 2nd inclusion}
	|\xi_s|^2 = |B_{ss} \xi_s^1|^2 \geq (e^{\lambda/2} -\varepsilon)^2 |\xi_s^1|^2 =: C_4 (\varepsilon) |\xi_s^1|^2.
 \end{align}

Putting together equations (\ref{eq: lemme vois conique tilde - C3 2nd inclusion}) and (\ref{eq: lemme vois conique tilde - 3rd step, 2nd inclusion}), we get
 \begin{align*}
	 \delta(|\xi_u^1|^2 + |\xi_o^1|^2) \geq C_3 (\varepsilon)  C_4 (\varepsilon) |\xi_s^1|^2 .
\end{align*}

 Therefore, we have $\xi^1 = \Phi_x^1 (\xi) \in \Ccal_u^{\delta'}(\varphi^1 (x))$ for 
 $$
	\boxed{ \delta'(\varepsilon) = \frac{\delta }{C_3 (\varepsilon) C_4 (\varepsilon)}. }
 $$
 where the polynomial functions $C_3$ and $C_4$ (w.r.t the variable $\varepsilon$) satisfy $|C_3 (\varepsilon)| \leq 1$ for every $0 \leq \varepsilon < \frac{1}{4}$ and $ C_3 (\varepsilon) \underset{\varepsilon \to 0}{\longrightarrow} 1$ and $C_4 (\varepsilon) \underset{\varepsilon \to 0}{\longrightarrow} e^{\lambda}$. For $\varepsilon$ sufficiently small so that 
 \begin{equation}\label{eq : proof big lemma - estimates C3 C4}
   C_3 (\varepsilon)  C_4 (\varepsilon) \geq e^{\lambda/2},
 \end{equation}
 we get for every $0 \leq \delta \leq 1$ and for every $x \in \Ncal (\varepsilon)$ the inclusion
$$\Phi^{1}\left( \Ccal_{uo}^{\delta}(x)\right) \subseteq  \Ccal_{uo}^{\delta'}(\varphi^{1}(x)),$$
 with $\delta' (\varepsilon) \leq \delta e^{-\lambda/2}$. It ends the proof of the inclusions along the unstable manifolds.
\end{itemize}

Now, fix a value of $\varepsilon > 0$ sufficiently small so that (\ref{eq : proof big lemma - estimates C1 C2}) and (\ref{eq : proof big lemma - estimates C3 C4}) are verified. There exists $\varepsilon_0 > 0$ such that $\Vcal \cap W^u (K) \subset \Ncal (\varepsilon)$ hold for every unrevisited neighborhood $\Vcal$ contained in a $\varepsilon_0$-neighborhood of $K$. Let $\Vcal$ be an unrevisited neighborhood contained in a $\varepsilon_0$-neighborhood of $K$. Thanks to our choice of $\varepsilon$, the inclusions (\ref{lemma stab vois conique tilde - Unstable inlusions}) are verified on $\Vcal \cap W^u (K)$ and we would like to extend them to $\Vcal \cap \varphi^m( \Vcal)$ as stated in (\ref{lemma stab vois conique tilde - Unstable inlusions}). 

Recall that $\Vcal \cap \varphi^m (\Vcal)$ converges uniformly to $\overline{\Vcal} \cap W^u (K)$ as $m \to + \infty$ in the sense of Lemma \ref{lemme: cv nonrevisite vers variete stable et instable}. The remaining of the proof consists in extending the inclusions (\ref{lemma stab vois conique tilde - Unstable inlusions}) by continuity on $\Vcal \cap \varphi^m (\Vcal)$ for $m$ large enough. Fix $0 < \delta_0 \leq 1$. Let us define for every $x \in \varphi^1(\Ncal) \cap \varphi^{-1} (\Ncal)$ and every $\xi \in S_x^* M$ such that $(\xi_s, \xi_o) \neq 0$ and $\xi_u^1 \neq 0$ the following contraction rate
$$
 \Gamma (x,\xi) = \left( \frac{|\xi_s|^2 + |\xi_o|^2}{|\xi_u|^2}\right)^{-1} \left( \frac{|\xi_s^1|^2 + |\xi_o^1|^2}{|\xi_u^1|^2} \right).
$$
The map $\Gamma$ is continuous by definition. Moreover, for every $x \in W^u (K) \cap \overline{\Vcal}$ and for every $\xi \in \Ccal_u^1 (x) \setminus \Ccal_u^{\delta_0} (x)$ there exists $\delta > 0$ such that 
$\delta_0 \leq \delta < 1$, 
$$\frac{|\xi_s|^2 + |\xi_o|^2}{|\xi_u|^2} = \delta,$$
and our previous analysis on stability of conical neighborhoods allows to obtain
\begin{equation}\label{eq : big lemma - upper bound gamma on the unstable manifold}
 \Gamma (x,\xi) \leq e^{-\lambda /2}.
\end{equation}
Let us prove by contradiction that there exists $m_{\delta_0} \in \N$ such that for every $x \in \Vcal \cap \varphi^{m_{\delta_0}} (\Vcal)$ and for every $\xi \in \overline{\Ccal_u^1 (x)} \setminus \Ccal_u^{\delta_0} (x)$,
$$
 \Gamma (x,\xi) \leq e^{-\lambda/3}.
$$
The conclusion will then follow as $\Vcal \cap \varphi^m(\Vcal)$ is a decreasing sequence of neighborhoods. By contradiction, assume that for every $m \in \N$ there exist $x_m \in \Vcal \cap \varphi^m (\Vcal)$ and $\xi_m \in \overline{\Ccal_u^1 (x_m)} \setminus \Ccal_u^{\delta_0} (x_m)$ with $(x_m,\xi_m) \in S^* M$ such that $\Gamma (x_m, \xi_m) > e^{-\lambda /3}$. By compactness of $S^* M$, we can extract a subsequence $(x_{m_k},\xi_{m_k})_{k}$ which converges to an element $(x_{\infty},\xi_{\infty}) \in S^*M$. Since $\Vcal \cap \varphi^m (\Vcal)$ converges uniformly to $\overline{\Vcal} \cap W^u (K)$ as $m \to + \infty$ in the sense of Lemma \ref{lemme: cv nonrevisite vers variete stable et instable}, we must have $x_{\infty} \in W^u (K) \cap \overline{\Vcal}$. By continuity, we also get $\xi_{\infty} \in \overline{\Ccal_u^1 (x_{\infty})} \setminus \Ccal_u^{\delta_0} (x_{\infty})$. Moreover, we deduce from (\ref{eq : big lemma - upper bound gamma on the unstable manifold}) the inequality $\Gamma (x_{\infty},\xi_{\infty}) \leq e^{-\lambda /2}$. However, by construction of the sequence $(x_m,\xi_m)_m$ and by continuity of $\Gamma$ on the set 
$$\bigcup_{x \in \overline{\Vcal}} S_x^* M \cap \left( \overline{\Ccal_u^1 (x)} \setminus  \Ccal_u^{\delta_0} (x) \right) \subset \bigcup_{x \in \varphi^1 (\Ncal) \cap \varphi^{-1}(\Ncal)} \left\{ \xi \in T_x^* M, \: \xi_u^1 \neq 0 \mbox{ and } (\xi_s, \xi_o) \neq 0 \right\}$$ for $m$ sufficiently large, we must have $\Gamma (x_{\infty},\xi_{\infty}) \geq e^{-\lambda /3} > e^{-\lambda /2} $ which gives the expected contradiction. So, we have proved that for every $0 < \delta_0 \leq 1$, there exists $m_{\delta_0} \in \N$ such that for every $x \in \Vcal \cap \varphi^{m_{\delta_0}} (\Vcal)$, we have the inclusions  
\begin{equation}\label{eq : proof big lemma - stability of conical neigh N1}
 \forall \delta_0 \leq \delta \leq 1, \quad \Phi^1 \left( \Ccal_u^{\delta} (x) \setminus \Ccal_u^{\delta_0} (x) \right) \subseteq \Ccal_u^{\delta'} (\varphi^1 (x)),
\end{equation}
with $\delta ' = \delta e^{-\lambda/3}$. With a similar argument using the continuity of the map 
$$\Lambda (x,\xi) =   \frac{|\xi_s^1|^2 + |\xi_o^1|^2}{|\xi_u^1|^2},$$
we can assume, up to choosing a larger constant $m_{\delta_0}$, that 
\begin{equation}\label{eq : proof big lemma - stability of conical neigh N2}
 \Phi^1 \left( \Ccal_u^{\delta_0} (x)\right) \subseteq \Ccal_u^{\delta'_0} (\varphi^1 (x))
\end{equation}
is verified on $\Vcal \cap \varphi^{m_{\delta_0}} (\Vcal)$ for $\delta'_0 = \delta_0 e^{-\lambda/3}$. Finally, putting (\ref{eq : proof big lemma - stability of conical neigh N1}) and (\ref{eq : proof big lemma - stability of conical neigh N2}) together gives the expected inclusions for the unstable cones. For the weak unstable cones, we can proceed similarly using the following maps 
$$\widetilde{\Gamma} (x,\xi) = \left( \frac{|\xi_s|^2}{|\xi_u|^2 + |\xi_o|^2}\right)^{-1} \left( \frac{|\xi_s^1|^2}{|\xi_u^1|^2 + |\xi_o^1|^2} \right) \mbox{ and } \widetilde{\Lambda} (x,\xi) =   \frac{|\xi_s^1|^2}{|\xi_u^1|^2 + |\xi_o^1|^2}.$$
Up to choosing a larger $m_{\delta_0}$, it ends the proof of $(i)$.

\textbf{Exponential estimate (proof of $(ii)$).} Applying $(i)$ to $0 < \delta_0 = \delta \leq 1$, we get the existence of $m_{\delta} \in \N$ such that the inclusions (\ref{lemma stab vois conique tilde - Unstable inlusions}) are verified on $\Vcal \cap \varphi^{m_{\delta}} (\Vcal)$. Extending (\ref{inequality intermediate exponential bound 2}) by continuity, we can find $m_0 \in \N$ such that for every $x \in \Vcal \cap \varphi^{m_0} (\Vcal)$ and for every $\xi \in \Ccal_{u}^{\delta} (x)$, we have
$$ |\xi_u^1|^2 \geq e^{2\lambda / 5 } |\xi_u|^2  . $$
 Up to considering a larger $m_{\delta}$, we can assume that $m_{\delta} \geq m_0$. The exponential estimate follows by equivalence of the norms $|\xi |_u:= |\xi_u|$ and $|.|$ on the conical neighborhoods $\Ccal_u^{\delta}(x)$ and $\Ccal_u^{\delta}(\varphi^1(x))$. Indeed, we have $\xi^1 \in \Ccal_u^{\delta} (\varphi^1 (x))$ thanks to our previous analysis and therefore
 $$
	\boxed{|\Phi^1 (x,\xi)|^2 = |\xi^1|^2 \geq \frac{e^{2/5\lambda}}{1 + \delta} |\xi|^2 \geq e^{\lambda /3} |\xi|^2.}
 $$
for every $\delta \leq \delta_1:= \min (1, e^{2/5\lambda} - 1)$ and every $x \in \Vcal \cap \varphi^{m_{\delta}} (\Vcal)$.

\end{proof}

The following corollary states in a quantitative manner that, if the trajectory of a point $(x,\xi)$ stays for a long time near a basic set, then the fiber part of $\Phi^t (x,\xi)$ gets attracted to the $\widetilde{E}_u^*$ or $\widetilde{E}_{uo}^*$ distribution: 

\begin{corollary}\label{corollary big lemma} Let $K$ be a basic set and let $\Ncal$ be the open neighborhood appearing in (63). There exists $\varepsilon_0 >0$ such that, for every unrevisited neighborhood $\Vcal\subset \varphi^1(\Ncal) \cap\varphi^{-1}(\Ncal)$ contained in an $\varepsilon_0$-neighborhood of $K$, the following hold:
\begin{itemize}
 \item For every $0<\delta' \leq \delta\leq 1$, one can find $m_{0}\geq 0$ such that, for every $m \geq m_{0}$ and for every $x \in \varphi^{-2m}(\Vcal)\cap \Vcal$, one has 
 $$
	\boxed{ \xi \notin \Ccal_{so/s}^{\delta} (x) \Longrightarrow \Phi^{2 m} (x,\xi) \in \Ccal_{u/uo}^{\delta'} (\varphi^{2m} (x)) .}
  $$ 
 \item For every $0<\delta \leq 1$, there exists $C > 0$, $m_1 \in \N$ such that, for every $m \geq m_1$, for every $x \in \varphi^{-2m}(\Vcal)\cap \Vcal$ and for every $\xi \in T_x^* M$ such that $ \xi \notin \Ccal_{so}^{\delta} (x) $, we have the inequality
 \begin{equation}\label{eq : corollary big lemma - statement exponential estimates}
     |\Phi^{2 m} (\xi)| \geq C e^{m \lambda /3} |\xi|,
\end{equation}
with $\lambda$ being the constant appearing in the definition (\ref{eq : def extended metric near basic set}).
\end{itemize} 
\end{corollary}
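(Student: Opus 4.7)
The plan is to apply Lemma \ref{stabilite voisinage conique tilde pour Phi1} both forward and backward along the orbit of $x$. The crucial input is a time-reversed form of that lemma, which I expect to obtain by reading its proof with $\varphi^t$ replaced by $\varphi^{-t}$: the construction of the extended distributions $\widetilde E_{s/u/o}^*$ and the adapted metric $\hat g$, as well as the matrix block estimates~(\ref{eq perturbe: lemme stabilite vois conique tilde - estimées matrices}), are entirely symmetric under $\varphi^t\leftrightarrow \varphi^{-t}$ after swapping the roles of the stable and unstable directions. This will yield: for every $0<\delta_0\leq 1$, there exists $m'_{\delta_0}\geq 0$ such that for every $\delta_0\leq\delta\leq 1$, every $m\geq m'_{\delta_0}$ and every $y\in\Vcal\cap\varphi^{-m}(\Vcal)$, $\Phi^{-1}(\Ccal_{s/so}^\delta(y))\subseteq\Ccal_{s/so}^{\delta e^{-\lambda/3}}(\varphi^{-1}(y))$, together with the analogous exponential norm bound on $\Ccal_s^{\delta_1}$. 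Since $\Vcal$ is unrevisited, the hypothesis $x\in\Vcal\cap\varphi^{-2m}(\Vcal)$ forces $\varphi^k(x)\in\Vcal\cap\varphi^k(\Vcal)\cap\varphi^{-(2m-k)}(\Vcal)$ for every $0\leq k\leq 2m$, so that both the forward and backward big lemmas apply at every intermediate point as soon as $m$ is larger than a constant depending on $\delta_0$.

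I treat the $\Ccal_{so}^\delta\to\Ccal_u^{\delta'}$ case; the parallel $\Ccal_s^\delta\to\Ccal_{uo}^{\delta'}$ case is verbatim after swapping weak cones for strong ones. Set $k_0:=\lceil \tfrac{3}{\lambda}\log(1/\delta)\rceil+1$, a constant depending only on $\delta$. Iterating the backward lemma $k_0$ times from $\varphi^{k_0}(x)$ back to $x$ gives
\[
 \Phi^{-k_0}\bigl(\Ccal_{so}^{1}(\varphi^{k_0}(x))\bigr)\subseteq\Ccal_{so}^{e^{-k_0\lambda/3}}(x)\subseteq\Ccal_{so}^{\delta}(x).
\]
Taking fibre-wise complements (each $\Phi^{\pm 1}$ is a linear isomorphism), $\xi\notin\Ccal_{so}^{\delta}(x)$ implies $\Phi^{k_0}(\xi)\notin\Ccal_{so}^{1}(\varphi^{k_0}(x))$, i.e.\ $\Phi^{k_0}(\xi)\in\overline{\Ccal_u^1(\varphi^{k_0}(x))}$. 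A bounded number $j_0$ of further forward iterations of Lemma \ref{stabilite voisinage conique tilde pour Phi1}(i) at $\varphi^{k_0}(x),\ldots,\varphi^{k_0+j_0-1}(x)$, combined with a continuity argument at the boundary $\partial \Ccal_u^1$, then places $\Phi^{k_0+j_0}(\xi)$ inside the open proper cone $\Ccal_u^{\delta_1}(\varphi^{k_0+j_0}(x))$; this $j_0$ depends only on $\lambda$ and $\delta_1$, not on $m$. Iterating Lemma \ref{stabilite voisinage conique tilde pour Phi1}(i) forward another $2m-k_0-j_0$ times, the cone parameter is divided by $e^{\lambda/3}$ at each step, so $\Phi^{2m}(\xi)\in\Ccal_u^{\delta_1 e^{-(2m-k_0-j_0)\lambda/3}}(\varphi^{2m}(x))\subseteq\Ccal_u^{\delta'}$ for $m$ larger than an $m_0$ depending only on $\delta,\delta'$ and the constants from Lemma \ref{stabilite voisinage conique tilde pour Phi1}. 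This proves the first bullet.

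The exponential estimate follows by invoking Lemma \ref{stabilite voisinage conique tilde pour Phi1}(ii) at each of the $2m-k_0-j_0$ forward steps between times $k_0+j_0$ and $2m-1$: at every such step $|\Phi^{k+1}(\xi)|^2\geq e^{\lambda/3}|\Phi^k(\xi)|^2$. Composing these inequalities gives $|\Phi^{2m}(\xi)|^2\geq e^{(2m-k_0-j_0)\lambda/3}|\Phi^{k_0+j_0}(\xi)|^2$; since $k_0+j_0$ depends only on $\delta$ and the linear isomorphism $\Phi^{k_0+j_0}$ has norm bounded below uniformly on the compact set $\overline{\Vcal}$ (by smoothness of the flow), one deduces $|\Phi^{k_0+j_0}(\xi)|\geq C_\delta|\xi|$, which combined with the previous inequality yields $|\Phi^{2m}(\xi)|\geq Ce^{m\lambda/3}|\xi|$. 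The principal obstacle will be the careful verification of the time-reversed version of Lemma \ref{stabilite voisinage conique tilde pour Phi1}: each hyperbolic block estimate in its proof has to be re-examined with the roles of $u$ and $s$ exchanged and with $\Vcal\cap\varphi^{-m}(\Vcal)$ in place of $\Vcal\cap\varphi^{m}(\Vcal)$. The boundary continuity step in Stage 1 is routine.
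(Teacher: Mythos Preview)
Your approach is essentially the paper's own: split the orbit at an intermediate time, use the time-reversed Lemma~\ref{stabilite voisinage conique tilde pour Phi1} on the first segment to inflate the stable-cone parameter from $\delta$ up to $1$ (equivalently, push $\Phi^{k_0}(\xi)$ into $\overline{\Ccal_u^1}$), then use the forward lemma on the remaining segment to contract the unstable-cone parameter down to $\delta'$; the exponential bound follows by iterating part~(ii) along the second segment. One small point to tighten: the forward lemma at $\varphi^{k_0+j_0}(x)$ requires this point to lie in $\varphi^{m_{\delta_0}}(\Vcal)\cap\Vcal$, so the split index must be at least the threshold $m_{\delta_0}$ coming from Lemma~\ref{stabilite voisinage conique tilde pour Phi1} (with $\delta_0\leq\delta'$); your $k_0+j_0$ as written depends only on $\delta,\lambda,\delta_1$ and may be smaller. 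The paper handles this by taking the split point $m_0\geq m_{\delta'}$ from the outset, which also removes the need for your separate boundary step $j_0$.
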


The idea of the proof is the following. Thanks to Lemma \ref{stabilite voisinage conique tilde pour Phi1}, we know that the conical neighborhoods are $\Phi^1$-stable near the unstable manifold of the basic set, so our goal is to use the different unrevisited neighborhoods represented in figure \ref{fig: unrevisited neighborhoods near a basic set} to show that we can iterate the lemma.

  \begin{proof}[Proof of the corollary]Fix $0 < \delta' \leq \delta \leq 1$ and let $\Vcal$ be some unrevisited $\varepsilon_0$-neighborhood of $K$ contained in $\varphi^1(\Ncal)  \cap \varphi^{-1} (\Ncal)$ so that we can apply Lemma \ref{stabilite voisinage conique tilde pour Phi1} for the flows $\varphi^t$ and $\varphi^{-t}$ (with $\varepsilon_0$ given by the lemma). Doing so for the flows $\varphi^t$ and $\varphi^{-t}$, we obtain the existence of an integer $m_{\delta'}$ as in the statement of the lemma (we take $\delta_0 := \delta'$ and we assume that $m_{\delta'}$ is the same integer obtained for $\varphi^t$ and $\varphi^{-t}$). Now, fix $m_0 \geq m_{\delta'}$ which will be chosen sufficiently large later on. For all $m \geq m_{0}$ and all $x \in \Vcal \cap \varphi^{-2m} (\Vcal)$, we have the inclusions
  $$ \forall k \in [\![ 0, m_0 ]\!], \quad \varphi^{k} (x) \in \varphi^k (\Vcal) \cap \varphi^{-2m + k} (\Vcal) \subset \Vcal \cap \varphi^{- m_{0}} (\Vcal) $$
  and 
  $$ \forall k \in [\![m_0, 2m ]\!], \quad \varphi^{k} (x) \in \varphi^k (\Vcal) \cap \varphi^{-2m + k} (\Vcal) \subset \varphi^{m_{0}} (\Vcal) \cap \Vcal ,$$
  see figure \ref{fig: unrevisited neighborhoods near a basic set}. The remaining of the proof consists in interating $2m$ times the lemma for $(x,\xi) \in T^* M$ such that $x \in \Vcal \cap \varphi^{-2m} (\Vcal)$ with $m \geq m_{0}$ and $\xi \notin \Ccal_{so/s}^{\delta} (x)$: $m_0$ times for the backward flow $\varphi^{-t}$ when the orbit of $x$ belongs to $\Vcal \cap \varphi^{- m_0} (\Vcal)$ and $2m - m_0$ times for $\varphi^t$ when the orbit of $x$ goes in $ \varphi^{m_{0}} (\Vcal) \cap \Vcal$. For $m$ sufficiently large, we will get the result. Let us consider $(x,\xi)$ as above. Two analysis are needed: first for $0 \leq k \leq m_0$ and then for $m_0 \leq k \leq 2m$.
  \begin{itemize}
   \item First, since $\varphi^k (x) \in  \Vcal \cap \varphi^{- m_{0}} (\Vcal) \subset \Vcal \cap \varphi^{- m_{\delta'}} (\Vcal)$ for all $0 \leq k \leq m_0-1$, a straight application of the lemma for the flow $\varphi^{-t}$ instead of $\varphi^t$ gives us 
  $$
     \xi \notin \overline{\Ccal_{so/s}^{\delta} (x)} \Longrightarrow \Phi^{m_0} (x,\xi) \notin \overline{\Ccal_{so/s}^{\delta_{m_0}} (\varphi^{m_0}(x))}
  $$
  where $\delta_{m_0} = \min (e^{\lambda m_0 /3} \delta, 1)$. So, by choosing $m_0 \in \N$ large enough so that 
  $$e^{-m_0 \lambda /3} \leq \delta' ,$$ 
  we get in particular that $\delta_{m_0} = 1$, 
  $$ \Phi^{m_0} (x,\xi) \notin \overline{\Ccal_{so/s}^{1} (\varphi^{m_0} (x))}  \mbox{ and thus } \Phi^{m_0} (x,\xi) \in \Ccal_{u/uo}^{1} (\varphi^{m_0} (x)).$$
   \item For every $m_0 \leq k \leq 2m-1$, we can apply the lemma\footnote{Note that we use here the uniformity in $\delta \in [\delta', 1]$ stated in Lemma \ref{stabilite voisinage conique tilde pour Phi1}.} for the flow $\varphi^t$ and it gives
   $$\varphi^k (x) \in \Vcal \cap \varphi^{m_{\delta'}} (\Vcal), \: \Phi^k (x,\xi) \in \Ccal_{u/uo}^{\mu^{k-m_0}} (\varphi^m (x)) \Longrightarrow  \Phi^{k+1}(x,\xi) \in \Ccal_{u/uo}^{\mu^{k-m_0+1}} (\varphi^{k+1} (x)), $$
with $\mu = e^{-\lambda/3}$. Finally, we obtain
$ \Phi^{2m}(x,\xi) \in \Ccal_{u/uo}^{\mu^{2m - m_0}} (\varphi^{2m} (x)).$ Thanks to our choice of $m_0$, we have $\mu^{2m - m_0} \leq \mu^{m_0} = e^{-m_0 \lambda /3} \leq \delta'$. It gives the first point.
  \end{itemize}
   To prove the exponential estimate (\ref{eq : corollary big lemma - statement exponential estimates}), we need to introduce first the constant $0 < \delta_1 \leq 1$ given by the point $(ii)$ of Lemma \ref{stabilite voisinage conique tilde pour Phi1}. From the above analysis, there exists an integer $m_1 \geq m_0$ such that, for every $m \geq m_1$ and for every $ m_1 \leq k \leq 2m$, we have $ \Phi^k (x,\xi) \in \Ccal_{u/uo}^{\delta_1} (\varphi^k (x))$. The result is a direct application of the point $(ii)$ of Lemma \ref{stabilite voisinage conique tilde pour Phi1} and the constant $C$ is obtained by continuity of $\Phi^{m_1}$.
 \end{proof}

 \subsection{Proof of proposition \ref{prop compactness}: compactness.}
 \label{subsection : proof compactness result}
 
We are now in position to prove the compactness of $\Sigma_{u/uo}$. The compactness of $\Sigma_{s/so}$ can be proved similarly if we exchange $\varphi^1$ by $\varphi^{-1}$. Let us assume the indices to be ordered in the sense of \S \ref{subsubsection total order relation}. 
 Let $((x_m, \xi_m))_{m \in \N}$ be a sequence of elements in $\Sigma_{u/uo}$. Up to extraction of a subsequence, we can assume that there exists an integer $1 \leq j \leq N$ such that every point $x_m$ belongs to $W^s (K_j)$. Up to another extraction, we can suppose that $((x_m,\xi_m))_m$ has a limit in $S^*M$ that we denote by $(x_{\infty},\xi_{\infty}) \in S^*M$. Our goal will be to prove that $(x_{\infty}, \xi_{\infty})$ belongs to $\Sigma_{u/uo}$. First of all, we know from our assumption on $x_m$ that the limit $x_{\infty}$ must lie in the closure of the set $ W^s (K_j)$, i.e. $x_{\infty} \in \overline{W^s (K_j)}$. Since $\overline{W^s (K_j)} = \cup_{j_0, K_{j_0} \leq K_j} W^s (K_{j_0})$, we can find some integer $j_0 \leq j$ such that $x_{\infty} \in W^s (K_{j_0})$. Now, let us assume by contradiction that $(x_{\infty},\xi_{\infty}) \notin \Sigma_{u/uo}$.
 
 To obtain a contradiction, we split the analysis in three steps. First, when $j \neq j_0$, we will construct by induction a family of integers $j_0 < j_1 < \cdots < j_{\ell} = j$ such that for each $1 \leq k \leq \ell$ one can find an element $(x_{\infty}^{(k)}, \xi_{\infty}^{(k)})$ with $x_{\infty}^{(k)} \in W^s (K_{j_k}) \cap W^u (K_{j_{k-1}})$ and a sequence $(x_m^{(k)},\xi_m^{(k)}) = \widetilde{\Phi}^{\tau_{m,k}} (x_m,\xi_m)$, for some parameter $ \tau_{m,k} \geq 0$ verifying $\tau_{m,k +1} - \tau_{m,k} \underset{m\to+\infty}{\longrightarrow} + \infty$, which converges to $(x_{\infty}^{(k)},\xi_{\infty}^{(k)})$ as $m$ tends to infinity. In a second part, we will apply our previous analysis near basic sets from Corollary \ref{corollary big lemma} to prove that if $(x_{\infty}^{(k)},\xi_{\infty}^{(k)})$ does \textbf{not} belong to $\Sigma_{u/uo}$, then we have $(x_{\infty}^{(k+1)}, \xi_{\infty}^{(k+1)}) \in \Sigma_{so/s}$ which is disjoint to $\Sigma_{u/uo}$ thanks to the transversality assumption (\ref{Transversality assumption}). Finally, we will obtain the expected contradiction.
 
 First, let us see how the first two steps yield the contradiction. At the end of the induction, we will get $(x_{\infty}^{(\ell)}, \xi_{\infty}^{(\ell)}) \in \Sigma_{so/s}$ with $x_{\infty}^{(\ell)} \in W^s (K_j)$ as well as $x_{m}^{(\ell)} \in W^s (K_j)$ for every $m \in \N$. However , we also have the convergence 
 $$ \Sigma_{u/uo} \ni (x_m^{(\ell)},\xi_m^{(\ell)}) = \widetilde{\Phi}^{\tau_{m,\ell}} \underbrace{(x_m,\xi_m)}_{\in \Sigma_{u/uo}} \underset{m \to +\infty}{\longrightarrow} (x_{\infty}^{(\ell)}, \xi_{\infty}^{(\ell)}) \in \Sigma_{so/s},$$
 which is in contradiction with the continuity of $\Sigma_{u/uo}$ on $W_{\varepsilon}^s (K_j)$ for $\varepsilon \ll 1$. Indeed, if $x_{\infty}^{(\ell)} \in W^s (K_j)$ and if $\varepsilon > 0$ is a small parameter (that will be fixed later on), then there exists $T \geq 0$ (which depends on $\varepsilon$) such that $\varphi^T (x_{\infty}^{(\ell)}) \in W_{\varepsilon}^s (K_j)$. Since $x_m^{(\ell)} \in W^s (K_j)$ and the sequence $(x_m^{(\ell)})_{m \in \N}$ converges to $x_{\infty}^{(\ell)}$ as $m$ tends to $+\infty$, we can find an integer $m_0 \geq 0$ such that $\varphi^T (x_m) \in W_{\varepsilon}^s (K_j)$ for every $m \geq m_0$. Now, if we choose $\varepsilon$ small enough in order to have the continuity of $E_{so}^*$ and $E_s^*$ on $W_{\varepsilon}^s (K_j)$, then we find that $(x_{\infty}^{(\ell)}, \xi_{\infty}^{(\ell)}) \in \Sigma_{u/uo}$. Therefore, we get the contradiction
 $$(x_{\infty}^{(\ell)}, \xi_{\infty}^{(\ell)}) \in \Sigma_{u/uo} \cap \Sigma_{so/s},$$
 which is empty from the transversality assumption (\ref{Transversality assumption}).
 
 \textbf{Step 1: construction of the sequence by induction.} 
  	\begin{figure}[t]
\centering
\def\svgscale{0.6}
 \executeiffilenewer{compactness.svg}{compactness.pdf}%
 {inkscape -z -D --file=compactness.svg %
 --export-pdf=compactness.pdf --export-latex}%
 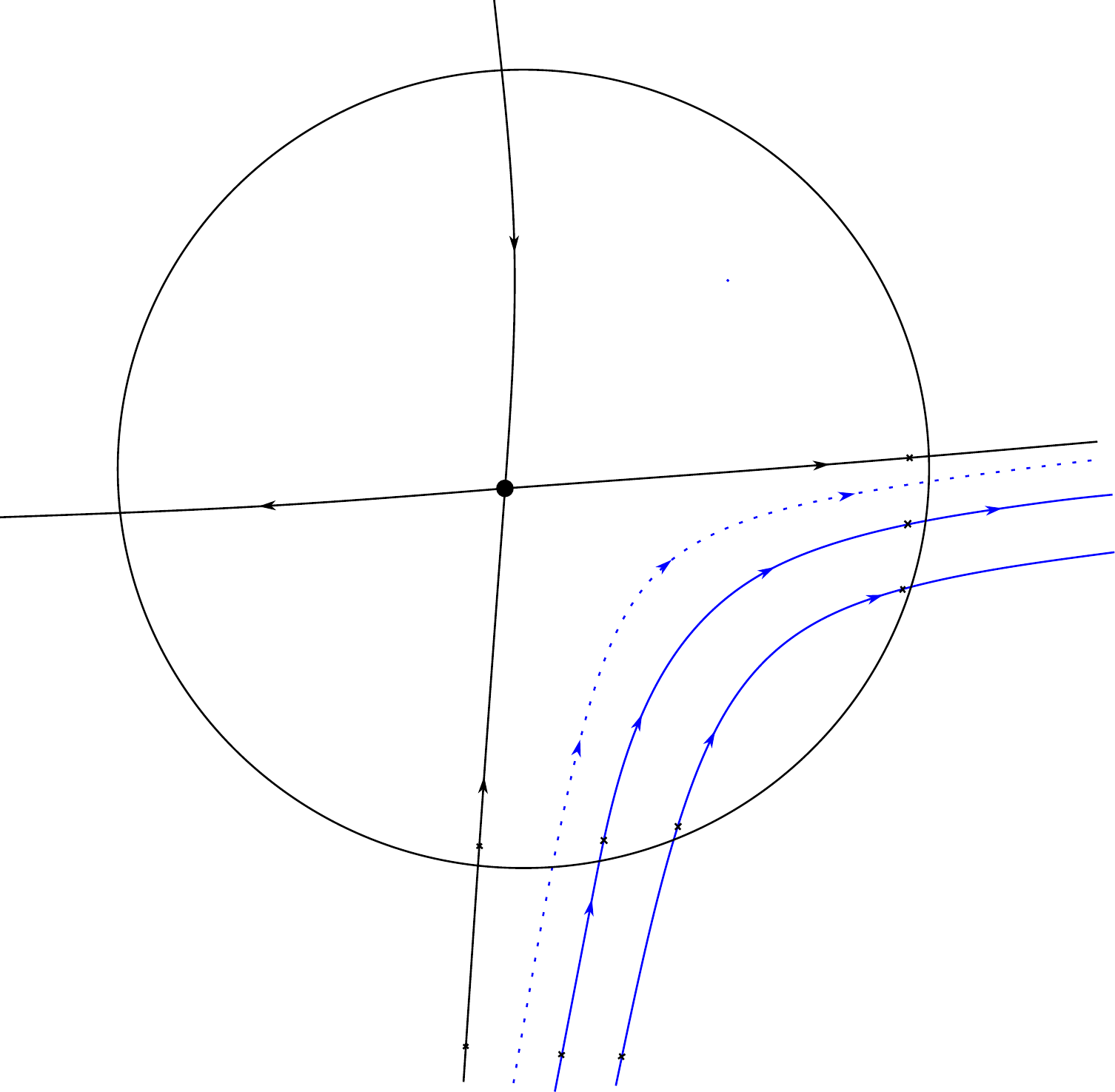%

  \caption{Illustration of some element used in the proof}
\end{figure}
	Since $j_0$ has already been defined, we just have to set $(x_{\infty}^{(0)}, \xi_{\infty}^{(0)}):= (x_{\infty}, \xi_{\infty})$ to end the base case. Now, let us assume the integers $j_1 < j_2 < \cdots < j_k (<j)$, the elements of the unitary cotangent bundle $(x_{\infty}^{(1)}, \xi_{\infty}^{(1)}),  \cdots, (x_{\infty}^{(k)}, \xi_{\infty}^{(k)})$ and the constants $\tau_{m,1}, \cdots, \tau_{m,k}$ to be constructed as in the above discussion. Our goal is to define a sequence $(x_{m}^{(k+1)}, \xi_{m}^{(k+1)}):= \widetilde{\Phi}^{\tau_{m,k+1}} (x_m,\xi_m)$, the constants $\tau_{m,k+1} \geq 0$ such that $\tau_{m,k+1} - \tau_{m,k}$ tends to $+\infty$ as $m$ goes to $+\infty$ and to exhibit a new accumulation point $(x_{\infty}^{(k+1)}, \xi_{\infty}^{(k+1)})$ with $x_{\infty}^{(k+1)} \in W^u(K_{j_k})\cap W^s( K_{j_{k+1}})$.

	\textbf{Definition of $\tau_{m,k+1}$:}	let us consider an unrevisited neighborhood $\Vcal$ of $K_{j_k}$ so that $\overline{\Vcal} \cap \Omega = K_{j_k}$ and sufficiently small to be in the range of application of Corollary \ref{corollary big lemma}. Recall that $x_{\infty}^{(k)}$ belongs to $W^s (K_{j_k})$, so there exists $T > 0$ such that $\varphi^T (x_{\infty}^{(k)}) \in W^s (K_{j_k}) \cap \Vcal$. Since every $x_{1}^{(k)}, x_{2}^{(k)}, \cdots, x_{m}^{(k)}, \cdots$ are in $W^s (K_{j})$ and the sequence converges to $x_{\infty}^{(k)}$ as $m \rightarrow + \infty$, there exists an integer $m_0 \geq 0$ such that for every $m \geq m_0$ we have $\varphi^T (x_m^{(k)}) \in  W^s (K_{j}) \cap \Vcal$.
	To lighten notations, let us denote by $y_m$ the point $\varphi^T (x_{m}^{(k)})$ and by $\eta_m$ the cotangent vector $\kappa ( ( D \varphi^{T} (x_m^{(k)})^{-1} )^{\top} (\xi_m^{(k)}) ) \in S_{y_m}^* M$ for every $m \in \N \cup \{ \infty \}$. 
	For every $m \geq m_0$, we can define the exit time of the unrevisited set $\Vcal$ for the point $y_m$ as follows:
	$$ \tau_m:= \inf \{ p \in \N, \: \varphi^p (y_m) \notin \Vcal \} -1 .$$ 
	It is well defined by definition of $y_m$ and it depends on $k$. Note that $\tau_m$ is finite since $y_m \notin W^s (K_{j_k})$ thanks to the unrevisited neighborhood's property \textbf{(P2)} of $\Vcal$. Also, the convergence $y_m \underset{m \to +\infty}{\longrightarrow} y_{\infty} \in W^s (K_{j_k})$ implies that $\tau_m$ goes to $+\infty$ when $m$ tends to $+\infty$. Now, we define $y_m^{(1)} = \varphi^{\tau_m} (y_m)$ and $\eta_m^{(1)} = \kappa ( ( D\varphi^{\tau_m} (y_m)^{-1} )^{ \top} (\eta_m) )$. Up to extraction, we can assume that $(y_m^{(1)}, \eta_m^{(1)})$ converges to an element $(y_{\infty}^{(1)},\eta_{\infty}^{(1)}) \in S^*M$ by compactness of $S^* M$. Let us define $(x_m^{(k+1)}, \xi_m^{(k+1)}):= (y_m^{(1)}, \eta_m^{(1)}) = \widetilde{\Phi}^{\tau_{m,k} + \tau_m + T} (x_m, \xi_m)$ for every $m \in \N \cup \{ \infty \}$ and therefore 
	$$\tau_{m, k+ 1}:= \tau_{m,k} + \tau_m + T \geq 0.$$
	Thanks to Lemmas \ref{lemme: cv nonrevisite vers variete stable et instable} and \ref{lemma: clos. of stable man on unrevisited neigh.}, the point $y_{\infty}^{(1)}$ belongs to $\overline{W^u (K_{j_k})\cap \Vcal} = W^u (K_{j_k})\cap \overline{\Vcal}$. Indeed, it follows from the convergence $y_m^{(1)} \underset{m \to +\infty}{\rightarrow} y_{\infty}^{(1)}$ together with the fact that $y_m^{(1)} \in \varphi^{\tau_m} (\Vcal) \cap \Vcal$ and $\tau_m \underset{m \to +\infty}{\longrightarrow} +\infty$. Moreover, we have $\varphi^{1} ( y_{\infty}^{(1)}) \notin \Vcal$ and thus $y_{\infty}^{(1)} \notin K_{j_k}$ because $\varphi^1 (y_m^{(1)}) \notin \Vcal$ by definition. It ends the induction step. Note that the algorithm stops once we have reached $K_j$, i.e. we have defined $j_0 < j_1 < \cdots < j_{\ell} = j$. 
	
  \textbf{Step 2: the local analysis near a basic set.} For every $0 \leq k < \ell$, we will prove the implication
	\begin{equation}\label{eq: compactness - proof implication}
	      \boxed{(x_{\infty}^{(k)}, \xi_{\infty}^{(k)}) \notin \Sigma_{u/uo} \mbox{ and } \tau_{m,k +1} - \tau_{m,k}  \underset{m\to+\infty}{\longrightarrow} + \infty \Longrightarrow (x_{\infty}^{(k+1)}, \xi_{\infty}^{(k+1)}) \in \Sigma_{so/s}.}
	\end{equation}
  Let us fix $0 \leq k < \ell$. To simplify, we will use the same notations as the one used in the previous induction. By definition, we have $(x_\infty^{(k+1)},\xi_\infty^{(k+1)}) \in \Sigma_{so/s}$ if and only if $\eta_{\infty}^{(1)} \in E_{u/uo}^* (y_{\infty}^{(1)})$. Since $y_{\infty} \in W^s (K_{j_k}) \cap \overline{\Vcal}$, since $x_{\infty}^{(k+1)} = y_{\infty}^{(1)} \in W^u (K_{j_k}) \cap \overline{\Vcal} \setminus \varphi^{-1} (\Vcal)$ and since the conical neighborhoods $\Ccal_{*}^{\delta}$ are well-defined on $\overline{\Vcal}$, we have by construction 
  $$ E_{so/s}^* (y_{\infty}) = \widetilde{E}_{so/s}^* (y_{\infty}) \quad \mbox{and} \quad E_{u/uo}^* (y_{\infty}^{(1)}) = \widetilde{E}_{u/uo}^* (y_{\infty}^{(1)}). $$
  Assume that $\eta_{\infty} \notin E_{so/s}^* (y_{\infty})$ and let us prove that $\eta_{\infty}^{(1)}$ belongs to $E_{u/uo}^* (y_{\infty}^{(1)})$ or equivalently: that for any $\delta' > 0$, we have 
  \begin{equation}\label{eq: compactness - phase at the exit}
   \eta_{\infty}^{(1)} \in \Ccal_{u/uo}^{\delta'} (y_{\infty}^{(1)}).
  \end{equation}
  Fix $\delta' > 0$. By hypothesis, we can find a small constant $\delta > 0$ such that 
	$$
		\eta_{\infty} \notin \Ccal_{so/s}^{\delta} (y_{\infty}).
	$$
	Since $\tau_{m,k +1} - \tau_{m,k}  \underset{m\to+\infty}{\longrightarrow} + \infty$, we can apply Corollary \ref{corollary big lemma} which gives (\ref{eq: compactness - phase at the exit}) and thus $(x_{\infty}^{(k+1)}, \xi_{\infty}^{(k+1)}) \in \Sigma_{so/s}$.
	
	\textbf{Conclusion.} We started with a point $(x_{\infty}^{(0)}, \xi_{\infty}^{(0)}) \notin \Sigma_{u/uo}$. Thanks to (\ref{eq: compactness - proof implication}), we have $(x_{\infty}^{(1)}, \xi_{\infty}^{(1)}) \in \Sigma_{so/s}$ which is disjoint to $\Sigma_{u/uo}$ according to the transversality assumption. Therefore, we can iterate (\ref{eq: compactness - proof implication}) by induction to finally get $(x_{\infty}^{(\ell)}, \xi_{\infty}^{(\ell)}) \in \Sigma_{so/s}$ and this concludes the proof after application of the third step to end up the induction. \hfill $\square$
 
 \vspace{0,5cm}
 Let us state a corollary which will be used to construct the map $f$ which appears in the definition of the escape function - see Proposition \ref{prop escape function}.
  
 \begin{corollary}\label{Corollary compactness proposition}
  For every $\varepsilon > 0$, there exists $\varepsilon' > 0$ such that, for every point $x$ which is $\varepsilon'$-close to $K$,
  $$
        d_{S^* M} \left(\kappa \left( E_{u/uo}^* (x) \right), \bigcup_{z \in K} \kappa\left( E_{u/uo}^* (z) \right) \right) < \varepsilon.
  $$
  where the distance should be understood as a distance between compact subset of $S^* M$ associated with the geodesic distance for the Sasaki metric on $S^*M$.
 \end{corollary}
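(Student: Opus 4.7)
The plan is to argue by contradiction using the compactness of $\Sigma_{u/uo}$ established in Proposition \ref{prop compactness}. The statement asserts, in the (one-sided) Hausdorff sense, that the compact subset $\kappa(E_{u/uo}^*(x)) \subset S^*M$ is close to $\bigcup_{z \in K}\kappa(E_{u/uo}^*(z))$ whenever $x$ is sufficiently close to $K$, and the natural way to capture this is the closedness and compactness of $\Sigma_{u/uo}$ combined with a subsequence extraction.

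Concretely, I would suppose the conclusion fails: there exist $\varepsilon_0 > 0$ and a sequence $(x_m)_{m \in \N}$ in $M$ with $d_g(x_m, K) \to 0$, together with covectors $\xi_m \in \kappa(E_{u/uo}^*(x_m))$ such that
$$
d_{S^*M}\left(\xi_m,\, \bigcup_{z \in K} \kappa(E_{u/uo}^*(z))\right) \geq \varepsilon_0, \quad \forall m \in \N.
$$
By construction, each pair $(x_m, \xi_m)$ lies in $\Sigma_{u/uo}$. Invoking Proposition \ref{prop compactness}, the set $\Sigma_{u/uo} \subset S^*M$ is compact, so up to extraction we may assume $(x_m,\xi_m) \to (x_\infty, \xi_\infty) \in \Sigma_{u/uo}$. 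Since $K$ is closed in $M$ and $d_g(x_m, K) \to 0$, the base point must satisfy $x_\infty \in K$; by definition of $\Sigma_{u/uo}$ this forces $\xi_\infty \in \kappa(E_{u/uo}^*(x_\infty)) \subseteq \bigcup_{z \in K}\kappa(E_{u/uo}^*(z))$. Passing the distance inequality to the limit and using continuity of $d_{S^*M}$ then yields $0 = d_{S^*M}(\xi_\infty, \bigcup_{z \in K}\kappa(E_{u/uo}^*(z))) \geq \varepsilon_0 > 0$, the desired contradiction.

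The whole weight of the argument is carried by the compactness of $\Sigma_{u/uo}$, which is the deep input from Proposition \ref{prop compactness} and itself relies on the strong transversality assumption (\ref{Transversality assumption}) through the disjointness $\Sigma_{u/uo} \cap \Sigma_{so/s} = \emptyset$ and the local analysis near basic sets from Corollary \ref{corollary big lemma}. Once this compactness is granted, the corollary is a routine extraction-of-subsequence argument; there is no further obstacle. Worth noting is that the Hausdorff bound here is genuinely one-sided: for $x$ lying off the unstable manifold $W^u(K)$, the dimension of $E_{u/uo}^*(x)$ (determined by the backward basic set $x_-$ with $x \in W^u(x_-)$) can be strictly smaller than that of $E_{u/uo}^*(z)$ for $z \in K$, so no uniform approximation of $\kappa(E_{u/uo}^*(z))$ by elements of $\kappa(E_{u/uo}^*(x))$ can be expected in the other direction, and this is precisely the direction that is \emph{not} needed for the construction of the map $f$ in the escape function of Proposition \ref{prop escape function}.
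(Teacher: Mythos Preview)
Your proof is correct and follows essentially the same contradiction-and-extraction argument as the paper. One small notational slip: in the paper's conventions $\kappa(E_{u/uo}^*) = \Sigma_{s/so}$ (not $\Sigma_{u/uo}$), but since Proposition~\ref{prop compactness} gives compactness of all four sets this does not affect the argument.
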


 \begin{proof}
  By contradiction, assume that there exists $\varepsilon > 0$ such that for every $m \in \N^*$, there exist $x_m$ and $\xi_m \in \kappa \left(E_{u/uo}^* (x_m) \right)$ which verify
  \begin{equation}\label{eq : corollary compactness}
   d_g (x_m, K) \leq 1/m \quad \mbox{and} \quad d_{S^* M} \left( (x_m,\xi_m ), \bigcup_{z \in K} \kappa\left( E_{u/uo}^* (z) \right) \right) \geq \varepsilon.
  \end{equation}
  By compactness of $S^* M$, we can extract a converging subsequence $((x_{m_k},\xi_{m_k} ))_k$ which converges to an element $(x_{\infty},\xi_{\infty}) \in S^* M$ as $k \to + \infty$. Taking the limit in the inequalities (\ref{eq : corollary compactness}) implies that $x_{\infty} \in K$ and $\xi_{\infty} \notin \kappa \left( E_{u/uo}^* (x_{\infty} ) \right)$. However, the set $\Sigma_{s/so} = \kappa  \left( E_{u/uo}^* \right)$ is a compact set thanks to the compactness Proposition \ref{prop compactness}. Therefore, we must have $(x_{\infty},\xi_{\infty}) \in \Sigma_{s/so}$ or equivalently $\xi_{\infty} \in \kappa \left( E_{u/uo}^* (x_{\infty}) \right)$ which gives the contradiction. 
 \end{proof}

\subsection{The compact sets are attracting and repelling sets for the Hamiltonian flow}
\label{subsection : proof Lemma attracting and repelling sets for the Hamiltonian flow}

 \begin{figure}[!t]
\centering
\def\svgscale{1.1}
 \executeiffilenewer{cvdualdistrib.svg}{cvdualdistrib.pdf}%
 {inkscape -z -D --file=cvdualdistrib.svg %
 --export-pdf=cvdualdistrib.pdf --export-latex}%
 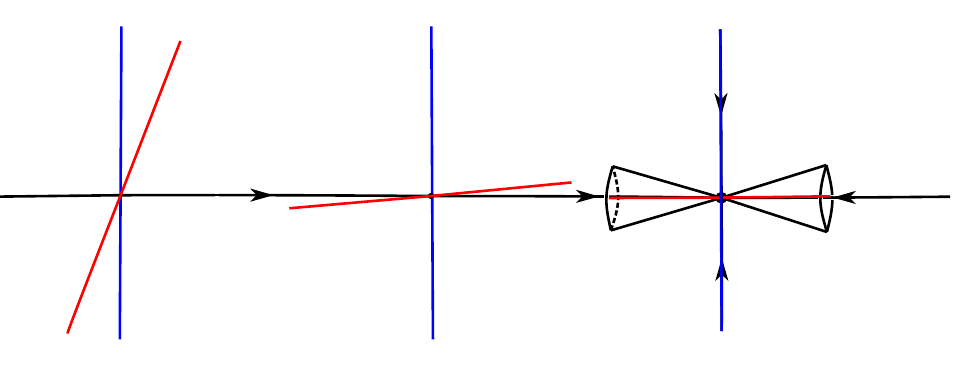%

  \caption{Convergence of the dual unstable distribution. The point $z$ denotes an hyperbolic fixed point and the point $x$ belongs to the stable manifold of $z$.}
  \label{fig: cv of dual distribution for Morse-Smale gradient flows}
\end{figure}

Now that we have proved the compactness of $\Sigma_{uo/u}$ and $\Sigma_{s/so}$, it remains to show that $(\Sigma_{s/so}, \Sigma_{uo/u})$ defines an attractor-repeller couple. Equivalently, we have to prove Lemmas \ref{lemma attractor and repeller on the phase space} and \ref{lemma invariant neighborhoods}. In the upcoming argument, we generalize the convergence presented in figure \ref{fig: cv of dual distribution for Morse-Smale gradient flows} to Axiom A flows satisfying the transversality assumption (\ref{Transversality assumption - first}). The proof is very similar to the one given in \cite[lemma 2.10 assertion 4, p.13]{dyatlov-guillarmou_2016} except we authorize our phase point $\xi$ to have a neutral component, i.e. $\xi (V)$ can be non zero.

\begin{proof}[Proof of Lemma \ref{lemma attractor and repeller on the phase space}] Let us take $(x,\xi) \in S^* M$ such that $(x,\xi) \notin  \Sigma_{uo/u}$ and let us prove that $\widetilde{\Phi}^t (x, \xi)$ tends to $\Sigma_{s/so}$ as $t \to +\infty$. The case $t\rightarrow-\infty$ is obtained by applying the result to the vector field $-V$. From the spectral decomposition of $M$ (Lemma \ref{lemma: spectral decomposition of the manifold}), we have $M = \sqcup_{i = 1}^N W^s (K_i)$ and consequently there exists a unique integer $1 \leq i \leq N$ such that $x \in W^s (K_i)$. Let $\Vcal$ be an unrevisited neighborhood of $K_i$ sufficiently small to be in the range of application of Corollary \ref{corollary big lemma}. Also, there exists $T > 0$ such that $\varphi^t (x) \in W^s (K_i) \cap \Vcal$ for every $t \geq T$. Now, let us recall that for every $y \in W^s (K_i) \cap \Vcal$ we have $$\widetilde{E}_s^* (y) = E_s^* (y) \quad \mbox{and} \quad \widetilde{E}_{so}^* (y) = E_{so}^* (y)$$
by construction. Since $\Sigma_{uo/u}$ is $\widetilde{\Phi}^t$-invariant, the condition $(x,\xi) \notin  \Sigma_{uo/u} $ implies that $\widetilde{\Phi}^t (x,\xi) \notin  \Sigma_{uo/u} = \kappa(E_{so/s}^*)$ for every $t \geq 0$. Therefore, there exists $\delta > 0$ such that $ \widetilde{\Phi}^T (x,\xi) \notin \Ccal_{so/s}^{\delta}$ and Corollary \ref{corollary big lemma} implies that  
 $$d_{S^* M} \left( \widetilde{\Phi}^t (x,\xi) , \bigcup_{z \in K_i} \kappa \left( \Ccal_{u/uo}^{\delta'} (z) \right) \right) \underset{t \to +\infty}{\longrightarrow} 0 , \qquad \forall \delta' > 0 ,$$
 or equivalently that
 $$ d_{S^* M} \left( \widetilde{\Phi}^t (x,\xi), \bigcup_{z \in K_i } \kappa (E_{u/uo}^* (z)) \right) \underset{t \to +\infty}{\longrightarrow} 0 .$$
 It ends the proof.
\end{proof}

 \subsection{Proof of Lemma \ref{lemma invariant neighborhoods}: invariant neighborhoods for the Hamiltonian flow}\label{subsection : Lemma invariant neighborhoods for the Hamiltonian flow}
 
Now, we need to find invariant neighborhoods of $\Sigma_{s/so}$ and $\Sigma_{uo/u}$, i.e. we have to prove Lemma \ref{lemma invariant neighborhoods}. The idea of the proof follows from two remarks:

\begin{itemize}
 \item In the case where $K$ is an attractor, we can construct a $\widetilde{\Phi}^1$-stable neighborhood of $\{(x,\xi)\in\Sigma_{s/so}: x\in K\}$ as follows: if $\Vcal$ is an unrevisited neighborhood of $K$ small in the sense that the conical neighborhood $\Ccal_*^{\delta}$ are well-defined on $\Vcal$, then for all $0 < \delta \leq 1$ the sets
 $$\bigcup_{x \in \Vcal} \kappa \left( \Ccal_{u}^{\delta}(x) \right) \quad \mbox{and} \quad \bigcup_{x \in \Vcal} \kappa \left( \Ccal_{uo}^{\delta}(x) \right)  $$ 
 are $\widetilde{\Phi}^1$-stable neighborhoods of $\{(x,\xi)\in\Sigma_{s}: x\in K\}$ and $\{(x,\xi)\in\Sigma_{so}: x\in K\}$.
 \item In the gerenal case, thanks to Lemma \ref{stabilite voisinage conique tilde pour Phi1} we can note that if $\Vcal$ is an unrevisited neighborhood of $K$ (to be in the setting of this lemma), then the set
  $$ \bigcup_{x \in \varphi^m (\Vcal)\cap \Vcal} \kappa \left( \Ccal_{u/uo}^{\delta}(x) \right)$$
  defines an unrevisited neighborhood of $\bigcup_{x \in W^u (K) \cap \Vcal } \kappa\left( E_{u/uo}^* (x) \right)$ for every $0 < \delta \leq 1$ and every $m \gg 1$.
\end{itemize}

Our strategy will be to construct invariant neighborhoods $\Sigma_{s/so}$ and $\Sigma_{uo/uo}$ by induction similarly to what we did when proving Lemma \ref{lemma: equivalence filtration and unrevisited} for the flow $\varphi^t$ on $M$. Indeed, in this lemma, we saw that the existence of unrevisited neighborhoods is deeply related to the existence of filtrations. In the upcoming proof, we contruct a filtration\footnote{Even if the definition of filtration was given for the flow $\varphi^t$, it can be adapted for the Hamiltonian without too much effort, see Shub \cite{shub}.} of open set for the Hamiltonian flow $\widetilde{\Phi}^t$ starting from the filtration on the base. 
 
\begin{proof}[Proof of Lemma \ref{lemma invariant neighborhoods}] 
Let us only treat the case of $\Sigma_{s/so}$, as the other cases can again be treating similarly by reversing the sense of time. Recall that, thanks to Lemma \ref{lemma: equivalence filtration and unrevisited}, we are given a filtration $\Ocal_j^+$ of the manifold $M$ which is arbitrarly close to the unstable manifolds. Using a total order relation on the indices of the basic sets $K_j$, we will proceed by induction to construct a filtration for the diffeomorphism $\widetilde{\Phi}^1$
\begin{equation}\label{eq: proof lemma invariant neighborhood HR}
	\Ucal_j^{s/so} \: \: \varepsilon\mbox{-close to } \bigcup_{x \in \bigcup_{k > N-j} W^u (K_k)} \kappa \left( E_{u/uo}^* (x) \right).
\end{equation}
For $j = 1$, we define $\Ucal_1^{s/so}$ as a neighborhood of the attractor $\bigcup_{x \in K_N} \kappa \left( E_{u/uo}^* (x) \right)$ by fixing
$$
	\Ucal_1^{s/so}:= \bigcup_{x \in \varphi^m (\Vcal_N) \cap \Vcal_N} \kappa \left(\Ccal_{u/uo}^{\delta} (x) \right),
$$
where $\Vcal_N$ denotes an unrevisited neighborhood of the attractor $K_N$ sufficiently small to be in the range of application of Corollary \ref{corollary big lemma} and where $m$ is an integer. By choosing $\delta$ small enough and then $m$ sufficiently large, i.e $m \geq m_{\delta}$, then the conical neighborhoods and $\Ucal_1^{s/so}$ are $\widetilde{\Phi}^1$-stable and we can assume that
$$
	d_{S^* M} \left( \Ucal_1^{s/so}, \bigcup_{x \in K_N} \Sigma_{s/so}(x) \right) < \varepsilon .
$$
Note that this construction only uses the fact that $K_N$ is an attractor.

Now, let us deal with the induction step and assume that we can find for every $\varepsilon > 0$ some open sets $\Ucal_{j}^{s/so}$ satisfying (\ref{eq: proof lemma invariant neighborhood HR}) for any $j < i$. Fix $\varepsilon > 0$. We want to construct $\Ucal_{i}^{s/so}$. 

 First, consider an unrevisited neighborhood $\Vcal$ of $K:= K_{N-i+1}$ small enough so that we can apply Corollary \ref{corollary big lemma}. As in the proof of Lemma \ref{lemma: equivalence filtration and unrevisited}, we define the annulus
 $$\Acal (m):= \left( \Vcal \cap \varphi^m (\Vcal) \right) \setminus \left( \varphi^{-1}(\Vcal) \cap \varphi^m (\Vcal) \right).$$ 
 We recall that it satisfies the following properties (\ref{eq : lemma invariant neighborhoods on the base - convergence of annulus}), (\ref{eq : lemma filtration on the base - annulus on unstable manifold}): $\Acal (m) \neq \emptyset$,  $\Acal(m) \cap W^u (K) = \Acal(0) \cap W^u (K)$ for every integer $m \geq 0$ and $\Vcal \cap \varphi^m (\Vcal)$ (resp. $\Acal(m)$) converges uniformly to $\overline{\Vcal} \cap W^u (K)$ (resp. $\overline{\Acal (0)} \cap W^u(K)$) in the sense of Lemma \ref{lemme: cv nonrevisite vers variete stable et instable}. By the compactness Proposition \ref{prop compactness} and thanks to Lemma \ref{lemma attractor and repeller on the phase space}, there exists an integer $ m_0 \geq 0$ such that: 
 \begin{equation}\label{eq : lemma invariant neighborhood - uniform cv induction}
    \forall (x,\xi) \in \bigcup_{x \in \Acal (0) \cap W^u (K)} \kappa \left( E_{u/uo}^* (x) \right), \quad \widetilde{\Phi}^{m_0} (x,\xi) \in \Ucal_{i-1}^{s/so}.
 \end{equation}
 Indeed, the fact that (\ref{eq : lemma invariant neighborhood - uniform cv induction}) holds ponctually for each $(x,\xi)$ follows from and a direct application of Lemmas \ref{lemma: attractor and repeller for the flow on the basis} and \ref{lemma attractor and repeller on the phase space}. Furthermore, the constant $m_0$ can be chosen uniformly using the fact that $\Sigma_{uo/u}$ and $\Sigma_{s/so}$ are disjoint \textbf{compact} sets. By continuity and since $\Ucal_{i-1}^{s/so}$ is an open set, we can extend (\ref{eq : lemma invariant neighborhood - uniform cv induction}) on small conical neighborhoods: there exists $\delta_0 > 0$ such that
 $$
	\forall (x,\xi) \in \bigcup_{x \in \Acal (0) \cap W^u (K)} \kappa ( \Ccal_{u/uo}^{\delta_0}(x)),  \quad \widetilde{\Phi}^{m_0} (x,\xi) \in \Ucal_{i-1}^{s/so}.
 $$
 Let us define for all $m \geq 0$ and all $0 < \delta \leq 1$ the set
 $$
  \Wcal (m,\delta):= \bigcup_{x \in \Vcal \cap \varphi^m (\Vcal)} \kappa (\Ccal_{u/uo}^{\delta}(x)).
 $$
 
According to Lemmas \ref{lemme: cv nonrevisite vers variete stable et instable} and \ref{stabilite voisinage conique tilde pour Phi1}, for every $\delta > 0$ there exists an integer $m(\delta) > 0$ such that for all $m \geq m (\delta)$ the open set $\Wcal(m,\delta)$ is an arbitrarily small (as $m \rightarrow + \infty$ and $\delta \to 0$) unrevisited neighborhood of $\bigcup_{x \in \Vcal\cap W^u (K)} \kappa \left( E_{u/uo}^* (x) \right)$. Actually, we have something better. For every $m \geq m(\delta)$, the conical neighborhood $\Ccal_{u/uo}^{\delta}$ is $\Phi^1$-stable on $\Vcal \cap \varphi^m (\Vcal)$ in the sense that for all $\ell \in \N$,
\begin{equation}\label{eq : lemma invariant neighborhoods - stability}
 (x,\xi) \in \Wcal (m,\delta), \:  \varphi^{\ell} (x) \in \Vcal \cap \varphi^{m} (\Vcal) \Longrightarrow \: \forall k \in [\![0, \ell ]\!], \: \widetilde{\Phi}^k (x,\xi) \in \Wcal (m,\delta).
\end{equation}

From (\ref{eq : lemma invariant neighborhoods on the base - convergence of annulus}) as mentionned before, $\Acal (m)$ converges to $\overline{\Acal(0)} \cap W^u (K)$ as $m \to + \infty$, so for every $\delta \leq \delta_0$, there exists an integer $\widetilde{m}(\delta) \geq m(\delta)$ such that for all $m \geq \widetilde{m}(\delta)$,
\begin{equation}\label{eq: invariant neighborhood for hamilt flow - induction}
	\forall (x,\xi) \in \bigcup_{x \in \Acal (m)} \kappa (\Ccal_{u/uo}^{\delta}(x)), \qquad  \widetilde{\Phi}^{ m_0} (x,\xi) \in \Ucal_{i-1}^{s/so}.
\end{equation}  
 Therefore, we define for all $\delta \leq \delta_0$ and all $m \geq \widetilde{m}(\delta)$ the set
 $$ \Ucal_i^{s/so} (m,\delta):= \Ucal_{i-1}^{s/so} \cup \bigcup_{k = 0}^{m_0 - 1} \widetilde{\Phi}^k ( \Wcal (m,\delta )).$$
 It is $\widetilde{\Phi}^1$-stable from (\ref{eq: invariant neighborhood for hamilt flow - induction}). Indeed, let us consider $(y,\eta) \in  \widetilde{\Phi}^{k} (\Wcal(m,\delta))$ for some $0 \leq k \leq m_0 - 1$. There exists $(x,\xi) \in \Wcal(m,\delta)$ such that $(y,\eta) = \widetilde{\Phi}^k (x,\xi)$. We have two cases to deal with.
	\begin{itemize}
	 \item \textbf{1\ts{st} case:} $x \in  (\varphi^{m} (\Vcal) \cap \Vcal ) \setminus (\varphi^{m} (\Vcal) \cap \varphi^{-1} (\Vcal))  = \Acal (m)$. Since $x \in \varphi^{m} (\Vcal) \cap \Vcal$ we get $\varphi^{p}(x) \in \varphi^{p}(\varphi^{m} (\Vcal) \cap \Vcal)$ for every $p \in [\![0, m_0 -1 ]\!]$ and the statement (\ref{eq: lemme filtration cv uniform R(n)}) together with (\ref{eq : lemma invariant neighborhoods - stability}) gives us $\widetilde{\Phi}^{m_0}(x,\xi) \in \Ucal_{i-1}^+$. Therefore, we deduce that $\widetilde{\Phi}^{p} (x,\xi) \in \Ucal_i^{s/so} (m,\delta)$ for all $p \in \N$.
	 \item \textbf{2\ts{nd} case:} $x \in \varphi^{m} (\Vcal) \cap \varphi^{-1} (\Vcal)$. Then, we have
	 $$\varphi^1 (x) \in  \varphi^{m + 1} (\Vcal) \cap \Vcal \subseteq  \varphi^{m} (\Vcal) \cap \Vcal$$
	 and thanks to (\ref{eq : lemma invariant neighborhoods - stability}) we get that $\widetilde{\Phi}^1 (x,\xi) \in \Wcal (m,\delta) \subset \Ucal_i^{s/so} (m, \delta)$. Fix $d \geq 0$ such that $\varphi^d (x) \in \Acal (\widetilde{m} (\delta))$. A direct application of (\ref{eq : lemma invariant neighborhoods - stability}) gives that $\widetilde{\Phi}^d (x,\xi) \in \Wcal (m, \delta)$ and the first case applied to $\widetilde{\Phi}^d (x,\xi)$ implies that $\widetilde{\Phi}^{p} (x,\xi) \in \Ucal_i^{s/so} (m, \delta)$ for all $p \in \N$.
	 \end{itemize}
	 
 Finally, choosing $\delta$ small enough and then $m$ large enough, we can assume $\Ucal_i^{s/so} (m,\delta)$ to be a $\varepsilon$-neighborhood of $\bigcup_{x \in \bigcup_{j > N-i} W^u (K_j)} \kappa \left( E_{u/uo}^* (x) \right)$. It ends the induction and the proof.
 
 Note that at the end of the induction (which finishes), we constructed an arbitrarily small neighborhood $\Ucal_N^{s/so}$ of $\Sigma_{s/so}$.
 \end{proof}
 
 \begin{remark}
  We can note that the set $\Ucal_N^{s/so}$  constructed in the previous proof defines an arbitrarily small neighborhood of $\Sigma_{s/so}$ which is stable by $\widetilde{\Phi}^1$.
 \end{remark}

\subsection{Proof of Proposition \ref{prop energy function on the unitary cotangent bundle}: energy function for the Hamiltonian flow}\label{subsection : Proof energy functions for the Hamiltonian flow}

 Thanks to Proposition \ref{prop compactness}, Lemmas \ref{lemma attractor and repeller on the phase space} and \ref{lemma invariant neighborhoods}, we deduce that $(\Sigma_{uo},\Sigma_{s})$ and $(\Sigma_{u},\Sigma_{so})$ define attractor-repeller couples. Therefore, a straight application of Lemma \ref{lemma: energy function for attractor-repeller model} gives the result.

 \section{Construction of the escape function: proof of Proposition \ref{prop escape function}.}\label{section : Proof of escape function}

  Let us begin with some candidate for the escape function that depends on two auxiliary functions. We will see how the properties of the maps $E$ and $f$ will be related to the ones of the escape function. First of all, let us assume that the maps $E$ and $f$ have been defined properly and recall the expression of the escape function:
 $$
	G_m (x,\xi) = m(x,\xi) \log \sqrt{1 + f(x,\xi)^2},
 $$
 with $m(x, \xi) = E \left( x, \frac{\xi}{|\xi|} \right) \chi (|\xi|^2)$. To lighten notations, we will use the japanese bracket $\w{r}:=  \sqrt{1 + r^2}$ and the shortcut $\tilde{\xi}$ for $\frac{\xi}{|\xi|}$. Now, let us compute $X_H (G_m)$, for $\|\xi\|\geq 1$
   $$
	X_H \left( G_m (x,\xi) \right) =   X_H (E (x, \tilde{\xi}))\log \w{f(x,\xi)} +  E (x, \tilde{\xi}) X_H \left( \log \w{f(x,\xi)} \right)
   $$
  Our goal is to make sure that this quantity is nonpositive and is negative outside a conical neighborhood of $E_o^*$.
  
  \textbf{Definition of $E$.} Fix $\varepsilon >0$. We define $E \in \Ccal^{\infty} (S^*M)$ as follows:
  $$
	\boxed{E(x,\xi):= -E(x) + s + (u - n_0) E_+ (x,\xi) + (n_0 - s) E_- (x,\xi)}
  $$
  where $E \in \Ccal^{\infty} (M)$ denotes the energy function on the basis given by Proposition \ref{prop energy function for Axiom A flows} for some parameter $\varepsilon' > 0$ (which will be fixed later on) and $\lambda_1 = 0, \cdots, \lambda_j = \frac{n_0 (j-1)}{4(N-1)}, \cdots, \lambda_N = \frac{n_0}{4}$. Thus, the map $E(x)$ has value in the interval $[0, \frac{n_0}{4}]$ and there exists a family $\Ncal_i$ of $\varepsilon'$-neighborhoods of $K_i$ and a constant $\eta_0 > 0$ (which only depends on $\varepsilon'$) such that\footnote{We make the assumption that $N \geq 2$. The case $N=1$ corresponds to the Anosov case.} 
  $$
  - \Lie_V (E) < - \min_{1 < j \leq N}(\lambda_j - \lambda_{j-1}) \eta_0 \leq - \frac{n_0}{4(N-1)} \eta_0, \quad \mbox{on } M \setminus \bigcup_{i = 1}^N \Ncal_i . 
  $$
  Moreover, from Proposition \ref{prop energy function on the unitary cotangent bundle}, there exist smooth energy functions $E_{\pm} \in \Ccal^{\infty}(S^*M,[0,1])$, small neighborhoods $\Wcal^{s/so}$ of $\Sigma_{s/so}$, $\Wcal^{uo/u}$ of $\Sigma_{uo/u}$ and a constant $\eta > 0$ such that:
	\begin{align*}
	 \begin{split}
		 \Lie_{\widetilde{X}_H} E_{+} &\geq 0 \mbox{ on } S^* M \: \mbox{and } \Lie_{\widetilde{X}_H} E_{+} > \eta \mbox{ on } S^* M \setminus  \bigsqcup_{i = 1}^N (\Wcal^{uo} \cup \Wcal^{s}) , \\ 
		  \Lie_{\widetilde{X}_H} E_{-} &\geq 0 \mbox{ on } S^* M \: \mbox{and } \Lie_{\widetilde{X}_H} E_{-} > \eta \mbox{ on } S^* M \setminus  \bigsqcup_{i = 1}^N (\Wcal^{u} \cup \Wcal^{so}) .
	 \end{split}
	\end{align*}  
	
\begin{center}
  \begin{figure}[t]
  \begin{minipage}[c]{.46\linewidth}
\centering
\def\svgscale{0.8}
 \executeiffilenewer{Hamiltonianfirst.svg}{Hamiltonianfirst.pdf}%
 {inkscape -z -D --file=Hamiltonianfirst.svg %
 --export-pdf=Hamiltonianfirst.pdf --export-latex}%
\begingroup%
  \makeatletter%
  \providecommand\color[2][]{%
    \errmessage{(Inkscape) Color is used for the text in Inkscape, but the package 'color.sty' is not loaded}%
    \renewcommand\color[2][]{}%
  }%
  \providecommand\transparent[1]{%
    \errmessage{(Inkscape) Transparency is used (non-zero) for the text in Inkscape, but the package 'transparent.sty' is not loaded}%
    \renewcommand\transparent[1]{}%
  }%
  \providecommand\rotatebox[2]{#2}%
  \newcommand*\fsize{\dimexpr\f@size pt\relax}%
  \newcommand*\lineheight[1]{\fontsize{\fsize}{#1\fsize}\selectfont}%
  \ifx\svgwidth\undefined%
    \setlength{\unitlength}{192.05055297bp}%
    \ifx\svgscale\undefined%
      \relax%
    \else%
      \setlength{\unitlength}{\unitlength * \real{\svgscale}}%
    \fi%
  \else%
    \setlength{\unitlength}{\svgwidth}%
  \fi%
  \global\let\svgwidth\undefined%
  \global\let\svgscale\undefined%
  \makeatother%
  \begin{picture}(1,0.89269416)%
    \lineheight{1}%
    \setlength\tabcolsep{0pt}%
    \put(0,0){\includegraphics[width=\unitlength,page=1]{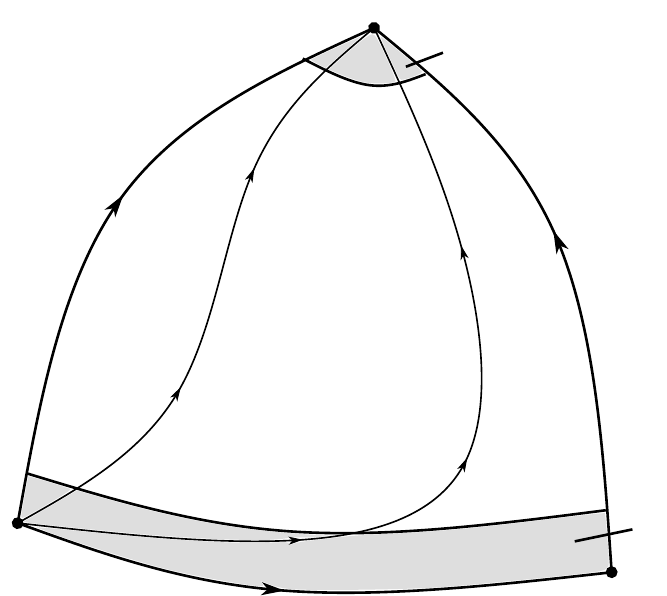}}%
    \put(0.66468589,0.79352199){\makebox(0,0)[lt]{\lineheight{1.25}\smash{\begin{tabular}[t]{l}$\mathcal{W}^s$\end{tabular}}}}%
    \put(0.94754162,0.0793803){\makebox(0,0)[lt]{\lineheight{1.25}\smash{\begin{tabular}[t]{l}$\mathcal{W}^{uo}$\end{tabular}}}}%
    \put(0.08811445,0.76299782){\makebox(0,0)[lt]{\lineheight{1.25}\smash{\begin{tabular}[t]{l}$S^* M$\end{tabular}}}}%
    \put(0.51294832,0.87239973){\makebox(0,0)[lt]{\lineheight{1.25}\smash{\begin{tabular}[t]{l}$\kappa (E_u^*)$\end{tabular}}}}%
    \put(0.8822652,-0.02350962){\makebox(0,0)[lt]{\lineheight{1.25}\smash{\begin{tabular}[t]{l}$\kappa (E_o^*)$\end{tabular}}}}%
    \put(-0.09233957,0.02565685){\makebox(0,0)[lt]{\lineheight{1.25}\smash{\begin{tabular}[t]{l}$\kappa (E_s^*)$\end{tabular}}}}%
  \end{picture}%
\endgroup%

  \end{minipage}
  \begin{minipage}[c]{.46\linewidth}
   \centering
\def\svgscale{0.8}
 \executeiffilenewer{Hamiltoniansecond.svg}{Hamiltoniansecond.pdf}%
 {inkscape -z -D --file=Hamiltoniansecond.svg %
 --export-pdf=Hamiltoniansecond.pdf --export-latex}%
\begingroup%
  \makeatletter%
  \providecommand\color[2][]{%
    \errmessage{(Inkscape) Color is used for the text in Inkscape, but the package 'color.sty' is not loaded}%
    \renewcommand\color[2][]{}%
  }%
  \providecommand\transparent[1]{%
    \errmessage{(Inkscape) Transparency is used (non-zero) for the text in Inkscape, but the package 'transparent.sty' is not loaded}%
    \renewcommand\transparent[1]{}%
  }%
  \providecommand\rotatebox[2]{#2}%
  \newcommand*\fsize{\dimexpr\f@size pt\relax}%
  \newcommand*\lineheight[1]{\fontsize{\fsize}{#1\fsize}\selectfont}%
  \ifx\svgwidth\undefined%
    \setlength{\unitlength}{215.32969557bp}%
    \ifx\svgscale\undefined%
      \relax%
    \else%
      \setlength{\unitlength}{\unitlength * \real{\svgscale}}%
    \fi%
  \else%
    \setlength{\unitlength}{\svgwidth}%
  \fi%
  \global\let\svgwidth\undefined%
  \global\let\svgscale\undefined%
  \makeatother%
  \begin{picture}(1,0.80197986)%
    \lineheight{1}%
    \setlength\tabcolsep{0pt}%
    \put(0,0){\includegraphics[width=\unitlength,page=1]{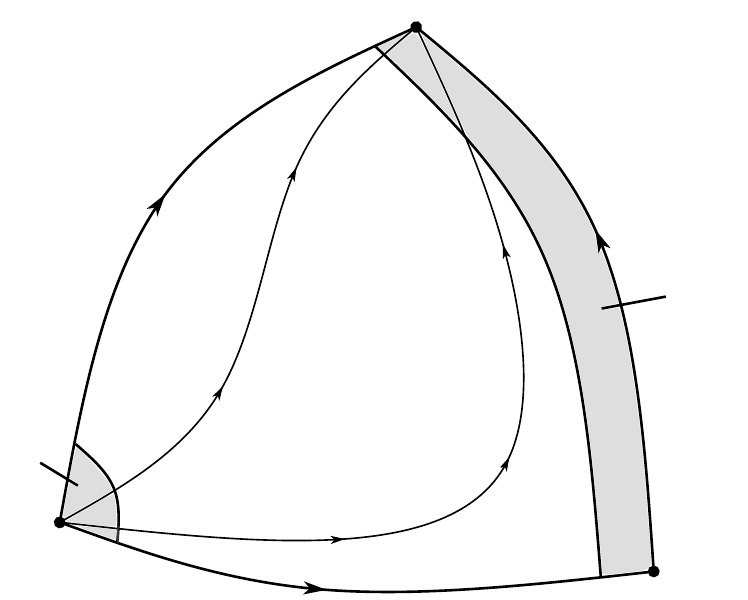}}%
    \put(-0.02735483,0.16832987){\makebox(0,0)[lt]{\lineheight{1.25}\smash{\begin{tabular}[t]{l}$\mathcal{W}^{u}$\end{tabular}}}}%
    \put(0.8875112,0.3913118){\makebox(0,0)[lt]{\lineheight{1.25}\smash{\begin{tabular}[t]{l}$\mathcal{W}^{so}$\end{tabular}}}}%
    \put(0.51856771,0.78760367){\makebox(0,0)[lt]{\lineheight{1.25}\smash{\begin{tabular}[t]{l}$\kappa (E_u^*)$\end{tabular}}}}%
    \put(0.84805511,-0.01236207){\makebox(0,0)[lt]{\lineheight{1.25}\smash{\begin{tabular}[t]{l}$\kappa (E_o^*)$\end{tabular}}}}%
    \put(-0.02583906,0.0318666){\makebox(0,0)[lt]{\lineheight{1.25}\smash{\begin{tabular}[t]{l}$\kappa (E_s^*)$\end{tabular}}}}%
  \end{picture}%
\endgroup%

  \end{minipage}
  \caption{Illustration of some elements used in the proof. Inspired from the picture of \cite[Fig. 6]{FS}.}
\end{figure}
\end{center}

 We also have the estimates $E_{+} \geq 1 - \varepsilon$ on $\Wcal^{s}$, $E_{+} \leq \varepsilon$ on $\Wcal^{uo}$, $E_{-} \geq 1 - \varepsilon$ on $\Wcal^{so}$ and $E_{-} \leq \varepsilon$ on $\Wcal^{u}$. In order to use these estimates together, let us introduce new open sets in $S^*M$:
 $$
  \Ncal^{s}:= \Wcal^{s} \cap \Wcal^{so}, \quad \Ncal^{o}:= \Wcal^{so} \cap \Wcal^{uo}, \quad \Ncal^{u}:= \Wcal^{uo} \cap \Wcal^{u}.
 $$
   Let us introduce another notation which will be useful: for any open set $U$ in $S^*M$, we denote by $C(U)$ the conical neighborhood\footnote{defined by $C(U) = \left\{ (x,\lambda \xi) \in T^* M, \: \lambda \in \R^*, \: (x,\xi) \in U \right\}$.} of $U$ in $T^*M \setminus 0_M$. In particular, the sets $C(\Ncal^s)$, $C(\Ncal^o)$ and $C(\Ncal^u)$ define conical neighborhoods of $\cup_{x \in M} E_u^* (x)$, $\cup_{x \in M} E_o^* (x)$ and $\cup_{x \in M} E_s^* (x)$ respectively (outside the null section).
    \begin{itemize}
  \item On $\Ncal^s$, we have 
  \begin{align*}
	E(x,\xi) &\leq  \underbrace{-E (x)}_{\leq 0} + s + (u - n_0) (1 - \varepsilon) +  (n_0 - s) (1 - \varepsilon) \\
		&\leq s \varepsilon + u (1 - \varepsilon) \leq \frac{u}{2},
  \end{align*}
  for $\varepsilon$ small enough.
  \item On $\Ncal^u$, we obtain
  \begin{align*}
	E(x,\xi) &\geq  \underbrace{-E (x)}_{\geq - s/4} + s + (u - n_0) \varepsilon +  (n_0 - s)  \varepsilon \\
		&\geq s (1 - \varepsilon  - \frac{1}{4} ) + u \varepsilon \geq \frac{s}{4},
  \end{align*}
  for $\varepsilon$ small enough.
  \item On $\Ncal^o$, we obtain (again for $\varepsilon$ small enough) 
  \begin{align*}
	 \frac{n_0}{2} \leq  - \frac{n_0}{4}  + n_0 (1 - \varepsilon) + u \varepsilon  \leq E(x,\xi) \leq  n_0 ( 1 - \varepsilon) + s \varepsilon \leq n_0 .
  \end{align*}
  \end{itemize} 
  
  We now explain how to construct the function $f$ following the strategy of \cite{FS,dyatlov-zworski}. We want to construct a function $f \in \Ccal^{\infty} (T^*M)$ which is a homogeneous polynomial of degree $1$ for $|\xi| \geq 1$, so that $\Lie_{X_H} (f)$ is a bounded function. Since we want $\Lie_{X_H}(G_m)$ to be negative everywhere, we need to make sure that our definition of $f$ gives the right sign in the term $E . \Lie_{X_H}(\log \w{f})$. We will also choose $f$ such that $\Lie_{X_H} (f)$ vanishes near $E_o^*$. Moreover, we set $f(x,\xi) = \chi (|\xi|^2) |\xi| \tilde{f} ( x,\tilde{\xi} )$ with $\chi$ as before and with $\tilde{f} \in \Ccal^{\infty}(S^* M)$ that is to be define. We can already check that $\Lie_{X_H} (\log \w{f})$ is a bounded function on $T^* M$ for $|\xi| \geq 1$. Take an arbitrary basic set $K_i$.
  \begin{itemize}
   \item  On $\bigcup_{z \in K_i} \left\{(z,\xi) \in \Ncal^s \right\}$, we define the map $\tilde{f}$ by
   $$ \tilde{f}(x,\xi):= \int_0^T |\Phi^t (x,\xi)| dt .$$
  Thanks to the hyperbolicity of $\varphi^t$ on the basic set $K_i$, we have for $T$ sufficiently large
  $$ \Lie_{X_H} (\tilde{f})(x,\xi) = |\Phi^T (x,\xi)| - |\xi| \geq \tilde{C} e^{\tilde{\lambda} T} |\xi| - |\xi| \geq 2 |\xi|.$$
  Also, $\tilde{f}$ is a homogeneous polynomial of degree $1$, so we can find a constant $c > 0$ such that $0 < c^{-1} |\xi| < \tilde{f} (x,\xi) < c |\xi|$ on $\Ncal^s$. For $|\xi| \geq 1$, we get $\Lie_{X_H} (f) = \chi (|\xi|^2) \Lie_{X_H} (\tilde{f}) = \Lie_{X_H} (\tilde{f}) \geq \frac{2}{c}. c|\xi| \geq \frac{2}{c} \tilde{f} = \frac{2}{c} f$. Therefore, there exists a universal constant $\gamma > 0$ such that
  \begin{equation}\label{eq : proof escape function - estimate 1 on f}
   \Lie_{X_H} (\log \w{f} ) = \frac{f}{\w{f}^2} \Lie_{X_H} (f) \geq \frac{2}{c} \frac{f^2}{\w{f}^2} \geq \gamma > 0 .
  \end{equation}
 \item On $\bigcup_{z \in K_i} \{(z,\xi) \in \Ncal^u \}$, we define similarly
 $$ \tilde{f}(x,\xi):= \int_0^T |\Phi^{-t} (x,\xi)| dt .$$
 With the same remark, up to reducing the constant $\gamma$, we obtain
 \begin{equation}\label{eq : proof escape function - estimate 2 on f}
  \Lie_{X_H} (\log \w{f} )  \leq -\gamma < 0,
 \end{equation}
 as soon as $|\xi| \geq 1$.
 \item We extend the bound (\ref{eq : proof escape function - estimate 1 on f}), (\ref{eq : proof escape function - estimate 2 on f}) by continuity on a neighborhood of $K_i$. Precisely, there exists $\varepsilon_0 > 0$ such that for every point $(x,\xi)\in S^* M$ which is $\varepsilon_0$-close to $\bigcup_{z \in K_i} \{(z,\xi) \in \Ncal^s \}$ (resp. $\bigcup_{z \in K_i} \{(z,\xi) \in \Ncal^u \}$), we have
 \begin{equation}\label{eq : proof escape function - estimate on f near basic set}
   \Lie_{X_H} (\log \w{f} )  \geq \frac{\gamma}{2} > 0 \qquad \left(  \mbox{resp. } \Lie_{X_H} (\log \w{f} )  \leq -\frac{\gamma}{2} < 0  \right) .
 \end{equation}
 If we choose $\varepsilon$ sufficiently small then we can assume that $\varepsilon < \varepsilon_0$. Now, from Corollary \ref{Corollary compactness proposition}, there exists $\varepsilon' > 0$ (\textbf{now $\varepsilon'$ is fixed}) such that for every $x$ such that $d_g (x,K_i) < \varepsilon'$ we have
 $$ d_{S^* M} \left( \kappa \left( E_{u/s}^* (x) \right), \bigcup_{z \in K_i} \kappa \left( E_{u/s}^* (z) \right) \right) < \varepsilon_0 - \varepsilon.$$
 Thus, we deduce
 $$ d_{S^* M} \left( \bigcup_{ d_g (x,K_i) < \varepsilon'}\left\{ (x,\xi) \in \Ncal^{s/u} \right\} , \bigcup_{z \in K_i} \kappa \left( E_{u/s}^* (z) \right) \right) < \varepsilon_0$$
 and that (\ref{eq : proof escape function - estimate on f near basic set}) hold on $ \bigcup_{ d_g (x,K_i) < \varepsilon'}\left\{ (x,\xi) \in \Ncal^{s/u} \right\}$. In particular, since $\Ncal_i$ is an $\varepsilon'$-neighborhood of $K_i$, the inequalities (\ref{eq : proof escape function - estimate on f near basic set}) are verified on $\bigcup_{x \in \Ncal_i} \left\{ (x,\xi) \in \Ncal^{s/u} \right\}$.
 \item On $\bigcup_{z \in \Ncal_i} \left\{ (z,\xi) \in  \Ncal^o \right\}$, we fix $\tilde{f} (x,\xi) = |\xi (V (x))|$. This ensures that 
 $$\Lie_{X_H} (\log \w{f}) = 0.$$
 \item On $ \bigcup_{z \in \cup_i \Ncal_i} \left\{ (z,\xi) \in S^* M \setminus \left(\Ncal^s \sqcup \Ncal^o \sqcup \Ncal^u \right) \right\}$ and on $\bigcup_{x \notin \cup_{i} \Ncal_i} S_x^* M$, we let $\tilde{f}$ take arbitrary positive values on $S^*M$.
 \end{itemize}
 
 It now remains to show that $G_m$ has the expected decaying properties (namely points (2) and (3)) of Proposition \ref{prop escape function}.
  
 \textbf{Decaying estimates.} To compute the derivative of the map $E$ along the flow $\widetilde{\Phi}^t$ we will need at some point the next relation:
 $$
	\Lie_{\widetilde{X}_H} E = - \Lie_V (E) + (u - n_0) \Lie_{\widetilde{X}_H} E_+ + (n_0 -s) \Lie_{\widetilde{X}_H} E_- 
 $$
 and we can already see that it is nonpositive everywhere. For $|\xi| \geq 1$, we can estimate the quantity $\Lie_{X_H} G_m$ in different directions.
 \begin{itemize}
  \item On $\widetilde{\Ncal}^s := C \left( \bigcup_{x \in \cup_i \Ncal_i} \left\{ (x,\xi) \in  \Ncal^s \right\} \right)$, we get
  \begin{align*}
   \Lie_{X_H} G_m &= \underbrace{\Lie_{\widetilde{X}_H} (E)}_{\leq 0} \underbrace{\log \w{f}}_{> 0} + \underbrace{E (x,\tilde{\xi})}_{\leq u /2} \underbrace{\Lie_{X_H} (\log \w{f})}_{\geq \gamma/2} \\
    &\leq - \frac{\gamma}{4} |u|.
  \end{align*}
  \item On $\widetilde{\Ncal}^u := C \left( \bigcup_{x \in \cup_i \Ncal_i} \left\{ (x,\xi) \in  \Ncal^u \right\} \right)$, we get
  \begin{align*}
  \Lie_{X_H} G_m &= \underbrace{\Lie_{\widetilde{X}_H} (E)}_{\leq 0} \underbrace{\log \w{f}}_{> 0} + \underbrace{E (x,\tilde{\xi})}_{\geq s /4} \underbrace{\Lie_{X_H} (\log \w{f})}_{\leq - \gamma/2} \\
    &\leq - \frac{\gamma}{8} s .
  \end{align*}
  \item On $\widetilde{\Ncal}^o := C \left(\bigcup_{z \in \Ncal_i} \left\{ (z,\xi) \in  \Ncal^o \right\}\right)$, we obtain $\Lie_{X_H} G_m = \Lie_{\widetilde{X}_H} (E) \log \w{f} \leq 0$.
  \item On $C \left( \bigcup_{z \in \cup_i \Ncal_i} \left\{ (z,\xi) \in S^* M \setminus \left(\Ncal^s \sqcup \Ncal^o \sqcup \Ncal^u \right) \right\} \right)$. Or equivalently, for $x \in \bigcup_{1 \leq i \leq N} \Ncal_i$ with $(x,\xi) \notin C (\Wcal^{so}\cup \Wcal^{u})$ or $(x,\xi) \notin C (\Wcal^{s}\cup \Wcal^{uo})$. Since $E$ and $X_H (\log \w{f})$ are bounded and $f$ is $1$-homogeneous w.r.t $\xi$, there exist constants $C_1, C_2 > 0$ such that for all $|\xi| \geq 1$,
  $$ \Lie_{X_H} (G_m) =  \Lie_{\widetilde{X}_H} (E) \log \w{ |\xi| C_1} + C_2 .$$
  Moreover, according to the construction of $E$, one can find a positive constant $\eta > 0$ such that $ \Lie_{\widetilde{X}_H} (E) (x,\tilde{\xi}) < - \eta < 0$. Therefore, there exists a positive radius $R > 0$ such that for every $(x,\xi) \in T^*M \setminus \left( C(\Ncal^s) \cup C(\Ncal^o) \cup C(\Ncal^u) \right)$ with $x \in \bigcup_{1 \leq i \leq N} \Ncal_i$ and $|\xi| \geq R$, we have
  \begin{equation}\label{eq: proof of escape function - bound on derivative of the escape function}
	\Lie_{X_H} (G_m) \leq - \frac{\gamma}{8} \min(s, |u|) .
  \end{equation}
  Therefore, we define $C_m:= \frac{\gamma}{8}  \min(s, |u|) $.
  \item Outside a small neighborhood of the nonwandering set, i.e. for $x \in M\setminus \left( \cup_{i = 1}^N \Ncal_i \right)$. This time we have $ \Lie_{\widetilde{X}_H} (E) \leq - \frac{n_0}{4 (N-1)}\eta_0 < 0$. So, using a similar argument than the previous point, we deduce that equation (\ref{eq: proof of escape function - bound on derivative of the escape function}) still holds far away from the null section. \hfill $\square$
 \end{itemize}

 \begin{appendix}
 
 \section{Hyperbolic sets}\label{Appendix: hyperbolic set}

In this appendix, we recall the definition of a hyperbolic set.

\begin{definition}A $\varphi^t$-invariant compact set $K$ is said to be \textbf{hyperbolic} for the flow $\varphi^t$ on $M$ if 
\begin{itemize}
 \item For each $x \in K$, we have the following decomposition
      \begin{equation}\label{decompositionTM}
       T_x M = E_u (x) \oplus E_s(x) \oplus E_0 (x)
      \end{equation}
      where $E_0 (x) = \R V(x)$ and $E_s$ (resp. $E_u$) is called the stable (resp. unstable) distribution.
 \item The decomposition (\ref{decompositionTM}) is invariant by the flow $\varphi^t$: 
 $$
  \forall x \in K, \quad (D_x \varphi^t )(E_u(x)) = E_u(\varphi^t (x)) \quad \mbox{and} \quad (D_x \varphi^t )(E_s(x)) = E_s(\varphi^t (x)) 
 $$
 \item There are constants $C > 0$ and $\lambda > 0$ such that for every $x \in K$ and for every $\forall t \geq 0$, the following inequalities are verified 
 \begin{equation}\label{eq: hyperbolic inegality on an hyperbolic set}
  \begin{split}
   | D_x \varphi^t (v_s) |_g &\leq C e^{-\lambda t} |v_s|_g , \quad \quad \forall v_s \in E_s(x), \\
   | D_x \varphi^{-t} (v_u) |_g &\leq C e^{-\lambda t} |v_u|_g , \quad \quad \forall v_u \in E_u(x).
  \end{split}
 \end{equation}
\end{itemize} 
\end{definition}

When $K$ is reduced to a singleton $\{z\}$, we must have $V(z) = 0$ and thus $E_o (x) = \{0\}$. In that case, we say that $z$ is a hyperbolic fixed point. When the whole manifold is a hyperbolic compact set on which the vector field $V$ never vanishes, the flow is said to be Anosov. It was first introduced by D. Anosov in \cite{anosov} and this formal definition of hyperbolicity was motivated by the properties of the geodesic flow on negatively curved manifolds. Another famous example of Anosov flow is given by the suspension of an Anosov diffeomorphism. The notion of hyperbolicity was later extended by Smale who defined the notion of Axiom A flows which are at the heart of this article. There are many other examples of hyperbolic sets and we refer the reader to \cite{katok-hasselblatt} and \cite{palis-demelo} for a comprehensive study of hyperbolic dynamics. We also refer to \cite{dyatlov} for the case of flows.

\begin{remark}\begin{itemize}[align=left, leftmargin=*, noitemsep]
       \item  The definition of hyperbolicity does not depend on the continuous metric $g$ on $M$. Indeed, if $g'$ denotes another smooth metric then, by compactness of $M$, $g'$ is equivalent to $g$ and (\ref{eq: hyperbolic inegality on an hyperbolic set}) still holds for some constant $C'$ instead of $C$.
       \item The distributions $E_u$ and $E_s$ are only Hölder-continuous in general. Let $d_u (x)$ and $d_s (x)$ be the dimensions of $E_u (x)$ and $E_s (x)$ at any point $x \in K$. The maps $d_s$ and $d_u$ do not depend on $x \in K$ and $E_u$ (resp. $E_s$) define a Hölder-continuous section of the Grassmann bundle $G_{d_u, n}$ (resp. $G_{d_u, n}$) of vector subspaces of dimension $d_u$ (resp. $d_s$).  
      \end{itemize}      
\end{remark}
  
\section{Proof of De Rham's theorem}\label{Appendix: De Rham}

As we shall see, a short proof consists in using the fact that eigenvectors of Hodge-De Rham Laplace operators $\Delta^{(k)} = (d + d^*)^2$ acting on $L^2 (M ; \Lambda^k T^* M \otimes \C)$ are smooth by elliptic regularity. Let us only prove the first point. A proof of the second point can be found in \cite[p. 355]{schwartz}. If we denote by $\pi_0$ the spectral projector onto the eigeinspace of the eigenvalue $0$, then we have for all $0 \leq k \leq n$ and all $u \in \Hil_k^m$
 \begin{equation}\label{eq : proof of De Rham's thm}
    \begin{split}
     u &= \pi_0 (u) + (\Id - \pi_0) (u) \\
        &= \pi_0 (u) + \Delta \circ \Delta^{-1} \circ (\Id - \pi_0) (u) \\
        &= \pi_0 (u) + d \circ R (u) + R \circ d (u).
    \end{split}
 \end{equation}
 Here, $R$ denotes the homotopic operator defined by $R: u \in \Hil_k^m \mapsto d^* \circ \Delta^{-1} \circ (\Id - \pi_0) (u) \in d^* \left(\Hil_{k}^{m+2} \right) \subset \Hil_{k-1}^{m+1}$ where we used that $\Delta^{-1} \circ (\Id - \pi_0)$ is a pseudodifferential operator of order $-2$. The last equality is obtained using commutativity of $d$ with $\Delta^{-1}$ and $\pi_0$ (which follows from the commutativity of $d$ and $\Delta$). Now, fix an element $u \in \Hil_k^m$ such that $du = 0$. We deduce from (\ref{eq : proof of De Rham's thm}) the identity
 $$ u = \pi_0 (u) + d \circ R (u).$$
 By setting $\omega := \pi_0 (u) \in \Omega^k (M; \C)$ which is indeed smooth by smoothness of the eigenvectors of $\Delta^{(k)}$, we obtain the result. 
\end{appendix}
\begin{small}

\bibliographystyle{plain}
\bibliography{biblio}
\end{small}

\end{document}